\newcommand{\ERCagreement}{{\begin{minipage}{.56\textwidth}This paper is part of a project that has received funding from the European Research Council (ERC) under the European Union's Horizon 2020 research and innovation programme (grant agreement No 810115 -- {\sc Dynasnet}). \end{minipage}\hfill\begin{minipage}{.33\textwidth}\includegraphics[width=\textwidth]{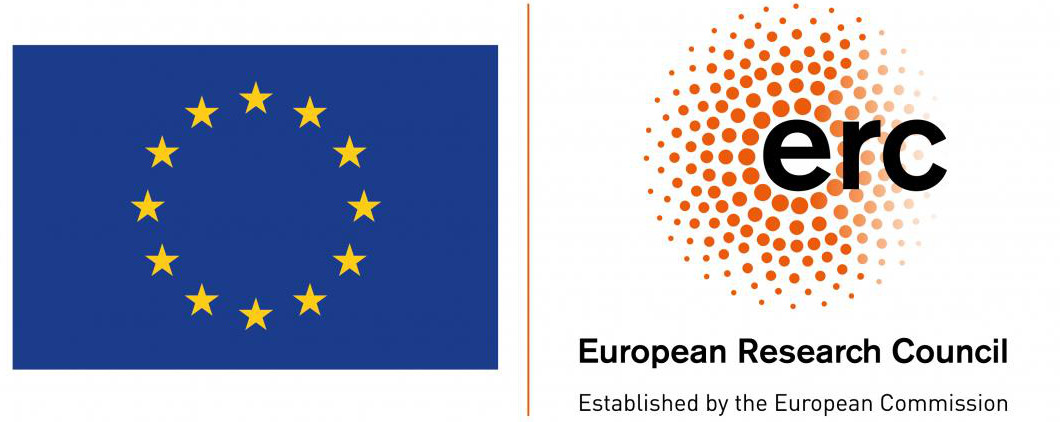}\end{minipage}\hfill}}
\title{Twin-width IV: ordered graphs and matrices}
\titlerunning{Twin-width IV: ordered graphs and matrices}%optional, please use title is longer than one line
\author{\'{E}douard Bonnet}{Univ Lyon, CNRS, ENS de Lyon, Université Claude Bernard Lyon 1, LIP UMR5668, France \and \url{http://perso.ens-lyon.fr/edouard.bonnet/}}{edouard.bonnet@ens-lyon.fr}{https://orcid.org/0000-0002-1653-5822}{}
\author{Ugo Giocanti}{Univ Lyon, CNRS, ENS de Lyon, Universit\'{e} Claude Bernard Lyon 1, LIP UMR5668, France}{ugo.giocanti@ens-lyon.fr}{}{}
\author{Patrice Ossona de Mendez}{Centre d'Analyse et de Mathématique Sociales CNRS UMR 8557, France \and and Computer Science Institute of Charles University (IUUK), Praha, Czech Republic \and \url{http://cams.ehess.fr/patrice-ossona-de-mendez/} }{pom@ehess.fr}{https://orcid.org/0000-0003-0724-3729}{}
\author{Pierre Simon}{UC Berkeley \and \url{http://www.normalesup.org/~simon/index.html}}{simon@math.berkeley.edu}{https://orcid.org/0000-0002-2923-8202}{}
\author{St\'{e}phan Thomass\'{e}}{Univ Lyon, CNRS, ENS de Lyon, Universit\'{e} Claude Bernard Lyon 1, LIP UMR5668, France \and \url{http://perso.ens-lyon.fr/stephan.thomasse/}}{stephan.thomasse@ens-lyon.fr}{}{}
\author{Szymon Toru\'nczyk}{University of Warsaw \and \url{https://www.mimuw.edu.pl/~szymtor/}}{szymtor@mimuw.edu.pl}{https://orcid.org/0000-0002-1130-9033}{}
\authorrunning{\'E. Bonnet, U. Giocanti, P. {Ossona de Mendez}, P. Simon, S. Thomassé, S. Toru\'nczyk}
\keywords{Twin-width, matrices, ordered graphs, enumerative combinatorics, model theory, algorithms, computational complexity, Ramsey theory}%TODO mandatory; please add comma-separated list of keywords
\Crefname{figure}{Figure}{Figures}
\newtheorem*{rep@theorem}{\rep@title}
\newcommand{\newreptheorem}[2]{%
\newenvironment{rep#1}[1]{%
 \def\rep@title{#2 \ref{##1}}%
 \begin{rep@theorem}}%
 {\end{rep@theorem}}}
\renewcommand{\neq}{{\not=}}
\renewcommand{\geq}{\geqslant}
\renewcommand{\leq}{\leqslant}
\renewcommand{\le}{\leq}
\renewcommand{\ge}{\geq}
\renewcommand{\phi}{\varphi}
\newcommand{\interp}{\mathsf} %interpretations
\newcommand{\CC}{\mathscr C}
\newcommand{\DD}{\mathscr D}
\newcommand{\sym}{{\rm{s}}}
\newcommand{\six}{\set{{=, \neq, \le_R, \ge_R,} {\le_C, \ge_C}}}
\newcommand{\sax}{\set{{=, \neq, \le_l, \ge_l,} {\le_r, \ge_r}}}
\newcommand{\set}[1]{\ensuremath{\{#1\}}} %\set{a,b,c}
\newcommand{\setof}[2]{\set{#1\mid#2}} %\setof{x}{x>5}
\newcommand{\from}{\colon} %function $f\from X\to Y$
\newcommand{\str}[1]{\mathbf{#1}} %structure
\DeclareMathOperator{\sign}{\mathsf{sign}}
\DeclareMathOperator{\ot}{ot}
\DeclareMathOperator{\mt}{mt}
\DeclareMathOperator{\reduct}{\text{\sf Reduct}}
\newcommand{\Nn}{\mathbb{N}}
\newcommand*\sg[1]{\left \{ #1 \right \}}
\newcommand{\bipri}[2]{\mathrm{b}^{(#2)}_2(#1)}
\newcommand{\bipramm}[2]{\mathrm{b}_{#2}(#1)}
\newcommand{\ramm}[2]{\mathrm{R}_{#2}(#1)}
\newcommand{\gridramm}[2]{\mathrm{g}_{#2}(#1)}
\theoremstyle{definition}
\theoremstyle{remark}
\newcommand{\LFP}{\text{\sf{LFP}}}
\DeclareMathOperator{\tww}{tww}
\DeclareMathOperator{\gr}{gr}
\newcommand{\utr}{\mathbf{U}_k}
\newcommand{\ltr}{\mathbf{L}_k}
\newcommand{\id}{\mathbf{I}_k}
\newcommand{\ao}{\mathbf{1}_k}
\newcommand{\az}{\mathbf{0}_k}
\newcommand{\rami}{\mathcal N_k}
\newcommand{\row}{\text{rows}}
\newcommand{\col}{\text{cols}}
\newcommand{\kop}{\mbox{$k$-overlapping} partition\xspace}
\newcommand{\kops}{\mbox{$k$-overlapping} partitions\xspace}
\newcommand{\opar}[1]{\mbox{$#1$-overlapping} partition\xspace}
\newcommand{\opars}[1]{\mbox{$#1$-overlapping} partitions\xspace}
\newcommand{\Oof}{O}
\begin{document}

\maketitle

\begin{abstract}
  We establish a list of characterizations of bounded twin-width for hereditary, totally ordered binary structures.
  This has several consequences.
  First, it allows us to show that a (hereditary) class of matrices over a finite alphabet either contains at least $n!$ matrices of size $n \times n$, or at most $c^n$ for some constant $c$.
  This generalizes the celebrated Stanley-Wilf conjecture/Marcus-Tardos theorem from permutation classes to any matrix class over a finite alphabet, answers our small conjecture [SODA '21] in the case of ordered graphs, and with more work, settles a question first asked by Balogh, Bollob\'as, and Morris [Eur. J. Comb. '06] on the growth of hereditary classes of ordered graphs.
  Second, it gives a fixed-parameter approximation algorithm for twin-width on ordered graphs.
  Third, it yields a full classification of fixed-parameter tractable first-order model checking on hereditary classes of ordered binary structures. 
  Fourth, it provides a model-theoretic characterization of classes with bounded twin-width.
\end{abstract}

%\tableofcontents

\subparagraph{Acknowledgments}

We thank Colin Geniet, Eunjung Kim, Jarik Nešetřil, Sebastian Siebertz, Michał Pilipczuk, and Rémi Watrigant for fruitful discussions.

\section{Introduction}
A common goal in combinatorics, structural graph theory, parameterized complexity, and finite model theory 
is to understand \emph{tractable} classes of graphs, or other structures. Depending on the context, tractability may mean e.g.
 few structures of any given size, 
 small chromatic number, efficient algorithms, for example for the clique problem; or structural properties of sets definable by logical formulas.

Twin-width is a recently introduced graph width parameter~\cite{twin-width1,twin-width2,twin-width3} 
which, despite its generality, ensures many of those properties.
Intuitively, a graph  has twin-width $d$ if it can be constructed by merging larger and larger parts so that at any moment, every part has a non-trivial interaction
with at most $d$ other parts
(two parts have a trivial interaction if either no edges, or all edges span across the two parts).

Many well-studied graph classes have bounded twin-width: planar graphs, and more generally, any class of graphs excluding a fixed minor; cographs, and more generally, any class of bounded clique-width.
Twin-width can be generalized to structures equipped with several binary relations. Then posets of bounded width, tree orders, and permutations omitting a fixed permutation also have bounded twin-width. 

Classes of bounded twin-width enjoy many   remarkable properties of combinatorial, algorithmic, and logical nature.
For instance, classes of bounded twin-width are small (contain $n!\cdot 2^{\Oof(n)}$ graphs with vertex set $\set{1,\ldots,n}$), are $\chi$-bounded (the chromatic number is bounded in terms of the clique number)~\cite{twin-width2}, and have the NIP property from model theory (every  first-order formula $\phi(x,y)$ defines a binary relation of bounded VC-dimension).
Furthermore, model checking first-order logic (\FO) is fixed-parameter tractable (\FPT) on classes of bounded twin-width, assuming a \emph{witness} of $G$ having bounded twin-width is provided.
This means that there is an algorithm which, given an \FO~sentence $\phi$, a graph $G$ together with a witness that its twin-width is at most $d$, decides whether $\phi$ holds in $G$ in time $f(\phi,d)\cdot |G|^c$ for some computable function $f$ and universal constant $c$.

For each of the classes $\CC$ mentioned above there is actually an algorithm which, given a graph $G\in \CC$, computes the required witness of low twin-width, in polynomial time~\cite{twin-width1}.
Hence \FO~model checking is \FPT~on these classes (without requiring the witness), generalizing many previous results, while it is $\AW[*]$-hard (thus, unlikely to be \FPT) on the class of all graphs~\cite{Downey96}.
It is however unknown whether such witness can be computed efficiently for every class $\CC$ of bounded twin-width.

It transpires from the mere definitions that every graph can be equipped with some total order, resulting in an ordered graph of the same twin-width~\cite{twin-width1}.
Such orders, while elusive to efficiently find, are crucial to most of the combinatorial and algorithmic applications.
This suggests that \emph{ordered graphs} of bounded twin-width are a more fundamental object than unordered graphs of bounded twin-width.

\subparagraph{Main result.}
% \label{sec:newmain}
Our main result,~\cref{thm:main} below, gives multiple characterizations of hereditary (i.e., closed under induced substructures) classes of ordered graphs of bounded twin-width, connecting notions from various areas of mathematics and theoretical computer science -- enumerative combinatorics, model theory, parameterized complexity, matrix theory, and graph theory -- and solving several open problems on the way.
Furthermore, we show that if a class $\CC$ of ordered graphs has bounded twin-width, then for each $G\in\CC$, a witness that $G$ has twin-width bounded by a constant can be computed in polynomial time.
Consequently, \FO~model checking is \FPT~on $\CC$. We also prove that the converse holds, under common complexity-theoretic assumptions.

 \tikzexternaldisable
\begin{figure}\centering
  \begin{tikzpicture}[vertex/.style={draw,fill,circle,inner sep=0.03cm}]
    \def\n{6}
    \def\s{1.2}
    \def\z{0.6}
    \foreach \i in {1,...,\n}{
       \node at (-0.5,-\i * \z) {\i} ;
    }

    \foreach \symb/\symbp/\xsh/\sign in {{=}/{=}/0/=,{<}/{=}/3/{\leq},{>}/{=}/6/{\geq},{<}/{>}/9/{\neq}}{
      \begin{scope}[xshift=\xsh cm]
        \node at (\s / 2,-\n * \z-0.5) {$\sign$} ;
    \foreach \i in {1,...,\n}{
      \node[vertex] (l\i) at (0,-\i * \z) {} ;
      \node[vertex] (r\i) at (\s,-\i * \z) {} ;
    }
    \foreach \i in {1,...,\n}{
      \foreach \j in {1,...,\n}{
        \ifthenelse{\i \symb \j}{\draw (l\i) -- (r\j);}{}
        \ifthenelse{\i \symbp \j}{\draw (l\i) -- (r\j);}{} 
      }
    }
      \end{scope}
    }
  \end{tikzpicture}
  \caption{The four bipartite graphs $G^n_=$, $G^n_\le$, $G^n_\ge$, and $G^n_{\neq}$, for $n=6$.}
  \label{fig:ladders1}
\end{figure}
 \tikzexternalenable

 \medskip
We now briefly discuss some notions which are involved in our characterization.
One of them involves certain forbidden patterns, which are obfuscated matchings between ordered sets, defined as follows.
Fix a number $n\ge 1$ and 
let $L$ and $R$ be two copies of $[n]=\set{1,\ldots,n}$.
Consider the four bipartite graphs $G_=^n,G_\le^n,G_\ge^n,G_{\neq}^n$ with vertices $L\cup R$  corresponding to the binary relations $=$, $\le$, $\ge$, $\neq$
on $L\times R$, as depicted in~\cref{fig:ladders1}. 
Fix parameters $P\in\set{=,\neq,\le,\ge}$,
$S\in\set{l,r}$ and $\lambda,\rho\in\set{0,1}$, and define $\mathscr M_{P_S,\lambda,\rho}$ as the hereditary closure of the class of all ordered graphs obtained from the graphs $G_P^n$, for $n\ge 1$, as follows:
\begin{itemize}
  \item if $S=l$, order $L\cup R$ by ordering $L\simeq [n]$ as usual, followed by the vertices of $R$ ordered arbitrarily,
  \item if $S=r$, order $L\cup R$ by ordering $L$ arbitrarily, followed by the vertices of $R\simeq [n]$ in reverse order,
  \item if $\lambda=1$, create a clique on $L$, otherwise $L$ remains an independent set,
  \item if $\rho=1$, create a clique on $R$, otherwise $R$ remains an independent set.
\end{itemize}
As $\mathscr M_{=_l,\lambda,\rho}$ and $\mathscr M_{=_r,\lambda,\rho}$ are the same class,  we denote it by $\mathscr M_{=,\lambda,\rho}$.
Similarly $\mathscr M_{\neq_l,\lambda,\rho}=\mathscr M_{\neq_r,\lambda,\rho}$ is denoted $\mathscr M_{\neq,\lambda,\rho}$.
Altogether there are 24 classes: $\mathscr M_{\sym,\lambda,\rho}$ for each $\sym\in\sax$ and $\lambda,\rho\in\set{0,1}$.
For example, $\mathscr M_{=,0,0}$ consists  of all induced subgraphs of ordered matchings with vertices $a_1<\ldots<a_n<b_1<\ldots<b_n$,
whose edges form a matching between the $a_i$'s and the $b_j$'s, while $\mathscr M_{\neq,1,1}$ is the class of their edge complements. See~\cref{fig:ladders} for another representation of those classes.

\begin{figure}\centering
  \includegraphics[scale=0.5,page=7]{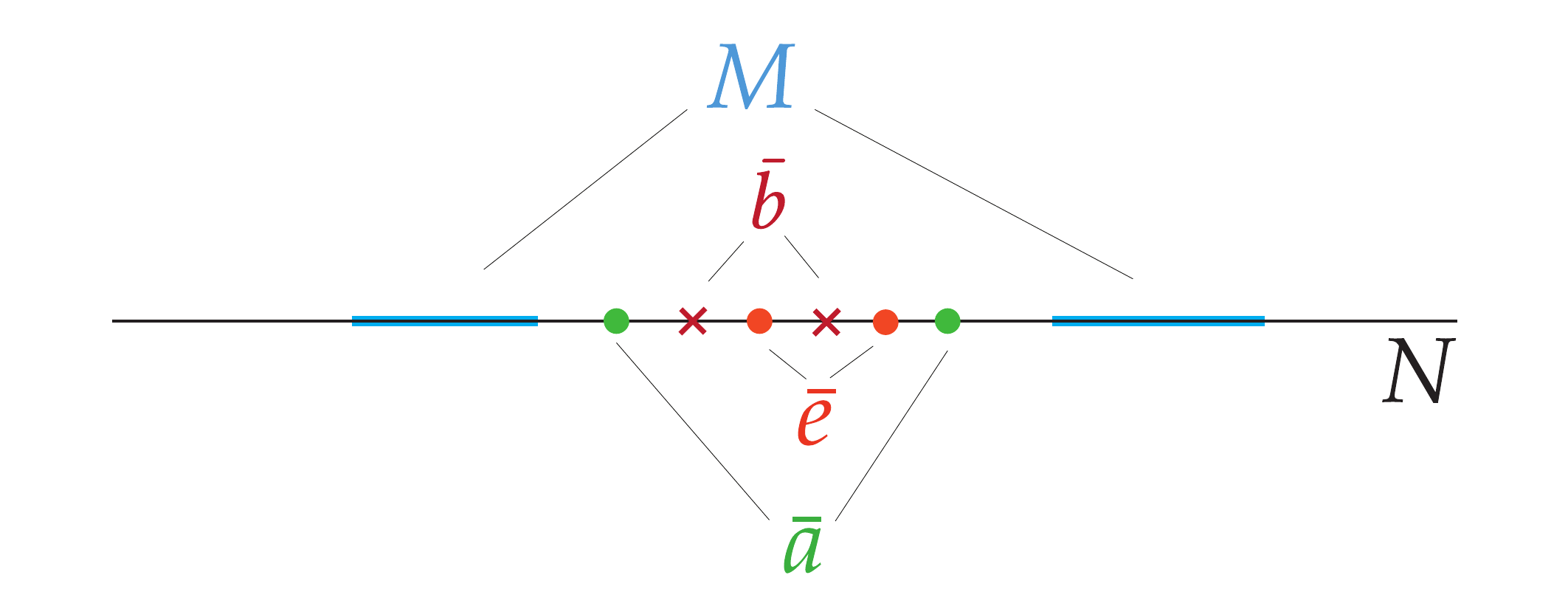}
  \caption{
    The six ordered graphs of $\mathscr M_{\sym,0,0}$ for $\sym\in\sax$ corresponding to the same ordered matching of $\mathscr M_{=,0,0}$ represented to the left.
    In each ordered graph the black edges are those implied by the bold edge $uv$ in the matching.
    To picture the other classes $\mathscr M_{\sym,\rho,\lambda}$ with $(\lambda,\rho) \neq (0,0)$, one just needs to turn the left part and/or the right part of each graph into cliques.}
  \label{fig:ladders}
\end{figure}

In addition to these 24 classes, we will need to define one more class, $\mathscr P$. 
An  \emph{ordered permutation graph} associated 
with a permutation $\pi$ of $[n]=\set{1,\ldots,n}$
is the ordered graph with vertices $[n]$ ordered naturally, such that two vertices $i<j$ are adjacent if and only if $\pi(i)>\pi(j)$.
Let $\mathscr P$ be the class of all ordered permutation graphs.

Another characterization is in terms of adjacency matrices of the considered ordered graphs.
% Say that a matrix $M$ over a finite alphabet has \emph{combinatorial rank} at least $k$ if it  has at least $k$ distinct rows or  at least $k$ distinct columns\footnote{If the alphabet is a finite field, then the combinatorial rank of a matrix is  is bounded in terms of the usual rank of $M$, and vice-versa.}.
A \emph{$d$-division} of a matrix is a partition of its entries into $d^2$ zones using $d-1$ vertical lines and $d-1$ horizontal lines.
A \emph{rank-$k$ $d$-division} is a $d$-division where each zone has at least $k$ non-identical rows or at least $k$ non-identical columns.
A matrix has \emph{grid rank} at least $k$ if it has a rank-$k$ $k$-division, simply called rank-$k$ division.
Note that this notion depends on the order of rows and columns in the matrix.
The grid rank of an ordered graph is the grid rank of its adjacency matrix along the order of the graph.

A further characterization involves (simple first-order) \emph{interpretations}, which is a notion originating in logic.
Interpretations are a means of producing new structures out of old ones, using formulas.
The new structure has the same domain as the old one (or a subset of it, defined by a formula $\delta(x)$) while each of its relations is defined by a formula $\phi(\overline{x})$ interpreted in the old structure.
For example, there is an interpretation which transforms a given graph $G$ into its edge complement (using the formula $\neg E(x,y)$), and an interpretation which transforms $G$ into its square (using the formula $\exists z.E(x,z) \land E(z,y)$).
Transductions are a similar notion, but additionally allow to arbitrarily color the old structure before applying the interpretation and then use the colors in the formulas. 
Say that $\CC$ \emph{interprets} the class of all graphs if there is an interpretation $\mathsf I$ such that every (finite) graph $G$ can be obtained as the result of $\mathsf I$ applied to some structure in $\CC$.
Replacing interpretations with transductions, we say that $\CC$ \emph{transduces} the class of all graphs.
It is known that no class of bounded twin-width transduces the class of all graphs~\cite{twin-width1}.

Finally, the \emph{growth} of a class of structures $\CC$
is the function counting, for a given $n$, the number of $n$-element structures in $\CC$, up to isomorphism.
It was previously shown that every class of ordered graphs of bounded twin-width has growth $2^{\Oof(n)}$~\cite{twin-width2}.

\medskip

 We now state our main result, characterizing hereditary classes of ordered graphs with bounded twin-width.
 It provides a dichotomy result for all such classes: Either they have bounded twin-width, and are therefore well-behaved, or otherwise, they are very untamable. 

\begin{theorem}\label{thm:main}
	Let $\CC$ be a hereditary class of ordered graphs.
	Then either $\CC$ satisfies conditions~(i)-(v), or $\CC$ satisfies conditions~(i')-(v') below:
	\begin{multicols}{2}%		
	\begin{enumerate}[label={(\roman*)},ref=\roman*]
			\item\label{git:tww} $\CC$ has bounded twin-width			
			 \item\label{git:matrix} $\CC$ has bounded~grid rank
			\item\label{git:small} $\CC$ has growth $2^{\Oof(n)}$
			
			 \item\label{git:transduce} $\CC$ does not transduce the class of all graphs
			 \item\label{git:easy} \FO~model checking  is \FPT~on $\CC$
			
	\end{enumerate}
	\begin{enumerate}[label={(\roman*')},ref=\roman*']
	\item\label{git:utww} 	$\CC$ has unbounded twin-width
	\item\label{git:patterns} 
	  $\CC$ contains $\mathscr P$ or one of the 24 classes $\mathscr M_{\sym,\lambda,\rho}$
	 \item\label{git:big} $\CC$ has growth at least $\sum_{k=0}^{\lfloor n/2 \rfloor} {n \choose 2k}k!\ge\lfloor \frac n 2\rfloor!$
	 
	  \item\label{git:interp} $\CC$ interprets the class of all graphs
	  \item\label{git:hard} \FO~model checking  is $\AW[*]$-hard on~$\CC$.
\end{enumerate}
		\end{multicols}
\end{theorem}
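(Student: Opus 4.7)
The plan is to prove the dichotomy by establishing, on the one hand, (i)$\Rightarrow$(ii)$\wedge$(iii)$\wedge$(iv)$\wedge$(v), and on the other (i')$\Rightarrow$(ii')$\wedge$(iii')$\wedge$(iv')$\wedge$(v'); since (i) and (i') are complementary, this suffices. The central backbone is the equivalence (i)$\Leftrightarrow$(ii), a Marcus--Tardos-type statement relating bounded twin-width of ordered graphs to bounded grid rank of their adjacency matrices, combined with a Ramsey-theoretic structure theorem asserting that unbounded grid rank in a hereditary class of ordered graphs forces one of the $24+1$ explicit patterns to appear, that is, (i')$\Rightarrow$(ii').

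For (i)$\Leftrightarrow$(ii), the easy direction is that a rank-$k$ $k$-division of the adjacency matrix obstructs any contraction sequence of width substantially smaller than $k$: the $k^2$ zones witness too many inequivalent twin-types to be merged under a bounded red-degree budget. The converse is the heart of the matter and I would prove it by a recursive decomposition of the ordered adjacency matrix: bounded grid rank lets one find at each level a vertical or horizontal cut behind which only a bounded number of row or column types are visible, and merging along these types produces a contraction sequence of width bounded in terms of the grid rank alone. This construction should moreover run in polynomial time, yielding the \FPT-approximation algorithm for ordered twin-width needed to derive (v) from (i) via the already established \FPT~model-checking algorithm for bounded twin-width classes given a witness.

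The hardest step will be (i')$\Rightarrow$(ii'), extracting one of the $24+1$ patterns from any ordered graph of sufficiently large grid rank. The strategy is a multi-round Ramsey extraction on a rank-$k$ $k$-division with $k$ chosen very large as a function of the target pattern size: first, inside each cell one finds a large ``elementary'' sub-pattern of one of the six shapes indexed by $\sym\in\sax$ (a matching, an anti-matching, or one of the four half-graph variants); Ramsey is then applied across cells to homogenize into a large grid all carrying the same elementary pattern with consistent vertical and horizontal orderings, producing a copy of some $\mathscr M_{\sym,0,0}$, or else an ordered permutation graph from $\mathscr P$ when the aligning pattern is a shifted diagonal rather than an aligned matching; a final Ramsey step inside each of $L$ and $R$ then decides whether that side becomes a clique or an independent set, fixing the parameters $\lambda,\rho\in\set{0,1}$. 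The main combinatorial obstacle here is not extracting some obstruction but checking exhaustiveness, namely that no further pattern beyond these $24+1$ is needed.

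Once (ii') is established, the implications (ii')$\Rightarrow$(iii')$\wedge$(iv')$\wedge$(v') are comparatively direct. Each pattern class has growth at least $\sum_{k=0}^{\lfloor n/2\rfloor}\binom{n}{2k}k!$ by direct counting of partial-matching or permutation substructures; each interprets the class of all graphs, using the added cliques or independent sets together with the total order to encode arbitrary edge relations on the matching side; and interpreting all graphs mechanically transfers the $\AW[*]$-hardness of \FO~model checking. The ``good'' implications (i)$\Rightarrow$(iii) and (i)$\Rightarrow$(iv) are inherited from previous parts of the twin-width series, closing the dichotomy.
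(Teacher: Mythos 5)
Your overall dichotomy strategy — prove $(i)\Rightarrow(ii)\wedge(iii)\wedge(iv)\wedge(v)$ and $(i')\Rightarrow(ii')\wedge(iii')\wedge(iv')\wedge(v')$, then invoke that $(i)$ and $(i')$ are complementary — is exactly the paper's. Your Ramsey extraction for $(i')\Rightarrow(ii')$ (find an elementary sub-pattern of one of the six types in each cell, homogenize across cells via product Ramsey, then finish with a Ramsey step on each of $L,R$ to fix $\lambda,\rho$ or fall into $\mathscr P$) also matches the paper's structure via rank Latin divisions, Lemma~\ref{lem:grid-ramsey}, and the reduction from 16 encodings to 6 matrix classes and then to the 25 ordered-graph classes.

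The genuine gap is in the converse $(ii)\Rightarrow(i)$, which you describe as a ``recursive decomposition'' that, at each level, finds a ``vertical or horizontal cut behind which only a bounded number of row or column types are visible.'' Bounded grid rank is a purely negative condition: it only says that every $k$-division has \emph{some} zone with few distinct rows and few distinct columns, and that zone can sit anywhere in the matrix; it does not hand you a cut, and it certainly does not say that one side of any particular cut has bounded row/column diversity. The paper bridges this gap by introducing the intermediate notion of a $k$-rich division. A greedy contraction strategy (maintain that each current part can be made constant by deleting at most $r$ opposing parts) either succeeds or gets stuck and certifies a $2k(k+1)$-rich division; then Theorem~\ref{thm:rd-to-gr}, which is a genuine application of the Marcus--Tardos theorem, shows that a sufficiently large rich division forces a large rank division. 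Without the rich-division intermediate and the Marcus--Tardos step, your proposed recursion has no mechanism to make progress, and the existence of the cut you posit is exactly what must be proved. Relatedly, the $\FPT$-approximation algorithm you assert runs in polynomial time relies on Lemma~\ref{lem:testingPR-PC}, which bounds the work to test the greedy invariant; this too is nontrivial and tied to the rich-division formulation. I would recommend you make the rich-division notion and the Marcus--Tardos application explicit, since they are the real content of the ``heart of the matter'' you flag.
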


The above result connects notions from graph theory~\cref{git:tww}, enumerative combinatorics~\cref{git:small},\cref{git:big}, parameterized complexity~\cref{git:easy},\cref{git:hard}, model theory~\cref{git:transduce},\cref{git:interp}, matrix theory~\cref{git:matrix} and Ramsey theory~\cref{git:patterns}.
The lower bound in~\cref{git:big} is optimal.

\cref{thm:main} is proved in greater generality for arbitrary classes of ordered, binary structures.
We prove an analogous result for classes $\mathcal M$ of $0, 1$-matrices (and more generally matrix classes over finite alphabets). 
In this result (see \cref{thm:equiv}), the lower bound on the number of $n\times n$-matrices in $\mathcal M$ in \cref{git:big} is replaced by $n!$, and the 25 classes in~\cref{git:patterns} are reduced to six classes $\mathcal F_\sym$ of matrices, indexed by a single parameter $\sym \in \six$ (see~\cref{fig:six-unavoidable}).
Those are the class of all permutation matrices, the class obtained from permutation matrices by exchanging $0$'s with $1$'s, and four classes obtained from permutation matrices by propagating each 1 entry downward/upward/leftward/rightward, respectively.

 \tikzexternaldisable 
  \begin{figure}[h!]
    \centering
    \begin{tikzpicture}[scale = .26]

   \foreach \symb/\b/\xsh/\ysh in {{==}/0/-30/-10,{!=}/0/-20/-10, {<=}/0/-10/-10,{>=}/0/0/-10,{<=}/1/10/-10,{>=}/1/20/-10}{
   \begin{scope}[xshift=\xsh cm,yshift=\ysh cm]     
    \foreach \i in {0,...,8}{
      \foreach \j in {0,...,8}{
        \pgfmathsetmacro{\ip}{\i+1}
        \pgfmathsetmacro{\jp}{\j+1}
        \pgfmathsetmacro{\col}{ifthenelse(\b==1,ifthenelse(\i \symb mod(\j,3)*3+floor(\j/3),"black","white"),ifthenelse(\j \symb mod(\i,3)*3+floor(\i/3),"black","white"))}
        \fill[\col] (\i,\j) -- (\i,\jp) -- (\ip,\jp) -- (\ip,\j) -- cycle;
      }
    }
    \draw[purple] (0, 0) grid (9, 9);
   \end{scope}
   }
    \end{tikzpicture}
    \caption{The matrices in $\mathcal F_=, \mathcal F_{\neq}, \mathcal F_{\le_R}, \mathcal F_{\ge_R}, \mathcal F_{\le_C}, \mathcal F_{\ge_C}$ (from left to right) for the same permutation matrix (the one to the left).
      The 1 entries are represented in black, the 0 entries, in white.
      As is standard with permutation patterns, we always place the first row of the matrix at the bottom.
      }
    \label{fig:six-unavoidable}
  \end{figure}
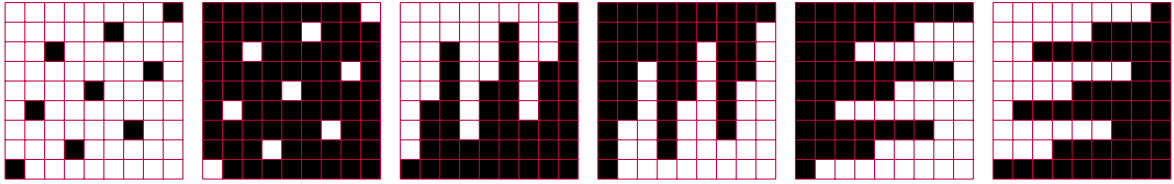
  \tikzexternalenable

As a consequence or by-product of~\cref{thm:main}, we settle a handful of questions in combinatorics and algorithmic graph theory.  
The main by-product is an approximation algorithm for twin-width in totally ordered binary structures.

\begin{theorem}\label{thm:approx-tww}
  There is a fixed-parameter algorithm that, given a totally ordered binary structure $G$ of twin-width~$k$, outputs a witness that $G$ has twin-width at most $2^{O(k^4)}$.
\end{theorem}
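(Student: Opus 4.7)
\textbf{Proof plan for \cref{thm:approx-tww}.} The plan is to leverage the equivalence between bounded twin-width and bounded grid rank given in \cref{thm:main} (items \cref{git:tww} and \cref{git:matrix}), making both directions constructive and quantitative. Specifically, the proof of \cref{git:tww}$\Rightarrow$\cref{git:matrix} gives that the adjacency matrix $M$ of $G$ (along the total order) has grid rank at most some explicit $r(k) = 2^{O(k^2)}$. The algorithmic content of the reverse direction \cref{git:matrix}$\Rightarrow$\cref{git:tww} is what we need to extract: from an ordered structure whose matrix has grid rank at most $r$, produce in polynomial time a contraction sequence of width bounded by $2^{O(r^2)} = 2^{O(k^4)}$.

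\textbf{Algorithmic core.} I would proceed iteratively, maintaining a coarsening $\mathcal P$ of $V(G)$ together with its quotient (``trigraph'') matrix $M/\mathcal P$ recording, for each pair of parts, the induced bipartite relation (either constant, i.e.\ black or white, or ``mixed'', i.e.\ red). At each step, observe that if $M/\mathcal P$ still has many parts but grid rank bounded by $r' \le r$, then in the spirit of the Marcus--Tardos / mixed-minor technology used in Twin-width I, any sufficiently fine division of $M/\mathcal P$ must contain a zone with few distinct rows or few distinct columns. Enumerating $r'$-divisions takes polynomial time for fixed $r'$, so such a zone can be located, and the rows (or columns) inside it that coincide can then be merged into a single part. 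This gives one step of the contraction sequence, and the new red degree incurred is controlled by $2^{O(r^2)}$ since each merged part interacts nontrivially with at most $2^{O(r^2)}$ other parts (the number of distinct ``interaction profiles'' of a row is bounded in terms of grid rank via a VC-type argument, already present in \cref{thm:main}).

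\textbf{Iteration and bound.} Iterating the merge step until a single part remains produces a full contraction sequence. The grid rank of $M/\mathcal P$ never increases through the process (merging identical rows/columns does not create new mixed minors), so the bound $2^{O(r^2)}$ on the red degree is preserved throughout. Composing with $r = r(k) = 2^{O(k^2)}$ yields the promised width $2^{O(k^4)}$. Each iteration decreases $|\mathcal P|$ by at least one, and the cost per iteration is polynomial (enumerating candidate divisions of a matrix of size $\le n$ for a fixed rank parameter), giving an overall running time of the form $f(k)\cdot n^{O(1)}$, i.e.\ \FPT.

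\textbf{Main obstacle.} The hard part is really the matrix lemma driving a single step: asserting that whenever the quotient matrix has bounded grid rank but many parts, one can \emph{efficiently locate} a pair (or small group) of rows/columns whose merger does not cause the red degree to spike beyond $2^{O(r^2)}$. The existence of such a good merger follows from a Marcus--Tardos style pigeonhole on the ``types'' of rows, but the quantitative control — ensuring the red degree stays below $2^{O(r^2)}$ globally rather than merely eventually — is where the exponents $k^2$ in $r(k)$ and $r^2$ in the red degree combine to produce $2^{O(k^4)}$. Making this combinatorial step both uniform and polynomial-time is the crux, and is precisely where the proof of \cref{thm:main}\,\cref{git:matrix}$\Rightarrow$\,\cref{git:tww} needs to be turned into an explicit procedure rather than an existence statement.
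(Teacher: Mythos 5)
Your proposal has a genuine gap, and it is located exactly where you flag it yourself. The step ``find a zone of $M/\mathcal P$ with few distinct rows, then merge rows that coincide inside it'' does not control the resulting red degree. Two rows that agree inside a single column zone $C_j$ may disagree on many other column parts, so merging them can create a part interacting non-trivially with nearly every other part. Bounded grid rank of the quotient does not rule this out, and the appeal to a ``VC-type argument'' to bound the number of interaction profiles per row is not substantiated; what bounded grid rank gives you is that in every $k$-division some zone is small, not that any individual row has few non-trivial interactions. You acknowledge this is ``the crux,'' but the proposal does not resolve it, and as written the algorithm could produce a partition sequence whose red degree blows up.

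The paper's actual proof avoids grid rank entirely as the driving invariant. It introduces the intermediate notion of a \emph{rich division} (\cref{subsec:rds}) and works with two local properties $\mathscr P^R,\mathscr P^C$ of a division: roughly, each row part admits a small set $Y$ of column parts whose removal collapses that row part to few distinct row vectors (and symmetrically). These properties are precisely what make the eventual refinement into a $(k,e)$-sequence with bounded error value possible: once a part can be ``cleaned'' by removing a few column parts, you can further refine it by the remaining row vectors and bound the error globally, not just inside one zone. The greedy algorithm merges consecutive parts as long as $\mathscr P^R,\mathscr P^C$ are preserved; if it gets stuck, a coarsening of the current division is a $2k(k+1)$-rich division, which certifies $\tww(M)>k$ by~\cref{lem:richcertificate}. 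If it completes, the division sequence is refined into a genuine contraction sequence with width $|A|^{O(k^4)}$. The quantitative source of the exponent $k^4$ is also different from what you sketch: $r=4k(k+1)+1=O(k^2)$ and the refinement parameter $w=r\,(|A|^{r-1})^r=|A|^{O(r^2)}$, not a grid-rank bound of $2^{O(k^2)}$ composed with a grid-rank-to-tww bound of $2^{O(r^2)}$. In short, the missing idea in your proposal is the rich-division framework and the properties $\mathscr P^R,\mathscr P^C$; without them the ``good merger'' you need does not obviously exist, and the stuck case does not yield a clean lower-bound certificate.
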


 As our second main result, we provide further characterizations of bounded twin-width classes in terms of model-theoretic notions, but which also transpire in algorithmic and structural graph theory.
We consider arbitrary \emph{monadically NIP} classes  of relational structures, which are not necessarily finite, ordered, or binary.
Those can be equivalently characterized as classes which do not transduce the class of all finite graphs.
They include all graph classes of bounded twin-width (with or without an order), but also all transductions of \emph{nowhere dense} classes \cite{sparsity}, such as classes of bounded maximum degree.

The following theorem generalizes some notions and implications appearing in \cref{thm:main}.
	
\begin{theorem}\label{thm:summary-mt0}
	For any class of structures $\CC$, consider the following statements:
	\begin{enumerate}[label={(\arabic*)},ref=\arabic*]
		\item\label{qit:trans} $\CC$ does not transduce the class of all graphs,
		\item\label{qit:mNIP} $\CC$ is monadically NIP,
		\item\label{qit:grids} $\CC$ does not {define large grids} (see~\cref{def:grids}),
		\item\label{qit:1-dim} $\CC$ is $1$-dimensional (see~\cref{def:1-dim}),
		\item\label{qit:restrained} $\CC$ is a {restrained} class (see~\cref{def:restrained}).
	\end{enumerate}
    Then the implications $\cref{qit:trans}\leftrightarrow\cref{qit:mNIP}\leftrightarrow\cref{qit:grids}\leftrightarrow\cref{qit:1-dim}\rightarrow\cref{qit:restrained}$ hold.
	For hereditary classes of binary, ordered structures, the above conditions are all equivalent to
		 $\CC$ having bounded twin-width.
\end{theorem}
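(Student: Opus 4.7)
The plan is to establish the chain $(\ref{qit:trans}) \leftrightarrow (\ref{qit:mNIP}) \leftrightarrow (\ref{qit:grids}) \leftrightarrow (\ref{qit:1-dim}) \to (\ref{qit:restrained})$ for arbitrary classes by combining standard model-theoretic facts with Ramsey-theoretic extractions of canonical patterns, and then specialize to hereditary classes of binary ordered structures by invoking \cref{thm:main} to graft bounded twin-width onto the chain.

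The equivalence $(\ref{qit:trans}) \leftrightarrow (\ref{qit:mNIP})$ is essentially the standard characterization of monadic NIP: a class is monadically NIP iff no unary expansion admits a formula of infinite VC dimension, while a transduction is exactly the composition of a monadic expansion with an interpretation. Since the class of all finite graphs contains the half-graphs of every order and hence is not monadically NIP, transducing it is equivalent to failing to be monadically NIP; the nontrivial direction is by a compactness/coloring argument showing that if some monadic expansion admits infinite VC dimension, then a further monadic expansion together with a fixed interpretation recovers all graphs. For $(\ref{qit:mNIP}) \to (\ref{qit:grids})$ I would argue contrapositively: if $\CC$ defines large grids through some formula $\phi(\bar x, \bar y)$, then by naming rows and columns with two unary predicates and reading off entries with $\phi$, one immediately obtains a transduction onto all bipartite graphs, and hence all graphs. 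The converse direction $(\ref{qit:grids}) \to (\ref{qit:mNIP})$ is the key combinatorial content: from a formula and a sequence of tuples witnessing arbitrarily large shattering in some monadic expansion, apply Ramsey's theorem on pairs (or standard indiscernible-extraction) to produce an indiscernible two-dimensional array on which a fixed formula encodes a grid.

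The equivalence $(\ref{qit:grids}) \leftrightarrow (\ref{qit:1-dim})$ is structural: $1$-dimensionality formalizes, via the definition given later in the paper, that the type spaces of $\CC$ decompose essentially along a single coordinate, which immediately prevents a $2$-dimensional definable grid; conversely, from a failure of $1$-dimensionality one extracts, by a pigeonhole and Ramsey-style argument on types over short prefixes, a grid-like configuration witnessed by a single formula. The implication $(\ref{qit:1-dim}) \to (\ref{qit:restrained})$ then falls out from unpacking the restrained property of \cref{def:restrained} from the decomposition provided by $1$-dimensionality. The \emph{main obstacle} I expect is the direction $(\ref{qit:grids}) \to (\ref{qit:1-dim})$: bridging a purely combinatorial grid-avoidance condition to the abstract model-theoretic one-dimensional decomposition requires a careful type-space analysis combined with indiscernible extraction, and is where the proof most delicately interleaves combinatorics with stability-theoretic machinery.

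Finally, for hereditary classes $\CC$ of binary, ordered structures, \cref{thm:main} provides the equivalence between bounded twin-width (item~(\ref{git:tww}) there) and the failure to transduce the class of all graphs (item~(\ref{git:transduce}) there), which is exactly condition $(\ref{qit:trans})$ of the present theorem. Since bounded twin-width already implies monadic NIP in full generality (so in particular for binary ordered structures), bounded twin-width fits into the chain and all five conditions become equivalent to it. The work done in the earlier parts of the paper to prove \cref{thm:main} thus completes the specialization with no further effort.
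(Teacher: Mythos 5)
Your decomposition of the chain is structurally sound, but you have misplaced the weight of the argument and left real gaps. The equivalence $(\ref{qit:trans})\leftrightarrow(\ref{qit:mNIP})$ is Baldwin--Shelah (Theorem~\ref{thm:baldwin-shelah}), $(\ref{qit:mNIP})\to(\ref{qit:grids})$ is the easy Lemma~\ref{lem:no-large-grids}, $(\ref{qit:grids})\to(\ref{qit:1-dim})$ is Shelah (Proposition~\ref{prop:fs dichotomy}), and the closing $(\ref{qit:1-dim})\to(\ref{qit:mNIP})$ --- which is what completes the cycle among $(\ref{qit:mNIP})$, $(\ref{qit:grids})$, $(\ref{qit:1-dim})$ --- is a recent theorem of Braunfeld and Laskowski, not the casual ``extract a grid-like configuration by pigeonhole and Ramsey'' argument you sketch. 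You place the main obstacle at $(\ref{qit:grids})\to(\ref{qit:1-dim})$, but that is precisely the cited Shelah implication. The genuine new content of this theorem is $(\ref{qit:1-dim})\to(\ref{qit:restrained})$, which you dismiss in a single clause as ``falls out from unpacking the restrained property.'' It does not: the paper's proof (Section~\ref{sec:mt-proof}) requires extracting, via compactness and Ramsey, mutually indiscernible sequences from a failure of restraint (Lemma~\ref{lem:preparation}), passing to a model $\str M$ over which both sequences become Morley sequences of ultrafilter-average types, exhibiting an explicit formula $\zeta$ (Claim~\ref{cl:2}) witnessing a forking relation $\tup a_1\nind[\str M]\tup a_0 a$, and then running an induction on tuple length that invokes $1$-dimensionality at every step to derive a contradiction with the infiniteness of $\Types[\theta](A/B)$. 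This is a substantial argument, not an unpacking exercise.

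For the specialization to hereditary binary ordered classes, your reasoning establishes only that bounded twin-width is equivalent to conditions $(\ref{qit:trans})$ through $(\ref{qit:1-dim})$ and \emph{implies} $(\ref{qit:restrained})$; it does not close the cycle. You still need $(\ref{qit:restrained})\Rightarrow\text{bounded twin-width}$, which is Proposition~\ref{prop:restrained-tww}: one instantiates the restrained condition with the interval formula $y_1\le x\le y_2$ (so that convex partitions realize the definable disjoint families) and takes $\Delta$ to be the set of atomic formulas (so that $\Delta$-types over a column part count distinct rows), deducing bounded grid rank and then bounded twin-width via Theorem~\ref{thm:equiv}. Without that bridge, the last sentence of the theorem remains unproved.
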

Defining large grids generalizes the property of containing one of the classes $\mathscr M_{\sym,\lambda,\rho}$ or $\mathscr P$
to arbitrary  structures, while the notion of a restrained class  
implies, for classes of ordered graphs, bounded grid rank.
In particular, those notions do not require the structures to be ordered, finite, or binary.
The notion of 1-dimensionality has a somewhat geometric flavor.
It is defined in terms of a variant of forking independence -- a central concept in stability theory, generalizing independence in vector spaces or algebraic independence.
The equivalence $\cref{qit:trans}\leftrightarrow\cref{qit:mNIP}$ is due to Baldwin and Shelah~\cite{BS1985monadic}, the implications $\cref{qit:mNIP}\rightarrow\cref{qit:grids}\rightarrow\cref{qit:1-dim}$ are due to Shelah~\cite{shelah:hanfnumbers} and the implication $\cref{qit:1-dim}\rightarrow\cref{qit:mNIP}$ is a very recent result of Braunfeld and Laskowski~\cite{braunfeld2021characterizations}.
Our contribution is the implication~$\cref{qit:1-dim}\rightarrow\cref{qit:restrained}$,  and the overall equivalence in the case of ordered binary structures. Indeed, the implication~$\cref{qit:restrained}\rightarrow\cref{qit:trans}$ follows from the implication~\cref{git:matrix}$\rightarrow$\cref{git:transduce} in \cref{thm:main}.

We now detail the consequences of our results.

\subparagraph{Stanley-Wilf conjecture, Marcus-Tardos theorem.}
In the late 80's, Stanley and Wilf independently conjectured that every proper permutation class has single-exponential growth.
To be more specific, every set of permutations closed under taking subpermutations (definition of permutation \emph{class}) and not equal to the set of all permutations (definition of \emph{proper}) contains at most $c^n$ permutations over $[n]$, for some constant $c$ depending solely on the class.
This conjecture was confirmed in 2004 by Marcus and Tardos~\cite{MarcusT04} (see \cref{subsec:enum-comb} for more details).
What Marcus and Tardos actually showed is that there is a function $f$ such that every square $0,1$-matrix with at least $f(k)$ 1's in average per row (or column) admits a $k$-division where every zone contains a 1.
This result is a milestone in combinatorics, and is at the core of the twin-width theory.
We will use it again in the current paper.

Let us call \emph{permutation matrix class} any (hereditary) class consisting of all the submatrices of a set of permutation matrices. 
In the language of matrices, the Stanley-Wilf conjecture/Marcus-Tardos theorem says that the growth of permutation matrix classes is (at least) $n!$ (for the class of all permutation matrices) or at most $2^{O(n)}$.
\cref{thm:main} \cref{git:small},\cref{git:big} extends that result to any matrix class over a finite alphabet, and the dividing line is the boundedness of twin-width.

We also obtain the following classification of all inclusion-minimal classes of superexponential growth.
Call a (submatrix-closed) class of matrices a \emph{Stanley-Wilf} class if it has superexponential growth, but every its proper subclass has at most exponential growth, that is, growth $2^{O(n)}$. Then the class of permutation matrices is a Stanley-Wilf class, as shown by Marcus and Tardos.
By a similar argument, each of the classes $\mathcal F_\sym$ for $\sym\in\six$ is a Stanley-Wilf class. Moreover, these six classes are precisely \emph{all} the Stanley-Wilf classes of $0,1$-matrices,
and every matrix class of superexponential growth contains one of those classes.
This is a consequence of our result for matrices, and the fact that the six classes are mutually incomparable (see~\cref{cor:stanley-wilf1}).

In the same way we may define Stanley-Wilf classes of ordered graphs, as those hereditary classes of superexponential growth whose every proper, hereditary subclass has at most exponential growth. Then the 25 classes $\mathscr M_{\sym,\lambda,\rho}$ and $\mathscr P$ are precisely all the Stanley-Wilf classes of ordered graphs
(see~\cref{cor:stanley-wilf2}).

\subparagraph{Speed gap in hereditary classes of ordered graphs.}
% \label{subsec: speedgap}
A couple of years after Marcus and Tardos proved the Stanley-Wilf conjecture, Balogh, Bollob\'as, and Morris~\cite{Balogh06,balogh2006hereditary} analyzed the growth of ordered structures, and more specifically, ordered graphs, in an attempt to generalize Marcus and Tardos's ideas to new settings.
They conjectured \cite[Conjecture 2]{Balogh06} that a hereditary class of (totally) ordered graphs has, up to isomorphism, either at most $2^{\Oof(n)}$ graphs with $n$ elements, or at least $n^{n/2+o(n)}$ such graphs, and proved it for weakly sparse graph classes, that is, without arbitrarily large bicliques (as subgraphs).
In a concurrent work, Klazar \cite{Klazar08} repeated that question, and more recently, Gunby and P\'alvölgyi~\cite{Gunby19} observe that establishing the first superexponential jump in the growth of hereditary ordered graph classes is still an open question.

\cref{thm:main} \cref{git:small},\cref{git:big} settles that question, and yields the optimal bound $\sum_{k=0}^{\lfloor n/2 \rfloor} {n \choose 2k}k!$, as conjectured by Balogh et al~\cite{Balogh06}.

\subparagraph{Small conjecture.}
% \label{subsec:small-conj}

Classes of bounded twin-width are small~\cite{twin-width2}, that is, they contain at most $n! c^n$ distinct labeled $n$-vertex structures, for some constant $c$.
(Actually they further contain at most $c^n$ pairwise non-isomorphic structures~\cite{twin-width&permutations}.)
The converse was conjectured for hereditary classes~\cite{twin-width2}.
In the context of classes of totally ordered structures, it is simpler to drop the labeling and to count up to isomorphism.
Indeed, every ordered structure has no non-trivial automorphism.
Then a class is \emph{small} if, up to isomorphism, it contains  $2^{\Oof(n)}$ distinct $n$-vertex structures.
With that in mind,~\cref{thm:main} \cref{git:tww},\cref{git:small} resolves the conjecture in the particular case of ordered graphs.

\subparagraph{Ramsey theory.}
Our proofs are based on multiple Ramsey-theoretic arguments, but also our main result,~\cref{thm:main}, has a bearing on Ramsey theory.
For example, we can conclude 
the following:
\emph{For every ordered matching $H$ there is some cubic graph $G$
such that for every total order $\le$ on $V(G)$, the resulting ordered graph $G_\le$ contains $H$ as an induced subgraph.}
Indeed, otherwise there is some ordered matching $H$ such that every cubic graph $G$ can be equipped with an order in a way which avoids $H$ as an induced subgraph.
This way, we obtain a class $\CC$ of ordered subcubic graphs which does not contain the class $\mathscr M_{=,0,0}$, as it already fails to contain $H$.
Clearly, $\CC$ does not contain any of the remaining 24 classes $\mathscr M_{\sym,\lambda,\rho}$ and $\mathscr P$, as those have unbounded degree.
By~\cref{thm:main} \cref{git:patterns},\cref{git:tww}, $\CC$ has bounded twin-width.
This implies that the class of all (unordered) subcubic graphs also has bounded twin-width, which we know is false (see~\cite{twin-width2}).
More directly based on~\cref{thm:main}, a contradiction can be reached by observing that $\CC$ does \emph{not} have growth $2^{O(n)}$. 

\subparagraph{Transductions and interpretations.}
The study of transductions in theoretical computer science originates from the study of word-like and tree-like structures, such as graphs of bounded treewidth~\cite{10.1007/3-540-19488-6_105} or graphs of bounded clique-width~\cite{COURCELLE199453}.
Classes of bounded twin-width are closed under transductions~\cite{twin-width1}; in particular, no class of bounded twin-width transduces (nor interprets) the class of all graphs.

\cref{thm:main} \cref{git:tww},\cref{git:transduce} characterizes hereditary classes of ordered graphs of bounded twin-width as precisely those which do not transduce the class of all graphs.
This is not unlike a result~\cite{COURCELLE200791} characterizing classes of bounded clique-width as precisely those which do not transduce the class of all graphs via some transduction of counting monadic second-order logic (CMSO, an extension of first-order logic).

\subparagraph{Grid theorems.}
Grid theorems are dichotomy results in structural graph theory which state that either a structure has a small \emph{width}, or otherwise, a grid-like obstruction can be found in the structure.
For example, this applies to the treewidth parameter and planar grids occurring as minors~\cite{ROBERTSON198692}.
It also applies to clique-width and grids being definable in CMSO~\cite{COURCELLE200791}.
 
\cref{thm:summary-mt0} proves an appropriate grid theorem for classes of ordered graphs of bounded twin-width, and more generally, for all classes which are not restrained.
Indeed, such classes define large grids,
which, intuitively, allows to define the `same row' and `same column' relations of arbitrarily large grids
using first-order formulas in the graphs from the class.
From this (also, from~\cref{thm:main} \cref{git:utww},\cref{git:interp}) it follows that if a hereditary class has unbounded twin-width then it interprets the class of all graphs.

There are other known grid theorems, including the Marcus-Tardos theorem itself.
%is a result of this kind, stating that if a square $0,1$ matrix has sufficiently many $1$'s then it must contain a large grid-like pattern formed by the $1$'s.

\subparagraph{Monadic NIP.}
Model theory typically classifies infinite structures according to the combinatorial complexity of families of definable sets. This is usually done through the introduction of tameness properties. The most important such notion is that of stability. A structure is stable if no formula $\phi(x, y)$ encodes arbitrary large half-graphs (the graphs $G_\le^n$ in \cref{fig:ladders1}),
which roughly means that there is no definable order on large subsets of the structure.
 A related, weaker, notion is that of NIP: a structure is \emph{NIP} (or \emph{dependent}) if no formula $\phi(x,y)$ encodes arbitrary bipartite graphs. This captures the tameness properties of families of sets arising from geometric settings (for instance sets of points defined by polynomials of bounded degree).
 
  The notion of monadically NIP (or monadically dependent)  is a stronger requirement which says that the structure is NIP even if every subset of the domain can be used as a unary predicate.
 This is closely related to notions studied in finite model theory and structural graph theory.
A class $\CC$ of structures is monadically NIP if and only if it does not transduce the class of all graphs.
By the recent result of Braunfeld and Laskowski~\cite{braunfeld2021characterizations}, this is equivalent to not defining large grids.

Thus~\cref{thm:main} \cref{git:tww},\cref{git:transduce} proves that a class  of ordered graphs is monadically NIP if, and only if it has bounded twin-width.
Furthermore, our results provide a model-theoretic characterization of 
classes of \emph{unordered} graphs of bounded twin-width:
a class of graphs has bounded twin-width if and only if it it can be obtained from some monadically NIP class of \emph{ordered} graphs by forgetting the order.

% Examples of monadically NIP graph classes include all {nowhere dense} classes.
% A class of graphs  $\CC$ is \emph{nowhere dense}
% if for all $r\in\mathbb N$ there is some $t\in\mathbb N$ such that the $r$-subdivision of the $t$-clique is not a subgraph of any graph in~$\CC$.
% Those include the class of graphs with maximum degree bounded by a constant (those classes have unbounded twin-width~\cite{tww2}),
% as well as every proper minor-closed graph class (here the twin-width is bounded).
% A subgraph-closed class of graphs  is nowhere dense if and only if it is monadically NIP~\cite{adler2014interpreting}.
% Monadically NIP classes are closed under transductions, so any transduction of a nowhere dense graph class is also monadically NIP,
% but not necessarily nowhere dense. 

\subparagraph{Fixed-parameter tractable first-order model checking.}\label{subsec:fomc}
Testing if a given \FO~sentence $\phi$ holds in a given structure $G$ takes time $\Oof(|G|^{|\phi|})$ using a naive algorithm, and it is conjectured that the exponential dependency on $|\phi|$ cannot be avoided.
More precisely, it is conjectured that \FO~model checking is not \emph{fixed-parameter tractable} (\FPT) on the class of all graphs, i.e., does \emph{not} admit an algorithm with running time $f(\phi)\cdot |G|^c$, for some computable function $f\from\mathbb N\to\mathbb N$ and constant $c$.
That statement would indeed hold if $\FPT\neq\AW[*]$, as it is conjectured in parameterized complexity theory~\cite{10.5555/1121738}.

% There are several known classes $\CC$ of structures  for which $\FO$ model checking  is $\FPT$.
% To the best of our knowledge, all known tractable hereditary  classes
% are monadically NIP\footnote{Tractable classes that are not hereditary include for example the class of all finite Abelian groups~\cite{bova2015first}}.

There is an ongoing program aiming to classify all the hereditary graph classes on which \FO~model checking is \FPT. 
Currently such an algorithm is known for nowhere dense classes~\cite{Grohe17}, for interpretations of bounded-degree classes~\cite{Gajarsky16,gajarsky2018recovering},  for map graphs \cite{eickmeyer2017fo}, for some families of intersection and visibility graphs~\cite{hlinveny2019fo}, for transductions of bounded expansion classes when a suitable witness is given~\cite{Gajarsky18}, and finally, for classes with bounded twin-width for which a witness of bounded twin-width can be efficiently computed~\cite{twin-width1}.
Those include proper minor-closed classes, classes of bounded clique-width, posets of bounded width and permutations excluding a fixed permutation pattern.

All known tractable hereditary\footnote{Tractable classes that are not hereditary include for example the class of all finite Abelian groups~\cite{bova2015first}} classes are monadically NIP.
This observation is the basis of the following conjecture:\footnote{This conjecture has been circulating in the community for some time, see e.g. the open problem session at the workshop on Algorithms, Logic and Structure in Warwick in 2016. See also \cite[Conjecture 8.2]{DBLP:journals/corr/abs-1805-01823}.}
\begin{conjecture}\label{conj:NIP}
  Let $\CC$ be a hereditary class of structures.
  Then  \FO~model checking is \FPT~on $\CC$ if and only if $\CC$ is monadically NIP.
\end{conjecture}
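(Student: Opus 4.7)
The plan is to tackle the two directions separately, with the hard direction leveraging the main result of this paper in the restricted (ordered, binary) setting.

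For the direction ``\FO~model checking is \FPT~implies $\CC$ is monadically NIP'', the argument is a standard reduction under the complexity assumption $\FPT \neq \AW[*]$. Suppose \FO~model checking is \FPT~on $\CC$, yet $\CC$ is not monadically NIP. By the Baldwin--Shelah equivalence recalled in \cref{thm:summary-mt0}, $\CC$ transduces the class of all graphs via some transduction $\mathsf T$. Given any \FO~sentence $\phi$ and graph $G$, there exist a structure $\str G' \in \CC$ of size polynomial in $|G|$, together with auxiliary unary predicates, such that $G$ is first-order interpretable from the coloured $\str G'$; correspondingly, one can compute in polynomial time an \FO~sentence $\phi'$ which holds in the expanded $\str G'$ if and only if $\phi$ holds in $G$. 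Composing this translation with the hypothetical \FPT~model checker on $\CC$ yields an \FPT~algorithm for \FO~model checking on all graphs, contradicting $\FPT \neq \AW[*]$.

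For the converse direction ``$\CC$ is monadically NIP implies \FO~model checking is \FPT'', which is the harder one, the strategy in the ordered binary setting is to invoke \cref{thm:main} directly. There, the equivalence of \cref{git:tww} and \cref{git:transduce} identifies monadic NIP with bounded twin-width; then \cref{git:easy}, combined with \cref{thm:approx-tww} to produce a witness of bounded twin-width in \FPT~time, yields the required \FPT~\FO~model checking algorithm. Hence \cref{conj:NIP} is an immediate corollary of \cref{thm:main} whenever $\CC$ is a hereditary class of ordered, binary structures.

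The main obstacle is extending the hard direction beyond this setting, to unordered and possibly higher-arity structures. Higher arities can be absorbed via the standard incidence encoding, which is FO-interdefinable and preserves monadic NIP, so the crucial missing step is to prove that any monadically NIP hereditary class $\CC$ of unordered binary structures admits, for each $\str G \in \CC$, a total order $\le$ such that the resulting class of ordered structures is still monadically NIP, and moreover such an order is computable in \FPT~time. This ordering statement is a deep open problem in its own right: it would say that monadic NIP is preserved by some choice of ordering, in analogy with the equivalence $\cref{qit:trans} \leftrightarrow \cref{qit:1-dim}$ of \cref{thm:summary-mt0}, and combined with \cref{thm:main} it would close \cref{conj:NIP}. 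The hard part is thus not the \FO~model checking algorithm itself (which is already available in the ordered twin-width setting), but the structural/algorithmic task of producing a twin-width witness for an unordered monadically NIP class.
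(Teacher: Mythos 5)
This statement is labeled a \emph{conjecture} in the paper, and the paper explicitly notes that \emph{both} implications are open in general; what is proved is only the special cases of subgraph-closed graph classes and hereditary classes of ordered binary structures. Your proposal correctly flags the open gap in the ``monadically NIP $\Rightarrow$ \FPT'' direction (the need to algorithmically produce an order and a twin-width witness for unordered structures), but it misrepresents the other direction as a routine reduction, and that is where the more subtle error lies.

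The issue is the jump from ``$\CC$ transduces the class of all graphs'' to an \FPT~reduction. A transduction, as obtained via Baldwin--Shelah, is a purely existential, non-constructive object: it says that for every finite graph $G$ there \emph{exists} some $\str G'\in\CC$ and some coloring of $\str G'$ from which $G$ is interpretable. It does not give you a bound on the size of $\str G'$ in terms of $|G|$, nor an algorithm that, given $G$, produces $\str G'$ and the coloring. Your sentence ``there exist a structure $\str G'\in\CC$ of size polynomial in $|G|$ \ldots which can be computed in polynomial time'' is exactly the unproved content. This is why the paper carefully introduces the stronger notion of \emph{efficiently interprets} (\cref{subsec:fmt}), establishes it explicitly for the unavoidable classes $\mathcal F_\sym$ and $\mathscr M_{\sym,\lambda,\rho}$ (via \cref{lem:matchings are dependent,lem:independent-matrices,lem: find middle point}), and only then derives $\AW[*]$-hardness (\cref{cor:AW-hard}, \cref{thm:m-nip-implies-pa}, \cref{thm:hardness}). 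In the general unordered/higher-arity setting, no such explicit and efficient interpretation is known, so your first direction is not a ``standard reduction'' but is itself open. In short: both halves of the conjecture remain open in general; your proposal proves neither, though it does correctly reduce the hard direction to a known open ordering/witness problem.
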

Both implications in the conjecture are open.
This conjecture is now confirmed in two prominent cases,  assuming $\FPT\neq\AW[*]$:
\begin{itemize}
	 \item subgraph-closed graph classes~\cite{Grohe17}, where monadically NIP classes are precisely nowhere dense classes,
	 \item hereditary classes of ordered graphs, where monadically NIP classes are precisely classes with bounded twin-width.
\end{itemize}
The second item is by our main result, \cref{thm:main} \cref{git:tww},\cref{git:transduce},\cref{git:easy},\cref{git:hard}.
Both the $\AW[*]$-hardness result in the case of unbounded twin-width, and the \FPT~algorithm\footnote{Recall that~\cite{twin-width1} only provides an $\FPT$ algorithm when a witness of bounded twin-width is given.
  Here we lift this requirement for classes of ordered graphs, thanks to~\cref{thm:approx-tww}.} in the case of bounded twin-width, are new.
Furthermore \cref{thm:main} \cref{git:easy},\cref{git:hard} indicates that for ordered graphs, twin-width is the dividing line of the tractability of \FO~model checking, as for (unordered) graphs, bounded treewidth is with MSO$_2$ (where quantifying over edge subsets is allowed) \cite{DBLP:conf/lics/KreutzerT10}, and rank-width is with MSO$_1$ (where quantifying over edge subsets is disallowed). 
%\sz{Removed this as this is vague}

\cref{conj:NIP} in particular predicts tractability of every class of \emph{unordered} graphs of bounded twin-width.
Note that from~\cite{twin-width1} this is only known if the graph is provided with a witness of bounded twin-width.
Our \cref{thm:approx-tww} gives the desired missing link for ordered graphs, that is, an~\FPT~algorithm which either concludes that the twin-width is at least $k$, or provides a witness of the twin-width being bounded by some computable function of $k$.
This is interesting on its own and gives some hope for the unordered case. 

Related to this, we believe that~\cref{thm:summary-mt0}  may be of independent interest, and possibly of broader applicability than 
just in the context of ordered, binary structures. For example, by~\cref{thm:summary-mt0}, all graph classes of bounded twin-width (without an order) and all transductions of nowhere dense classes are restrained, generalizing the fact that classes of ordered graphs of bounded twin-width have bounded grid rank.
We remark that although our proof of~\cref{thm:main} is purely combinatorial, an alternative proof can be derived from~\cref{thm:summary-mt0} (see our unpublished report~\cite{SimonTorunczyk}). This demonstrates that model-theoretic methods can be used in the context of algorithmic and structural graph theory, and that those two areas are intimately related.

\subparagraph{Expressive power of least fixed-point logic.}
It is well-known that least fixed-point logic {\LFP} captures the complexity class {\P} on ordered structures.
The \emph{ordered conjecture} \cite{kolaitis1992fixpoint} asserts that {\LFP} is more expressive than first-order logic on every infinite class $\mathscr C$ of finite ordered structures, in the sense that there is a sentence $\phi\in\LFP$  which is not equivalent to any sentence $\phi'\in\FO$ on $\CC$.

The conjecture holds for every class $\mathscr C$ of ordered structures for which \FO~model checking is \FPT~(see \cite[Cor. III.2]{chen2012ordered}).
In particular, it follows from \cref{thm:main} \cref{git:transduce},\cref{git:easy} that the ordered conjecture holds for every subclass of a hereditary dependent class of finite ordered binary structures. 

\section{Roadmap}

For the most part, the proof will be carried out in the language of matrices over a finite alphabet.
Matrices are considered ordered, in the sense that they are equipped with a total order on the rows and on the columns.
A \emph{class} of matrices is, by definition, assumed to be closed under taking submatrices, that is, removing rows and/or columns.

For the sake of simplicity, the description below concerns matrices with entries $0$ or $1$, called \emph{$0,1$-matrices}. 
A $0,1$-matrix can be seen as a relational structure whose domain consists of its rows and columns, equipped with two unary predicates marking the rows and the columns, respectively, a total order which places the rows before the columns, and a binary, symmetric relation which relates a row with a column if the entry at their intersection is equal to 1.

Recall that the six matrix classes of unbounded twin-width which arise are: the class $\mathcal F_=$ of all permutation matrices, the class $\mathcal F_\neq$ obtained from permutation matrices  by exchanging $0$'s with $1$'s, and four classes $\mathcal F_{\le R},\mathcal F_{\ge R},\mathcal F_{\le C},\mathcal F_{\ge C}$ obtained from permutation matrices by propagating each value $1$ downward/upward/leftward/rightward, respectively (see~\cref{fig:six-unavoidable}).
The \emph{growth} of a class of matrices is the function counting the number of distinct (square) $n \times n$-matrices in the class, for a given $n\ge 1$. 

Our main result concerning (hereditary) classes of matrices over a finite alphabet is as follows.

\begin{theorem}\label{thm:equiv}
Given a class $\mathcal M$ of $0, 1$-matrices, the following are equivalent.
\begin{enumerate}[label=({\roman*}),ref=\roman*]
\item \label{it:bd-tww} $\mathcal M$ has bounded twin-width.
\item \label{it:bd-gr} $\mathcal M$ has bounded grid rank.
\item \label{it:ramsey} $\mathcal M$ does not contain any of the six classes $\mathcal F_=,\mathcal F_\neq, \mathcal F_{\le R},\mathcal F_{\ge R},\mathcal F_{\le C},\mathcal F_{\ge C}$.
\item \label{it:nip} $\mathcal M$ does not interpret the class of all graphs.
\item \label{it:m-nip} $\mathcal M$ does not transduce the class of all graphs.
\item \label{it:fact-speed} $\mathcal M$ does not have growth at least $n!$.
\item \label{it:exp-speed} $\mathcal M$ has growth at most $2^{\Oof(n)}$.
\item \label{it:lin-mc} \FO~model checking is \FPT~on $\mathcal M$. (The implication from \cref{it:lin-mc} holds if $\FPT\neq \AW[*]$.\/)  
 \item \label{it:bd-rd} there is some $r \in\mathbb N$ such that no matrix $M \in \mathcal M$ admits a $r$-rich division.
\end{enumerate}
\end{theorem}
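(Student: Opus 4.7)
The plan is to prove the nine equivalences by a long cycle of implications, with one Ramsey-theoretic dichotomy as the combinatorial heart and several softer implications transferred from earlier parts of the twin-width series. The soft implications can be dispatched quickly: $2^{O(n)}\le n!$ for large $n$ gives (vii)$\Rightarrow$(vi); (i)$\Rightarrow$(vii) is the smallness theorem for bounded twin-width from twin-width~II; (i)$\Rightarrow$(v) is closure under transductions from twin-width~I; (v)$\Rightarrow$(iv) is immediate, interpretations being colour-free transductions; (i)$\Rightarrow$(ii) and (ii)$\Leftrightarrow$(ix) reduce to comparing the three combinatorial measures of nontrivial partitionability using the contraction-sequence machinery of twin-width~II; and for (i)$\Rightarrow$(viii) I combine the FPT first-order model-checking algorithm of twin-width~I with the FPT approximation algorithm provided by Theorem~\ref{thm:approx-tww}, which yields a contraction sequence of bounded width in fixed-parameter time.

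The combinatorial core is the Ramsey step \textbf{(iii)$\Rightarrow$(ix)}, stated contrapositively: any $0,1$-matrix admitting a sufficiently rich division must contain a large submatrix lying in one of the six classes $\mathcal F_=$, $\mathcal F_\neq$, $\mathcal F_{\le R}$, $\mathcal F_{\ge R}$, $\mathcal F_{\le C}$, $\mathcal F_{\ge C}$. I would proceed by iterated refinement. Start from a rank-$k$ $k$-division with $k$ enormous relative to the target size $n$. In each zone pick two rows witnessing richness together with a column on which they disagree; the resulting ``witness'' bit-matrix indexed by the zone grid carries a $1$ in every zone, and the Marcus--Tardos theorem extracts from it an $n\times n$ permutation pattern of zones on which the witnesses line up. Next, Ramsey-colour each surviving witness corner by the local arrangement of $0$s and $1$s in the four quadrants it induces. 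A bipartite-Ramsey extraction then yields a monochromatic subgrid of size $n$ whose behaviour is globally coherent; a case analysis of the possible coherence types matches it to exactly one of the six universal classes $\mathcal F_\sigma$. Combined with (ix)$\Rightarrow$(ii)$\Rightarrow$(i) from the soft block, this closes (iii)$\Rightarrow$(i).

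The main obstacle will be tuning the nested Ramsey bounds so that a single $k=k(n)$ works uniformly across the six cases, and tracking which part of the witness structure survives each extraction; this bookkeeping is where most of the technical effort lives. A secondary subtlety is lifting the argument from $0,1$-matrices to alphabets of arbitrary finite size, which I would handle by treating each pair of symbols as an indicator submatrix and iterating the $0,1$ analysis, paying a constant-factor blow-up depending on the alphabet.

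Closing the cycle requires three direct verifications for each class $\mathcal F_\sigma$. First, $\mathcal F_\sigma$ contains at least $n!$ distinct $n\times n$ matrices since the underlying permutation structure survives the $\sigma$-propagation, giving $\lnot$(iii)$\Rightarrow$$\lnot$(vi) and hence (vi)$\Rightarrow$(iii). Second, each $\mathcal F_\sigma$ interprets the class of all graphs via a class-specific first-order formula encoding an arbitrary graph on $[n]$ into a matrix of $\mathcal F_\sigma$ (immediate for $\mathcal F_=$ and $\mathcal F_\neq$, and requiring only a mild formula adjustment to ``undo'' the monotone propagation in the four remaining classes), giving $\lnot$(iii)$\Rightarrow$$\lnot$(iv) and hence (iv)$\Rightarrow$(iii). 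Third, FO model checking on each $\mathcal F_\sigma$ is \AW[*]-hard by pulling back parameterised $k$-clique through the interpretation of the previous step, giving $\lnot$(iii)$\Rightarrow$$\lnot$(viii) and hence (viii)$\Rightarrow$(iii) under \FPT$\ne$\AW[*]. With all arrows in place the equivalence is complete.
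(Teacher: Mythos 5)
Your overall architecture -- a circuit of implications, soft arrows imported from twin-width~I/II, the approximation algorithm supplying (i)$\Rightarrow$(viii), and a Ramsey core extracting one of the six classes -- matches the paper in spirit, and the soft arrows you list are correct. However, there is a genuine gap in how you close the circuit back into~(i). You attribute ``(i)$\Rightarrow$(ii) and (ii)$\Leftrightarrow$(ix)'' to the contraction-sequence machinery of twin-width~II and later invoke ``(ix)$\Rightarrow$(ii)$\Rightarrow$(i) from the soft block''. But (ii)$\Rightarrow$(i), bounded grid rank implies bounded twin-width, equivalently (ix)$\Rightarrow$(i), was never established in your soft block: it is not in twin-width~II, and it cannot be read off from the rank-$2$-division characterization of twin-width~I because there one is free to choose the ordering, whereas here the order is fixed (the checkerboard example in the roadmap illustrates exactly this failure). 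This direction is the central new contribution of Section~\ref{sec:approx-tww}: the greedy procedure that merges consecutive row/column parts while preserving properties $\mathscr P^R,\mathscr P^C$ and either produces a $(w,(r+2)w)$-sequence or halts on a $2k(k+1)$-rich division (Theorem~\ref{thm:brd-to-bdtww}, a by-product of Theorem~\ref{thm:approx-tww}). You cite Theorem~\ref{thm:approx-tww} only for (i)$\Rightarrow$(viii) and do not realize it is also what gives (ix)$\Rightarrow$(i); as a result, none of your arrows actually reaches~(i), and the equivalence is not closed.

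The sketch of the Ramsey core is also not coherent as stated. You build a ``witness bit-matrix'' that, as you say, carries a $1$ in every zone, and then claim Marcus--Tardos extracts a permutation pattern from it -- but Marcus--Tardos is vacuous on an all-ones grid. In the paper Marcus--Tardos plays a different role: it converts a large \emph{rich} division into a large \emph{rank} division (Theorem~\ref{thm:rd-to-gr}) by showing that each column part of a rich division contains enough ``coloured'' zones to trigger the theorem. The passage from rank divisions to the six classes then goes through rank Latin divisions (\cref{lem: cstrams}), the eight-canonical-matrices lemma (\cref{Trk}), the product-Ramsey lemma (\cref{lem:grid-ramsey}) yielding one of sixteen $\eta$-encodings, and the reduction to six (\cref{lem:reduce eta's}). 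Your corner-colouring-plus-bipartite-Ramsey idea is in the right spirit and could perhaps be repaired, but as written it skips the rank Latin division, which is the key intermediate structure making the final case analysis tractable, and the Marcus--Tardos step as you describe it does not do what you want.
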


The last condition, \cref{it:bd-rd}, is a technical one, whose definition is deferred to~\cref{sec:prelim}.
This will be a key intermediate step in proving that \cref{it:bd-gr} implies \cref{it:bd-tww}, as well as in getting an approximation algorithm for the twin-width of a matrix.
\cref{thm:equiv} reads the same for matrices over a finite alphabet $A$, except that \cref{it:ramsey} is replaced by: No selection of $\mathcal M$ contains any of the six classes $\mathcal F_=,\mathcal F_\neq, \mathcal F_{\le R},\mathcal F_{\ge R},\mathcal F_{\le C},\mathcal F_{\ge C}$, where for $a\in A$, the  \emph{$a$-selection} of a matrix class $\mathcal M$ is the class obtained from the matrices of $\mathcal M$ by replacing the letter $a\in A$ with $1$ and the remaining letters with $0$.
In~\cref{fig:proof-scheme}, a class satisfying~\cref{it:ramsey} is called \emph{pattern-avoiding} (the definition will be recalled in~\cref{subsec:patterns}).

\tikzexternaldisable  
\begin{figure}[h!]
  \centering
  \resizebox{360pt}{!}{
  \begin{tikzpicture}
    \def\s{1.3}
    \def\gr{black!30!green}
    \def\red{black!10!red}
    %\footnotesize{}
    \foreach \i/\j/\l/\t in {0/0/tww/$(i)$ bounded twin-width, 0/1.5/rd/$(ix)$ no rich division, 4.2/2.8/gr/$(ii)$ bounded grid rank, 2.5/1/sm/$(vii)$ small, 4.2/1.7/fa/$(vi)$ subfactorial growth, 4.3/-1.5/mnip/$(v)$ monadically NIP, 6.8/-0.5/nip/$(iv)$ NIP, 7.3/1.2/st/$(iii)$ pattern-avoiding, 4.35/-0.5/tr/$(viii)$ tractable}{
      \node[preaction={fill,fill opacity=0.1},draw,rectangle,rounded corners] (\l) at (\i * \s,\j * \s) {\t} ; 
    }

    \draw[\gr,double, ->] (tww) to [bend right = 25] node[black,below] {\cite[Sec. 8]{twin-width1}} (mnip) ;
    \draw[\gr,double, ->] (mnip) to [bend right = 25] node[black,below] {def} (nip) ; 
    %\draw[\gr,double, ->] (tr) to [bend left = 20] node[black,above] {if \FPT $\neq$ \AW$[*]$} (nip) ; 
    \draw[orange,double, ->] (tww) to [bend left = 15] node[black,above] {\cite[Sec. 7]{twin-width1}}  node[black,below] {\cref{thm:approx-tww}}(tr) ; 
    \draw[\red,double, ->] (tr) to [bend left = 15] node[black,above] {\cref{sec:NIP}} node[black,below] {if $\FPT \neq \AW[*]$} (st) ;
    \draw[\red,double, ->] (st) to [bend right = 15] node[black,right] {\cref{thm:Ram6}} (gr) ;
    \draw[\red,double, ->] (nip) to [bend right = 10] node[black,right] {\cref{sec:NIP}} (st) ; 
    \draw[\red,double, ->] (fa) to [bend right = 10] node[black,left] {\cref{thm:fact-implies-bgr}} (gr) ;
    \draw[\gr,double, ->] (sm) to [bend right = 5] node[black,below] {def} (fa) ; 
    \draw[\gr,double, ->] (tww) to [bend left = 15] node[black,above] {\cite[Sec. 3]{twin-width2}} (sm) ; 
    \draw[\red,double, <->] (rd) to [bend right = 15] node[black,left] (app) {\cref{sec:approx-tww}} (tww) ;  
    \draw[\red,double, ->] (gr.west) to [bend right = 15] node[black,above] {\cref{sec:rich-to-gr}} (rd) ;
    \normalsize{}
  \end{tikzpicture}
  }
  \caption{A bird's eye view of the paper.
    In green (arrows without a reference to a part of the paper), the implications that were already known for general binary structures.
    In red (other arrows except \cref{it:bd-tww} $\Rightarrow$ \cref{it:lin-mc}), the new implications for matrices on finite alphabets, or ordered binary structures.
    The effective implication \cref{it:bd-tww} $\Rightarrow$ \cref{it:bd-rd} is useful for~\cref{thm:approx-tww}.
    See~\cref{fig:zoom} for a more detailed proof diagram, distinguishing what is done in the language of matrices and what is done in the language of ordered graphs.
  }
  \label{fig:proof-scheme}
\end{figure}
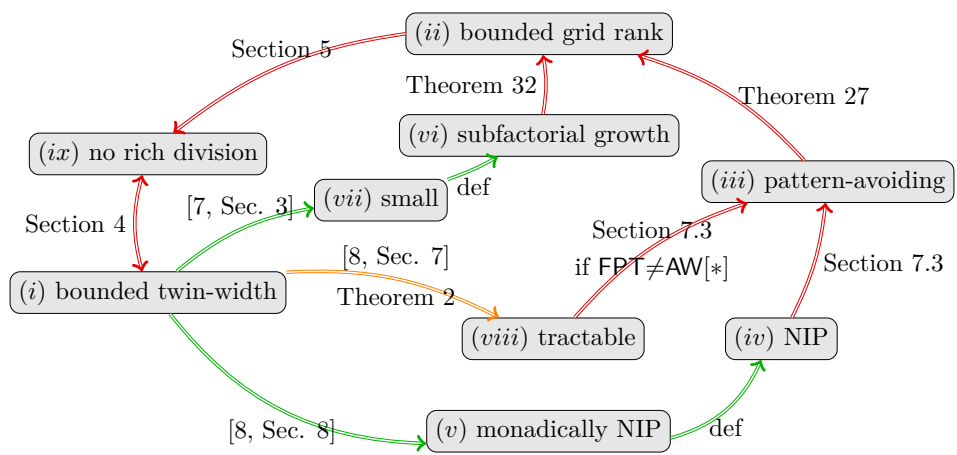
\tikzexternalenable
 
% We transpose these results for hereditary classes of ordered graphs.
% We also refine the model-theoretic (\cref{it2:dep,it2:no-s-int}) and growth (\cref{it2:sub-fact}) characterizations.

% \begin{theorem}\label{thm:her-ordered-graphs}
%   Let $\mathscr C$ be a hereditary class of ordered graphs.
%   The following are equivalent.
% 	\begin{enumerate}[label={\arabic*}\emph{)},ref=\arabic*]
% 		\item \label{it2:bd-tw} $\mathscr C$ has bounded twin-width.
% 		\item \label{it2:mon-dep} $\mathscr C$ is monadically dependent.
% 		\item \label{it2:dep} $\mathscr C$ is dependent.
% 		\item \label{it2:no-s-int} No simple interpretation in $\mathscr C$ is the class of all ordered graphs. 
% 		\item \label{it2:small} $\mathscr C$ is small.
% 		\item \label{it2:smallish} $\mathscr C$ contains $2^{O(n)}$ ordered $n$-vertex graphs.
% 		\item \label{it2:sub-fact} $\mathscr C$ contains less than $\sum_{k=0}^{\lfloor n/2\rfloor}\binom{n}{2k}\,k!$ ordered $n$-vertex graphs, for some $n$.
% 		\item \label{it2:ram-min} $\mathscr C$ does not include one of 256 hereditary ordered graph classes $\mathscr M_{\eta,\lambda,\rho}$ with unbounded twin-width. 
% 		\item \label{it2:ram-per} There exists a permutation $\sigma$ such that $\mathscr C$ does not include any of 256 ordered graphs defined from $\sigma$. 
%                 \item \label{it2:fo-mc} \FO~model checking is fixed-parameter tractable on $\mathscr C$.\\
%                   (This implies the other items only if $\FPT \neq \AW[*]$.)
% 	\end{enumerate}
% \end{theorem}

As mentioned in the introduction, we prove an analogous result (see \cref{thm:main}) for classes of ordered graphs, or more generally for classes of ordered binary structures.
In an informal nutshell, the high points of the paper read: For hereditary ordered binary structures, bounded twin-width, small, subfactorial growth, and tractability of \FO~model checking are all equivalent.
We conclude by giving a more detailed statement of the approximation algorithm. 

\begin{reptheorem}{thm:approx-tww}[more precise statement]
  There is a fixed-parameter algorithm, which, given an ordered binary structure $G$, encoded by a matrix $M$, and a parameter $k$, either outputs
  \begin{itemize}
    \item a $2^{O(k^4)}$-sequence of $G$, implying that $\tww(G) = 2^{O(k^4)}$, or
    \item a $2k(k+1)$-rich division of $M$, implying that $\tww(G) > k$.
  \end{itemize}
\end{reptheorem}

We now outline the proof of~\cref{thm:equiv,thm:main}.

\subparagraph{Proof outline.}
Bounded twin-width is already known to imply interesting properties:  \FO~model checking is {\FPT}  if a witness of small twin-width is part of the input~\cite{twin-width1}, monadic dependence~\cite{twin-width1}, smallness~\cite{twin-width2} (see the green and orange arrows in~\cref{fig:proof-scheme,fig:zoom}).
For a characterization of some sort in the particular case of ordered structures, the challenge is to find interesting properties implying bounded twin-width.
A~central characterization in the first paper of the series~\cite{twin-width1} goes as follows.

A~graph class $\mathscr C$ has bounded twin-width if and only if 
there is a constant $d$ such that every graph $G\in\CC$ can be ordered so that the adjacency matrix along that order has no $d$-division where each zone contains two non-identical rows and two non-identical columns.
The backward direction is effective: From such an ordering, we obtain a witness of bounded twin-width in polynomial time.

Now that we consider \emph{ordered} graphs it is tempting to try this order to get a witness of low twin-width.
Things are not that simple.
Consider the graph $G$ with vertices $1,\ldots,n$ ordered naturally, where two vertices are adjacent if and only if they have different parity.
The adjacency matrix $M$ of  $G$ is the checkerboard matrix to the left in \cref{fig:checkerboard}.
This matrix is fairly simple and indeed has bounded twin-width (this will be evident once we formally define twin-width).
%It admits a $(1,2)$-sequence.
%We can merge the first and third columns into $C_o$, the second and fourth columns into $C_e$, then $C_o$ and the fifth into $C_o$, $C_e$ and the sixth into $C_e$, and so on.
%This creates a sequence of \opars{1} since only two column parts, $C_o$ and $C_e$, ever get in conflict.
%The maximum error value remains 0 since all columns of odd (resp.~even) index are equal.
%Then we proceed in the same way on the row parts.
%Again it makes for a ``partial'' $(1,0)$-sequence.
%Finally we are left with two row parts and two column parts that we merge in any order.
%This yields an error value of~2, while preserving the fact that the partitions are 1-overlapping.
%
%So the twin-width of all the checkerboard matrices is bounded.
\begin{figure}\centering
        \includegraphics[scale=0.5,page=11]{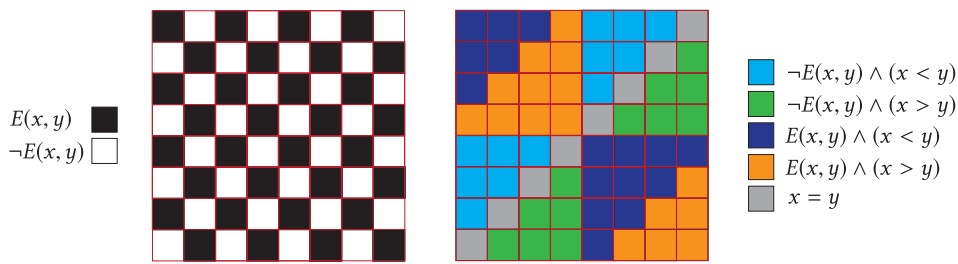}
        \caption{
          \emph{Left:} The adjacency matrix of the ordered graph 
          $G$ with vertices $1,\ldots,n$  and edges $ij$ such that $i+j$ is odd, along the usual order.  (The first row is at the bottom.)
          \emph{Right:} The adjacency matrix along another order,
          encoding the adjacency as well as the original order. Every $4$-division contains some constant zone.
        }
        \label{fig:checkerboard}
      \end{figure}
Yet for $d = \lfloor n/2 \rfloor$, the matrix $M$ has a $d$-division where each zone has two different rows and columns.
Now a \emph{good reordering} of $G$ would put all the odd-indexed vertices together, followed by all the even-indexed vertices.
Then the adjacency matrix $M'$ of $G$ along the new order (\cref{fig:checkerboard}, right), where the entries of $M'$ now encode the edges of $G$ as well as the original order, 
is such that every $4$-division contains a constant zone.

Can we find such reorderings automatically?
Eventually we can, but a~crucial opening step is precisely to nullify the importance of the reordering.
We show that matrices have bounded twin-width exactly when they have bounded grid rank, that is, they do not admit rank-$k$ divisions for arbitrary $k$.
This natural strengthening on the condition that cells should satisfy (from rank 2 to rank $k$) exempts us from the need to reorder.
Note that the checkerboard matrix does not have any rank-$k$ division already for $k=3$.

An important intermediate step is provided by the concept of rich divisions (see~\cref{subsec:rds} for a definition).
We first prove that a greedy strategy to find a potential witness of bounded twin-width can only be stopped by the presence of a large rich division; thus, unbounded twin-width implies the existence of arbitrarily large rich divisions.
This brings a theme developed in~\cite{twin-width1} to the ordered world.
In turn, leveraging the Marcus-Tardos theorem, we show that huge rich divisions contain large rank divisions (this is almost entirely summarized by~\cref{fig:rd-to-gr}).

By a series of Ramsey-like arguments, we find in large rank divisions more and more structured submatrices encoding universal permutations.
Eventually we find at least one of six encodings of all permutations.
More precisely, each class of unbounded grid rank contains one of the classes $\mathcal F_\sym$, for some $\sym\in\six$.
As each of the classes $\mathcal F_\sym$ has growth $n!$, this chain of implications shows that hereditary classes with unbounded grid rank have growth at least $n!$. 
Conversely, classes of matrices of bounded twin-width have growth $2^{O(n)}$ by~\cite{twin-width2}. 
That establishes the announced speed gap for matrix classes.
Moreover, as each of the classes $\mathcal F_\sym$ interprets the class of all graphs and has an $\AW[*]$-hard model checking, we obtain~\cref{thm:equiv}.

Finally we translate the permutation encodings in the language of ordered graphs.
This allows us to refine the growth gap specifically for ordered graphs.
We also prove that including a family $\mathcal F_\sym$, or its ordered-graph equivalent $\mathscr M_{\sym,\lambda,\rho}$, is an obstruction to being NIP.
This follows from the fact that the class of all permutation graphs is independent.
As we get an effectively constructible interpretation to the class of all structures (matrices or ordered graphs), we conclude that \FO~model checking is not {\FPT} on hereditary classes of unbounded twin-width.
This is the end of the road.
The remaining implications to establish the equivalences of~\cref{thm:equiv,thm:main} come from \cite[Sections 7 and 8]{twin-width1}, \cite[Section 3]{twin-width2}, and~\cref{thm:approx-tww} (see \cref{fig:zoom}).

\cref{thm:summary-mt0} is proved using model-theoretic methods. In particular, it relies on a suitable analogue of forking independence for monadically NIP classes.

\begin{figure}[ht]
	\begin{center}
		\includegraphics[width=\textwidth]{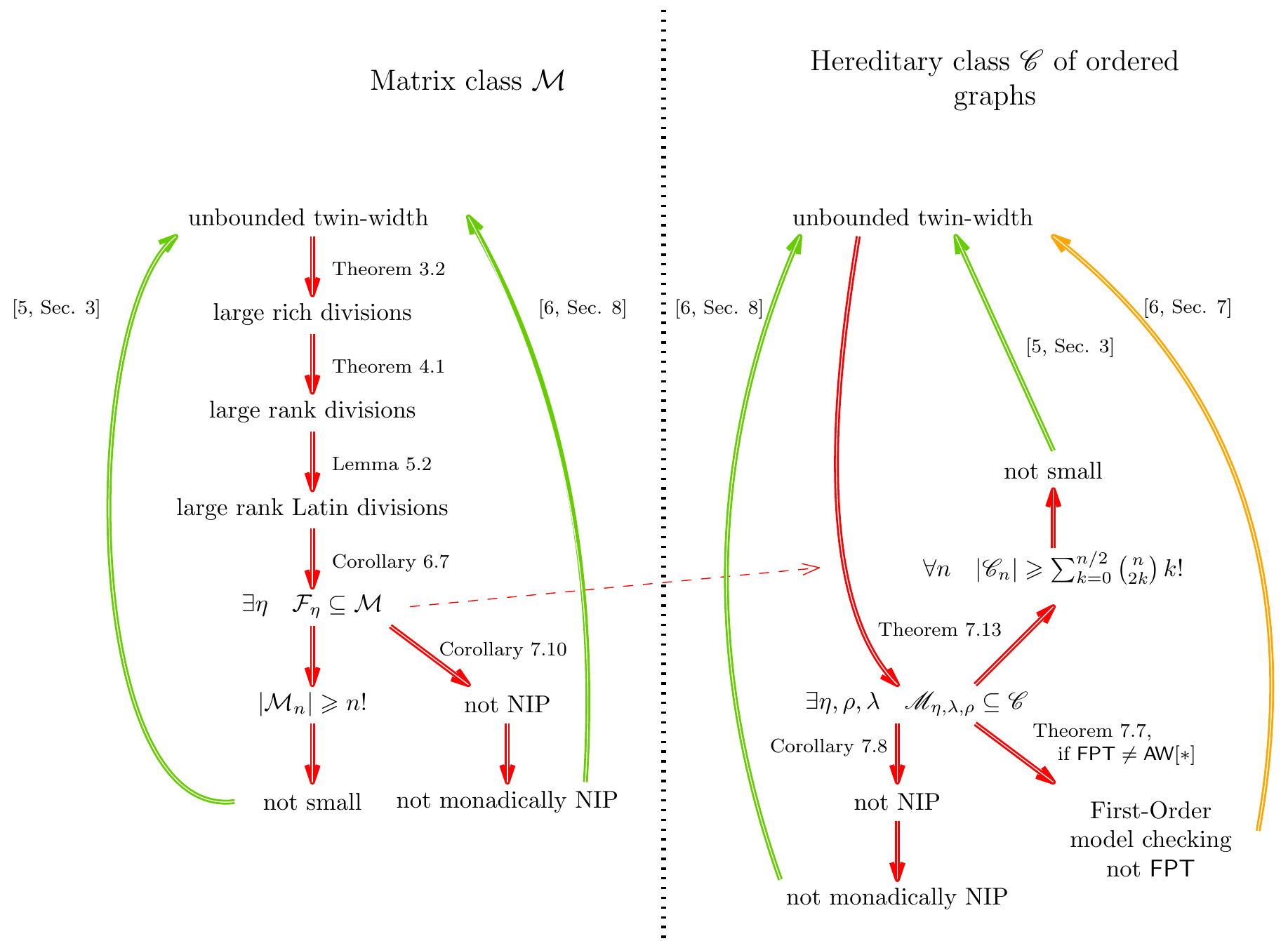}
	\end{center}
	\caption{A more detailed proof diagram. 
        }
        \label{fig:zoom}
\end{figure}

\subparagraph{Organization.}

The rest of the paper is organized as follows.
In~\cref{sec:prelim}, we formally define all the notions used in the rest of the paper.
In~\cref{sec:approx-tww}, we show that \cref{it:bd-tww} and \cref{it:bd-rd} are equivalent.
As a by-product, we obtain a fixed-parameter $f(\text{OPT})$-approximation algorithm for the twin-width of ordered matrices.
In~\cref{sec:rich-to-gr}, we prove the implication \cref{it:bd-gr} $\rightarrow$ \cref{it:bd-rd}.
In~\cref{sec:latin}, we introduce so-called \emph{rank Latin divisions} and show that large rank divisions contain large rank Latin divisions. 
In~\cref{sec:more-ramsey}, we further clean the rank Latin divisions in order to show that \cref{it:ramsey} $\rightarrow$ \cref{it:bd-gr} and \cref{it:fact-speed} $\rightarrow$ \cref{it:bd-gr}.
In~\cref{sec:matchings}, we show that \cref{it:lin-mc} $\rightarrow$ \cref{it:ramsey} and \cref{it:nip} $\rightarrow$ \cref{it:ramsey} transposed to the language of ordered graphs.
We also refine the lower bound on the growth of ordered graph classes with unbounded twin-width, to completely settle Balogh et al.'s conjecture~\cite{Balogh06}.
See~\cref{fig:zoom} for a visual outline.
Finally, in~\cref{sec:model theory} we prove~\cref{thm:summary-mt0}.

\section{Preliminaries}\label{sec:prelim}

Everything which is relevant to the rest of the paper will now be properly defined.
We may denote by $[i,j]$ the set of integers that are at least $i$ and at most $j$, and $[i]$ is a short-hand for $[1,i]$. 
We start with the combinatorial objects.

\subsection{Graphs, orders, matrices, permutations}\label{subsec:basic-notions}
By \emph{graph} we mean a simple, undirected graph $G$,
and denote its set of vertices $V(G)$ and set of edges $E(G)$. An edge with endpoints $u$ and $v$ is denoted $uv$ or $vu$.
A \emph{total order}  on a set $X$ is a binary relation $<$ which is transitive, irreflexive, such that for all $x,y\in X$ either $x<y$ or $y<x$ holds.
An \emph{ordered graph} is a graph together with a total order on its vertices. The \emph{edge complement} of a graph (resp. ordered graph) $G$ is the graph (resp. ordered graph) obtained from $G$ by replacing edges by non-edges, and vice-versa.

A \emph{matrix} $M$ over a finite alphabet $A$ is a function $M\from R \times C \to A$, where $R$ is a totally ordered set of rows and $C$ is a totally ordered set of columns.
The value $M(r,c)$, that we will often denote $M_{r,c}$, is the \emph{entry} of $M$ at position $(r,c)$, or in row $r$ and column $c$.
We may say that $M$ is an $R\times C$ matrix, or an $n \times m$ matrix, where $n=|R|$ and $m=|C|$.

A $0,1$-matrix is a matrix over the alphabet $\set{0,1}$. 
A $0,1$-matrix with rows $R$ and columns $C$ can be viewed as an ordered graph with vertices $R\uplus C$,
 total order $<$ obtained from the orders on $R$ and $C$ by making all the columns  larger than all the rows, and edges $rc$ such that $r\in R$, $c\in C$
 and $M(r,c)=1$.

We distinguish matrices only up to isomorphisms which preserve the order of the rows and columns.
A~\emph{submatrix} of a matrix $M$ is any matrix obtained from $M$ by deleting a (possibly empty) set of rows and columns. 
Analogously to permutation classes which are by default supposed closed under taking subpermutations (or patterns), we will define a \emph{class} of matrices as a set of matrices closed under taking submatrices.
The \emph{submatrix closure} of a matrix $M$ is the set of all submatrices of $M$ (including $M$ itself).
Thus our matrix classes include the submatrix closure of every matrix they contain.
On the contrary, classes of (ordered) graphs are only assumed to be closed under isomorphism.
A~\emph{hereditary} class of (ordered) graphs (resp.~binary structures) is one that is closed under taking induced subgraphs (resp.~induced substructures).

An $n$-\emph{permutation}, for $n\ge 1$, is a bijection $\pi\from [n]\to[n]$.
The set of all $n$-permutations is denoted $\mathfrak S_n$. 
Permutations turn out to be of central importance in the theory developed here,
and indeed, twin-width has its origins in the Stanley-Wilf conjecture which is precisely about permutations. As we will see, 
classes of ordered graphs or matrices with unbounded twin-width are exactly those which contain encodings of all permutations,
under a suitable encoding.

We will use several views on permutations
(see~\cref{fig:permutations}):
 as bijections between two ordered sets, as sets equipped with two total orders,  
 as ordered matchings, as 
ordered permutation graphs,
and as $0,1$-matrices. 

\begin{figure}\centering
  \includegraphics[scale=0.43,page=9]{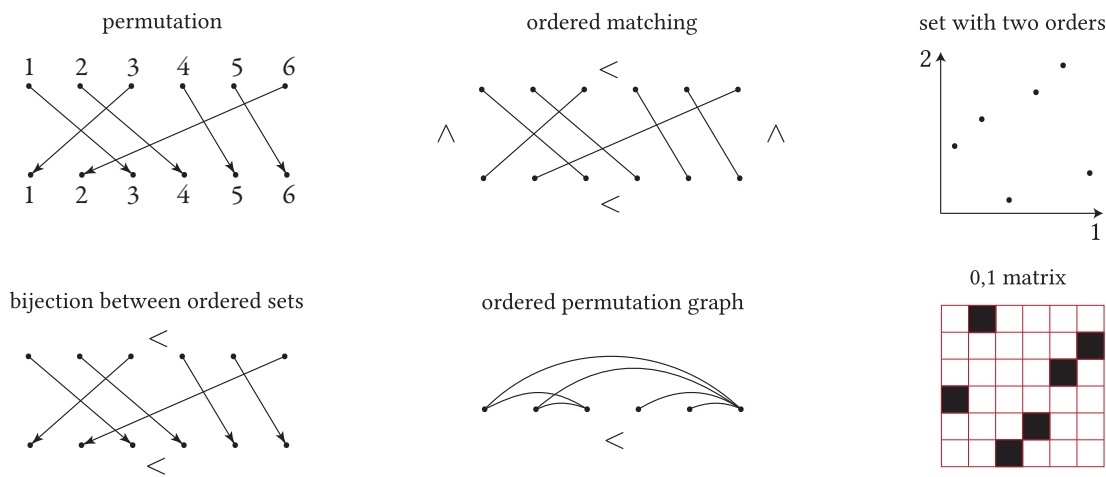}
  \caption{Six different views on the same permutation. We use the convention that the first row of a matrix is at the bottom.}
  \label{fig:permutations}
\end{figure}

\medskip

An $n$-permutation $\pi$ may be viewed as a bijection $\pi$ between two totally ordered sets, namely $X=([n],<)$ and $Y=([n],<)$.
Conversely, for every bijection $f\from X\to Y$ between two totally ordered sets of size $n$ there is a unique $n$-permutation $\pi$ such that $f=i_Y^{-1}\circ\pi\circ i_X$ holds for the unique order-preserving bijections $i_X\from X\to [n]$ and $i_Y\from Y\to [n]$.
Using this correspondence, we may define the notion of a \emph{subpermutation}.
A subpermutation of an $n$-permutation $\pi$
induced by a set $U\subseteq[n]$ is the unique $|U|$-permutation which corresponds to the restriction $\pi|_U$, treated as a bijection between the ordered sets $U\subseteq [n]$ and $\pi(U)\subseteq[n]$, via the correspondence described above.

Similarly, an $n$-permutation $\pi$ defines two orders on $[n]$, namely the usual order $<_1$, and the order $<_2$ such that $i<_2j$ if and only if $\pi(i)<\pi(j)$.
Conversely, every finite set equipped with 
two total orders is isomorphic to one obtained from a permutation as described above. Via this correspondence, subpermutations correspond exactly to induced substructures of sets equipped with two total orders.

An \emph{ordered matching} is an ordered graph with vertices $a_1<\ldots<a_n<b_1<\ldots<b_n$ such that each $a_i$ is adjacent with exactly one $b_j$, and vice-versa. Hence, there is a unique $n$-permutation $\pi$ such that $a_i$ is adjacent with $b_{\pi(i)}$, for $i \in [n]$.

An \emph{ordered permutation graph} associated with an $n$-permutation $\pi$ 
 is the ordered graph $G_\pi$ with vertices $[n]$ ordered naturally, such that $i<j$ are adjacent if and only if $\pi(i)>\pi(j)$.
 Note that the isomorphism type of $G_\pi$ determines the permutation $\pi$ uniquely. 
If $\sigma$ is a subpermutation of $\pi$ induced by $U\subseteq [n]$ then $G_\sigma$ is the subgraph of $G_\pi$ induced by $U$.
Observe that the edge complement of a permutation graph $G_\pi$ is also a permutation graph. Namely, if $G_\pi$ corresponds to two total orders $<_1,<_2$ on $[n]$, as explained above, then the edge complement of $G_\pi$ corresponds to the orders $<_1,>_2$ on $[n]$.

Finally, $n$-permutations correspond to $n\times n$ $0,1$-matrices with exactly one $1$ in each row and in each column.
We adopt the standard conventions that the 1 entries in the matrix of permutation $\sigma \in \mathfrak S_n$ are at positions $(i,\sigma(i))$ for $i \in [n]$, and that, in this context of patterns, the first row is placed at the bottom.
A permutation $\sigma$ is a subpermutation of $\pi$ if the matrix of $\sigma$ is a submatrix of the matrix of $\pi$.

In fact, we will see even more representations of permutations as matrices or ordered graphs, namely five further matrix classes and twenty-three further classes of ordered graphs.

 \subsection{Structures}\label{subsec:logic}
A relational \emph{signature} $\Sigma$ is a finite set of relation symbols~$R$, each  with a specified arity $r\in\mathbb N$. 
\mbox{A \emph{$\Sigma$-structure}~$\mathbf A$} is defined by a set~$A$ (the \emph{domain} of $\mathbf A$) together with a relation $R^\mathbf A\subseteq A^{r}$ for each relation symbol \mbox{$R \in \Sigma$} with arity $r$.
The syntax and semantics of first-order formulas over $\Sigma$, or \emph{$\Sigma$-formulas} for brevity, are defined as usual. 
% The first-order language $\fo(\Sigma)$ associated to \mbox{$\Sigma$-structures} defines, for each relation symbol $R$ with arity $r$ the predicate $R$ such that 
% $\mathbf A\models R(v_1,\dots,v_{r})$ if  $(v_1,\dots,v_{r})\in R^{\mathbf A}$.

A graph is viewed as a structure 
over the signature with one binary relation $E$
indicating the adjacency between vertices.
A total order is viewed as a structure over the signature with one binary relation $<$.
An ordered graph is viewed as a structure over the signature with two binary relations, $E$ and $<$.
More generally, an \emph{ordered binary structure} is
a structure $\str A$ over a signature $\Sigma$ consisting of unary and binary relation symbols which includes the symbol $<$,
and such that $<$ defines in $\str A$ a total order on the domain of $\str A$.
% We could also allow some unary relation symbols but any unary relation $U$ can be simulated by a binary relation $E_U$, with $E_U(x,y) \leftrightarrow (x=y \land U(x))$. 

A matrix $M$ over a finite alphabet $A$ 
with rows $R$ and columns $C$
is viewed as an ordered binary structure with domain $R \uplus C$,
 equipped with the following relations:
 \begin{itemize}
   \item unary relations $R$ and $C$, interpreted as the set of rows and set of columns, respectively,
   \item a binary relation $<$ which defines a total order on $R\uplus C$, extending the total orders  on the rows and columns of $M$ in such a way that the rows precede the columns,
   \item  one binary relation $E_a$, for each $a\in A$,
   where $E_a(r,c)$ holds if and only if $r$ is a row, $c$ is a column, and $a$ is the entry of $M$ at row $r$ and column $c$.
 \end{itemize}

\subsection{Twin-width}\label{subsec:graph-theory}

In the first paper of the series~\cite{twin-width1}, we define twin-width for general binary structures via \emph{unordered} matrices.
The twin-width of (ordered) matrices can be defined this way by encoding the total orders on the rows and on the columns with two binary relations.
However we will give an equivalent definition, tailored to ordered structures.
This slight shift is already a first step in understanding these structures better, with respect to twin-width.
We insist that matrices are always ordered objects, in the current paper.
Thus the twin-width of a matrix does \emph{not} coincide with the twin-width of unordered matrices, as defined in~\cite{twin-width1}.

Let $M$ be an $n \times m$ matrix with entries ranging over a fixed finite set.
We denote by $R:=\{r_1, \ldots, r_n\}$ its set of rows and by $C:=\{c_1, \ldots, c_m\}$ its set of columns.
Let $S$ be a non-empty subset of columns, $c_a$~be the column of~$S$ with minimum index~$a$, and $c_b$, the column of~$S$ with maximum index~$b$. 
The \emph{span} of $S$ is the set of columns $\{c_a, c_{a+1}, \ldots, c_{b-1}, c_b\}$. 
We say that a subset $S \subseteq C$ is in \emph{conflict} with another subset $S' \subseteq C$ if their spans intersect. 
A~partition $\mathcal P$ of $C$ is \emph{$k$-overlapping} if every part of $\mathcal P$ is in conflict with at most~$k$ other parts of~$\mathcal P$.
The definitions of \emph{span}, \emph{conflict}, and \emph{\kop} similarly apply to sets of rows.
With that terminology, a \emph{division} is a \opar{0}.

A partition $\mathcal P$ is a \emph{contraction} of a partition $\mathcal P'$ (defined on the same set) if it is obtained by merging two parts of ${\mathcal P}'$.
A \emph{contraction sequence} of $M$ is a sequence of partitions ${\mathcal P}_1,\dots, {\mathcal P}_{n+m-1}$ of the set $R \cup C$ such that ${\mathcal P}_1$ is the partition into $n+m$ singletons, ${\mathcal P}_{i+1}$ is a contraction of ${\mathcal P}_i$ for all $i \in [n+m-2]$, and $\mathcal P_{n+m-1}=\{R,C\}$.
In other words, we merge at every step two \emph{column parts} (made exclusively or columns) or two row parts (made exclusively or rows), and terminate when all rows and all columns both form a single part.
We denote by $\mathcal P^R_i$ the  partition of $R$ induced by $\mathcal P_i$ and by $\mathcal P^C_i$ the partition of $C$ induced by $\mathcal P_i$.
A \emph{contraction sequence} is \emph{$k$-overlapping} if all partitions $\mathcal P^R_i$ and $\mathcal P^C_i$ are \kops.
Note that a $0$-overlapping sequence is a sequence of divisions.

If $S^R$ is a subset of $R$, and $S^C$ is a subset of $C$, we denote by $S^R \cap S^C$ the submatrix at the intersection of the rows of $S^R$ and of the columns of $S^C$.
Given some column part $C_a$ of ${\mathcal P}^C_i$, the \emph{error value} of $C_a$ is the number of row parts $R_b$ of ${\mathcal P}^R_i$ for which the submatrix $C_a \cap R_b$ of $M$ is not constant.
The error value is defined similarly for rows, by switching the role of columns and rows.
The \emph{error value} of ${\mathcal P}_i$ is the maximum error value of some part in ${\mathcal P}^R_i$ or in ${\mathcal P}^C_i$. 
A contraction sequence is a \emph{$(k,e)$-sequence} if all partitions ${\mathcal P}^R_i$ and ${\mathcal P}^C_i$ are \kops with error value at most $e$.
Strictly speaking, to be consistent with the definitions in the first paper~\cite{twin-width1}, the \emph{twin-width} of a matrix $M$, denoted by $\tww(M)$, is the minimum $k+e$ such that $M$ has a $(k,e)$-sequence.
This matches, setting $d := k+e$, what we called a \emph{$d$-sequence} for the binary structure encoding $M$~\cite{twin-width1}.  
We will however not worry about the exact value of twin-width, but merely whether it is bounded or unbounded on a class of structures.
Thus for the sake of simplicity, we often consider the minimum integer $k$ such that $M$ has a $(k,k)$-sequence.
This integer is indeed sandwiched between $\tww(M)/2$ and $\tww(M)$. 

The twin-width of a matrix class $\mathcal M$, denoted by $\tww(\mathcal M)$, is simply defined as the supremum of $\{\tww(M)$ $|$ $M \in \mathcal M\}$.
We say that $\mathcal M$ has \emph{bounded twin-width} if $\tww(\mathcal M) < \infty$, or equivalently, if there is a finite integer $k$ such that every matrix $M \in \mathcal M$ has twin-width at most~$k$.
A class $\mathscr C$ of ordered graphs has \emph{bounded twin-width} if all the adjacency matrices of graphs $G \in \mathscr C$ along their vertex ordering, or equivalently their submatrix closure, form a set/class with bounded twin-width.

We can more generally define the twin-width of ordered binary structures via matrices. 
The \emph{matrix encoding} of an ordered binary structure $\mathbf A$ with relations $<, E_1, \ldots, E_p$ is the $|A| \times |A|$ matrix over the alphabet $\set{-1,0,1,2}^p$ whose entry at position $(x,y)$, is the vector $(b_1,\ldots,b_p)\in\set{-1,0,1,2}^p$ such that $b_i=1$ if $E_i(x,y) \land \neg E_i(y,x)$ holds, $b_i=-1$ if $\neg E_i(x,y) \land E_i(y,x)$ holds, $b_i=2$ if $E_i(x,y) \land E_i(y,x)$ holds, and $b_i=0$ otherwise.
Then the twin-width of an ordered binary structure $\mathbf A$ is simply the twin-width of the matrix encoding of $\mathbf A$.
We choose this particular encoding so that the vector $(b_1, b_2, \ldots, b_p)$ at position $(x,y)$ and the one $(b'_1, b'_2, \ldots, b'_p)$ at position $(y,x)$ satisfies $b_i=\pm b'_i$ for every $i \in [p]$.
We then say that the matrix is \emph{mixed-symmetric} as in each vector some coordinates are symmetric while others are skew-symmetric.
This technicality allows to turn a contraction sequence of a matrix encoding into a contraction sequence of its associated ordered binary structure.
For more details, see~\cite[Section 5, Theorem 14]{twin-width1}.

%Or, insisting on the equivalence between matrices and ordered bipartite graphs, $\mathcal M$ has bounded \emph{twin-width} if the class of ordered bipartite graphs corresponding to the matrices has bounded twin-width.
%An \emph{$(r,d)$-division} of $M$ is a division which partitions both ${\mathcal P}^R$ and ${\mathcal P}^C$ into $d$ parts, and such that for every $p\in {\mathcal P}^R$ and $p'\in {\mathcal P}^C$ the submatrix ${\mathcal P}\cap p'$ has rank at least $r$ (say in $GF(2)$). Intuitively, one can divide $M$ into many complex zones. 

\subsection{Rank division and rich division}\label{subsec:rds}

%We will now require that the matrix entries are elements of a finite \emph{field} $\mathbb F$.
We recall that a division $\mathcal D$ of a matrix $M$ is a pair $(\mathcal D^R,\mathcal D^C)$, where $\mathcal D^R$ (resp.~$\mathcal D^C$) is a partition of the rows (resp.~columns) of $M$ into (contiguous) intervals, or equivalently, a \opar{0}.
A~\emph{$d$-division} is a division satisfying $|D^R| = |D^C| = d$.
For every pair $R_i \in \mathcal D^R$, $C_j \in \mathcal D^C$, the submatrix $R_i \cap C_j$ may be called~\emph{zone} (or \emph{cell}) of $\mathcal D$ since it is, by definition, a contiguous submatrix of $M$.
We observe that a $d$-division defines $d^2$ zones.

A~\emph{rank-$k$ $d$-division} of $M$ is a $d$-division $\mathcal D$ such that for every $R_i \in {\mathcal D}^R$ and $C_j \in {\mathcal D}^C$ the zone $R_i \cap C_j$ has at least $k$ distinct rows or at least $k$ distinct columns.
A~\emph{rank-$k$ division} is simply a short-hand for a rank-$k$ $k$-division.
The~\emph{grid rank} of a matrix $M$ is the largest integer $k$ such that $M$ admits a rank-$k$ division.
% The~\emph{grid rank} of a matrix class $\mathcal M$, denoted by $\gr(\mathcal M)$, is defined as $\sup \{\gr(M)$ $|$ $M \in \mathcal M\}$.
A class $\mathcal M$ has \emph{bounded grid rank} 
%if $\gr(\mathcal M) < \infty$, or equivalently, 
if there is some integer $k$ such that every matrix $M\in\mathcal M$ has grid rank less than $k$, or equivalently, for every $k$-division $\mathcal D$ of $M$, there is a zone of $\mathcal D$ with less than $k$ distinct rows and less than $k$ distinct columns. 

Closely related to rank divisions, a \emph{$k$-rich division} is a division ${\mathcal D}$ of a matrix $M$ on rows and columns $R \cup C$ such that:
\begin{itemize}
\item for every part $R_a$ of ${\mathcal D}^R$ and for every subset $Y$ of at most $k$ parts in ${\mathcal D}^C$, the submatrix $R_a \cap (C \setminus \cup Y)$ has at least $k$ distinct row vectors, and symmetrically
\item for every part $C_b$ of ${\mathcal D}^C$ and for every subset $X$ of at most $k$ parts in ${\mathcal D}^R$, the submatrix $(R \setminus \cup X) \cap C_b$ has at least $k$ distinct column vectors. 
\end{itemize}
Informally, in a large rich division (that is, a $k$-rich division for some large value of $k$), the diversity in the column vectors within a column part cannot drop too much by removing a controlled number of row parts.   
And the same applies to the diversity in the row vectors.
        
\subsection{Enumerative combinatorics}\label{subsec:enum-comb}

In the context of unordered structures, a graph class $\mathcal C$ is~\emph{small} if there is a constant $c$, such that its number of $n$-vertex graphs bijectively labeled by $[n]$ is at most $n! c^n$.
When considering totally ordered structures, for which the identity is the unique automorphism, one can advantageously drop the labeling and the $n!$ factor.
Indeed, on these structures, counting up to isomorphism or up to equality is the same.
Thus a matrix class $\mathcal M$ is~\emph{small} if there exists a real number $c$ such that the total number of $n \times m$ matrices in $\mathcal M$ is at most $c^{\mathrm{max}(n,m)}$.

Marcus and Tardos~\cite{MarcusT04} showed the following central result, henceforth referred to as \emph{Marcus-Tardos theorem}, which by an argument due to Klazar~\cite{Klazar00} was known to imply the Stanley-Wilf conjecture, that permutation classes avoiding any fixed pattern are small.

\begin{theorem}\label{thm:marcustardos}
There exists a function $\mt: \mathbb N \rightarrow \mathbb N$ such that every $n \times m$ matrix $M$ with at least $\mt(k)\max(n,m)$ non-zero entries has a $k$-division in which every zone contains a non-zero entry.
\end{theorem}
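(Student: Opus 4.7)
The plan is to reproduce the classical double-counting argument of Marcus and Tardos. By padding the shorter dimension with rows or columns of zeros, it suffices to treat square matrices $n = n$, since the zeros do not affect the count of non-zero entries and the $k$-division of the padded matrix can be transported back (zero blocks can be absorbed into adjacent parts). We then proceed by induction on $k$, the base case $k=1$ being trivial.

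For the inductive step, partition the rows into $2k$ contiguous blocks $R_1, \ldots, R_{2k}$ of size $\lceil n/(2k)\rceil$ each and, symmetrically, the columns into $C_1, \ldots, C_{2k}$. Call a column block $C_j$ \emph{wide} if the index set $I_j = \{ i \in [2k] : R_i \cap C_j \text{ contains a non-zero entry}\}$ has size at least $k$, and define \emph{tall} row blocks symmetrically. Observe that, once a wide column block and a common $k$-subset of row blocks are fixed, merging row blocks with their neighbours into $k$ contiguous intervals and doing the same with $k$ chosen column blocks produces an honest $k$-division of the whole matrix, so it suffices to find a $k$-subset of $[2k]$ that is contained in $I_j$ for $k$ distinct column blocks (all of them then combined into $k$ column intervals).

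If there are at least $(k-1)\binom{2k}{k} + 1$ wide column blocks, pigeonhole over the $\binom{2k}{k}$ possible $k$-subsets of $[2k]$ produces $k$ wide column blocks whose sets $I_j$ all contain a common $k$-subset; merging as above yields the required $k$-division. Otherwise there are at most $(k-1)\binom{2k}{k}$ wide column blocks, contributing at most $(k-1)\binom{2k}{k}\cdot n\cdot \lceil n/(2k)\rceil$ non-zero entries, while each of the remaining column blocks has non-zero entries in fewer than $k$ row blocks, hence at most $(k-1)\lceil n/(2k)\rceil^2$ entries. Running the symmetric argument on rows and feeding the ``compressed'' $2k\times 2k$ block-incidence matrix (where we record which cells contain at least one non-zero entry) into the induction hypothesis closes the recursion and yields a bound of the form $\mt(k) \cdot n$ on the total number of non-zero entries.

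The main obstacle is the bookkeeping: one must choose the blocking factor (here $2k$) so that the pigeonhole on $k$-subsets of the $2k$ blocks is tight, and verify that the merge of $k$ wide column blocks together with $k$ selected row blocks into contiguous intervals produces a genuine $k$-division of $M$ (not merely of the coarser $2k \times 2k$ block matrix). Handling divisibility issues in $\lceil n/(2k)\rceil$ is harmless, as only the multiplicative constants are affected. This argument gives $\mt(k) = 2^{O(k\log k)}$, which is amply sufficient for every application of this theorem in the sequel.
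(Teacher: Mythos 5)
Your argument does not hold up, and the flaw lies in the choice of blocking. The Marcus--Tardos argument partitions the matrix into blocks of \emph{constant} size, roughly $k^2\times k^2$, giving $n/k^2$ block columns (a quantity that grows with $n$). Three things depend on this: each block then has $O(1)$ nonzero entries, so the contribution of ``rich'' blocks can be bounded by $O(n)$; the pigeonhole over the $\binom{k^2}{k}$ many $k$-subsets of the $k^2$ column positions \emph{inside} a block has room to fire, because a single block column contains arbitrarily many blocks as $n\to\infty$; and the $(n/k^2)\times(n/k^2)$ contracted (support) matrix is strictly smaller than $M$, which is what drives a recursion on $n$ (not on $k$). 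You instead cut $M$ into a \emph{constant} number $2k\times 2k$ of blocks, each of size $\Theta(n/k)\times\Theta(n/k)$, and this breaks all three ingredients at once.

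Concretely: your Case~1 asks for at least $(k-1)\binom{2k}{k}+1$ wide column blocks, but there are only $2k$ column blocks in total, and $(k-1)\binom{2k}{k}+1>2k$ for every $k\geq 2$, so that case is vacuous and the pigeonhole never triggers. In Case~2, your bounds $(k-1)\binom{2k}{k}\cdot n\cdot\lceil n/(2k)\rceil$ and $(k-1)\lceil n/(2k)\rceil^2$ are both $\Theta(n^2)$, which says nothing toward the required linear bound on the number of nonzeros. And the ``compressed $2k\times2k$ block-incidence matrix'' has size independent of $n$, so feeding it to any induction hypothesis cannot close a recursion whose conclusion must depend on $n$; inducting on $k$ gives no purchase because nothing in your setup ever lowers $k$. (Your merging step, which turns $k$ compatible wide column blocks together with a common $k$-subset of row blocks into a genuine $k$-division of all of $M$, is correct and is indeed the right local move.) The repair is to use $k^2\times k^2$ blocks, define richness via the number of occupied columns or rows \emph{inside} a block, and recurse on $n$ through the support matrix. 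Note that the paper does not reprove this statement; it simply cites~\cite{MarcusT04}.
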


We call $\mt(\cdot)$ the \emph{Marcus-Tardos bound}.
The current best bound is $\mt(k)=\frac{8}{3}(k+1)^22^{4k}=2^{O(k)}$~\cite{Cibulka16}.
Among other things, the Marcus-Tardos theorem is a crucial tool in the development of the theory around twin-width.
In the second paper of the series~\cite{twin-width2}, the Stanley-Wilf conjecture/Marcus-Tardos theorem was generalized to classes with bounded twin-width.
We showed that every graph class with bounded twin-width is small (while proper subclasses of permutation graphs have bounded twin-width~\cite{twin-width1}).
This can be readily extended to every bounded twin-width class of binary structures.
It was further conjectured that the converse holds for hereditary classes: Every hereditary small class of binary structures has bounded twin-width.
We will confirm this conjecture, in the current paper, for the special case of totally ordered binary structures. 

We denote by $\mathcal M_n$, the \emph{$n$-slice} of a matrix class $\mathcal M$, that is the set of all $n \times n$ matrices of $\mathcal M$ (up to isomorphism which preserves the order on the rows and columns).
The \emph{growth} (or~\emph{speed}) of a matrix class is the function $n \in \mathbb N \mapsto |\mathcal M_n|$.
A class $\mathcal M$ has \emph{subfactorial} growth if there is a finite integer beyond which the growth of $\mathcal M$ is strictly less than $n!$;
more formally, if there is $n_0$ such that for every $n \geqslant n_0$, $|\mathcal M_n| < n!$. 
Similarly, $\mathscr C$ being a class of ordered graphs, the \emph{$n$-slice} of $\mathscr C$, $\mathscr C_n$, is the set of $n$-vertex ordered graphs in $\mathscr C$, up to isomorphism.
And the \emph{growth} (or~\emph{speed}) of a class $\mathscr C$ of ordered graphs is the function $n \in \mathbb N \mapsto |\mathscr C_n|$. 

The following result follows from~\cite{twin-width2}.
\begin{theorem}\label{thm:small}
  Let $\CC$ be a class of ordered binary structures of bounded twin-width. Then $\CC$ has growth $c^n$ for some constant $c$.
\end{theorem}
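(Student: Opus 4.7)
The strategy is to reduce the theorem to the smallness result for unordered binary structures proved in~\cite{twin-width2}, which states that every hereditary class of binary relational structures of bounded twin-width contains at most $n! \cdot c^n$ pairwise non-isomorphic labeled $n$-element structures, for some constant $c$ depending only on the class. The key observation that converts this labeled bound into an unlabeled bound of the form $c^n$ is that an ordered structure is \emph{rigid}: since its signature contains a total order $<$, the only isomorphism from $\mathbf{A}$ to itself is the identity.

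First, I would pass to the hereditary closure $\overline\CC$ of $\CC$. Induced substructures of a structure of twin-width $\le k$ have twin-width $\le k$ (restrict a witnessing contraction sequence to the substructure, erasing empty parts), so $\overline\CC$ still has bounded twin-width. Viewing $\overline\CC$ as a class of binary relational structures (ignoring the special role of $<$ for counting purposes), we may invoke the smallness theorem of~\cite{twin-width2} to obtain that the number of structures in $\overline\CC$ whose domain is exactly $[n]$ is at most $n!\cdot c^n$ for some constant $c = c(\CC)$.

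Next, I would translate this labeled count into an isomorphism-class count. For an $n$-element ordered binary structure $\mathbf{A}$ on an unspecified $n$-element set, the number of distinct ways to realize $\mathbf{A}$ on the domain $[n]$ equals $n!/|\mathrm{Aut}(\mathbf{A})| = n!$, by rigidity. Consequently the number of isomorphism classes of $n$-element ordered structures in $\overline\CC$ is at most $(n!\cdot c^n)/n! = c^n$. Since $\CC\subseteq\overline\CC$, the same bound applies to $\CC$, yielding $|\CC_n|\le c^n$ as required.

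\textbf{Main obstacle.} There is no real obstacle here; the entire content of the argument already lies in~\cite{twin-width2}. The only delicate point is to ensure that the smallness result is applied to a hereditary class containing $\CC$ of the same (bounded) twin-width, and that the relation symbol $<$ is treated as part of the signature of the binary structure, so that no labeled structure in $\overline\CC$ is forgotten when we pass from labelings to isomorphism classes via the rigidity of ordered structures.
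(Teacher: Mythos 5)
Your proof is correct and takes essentially the same route the paper takes: apply the smallness theorem of~\cite{twin-width2} (giving at most $n!\cdot c^n$ labeled $n$-element structures, extended to binary structures), then use rigidity of ordered structures to divide out by $n!$ and obtain the $c^n$ bound on isomorphism classes. The paper states the result without an explicit proof, citing~\cite{twin-width2}, and sketches the rigidity observation in the introduction; your write-up just makes that chain of reasoning explicit.
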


\subsection{Computational complexity}\label{subsec:comp-compl}

First-order (\FO) matrix model checking asks, given a matrix $M$ (or a totally ordered binary structure~$\mathcal S$) and a first-order sentence $\varphi$ (i.e., a formula without any free variable), if $M \models \varphi$ holds.
The atomic formulas in $\varphi$ are of the kinds described in~\cref{subsec:logic}.

We then say that a matrix class $\mathcal M$ is \emph{tractable} if \FO~model checking is fixed-parameter tractable (\FPT) when parameterized by the sentence size and the input matrices are drawn from $\mathcal M$.
That is, $\mathcal M$ is tractable if there exists a constant~$c$ and a computable function $f$, such that $M \models \varphi$ can be decided in time $f(\ell)\,(m+n)^c$, for every $n \times m$ matrix $M \in \mathcal M$ and \FO~sentence $\varphi$ of quantifier depth $\ell$.
We may denote the size of $M$, $n+m$, by $|M|$, and the quantifier depth (i.e., the maximum number of nested quantifiers) of $\varphi$ by~$|\varphi|$.
Similarly a class $\mathscr C$ of binary structures is~\emph{tractable} if \FO~model checking is \FPT~on~$\mathscr C$.

\FO~model checking of general (unordered) graphs is $\AW[*]$-complete~\cite{Downey96}, and thus very unlikely to be \FPT.
Indeed $\FPT \neq \AW[*]$ is a much weaker assumption than the already widely-believed Exponential Time Hypothesis~\cite{Impagliazzo01}, and if false, would in particular imply the existence of a subexponential algorithm solving \textsc{3-SAT}.
In the first paper of the series \cite{twin-width1}, we show that \FO~model checking of general binary structures of bounded twin-width given with an $O(1)$-sequence can even be solved in linear FPT time $f(|\varphi|)\,|U|$, where $U$ is the universe of the structure.
In other words, bounded twin-width classes admitting a~$g(\text{OPT})$-approximation for the contraction sequences are tractable.
It is known for (unordered) graph classes that the converse does not hold.
For instance, the class of all subcubic graphs (i.e., graphs with degree at most~3) is tractable~\cite{Seese96} but has unbounded twin-width~\cite{twin-width2}.
\cref{thm:approx-tww} will show that, on every class of ordered graphs, a fixed-parameter approximation algorithm for the contraction sequence exists. 
Thus every bounded twin-width class of ordered graphs is tractable.
We will also see that the converse holds for hereditary classes of ordered graphs.

\subsection{Interpretations and transductions}\label{subsec:fmt}
Let $\Sigma,\Gamma$ be signatures.
A \emph{simple interpretation} $\mathsf I\from \Sigma\to\Gamma$ 
consists of the following $\Sigma$-formulas: a \emph{domain} formula $\nu(x)$,
and for each relation symbol $R\in\Gamma$ of arity $r$, a formula $\rho_R(x_1,\ldots, x_r)$. If $\mathbf A$ is a $\Sigma$-structure, the $\Gamma$-structure $\mathsf I(\mathbf A)$ has domain $\nu(\mathbf A)=\{v\in A:\mathbf A\models\nu(v)\}$ and the interpretation of a relation symbol $R\in\Sigma$ of arity $r$ is $\rho_R(\mathbf A)\cap \nu(\mathbf A)^{r}$, that is:
\[
R^{\mathsf I(\mathbf A)}=\{(v_1,\dots,v_{r})\in \nu(\mathbf A)^{r}:\mathbf A\models \rho_R(v_1,\dots,v_r)\}.
\]
If $\CC$ is a class of $\Sigma$-structures then denote $\mathsf I(\CC)=\setof{\mathsf I(\str A)}{\str A\in\CC}$.

An important property of (simple) interpretations is that they can be composed:
if $\mathsf I\from \Sigma\to\Gamma$ and $\mathsf J\from \Gamma\to\Delta$ are interpretations,
then there is an interpretation $\mathsf J \circ \mathsf I \from \Sigma\to\Delta$ (computable from $\mathsf I$ and~$\mathsf J$) such that $(\mathsf J\circ \mathsf I)(\str A)=\mathsf J(\mathsf I(\str A))$ for every $\Sigma$-structure $\str A$.
Similarly,
for every $\Sigma$-sentence $\varphi$ there is a sentence $\mathsf I^*(\varphi)$ computable from $\mathsf I$ and $\phi$ such that for every $\Sigma$-structure $\mathbf A$ and we have
\[
	\mathsf I(\mathbf A)\models\varphi\quad\iff\quad\mathbf A\models\mathsf I^*(\varphi).
\]

A class $\CC$ \emph{interprets} a class $\DD$ if there is an interpretation $\mathsf I$ such that $\mathsf I(\CC)\supseteq\DD$.
We say that $\CC$ \emph{efficiently interprets} $\DD$
if additionally there is an algorithm which, given $\str D\in\DD$, computes in time polynomial in the size of $\str D$ a structure $\str C\in\CC$ such that $\mathsf I(\str C)$ is isomorphic to $\str D$. (A structure is represented by the size of its domain written in unary, followed by the adjacency matrices representing each of its relations.)
By composition of interpretations, we conclude that 
if $\CC$ efficiently interprets $\DD$ and $\DD$ efficiently interprets $\mathscr E$, then $\CC$ efficiently interprets $\mathscr E$.

Efficient interpretations are a convenient way for obtaining \FPT~reductions,
as expressed by the following straightforward lemma.
\begin{lemma}
  Suppose that $\CC$ efficiently interprets a class $\DD$.
  Then there is an \FPT~reduction of \FO~model checking on $\DD$ to \FO~model checking on $\CC$:
   there is a computable function $f$, a constant~$c$, and an algorithm which given a structure $\str D\in\DD$ and an \FO~sentence $\phi$ computes 
  in time $f(\phi)\cdot |\str D|^c$ a~structure $\str C\in\CC$ and an \FO~sentence $\psi$ such that 
  \[\str D\models\phi\quad\Leftrightarrow\quad \str C\models \psi.\]
\end{lemma}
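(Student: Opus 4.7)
The plan is to combine the two features of an efficient interpretation: the polynomial-time algorithm producing a preimage $\str C\in\CC$ of $\str D\in\DD$, and the syntactic pullback operation $\phi\mapsto \mathsf I^*(\phi)$ on formulas that was recalled immediately before the lemma. Concretely, I would fix an interpretation $\mathsf I$ witnessing that $\CC$ efficiently interprets $\DD$, together with its associated polynomial-time algorithm $\mathcal A$ that, given $\str D\in\DD$, returns $\str C\in\CC$ with $\mathsf I(\str C)\cong \str D$ in time $|\str D|^{c}$ for some constant $c$.

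The reduction algorithm proceeds as follows. On input $(\str D,\phi)$ with $\str D\in\DD$ and $\phi$ an \FO{}-sentence, first run $\mathcal A$ on $\str D$ to obtain $\str C\in\CC$; second, compute $\psi:=\mathsf I^*(\phi)$ from $\mathsf I$ and $\phi$ using the syntactic pullback. Output the pair $(\str C,\psi)$. Correctness is immediate from the two fundamental properties recalled earlier: $\mathsf I(\str C)\cong \str D$ by efficient interpretation, and $\mathsf I(\str C)\models \phi \Leftrightarrow \str C\models \mathsf I^*(\phi)$ by the pullback property. Chaining these equivalences yields $\str D\models\phi \Leftrightarrow \str C\models\psi$, as required.

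For the running time, the first step takes time $|\str D|^{c}$ by definition of efficient interpretation. The second step operates purely on $\phi$ and the fixed interpretation $\mathsf I$: the pullback $\mathsf I^*(\phi)$ is obtained by relativizing quantifiers to $\nu(x)$ and replacing each atomic formula $R(x_1,\ldots,x_r)$ by $\rho_R(x_1,\ldots,x_r)$. This takes time bounded by some computable function $f(\phi)$ of $\phi$ alone (with $\mathsf I$ fixed). The total running time is therefore $f(\phi)\cdot |\str D|^{c}$, which matches the statement.

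The only step with any subtlety is making sure the syntactic pullback is stated precisely enough to go through: quantifiers $\exists x.\,\chi$ must be relativized to $\exists x.(\nu(x)\wedge \chi^{*})$, and equality, if present in the signature of $\DD$, must be interpreted correctly (either as genuine equality in $\CC$ or, if a more general notion of interpretation with a quotient is used, as the formula interpreting equality). Since the excerpt defines only simple interpretations, with a domain formula $\nu(x)$ and no quotient, equality is preserved verbatim, so no additional bookkeeping is required. Nothing else in the argument is more than bookkeeping, so I do not anticipate a real obstacle.
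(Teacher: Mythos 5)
Your proof is correct and is exactly the intended argument; the paper itself provides no explicit proof, describing the lemma as "straightforward," and what you write out (run the polynomial-time preimage algorithm to get $\str C$, syntactically pull back $\phi$ to $\psi=\mathsf I^*(\phi)$, and chain the equivalences through $\mathsf I(\str C)\cong\str D$) is the intended unpacking. Your aside about equality and quotient-free interpretations is also the right thing to check, and your conclusion that no extra bookkeeping is needed for simple interpretations is correct.
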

% \begin{proof}
%    By assumption, there is an interpretation $\mathsf I$ and an algorithm which, given a graph $G$, computes in time polynomial in $G$ a structure $\str C\in\CC$ such that $\mathsf I(\str C)$ is isomorphic to $G$.
%   Then
%   \[G\models \phi\quad\Leftrightarrow \quad \mathsf I(\str C)\models \phi\quad\Leftrightarrow\quad \str C\models \mathsf I^*(\phi).\]
%   This gives the desired reduction.
% \end{proof}
Since \FO~model checking on the class of all graphs is $\AW[*]$-hard~\cite{Downey96}, we get:
\begin{corollary}\label{cor:AW-hard}
  If $\CC$ efficiently interprets the class of all graphs then model checking on $\CC$ is $\AW[*]$-hard.
\end{corollary}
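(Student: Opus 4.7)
The plan is to deduce this immediately from the preceding lemma, since the corollary is the standard hardness-propagation consequence of an \FPT~reduction. Concretely, assume $\CC$ efficiently interprets the class $\mathscr{G}$ of all finite graphs via some simple interpretation $\mathsf I$. Applying the preceding lemma with $\DD := \mathscr{G}$ yields an \FPT~reduction from \FO~model checking on $\mathscr{G}$ to \FO~model checking on $\CC$: given a graph $H$ and an \FO~sentence $\phi$, one computes in time $f(\phi)\cdot|H|^c$ a structure $\str C\in\CC$ and a sentence $\psi$ such that $H\models\phi$ if and only if $\str C\models\psi$. This is exactly the shape of an \FPT~many-one reduction in the sense of parameterized complexity, where the parameter on the source side is $|\phi|$ and the parameter on the target side is $|\psi|$, which depends only on $\phi$ and $\mathsf I$.

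Combining this reduction with the theorem of Downey, Fellows, and Taylor~\cite{Downey96} that \FO~model checking on the class of all finite graphs is $\AW[*]$-hard, and with the fact that $\AW[*]$-hardness transfers along \FPT~reductions, we conclude that \FO~model checking on $\CC$ is $\AW[*]$-hard. There is no real obstacle here: the entire content has been packaged into the preceding lemma, which in turn rests on the composability of simple interpretations with \FO~sentences via the syntactic transformation $\mathsf I^*$ already recalled in~\cref{subsec:fmt}. The only point to double-check is that the quantifier depth of $\psi := \mathsf I^*(\phi)$ is bounded by a function of $|\phi|$ and $\mathsf I$ alone (and not of $|H|$), which is immediate from the inductive definition of $\mathsf I^*$.

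It is worth noting that this pattern -- efficient interpretation of the class of all graphs followed by the Downey--Fellows--Taylor hardness result -- is the standard route used throughout the paper to derive the $\AW[*]$-hardness parts of statements such as~\cref{thm:main}~\cref{git:hard} and~\cref{thm:equiv}~\cref{it:lin-mc}. In those downstream applications the nontrivial work lies in constructing the efficient interpretation (i.e.\ in exhibiting a class of ordered graphs or matrices which can encode arbitrary graphs in polynomial time), whereas the present corollary merely records, once and for all, that such an encoding suffices to transfer hardness.
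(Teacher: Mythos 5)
Your proof is correct and takes essentially the same route as the paper: the paper states the corollary immediately after observing that \FO~model checking on all graphs is $\AW[*]$-hard~\cite{Downey96}, leaving the composition with the preceding lemma implicit. You have merely spelled out the reduction and the transfer of hardness, which is exactly the intended reasoning.
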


An important class of ordered graphs which efficiently interprets the class of all graphs is the class $\mathscr M$ of all ordered matchings.
This is expressed by the following folklore result, whose proof is included in~\cref{app:matchings} for completeness.
\begin{lemma}\label{lem:matchings are dependent}
  The class $\mathscr M$ of ordered matchings efficiently interprets the class of all graphs.
\end{lemma}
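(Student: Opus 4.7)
The plan is to construct, for each graph $G$ with vertex set $\{v_1, \ldots, v_n\}$, an ordered matching $M_G \in \mathscr{M}$ in polynomial time, together with a fixed simple interpretation $\mathsf{I}$ (independent of $G$) such that $\mathsf{I}(M_G) \cong G$. Composing with the identity on $G$ will then give the required efficient interpretation.

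First, I would build $M_G$ by partitioning each side into $n$ consecutive vertex-blocks $B^L_1 < B^L_2 < \cdots < B^L_n$ and $B^R_1 < B^R_2 < \cdots < B^R_n$, where $B^L_i$ and $B^R_i$ correspond to vertex $v_i$. Each block $B^L_i$ begins with a small, FO-distinguishable \emph{marker} consisting of two consecutive elements matched in a local crossing to the two first elements of $B^R_i$ (so that markers are characterized locally as left endpoints of a crossing pair whose matches are consecutive on the right side). The rest of $B^L_i$ consists of \emph{edge slots}, one per vertex $v_j$ with $j \neq i$, and similarly in $B^R_i$. The matching is designed as follows: for each pair $i < j$, if $v_iv_j \in E(G)$, then the slot for $v_j$ in $B^L_i$ is matched to the slot for $v_i$ in $B^R_j$ (and symmetrically, the slot for $v_i$ in $B^L_j$ to the slot for $v_j$ in $B^R_i$), producing an inter-block crossing; if $v_iv_j \notin E(G)$, all the corresponding slots are matched within the same block pair $(B^L_i, B^R_i)$ and $(B^L_j, B^R_j)$, staying local.

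Next, I would describe the interpretation $\mathsf{I}$. The domain formula $\nu(x)$ selects the first marker element on the left side (expressible in FO via the local crossing characterization above). The adjacency formula $\phi_E(x,y)$ says: there is some element $s$ strictly between the marker of $x$ and the next marker on the left whose match is strictly between the marker of $y$ and the next marker on the right. Both "next marker" quantifications are FO because markers are FO-definable; from there one uses successor-style quantification over the totally ordered universe. By design, $\phi_E(x,y)$ holds iff there is an inter-block matched edge slot, which happens iff $v_xv_y \in E(G)$, so $\mathsf{I}(M_G) \cong G$. The construction of $M_G$ is polynomial in $n$, and the interpretation is fixed.

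The main obstacle is ensuring that "the block of $x$" is FO-definable \emph{uniformly in $n$}. Since FO cannot count, we cannot write "the $i$-th block". The resolution is to let the markers play the role of local boundaries: once we have a fixed FO formula identifying markers (thanks to their distinctive crossing pattern), the block of $x$ becomes "the half-open interval beginning at the marker of $x$ and ending just before the next marker", which is expressible in FO. A secondary technical point is to make sure that the crossing signature defining markers is not accidentally realized by the edge slots; this is handled by placing each marker at a block boundary and verifying that edge-slot matches, being either fully internal to one block pair or jumping across blocks in a controlled way, cannot mimic the adjacent-pair crossing of a marker.
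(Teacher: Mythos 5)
Your construction takes a genuinely different route from the paper's: the paper lays out the vertex names $v_1,\dots,v_n$, a separator $x$, a triple of slots per edge, and an incidence structure on the right side, using the anchors $x,x',y,y'$ to orient the interpretation, whereas you use a block-and-marker layout with one block per vertex on each side. The block-and-marker idea is plausible, but there is a genuine gap exactly at the point you dismiss as a ``secondary technical point'': you assert that edge slots cannot mimic the marker's consecutive-crossing signature, without any verification, and in fact the assertion is false in the construction as described. Take $n=2$ and $G=K_2$. Label the positions $1,\dots,12$ with $B^L_1=\{1,2,3\}$, $B^L_2=\{4,5,6\}$, $B^R_1=\{7,8,9\}$, $B^R_2=\{10,11,12\}$; the matching is $1\leftrightarrow 8,\ 2\leftrightarrow 7,\ 3\leftrightarrow 12,\ 4\leftrightarrow 11,\ 5\leftrightarrow 10,\ 6\leftrightarrow 9$. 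Besides the two intended marker pairs $(1,2)$ and $(4,5)$, the pairs $(3,4)$ (matched to $12,11$) and $(5,6)$ (matched to $10,9$) also consist of two consecutive left vertices matched in reversed order to two consecutive right vertices. Your marker formula therefore selects four elements rather than two, and the interpretation does not recover $K_2$.

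The underlying difficulty is that the marker signature is a purely local pattern, while the non-mimicry claim is a global property of the whole matching; whether it holds depends on data you never fix (the internal ordering of slots inside a block, the exact local matching rule for non-edge slots, whether the blocks are padded). With the most natural choices one can push a case analysis through for $n\ge 3$, checking every left-consecutive pair straddling a marker/slot or slot/marker boundary, but it fails already at $n=2$, and fixing it needs an extra idea (for instance a dummy slot at the end of each block, always matched locally, so that no inter-block slot sits next to a marker on either side). Since FO-locating the blocks is the entire engine of the interpretation, this is not a cosmetic omission: as written, the crux of your argument is unproved and, in the small case, wrong.
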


Let $\Sigma\subseteq \Sigma^+$ be relational signatures.
The \emph{$\Sigma$-reduct} of a $\Sigma^+$-structure $\mathbf A$ is the structure obtained from $\mathbf A$ by ``forgetting'' all the relations not in $\Sigma$. We denote this interpretation as $\reduct_{\Sigma}\from \Sigma^+\to\Sigma$, or simply $\reduct$, when $\Sigma$ is clear from context.

A class $\CC$ of $\Sigma$-structures \emph{transduces} a class $\DD$ if there is 
a class $\CC^+$ of $\Sigma^+$-structures,
where $\Sigma^+$ is the union of $\Sigma$ and some unary relation symbols such that $\reduct_{\Sigma}(\CC^+)=\CC$ and $\CC^+$ interprets $\DD$.

The following result follows from~\cite{twin-width1}.
\begin{theorem}\label{thm:tww implies mNIP}
  Let $\CC$ be a class of ordered, binary structures, and suppose that $\CC$ has bounded twin-width. Then $\CC$ does not transduce the class of all graphs.
\end{theorem}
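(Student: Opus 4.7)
The strategy is to reduce Theorem~\ref{thm:tww implies mNIP} to two facts established in \cite{twin-width1,twin-width2}: (a)~bounded twin-width is preserved under transductions of ordered binary structures, and (b)~the class of all finite graphs has unbounded twin-width. Given these two facts, if $\CC$ had bounded twin-width and transduced the class of all graphs, then the transduced class would itself have bounded twin-width while containing all finite graphs, a contradiction.

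To verify (a), I would unpack the definition of a transduction into its three constituent operations and inspect each. \emph{Coloring}, i.e.~expanding each structure by finitely many unary predicates, is harmless: the matrix encoding absorbs the colors by enlarging its finite alphabet, and any $(k,e)$-sequence of the original encoding lifts to a $(k, e+O(1))$-sequence of the colored encoding (by first refining partitions so that no merge mixes different colors, which only costs an additive constant in the error value). Applying a \emph{simple interpretation} $\mathsf I$ replaces the entry at position $(x,y)$ of the matrix encoding by a value determined by a fixed first-order formula evaluated at $(x,y)$ in the underlying structure; as shown in \cite[Section~8]{twin-width1}, a constant zone of the original mixed-symmetric matrix encoding remains constant (after a bounded refinement of the partition depending on $\mathsf I$), so the same contraction sequence witnesses bounded twin-width after the interpretation, with a bounded increase in the parameter. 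Finally, taking a \emph{reduct} only restricts the entries of the matrix encoding to a coarser alphabet, so it cannot increase twin-width. Composing the three bounds yields the desired closure of bounded twin-width under transductions of ordered binary structures.

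For fact~(b), I would invoke \cite{twin-width2}, which exhibits $3$-regular expanders whose twin-width tends to infinity with the number of vertices; these are in particular elements of the class of all graphs, so that class has unbounded twin-width. Combining (a) and (b) gives the conclusion as outlined. The main technical obstacle is purely bookkeeping in step~(a): one must track, for each of the three operations, how the $k$-overlapping property and the error value of a contraction sequence transform, and confirm that the chosen matrix encoding (which places rows before columns and uses the mixed-symmetric alphabet described in Section~\ref{subsec:graph-theory}) is robust under the rewriting induced by $\mathsf I$. Since the argument is local and the formulas defining the interpretation are fixed, all increases are by an additive constant depending only on the transduction, yielding a class of bounded twin-width as required.
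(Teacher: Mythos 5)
Your proposal is correct and follows exactly the route the paper takes: the paper gives no proof of Theorem~\ref{thm:tww implies mNIP} beyond the citation ``follows from~\cite{twin-width1},'' and what that citation encodes is precisely your two facts (a) bounded twin-width is preserved under \FO~transductions of binary structures, and (b) the class of all graphs has unbounded twin-width. Your elaboration of (a) is a fair sketch of what \cite{twin-width1}, Section~8 proves, with one imprecision worth noting: the claim that a constant zone of the original matrix encoding ``remains constant'' after applying an interpretation is not literally true (a formula $\rho_R(x,y)$ with quantifiers can take different truth values across a zone that is constant for the atomic relations), and the actual argument in \cite{twin-width1} bounds how $q$-types of pairs stabilize along a contraction sequence rather than rewriting entries locally; similarly the cost of coloring is multiplicative rather than additive in the parameter. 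Since you are invoking the cited theorem rather than re-proving it, these inaccuracies do not affect soundness.
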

This result more generally holds for (non necessarily ordered) binary structures.
We only state it in the ordered case, since the definition of twin-width we gave in~\cref{subsec:graph-theory} only fits ordered binary structures.  

\medskip

Fix a binary signature $\Sigma$ containing the symbol $<$.
An \emph{atomic type} $\tau(x_1,\ldots,x_n)$  over $\Sigma$ is a maximal conjunction of atomic formulas or negated atomic formulas with variables $x_1,\ldots,x_n$,
which is satisfiable in some ordered $\Sigma$-structure. (It is sufficient to verify this condition for structures with $n$ elements, since the formulas are quantifier-free.)
If $\bar a$ is an $n$-tuple of elements of an ordered $\Sigma$-structure $\str A$ then \emph{the} atomic type of $\bar a$ is the unique (up to equivalence) atomic type $\tau(x_1,\ldots,x_n)$ satisfied by $\bar a$ in $\str A$.
 For an atomic type $\tau(x,y)$ 
 and ordered $\Sigma$-structure $\str A$
 let $\interp I_\tau(\str A)$ be the ordered graph whose domain and order are the same as in $\str A$,
 and where two vertices $u<v$ are adjacent if and only if $\tau(u,v)$ holds in $\str A$.
Then $\interp I_\tau$ is an interpretation from $\Sigma$ to the signature of ordered graphs.

We formulate a standard lemma reducing the model checking problem for adjacency matrices of structures from a class $\CC$ to the model checking problem for $\CC$.
Let us view here the adjacency matrix $M(\str A)$ of an ordered $\Sigma$-structure $\str A$ as the matrix $|\str A|\times|\str A|$ matrix whose entry at position $(a,b)$, for $a,b\in\str A$, is the atomic type of the pair $(a,b)$.
Hence, $M(\str A)$ is a matrix over the alphabet $A_\Sigma$ consisting of all atomic types $\tau(x,y)$ with two variables.
See~\cref{app:reduction} for a proof of the lemma.

\begin{lemma}\label{lem:reducing matrices to structures}
Let $\CC$ be a class of ordered binary structures and let $\mathcal M=\setof{M(\str A)}{\str A\in\CC}$ be the class of adjacency matrices of structures in $\CC$.  
Then there is an $\FPT$~reduction of the \FO~model checking problem for $\cal M$ to the \FO~model checking problem for $\CC$.
In particular, if the former is $\AW[*]$-hard, so is the latter.
\end{lemma}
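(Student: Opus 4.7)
The natural idea is to simulate the matrix $M(\str A)$ inside $\str A$ using two ``virtual copies'' of the domain of $\str A$ (one for the rows, one for the columns). Since $\str A$ is not available as input, we first reconstruct it: given $M\in\mathcal M$, we know by definition that $M=M(\str A)$ for some $\str A\in\CC$. As the rows of $M$ are ordered in the same way as the elements of $\str A$ and each entry of $M$ records the atomic type of the corresponding pair, $\str A$ can be read off from $M$ in polynomial time by identifying its domain with the rows of $M$ and recovering each unary and binary relation from the atomic types appearing as matrix entries.

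The translation of formulas is the standard ``side-indexing'' trick. For each variable $x$ of a formula $\phi$ over the matrix signature I introduce a fresh variable $\tilde x$ ranging over $\str A$, together with a \emph{side} $\sigma(x)\in\{R,C\}$ indicating whether $x$ names a row-copy or a column-copy. Given a side assignment $\sigma$ on the free variables, I define $T_\sigma(\phi)$ by induction: boolean connectives are preserved; a quantifier $\exists x.\phi'$ becomes $\exists \tilde x.\,(T_{\sigma[x\mapsto R]}(\phi') \lor T_{\sigma[x\mapsto C]}(\phi'))$, and dually for $\forall$. Atomic formulas translate by case analysis on the sides: $R(x)$ is $\top$ if $\sigma(x)=R$ and $\bot$ otherwise, $C(x)$ symmetrically; $x=y$ is $\bot$ unless $\sigma(x)=\sigma(y)$, in which case it is $\tilde x=\tilde y$; $x<y$ is $\top$ when $\sigma(x)=R,\sigma(y)=C$, $\bot$ when $\sigma(x)=C,\sigma(y)=R$, and $\tilde x<\tilde y$ when $\sigma(x)=\sigma(y)$; and for each atomic type $\tau$ in the matrix alphabet, $E_\tau(x,y)$ is $\bot$ unless $\sigma(x)=R,\sigma(y)=C$, in which case it is the $\Sigma$-formula $\tau(\tilde x,\tilde y)$, which by definition of $M(\str A)$ precisely expresses that the entry of $M(\str A)$ at the relevant position equals $\tau$.

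A straightforward induction on $\phi$ then shows that for every assignment of row/column elements to the free variables of $\phi$, we have $M(\str A)\models\phi$ if and only if $\str A\models T_\sigma(\phi)$ under the corresponding assignment of $\tilde x$'s; in particular, for sentences the equivalence $M\models\phi \iff \str A\models T_\emptyset(\phi)$ holds. The size of $T_\emptyset(\phi)$ is at most $|A_\Sigma|^{O(1)}\cdot 2^{|\phi|}\cdot|\phi|$, which depends only on $\phi$ and $\Sigma$, so the map $(\phi,M)\mapsto (T_\emptyset(\phi),\str A)$ is an \FPT~reduction from \FO~model checking on $\mathcal M$ to \FO~model checking on $\CC$. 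The $\AW[*]$-hardness transfer is then immediate.

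The argument is essentially routine; the only place where one has to be slightly careful is ensuring that every atomic formula of the matrix signature (including the unary predicates $R,C$, the order $<$ across the row/column split, and the alphabet predicates $E_\tau$) is faithfully captured by the side-case analysis. There is no substantive obstacle: the entire content sits in correctly setting up the translation $T_\sigma$.
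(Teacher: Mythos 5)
Your proof is correct and follows essentially the same approach as the paper's: partition each variable occurrence into a row side or a column side (the paper does this by a WLOG rewriting step, you do it by an explicit side map $\sigma$ and a doubling quantifier rule, which amounts to the same thing), then translate each atom by case analysis on the sides. You are in fact slightly more thorough than the paper's own proof, which omits the case analysis for atoms $x<y$ across the row/column split; your treatment of $<$ (always true from row to column, always false from column to row, and $\tilde x<\tilde y$ on the same side) is the correct completion of that omission.
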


\subsection{Model theory}
\label{subsec:mt}

Let $\varphi(\overline{x},\overline{y})$ be a $\Sigma$-formula and let $\mathscr C$ be a class of \mbox{$\Sigma$-structures}.
The formula $\varphi$ is \emph{independent} over $\mathscr C$ if for every binary relation $R\subseteq A\times B$ between two finite sets $A$ and $B$ there exists a $\Sigma$-structure $\mathbf C\in\mathscr C$, some tuples $(\overline{u}_a)_{a\in A}$ in $C^{|\overline x|}$, and $(\overline{v}_b)_{b\in B}$ in $C^{|\overline y|}$ such that 
\[
	\mathbf C\models\varphi(\overline{u}_a,\overline{v}_b)\quad\iff\quad R(a,b)\qquad\qquad\text{for all $a\in A$ and $b\in B$.}
\]
The class $\mathscr C$ is \emph{independent} if there is a $\Sigma$-formula $\varphi(\overline x,\overline y)$ that is independent over $\mathscr C$. 
Otherwise, the class $\mathscr C$ is \emph{dependent} (or \emph{NIP}, for Not the Independence Property).
Note that if a class $\CC$ interprets the class of all graphs, then it is independent.\footnote{The converse also holds if the interpretations can use constant symbols, and we can take induced substructures after performing the interpretation, see~\cite{simon2021note}.}

A \emph{monadic lift} of a class $\mathscr C$ of $\Sigma$-structures is a class $\mathscr C^+$ of $\Sigma^+$-structures, where $\Sigma^+$ is the union of $\Sigma$ and a set of unary relation symbols, and $\mathscr C=\{\reduct_\Sigma(\mathbf A): \mathbf A\in\mathscr C^+\}$. 
 A~class~$\mathscr C$ of $\Sigma$-structures is \emph{monadically dependent} (or \emph{monadically} NIP) if every monadic lift of $\mathscr C$ is dependent (or NIP).
 
 The following theorem witnesses that transductions are particularly fitting to the study of monadic dependence:
 \begin{theorem}[Baldwin and Shelah \cite{BS1985monadic}]\label{thm:baldwin-shelah}
 	A class $\mathscr C$ of $\Sigma$-structures is monadically dependent if and only if for every monadic lift $\mathscr C^+$ of $\mathscr C$ (in $\Sigma^+$-structures), every $\Sigma^+$-formula $\varphi(\overline x,\overline y)$ with $|\overline x|=|\overline y|=1$ is dependent over $\mathscr C^+$.
 	Consequently, $\mathscr C$ is monadically dependent if and only if  $\mathscr C$ does not transduce
   the class $\mathscr G$ of all finite graphs.
 \end{theorem}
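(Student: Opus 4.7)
The forward direction is immediate from the definitions: if $\mathscr C$ is monadically dependent, then by definition every $\Sigma^+$-formula is dependent over every monadic lift $\mathscr C^+$, and in particular every $\Sigma^+$-formula $\varphi(\overline x,\overline y)$ with $|\overline x|=|\overline y|=1$ is. The substantive content is the converse, which I would prove by contraposition. Assuming that some formula $\varphi(\overline x,\overline y)$ with $|\overline x|+|\overline y|\geq 3$ is independent over some monadic lift $\mathscr C^+$, the plan is to inductively reduce $|\overline x|+|\overline y|$ by one, at the cost of enlarging the monadic lift, until we arrive at a formula $\psi(x,y)$ in single variables independent over some further monadic lift $\mathscr C^{++}$.

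The inductive step is where the work lies. Assume $|\overline x|=k\geq 2$ and write $\overline x=(x_1,\ldots,x_k)$. For each bipartite relation $R\subseteq A\times B$, independence furnishes a structure $\mathbf C\in\mathscr C^+$ and tuples $(\overline u_a)_{a\in A}$, $(\overline v_b)_{b\in B}$ witnessing $R$ via $\varphi$. The naive attempt would be to mark the first coordinates $\{u_a^1:a\in A\}$ by a single fresh unary predicate $U$ and work with the formula $\psi(x_2,\ldots,x_k,\overline y):=\exists z.\,U(z)\wedge\varphi(z,x_2,\ldots,x_k,\overline y)$ on the shortened tuples $(u_a^2,\ldots,u_a^k)$. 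This fails because when $R(a,b)$ is false one may still have $\varphi(u_{a'}^1,u_a^2,\ldots,u_a^k,\overline v_b)$ true for some interfering $a'\neq a$. To control this, I would take $R$ to be arbitrarily large and apply Ramsey's theorem to the atomic types of the finite unions of witnesses, extracting an indiscernible subfamily in which the atomic type of $(\overline u_{a_1},\ldots,\overline u_{a_j},\overline v_{b_1},\ldots,\overline v_{b_\ell})$ depends only on the combinatorial pattern of $R$ on the selected indices. On such a subfamily, either all coordinates $u_a^1$ collapse to a single element (and we replace the leading coordinate by a constant picked out by a fresh unary predicate), or they are pairwise distinct and disjoint from the remaining coordinates, so that the truth value of $\varphi(u_{a'}^1,u_a^2,\ldots,u_a^k,\overline v_b)$ is a fixed function of finitely many bits of combinatorial data about $(a,a',b)$. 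That residual data can then be absorbed into a bounded number of extra unary predicates, yielding a formula with fewer free variables that remains independent over an enlarged lift.

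For the ``consequently'' clause, the class $\mathscr G$ of all finite graphs is itself not monadically dependent, since the edge relation $E(x,y)$ is independent over $\mathscr G$; hence if $\mathscr C$ transduces $\mathscr G$, then $\mathscr C$ is not monadically dependent. Conversely, given a single-variable formula $\psi(x,y)$ independent over a monadic lift $\mathscr C^+$ produced by the first part, I would produce a transduction onto $\mathscr G$ as follows: for each finite graph $G$ on vertex set $V$, use independence to realize the symmetric bipartite relation $R_G(u,v):=uv\in E(G)$ by witnesses $(w_v)_{v\in V},(w'_v)_{v\in V}$ in some structure $\mathbf C$; then a further monadic lift marking the ``first copies'' together with a symmetrisation of $\psi$ in the interpretation recovers $G$ on the selected domain.

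The main obstacle is the indiscernibility step in the reduction: producing a Ramsey-extracted subfamily on which the interference between different $u_{a'}^1$'s and $(\overline u_a,\overline v_b)$ is genuinely uniform, and then packaging that uniform residue into a bounded set of unary predicates so that the resulting shorter formula is not merely satisfiable on the positive instances of $R$ but actually codes $R$ faithfully. This is precisely the place where enlarging the monadic lift is both necessary and sufficient.
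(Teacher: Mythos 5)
The paper does not prove this statement: it is quoted as a known result of Baldwin and Shelah~\cite{BS1985monadic}, so there is no in-paper argument to compare against; your plan must stand on its own. For the first equivalence, your outline — descend on $|\overline x|+|\overline y|$ by extracting a Ramsey-cleaned, mutually indiscernible subfamily, split into the ``collapse'' and ``disjoint'' cases for the peeled coordinate, and absorb the residual uniform data into finitely many fresh unary predicates — is the standard route for this kind of Baldwin--Shelah reduction, and you correctly identify the indiscernibility-and-interference step as the crux that still needs to be carried out in detail. That part of the proposal is credible in spirit even if it remains a plan rather than a proof.

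The ``consequently'' direction, however, contains a genuine gap. You propose to realize the symmetric relation $R_G(u,v) := uv\in E(G)$ by two families $(w_v)_{v\in V}$ and $(w'_v)_{v\in V}$ and then recover $G$ on the set $\{w_v\}$ by marking the first copies and symmetrising $\psi$. This cannot work: what the independence gives you is $\psi(w_v,w'_u)\iff uv\in E(G)$, whereas the symmetrised formula evaluated \emph{inside} $\{w_v\}$ only ever sees values $\psi(w_v,w_u)$, which nothing in the construction controls; and a unary predicate marking $\{w_v\}$ cannot encode the bijection $w_v\leftrightarrow w'_v$, so there is no way to ``look up'' $w'_u$ from $w_u$ in the interpretation. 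The standard repair is to first transduce onto the class of bipartite graphs with both sides marked, by using $\psi$ to realize the \emph{incidence} bipartite relation between $V(G)$ and $E(G)$ (rather than the edge relation on $V\times V$), restricting the domain to the witnesses and marking left/right; and then compose with the elementary transduction from incidence-bipartite graphs to all graphs, where two left vertices are adjacent exactly when they share a right neighbour. This avoids the pairing problem entirely, since no bijection between the two sides needs to be definable.
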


%  \begin{corollary}
%  	If $\mathscr C$ transduces  $\mathscr D$ and $\mathscr C$ is monadically dependent then $\mathscr D$ is monadically dependent.
%  \end{corollary}
%  \begin{proof}
%  	Otherwise, the class $\mathscr G$ of all finite graphs is a transduction of $\mathscr D$ and, by composition, a transduction of $\mathscr C$, contradicting the monadic dependence of $\mathscr C$.
%  \end{proof}

\subsection{Patterns}\label{subsec:patterns}
For a permutation $\sigma\in\mathfrak S_n$ and parameter $\sym\in\six$, we define the $n\times n$ matrix $F_\sym(\sigma)$ with entry at row $i$ and column $j$ equal to (see~\cref{fig:six-unavoidable}): 
\begin{align*}
  \bullet&\ \  [\sigma(i) =j]\textit{, if $\sym$ is `$=$'},&
  \bullet&\ \  [\sigma(i) \neq j]\textit{, if $\sym$ is `$\neq$'},\\
  \bullet&\ \  [i\le\sigma^{-1}(j)]\textit{, if $\sym$ is `$\le_R$'},&
  \bullet&\ \  [i\ge\sigma^{-1}(j)]\textit{, if $\sym$ is `$\ge_R$'},\\
  \bullet&\ \  [j \le \sigma(i)]\textit{, if $\sym$ is `$\le_C$'},&
  \bullet&\ \  [j \ge \sigma(i)]\textit{, if $\sym$ is `$\ge_C$'.}
\end{align*}
Here, $[\alpha]$ is the Iverson bracket,
with value $1$ if $\alpha$ holds and  $0$ otherwise.
Let $\mathcal F_\sym$ denote the submatrix closure of the matrices $F_\sym(\sigma)$, for all permutations $\sigma$.

We can also define classes $\mathcal F_\sym$ for $\sym\in\set{<_R,>_R,<_C,>_C}$ analogously as above, but replacing the non-strict inequalities $\le$ and $\ge$ by the strict variants $<$ and $>$. 
While changing the subscript $\sym$ in $F_\sym(\sigma)$ from a non-strict inequality to its strict variant affects the matrix entries, we nevertheless have: 
\[\mathcal F_{<R}=\mathcal F_{\le R},\quad 
\mathcal F_{>R}=\mathcal F_{\ge R},\quad 
\mathcal F_{<C}=\mathcal F_{\le C},\quad 
\mathcal F_{>C}=\mathcal F_{\ge C}.\]

A class $\mathcal M$ of $0,1$-matrices is \emph{pattern-avoiding} if it does not include any of the six matrix classes $\mathcal F_\sym$, for $\sym\in\six$.
We now lift this notion to arbitrary alphabets.

For a finite alphabet $A$, letter $a\in A$, and matrix $M$ over $A$, the \emph{$a$-selection} of $M$ is the $0,1$-matrix $s_a(M)$ obtained from $M$ by replacing each occurrence of $a$ by $1$ and each other letter by $0$.
The $a$-selection of a class $\mathcal M$ of matrices is the class $s_a(\mathcal M)$ of $a$-selections of matrices in $\mathcal M$.
Say that $\mathcal M$ is \emph{pattern-avoiding} if every its $a$-selection $s_a(\mathcal M)$ is pattern-avoiding.

\medskip
  In the introduction we define the 25 classes of ordered graphs $\mathscr P$ and  $\mathscr M_{\sym,\lambda,\rho}$, for 
  $\sym\in\sax$ and $\lambda,\rho\in\set{0,1}$. The parameters $=,\neq,\le_R,\ge_R,\le_C$, and $\ge_C$ 
  used for matrices
  are renamed to $=,\neq,\le_l,\ge_l,\le_r$, and $\ge_r$  in the case of ordered graphs, since rows and columns are interpreted as left and right vertices, respectively.  The classes $\mathscr M_{\sym,\lambda,\rho}$ can be alternatively defined as follows.
  
  % The relationship between $\mathscr M_{\sym,\lambda,\rho}$ and $\mathcal F_\sym$ is as follows.
  \medskip
  Let $H$ be an ordered matching with 
  vertices $a_1<\ldots<a_n<b_1<\ldots<b_n$,
  so that there is some $\sigma\in\mathfrak S_n$ such that $a_i$ is matched with $b_{\sigma(i)}$, for $1\le i\le n$. Then for $\sym\in\sax$, 
  define an ordered graph $H[\sym,\lambda,\rho]$  with vertices $a_1<\ldots<a_n<b_1<\ldots<b_n$ 
such that  $[E(a_i,b_j)]$ (the truth value of the adjacency between $a_i$ and $b_j$) is equal to:
\[[\sigma(i)=j],\quad [\sigma(i)\neq j],\quad [\sigma(i)\le j],\quad [\sigma(i)\ge j],\quad [i \le \sigma^{-1}(j)],~\text{or}~[i\ge \sigma^{-1}(j)],
\]
depending on the parameter $\sym\in\sax$,
and for $1\le i<j\le n$, $[E(a_i,a_j)]=\lambda$
and $[E(b_i,b_j)]=\rho$.
Note that 
$([E(a_i,b_j)])_{1\le i,j\le n}=
F_\sym(\sigma)$, where 
 $\sym$ is now treated as an element of $\six$.
 
 The class $\mathscr M_{\sym,\lambda,\rho}$
 is the hereditary closure of the class of all ordered graphs $H[\sym,\lambda,\rho]$, where $H$ is an ordered matching.
 The class $\mathscr P$ is the class 
 of all {ordered permutation graphs},
 and satisfies the following properties
 (see~\cref{subsec:basic-notions}).

 \begin{lemma}\label{lem: class P}
   The class $\mathscr P$ is hereditary, closed under edge complements, and has growth $n!$.
 \end{lemma}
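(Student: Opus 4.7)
All three claims are essentially bookkeeping on top of the correspondence between ordered permutation graphs and permutations that has already been set up in Section~\ref{subsec:basic-notions}. The plan is to verify each property in turn, reusing the observations recorded there.

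First, for hereditariness, the plan is to appeal directly to the remark that if $\sigma$ is the subpermutation of $\pi$ induced by $U\subseteq[n]$, then $G_\sigma$ is the ordered subgraph of $G_\pi$ induced by $U$. Hence any induced ordered subgraph of $G_\pi$ is (isomorphic to) $G_\sigma$ for some permutation $\sigma$, and thus lies in $\mathscr P$. No real work is required here beyond citing this correspondence.

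Second, for closure under edge complements, I would argue concretely: given $\pi\in\mathfrak S_n$, define $\bar\pi\in\mathfrak S_n$ by $\bar\pi(i)=n+1-\pi(i)$. Then for all $1\le i<j\le n$,
\[
\pi(i)>\pi(j)\quad\iff\quad \bar\pi(i)<\bar\pi(j),
\]
so two vertices $i<j$ are non-adjacent in $G_\pi$ iff they are adjacent in $G_{\bar\pi}$. Therefore the edge complement of $G_\pi$ is $G_{\bar\pi}\in\mathscr P$. (Equivalently, one may use the two-order viewpoint: $G_\pi$ corresponds to $(<_1,<_2)$ on $[n]$ and its edge complement to $(<_1,>_2)$, which is again a pair of linear orders and hence a permutation graph.)

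Third, for the growth, I would use the fact recorded in Section~\ref{subsec:basic-notions} that the isomorphism type of $G_\pi$ as an ordered graph determines $\pi$ uniquely. This gives an injection $\mathfrak S_n\hookrightarrow \mathscr P_n$, $\pi\mapsto G_\pi$, whose image is by definition all of $\mathscr P_n$. Therefore $|\mathscr P_n|=n!$. The only step that requires a moment's care is this uniqueness: knowing $G_\pi$ (with its vertex order) lets one read off, for each pair $i<j$, whether $\pi(i)>\pi(j)$, and these inversion data determine $\pi$. This is the only place where there is anything to check, and it is immediate, so there is no real obstacle in the proof.
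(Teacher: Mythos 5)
Your proof is correct and follows the same route the paper relies on: the three facts are exactly those already recorded in Section~\ref{subsec:basic-notions} (hereditariness via subpermutations, complement closure via reversing $<_2$ or equivalently $\bar\pi(i)=n+1-\pi(i)$, and the count $n!$ via the uniqueness of $\pi$ given the ordered graph $G_\pi$). Nothing is missing, and the paper simply regards the lemma as a restatement of those observations.
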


\subsection{Ramsey theory}\label{subsec:ram}

We recall Ramsey's theorem.

\begin{theorem}[Ramsey's theorem \cite{Ramsey1930}]
 \label{th: Ramsey}
 There exists a function $\ramm{\cdot}{\cdot}~:~\Nn \times \Nn \to \Nn$ such that for every $k\geq 1$, $t \geq 1$ 
 the complete graph $K_{\ramm{k}{t}}$ with edges colored by $t$ distinct colors contains a monochromatic clique on $k$ vertices, i.e., a clique whose edges all have the same color. 
\end{theorem}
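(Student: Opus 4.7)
The plan is to prove Ramsey's theorem by induction, first establishing the two-color case and then bootstrapping to arbitrarily many colors. I will define $\ramm{k}{t}$ recursively and verify that the recursion yields a finite bound at every step.

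First I would handle the case $t=2$ by defining a bivariate Ramsey function $R(s,t)$ such that any red-blue edge-coloring of $K_{R(s,t)}$ contains a red $K_s$ or a blue $K_t$, and then setting $\ramm{k}{2}:=R(k,k)$. The central recursion is $R(s,t)\le R(s-1,t)+R(s,t-1)$, proved as follows: in a graph on $R(s-1,t)+R(s,t-1)$ vertices, pick a vertex $v$; by the pigeonhole principle either $v$ has at least $R(s-1,t)$ red neighbors or at least $R(s,t-1)$ blue neighbors. In the first case the induction hypothesis applied to the red neighborhood yields either a red $K_{s-1}$ (which together with $v$ forms a red $K_s$) or a blue $K_t$; the other case is symmetric. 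With the base cases $R(s,1)=R(1,t)=1$, one obtains the explicit bound $R(s,t)\le\binom{s+t-2}{s-1}$, which is finite.

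To handle the general case $t\ge 3$, I would use the recursion
\[
\ramm{k}{t}\ \le\ \ramm{\,\ramm{k}{t-1}\,}{2},
\]
obtained by grouping colors. Given a coloring of $K_{\ramm{k}{t}}$ with $t$ colors, identify color~$1$ with ``red'' and all remaining colors with ``blue''; this is a $2$-coloring, so by the $t=2$ case there is a monochromatic clique of size $\ramm{k}{t-1}$. If this clique is red, we are done; otherwise every edge uses one of the colors $2,\ldots,t$, and the inductive hypothesis applied to this clique produces a monochromatic $K_k$. Unfolding the recursion shows that $\ramm{k}{t}$ is well-defined and finite for all $k,t\ge 1$.

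Since the statement is purely qualitative (existence of the function), no quantitative optimization is needed, and the argument above suffices. The only mildly delicate point is correctly setting up the two nested inductions (on $s+t$ in the bivariate case, and on $t$ in the multi-color reduction), together with the base cases; but this is routine and there is no substantial obstacle beyond bookkeeping.
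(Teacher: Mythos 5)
Your proof is correct and is the standard textbook argument (Erd\H{o}s--Szekeres recursion for two colors, then color-amalgamation to reduce $t$ colors to $t-1$). The paper does not prove \cref{th: Ramsey} at all; it simply cites Ramsey's 1930 paper and uses the statement as a black box, so there is no internal proof to compare against. Both inductions are set up correctly in your sketch (induction on $s+t$ for the two-color function $R(s,t)$, and induction on $t$ via $\ramm{k}{t}\le\ramm{\ramm{k}{t-1}}{2}$), and the base cases $R(s,1)=R(1,t)=1$ and $\ramm{k}{1}=k$ close the recursions, so the argument is complete.
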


For every $p \geq 0$ we will denote with ${\rm R}_{t}^{(p)}(\cdot)$ the function ${\rm R}_{t}(\cdot)$ iterated $p$ times.
A well-known variant of~\cref{th: Ramsey} for complete bipartite graphs is the following: 

\begin{theorem}[Bipartite Ramsey's theorem]
 \label{th: bipRamsey}
 There exists a function $\bipramm{\cdot}{\cdot}~:~\Nn \times \Nn \to \Nn$ such that for every $k\geq 1$, $t \geq 1$ 
 the complete graph $K_{\bipramm{k}{t},\bipramm{k}{t}}$ with edges colored by $t$ distinct colors contains a monochromatic biclique $K_{k,k}$. 
\end{theorem}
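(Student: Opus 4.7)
The plan is to establish Theorem~\ref{th: bipRamsey} by a standard iterative pigeonhole argument on one side of the bipartition, yielding the explicit bound $\bipramm{k}{t} \le k \cdot t^{(k-1)t+1}$. I prefer this direct approach over a reduction to the non-bipartite Ramsey theorem (\cref{th: Ramsey}) because it exploits the bipartite structure to give a tighter and more transparent bound.

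Let $A, B$ be the two sides of $K_{N,N}$ with $N := k \cdot t^{(k-1)t+1}$, and let $c \colon A \times B \to [t]$ be the given edge-coloring. The key construction is an iterative process producing distinct vertices $a_1, \ldots, a_m \in A$, colors $\chi_1, \ldots, \chi_m \in [t]$, and a nested chain $B = B_0 \supseteq B_1 \supseteq \cdots \supseteq B_m$ satisfying $|B_i| \ge N/t^i$ and $c(a_i, b) = \chi_i$ for every $b \in B_i$. At stage $i+1$ we pick any unused $a_{i+1} \in A$ and apply pigeonhole to the coloring $c(a_{i+1}, \cdot)\restriction_{B_i}$: some color class $B_{i+1} \subseteq B_i$ has size at least $|B_i|/t$, and we let $\chi_{i+1}$ be its color. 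Running this for $m := (k-1)t + 1$ steps is feasible provided $N \ge m$, which holds comfortably.

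The second step is a second pigeonhole pass: among the $m = (k-1)t+1$ colors $\chi_1, \ldots, \chi_m \in [t]$, at least $k$ must coincide, say $\chi_{i_1} = \cdots = \chi_{i_k} = \chi$ with $i_1 < \cdots < i_k$. Then $\{a_{i_1}, \ldots, a_{i_k}\}$ together with any $k$ vertices chosen from $B_m$ spans a monochromatic $K_{k,k}$ of color $\chi$; this requires $|B_m| \ge k$, which follows from $|B_m| \ge N/t^m = k$ by our choice of $N$.

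There is no real obstacle: the only bookkeeping is to check that $N = k \cdot t^{(k-1)t+1}$ simultaneously satisfies $N \ge m$ and $N/t^m \ge k$, both of which are immediate. The resulting function $\bipramm{k}{t}$ is then computable (in fact, single-exponential in $k$ for fixed $t$), which is all that is needed for the Ramsey-theoretic applications later in the paper.
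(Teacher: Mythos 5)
Your proof is correct, and the bound $\bipramm{k}{t} \le k\cdot t^{(k-1)t+1}$ is verified: after $m=(k-1)t+1$ pigeonhole steps one has $|B_m|\ge N/t^m = k$, and pigeonhole on the $m$ colors yields $k$ indices with a common color since $m > (k-1)t$; the side condition $N\ge m$ holds because $N = k\,t^m \ge k\cdot 2^m \ge m$ when $t\ge 2$ and $N=k=m$ when $t=1$. The paper states this result without proof, citing it as a well-known variant of Ramsey's theorem, so there is no in-paper argument to compare against; your direct two-pass pigeonhole argument is the standard route, and it is preferable to a reduction to the non-bipartite Ramsey number since it avoids the tower-type blow-up and keeps the bound single-exponential in $k$ for fixed $t$.
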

\medskip
The \emph{order type} of a pair $(x,y)$ of elements of a totally ordered set is the integer $\ot(x,y)$ defined by
\[
	\ot(x,y)=\begin{cases}
		-1&\text{if }x>y\\
		0&\text{if }x=y\\
		1&\text{if }x<y.
	\end{cases}
\]

For structures with two orders, we will use a convenient specific result from Ramsey theory, which  is a special case of the so-called \emph{product Ramsey theorem}
(see e.g.~Proposition 3 in~\cite{bodirsky_2015}
in the special case of the full product of two copies of $(\mathbb Q,<)$. See also the historical comment following it).

For a set with two total orders $(X,<_1,<_2)$, let $\ot_1(x,y)$  denote the order type of $x,y$ with respect to~$<_1$, while $\ot_2(x,y)$ the order type with respect to $<_2$. 

\begin{lemma}\label{lem:grid-ramsey}
Fix a finite set of colors $\Gamma$ with $t:=|\Gamma|$. There exists a function $\gridramm{\cdot}{t} ~ :\Nn \to \Nn$ such that
for every finite set with two total orders $\str M=([k],<_1,<_2)$ there is another finite set with two total orders $\str N=([N],<_1,<_2)$ where $N=\gridramm{k}{t}$, such that
for every coloring $c\from [N]^2\to \Gamma$ there is a substructure $\str M'$ of $\str N$ isomorphic to $\str M$  such that 
 $c(p,q)$ depends only on
 $\ot_{1}(p,q)$ and $\ot_{2}(p,q)$, for all distinct $p,q\in \str M'$. More precisely,  $c(p,q)=\gamma(\ot_1(p,q),\ot_2(p,q))$
 holds for some function $\gamma\from \set{-1,0,1}^2\to \Gamma$.
\end{lemma}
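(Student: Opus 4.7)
The plan is to invoke the Ramsey property of the class of finite sets equipped with two linear orders, which is Proposition~3 of~\cite{bodirsky_2015} applied to the homogeneous structure $(\mathbb Q, <_1, <_2)$. This property states: for every finite 2-ordered set $\str M$ and every positive integer $t$, there exists a finite 2-ordered set $\str N$ such that for every $t$-colouring of the ordered pairs of distinct elements of $\str N$, there is a substructure $\str M' \subseteq \str N$ isomorphic to $\str M$ on which the colour of each ordered pair depends only on its isomorphism type as a 2-ordered pair.

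The isomorphism type of an ordered pair $(p, q)$ of distinct elements in a 2-ordered set is completely characterised by the pair $(\ot_1(p, q), \ot_2(p, q)) \in \set{-1, +1}^2$, giving four possible types. Thus applying the Ramsey property above to any colouring $c \colon [N]^2 \to \Gamma$ yields a substructure $\str M' \subseteq \str N$ isomorphic to $\str M$ such that $c(p, q) = \gamma(\ot_1(p, q), \ot_2(p, q))$ for all distinct $p, q \in \str M'$ and some function $\gamma$, which is exactly the conclusion.

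For a self-contained argument, the strategy would be to construct $\str N$ explicitly as an iterated blow-up of $\str M$ and to apply the bipartite Ramsey theorem (\cref{th: bipRamsey}) in rounds. Identifying $\str M$ with the permutation $\pi \in \mathfrak S_k$ satisfying $i <_2 j \iff \pi(i) < \pi(j)$, one takes $\str N$ to be a large blow-up of $\pi$: $k$ contiguous $<_1$-blocks of size $L$, with the two orders coinciding inside each block and with the $<_2$-order of blocks following $\pi$. Every transversal is then a copy of $\str M$. Iterating bipartite Ramsey over the $k(k-1)$ ordered block pairs yields sub-blocks on which the cross-block colouring is constant on each block pair. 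The main obstacle, and the technically heaviest step, is then to force these constants to depend only on the order type of the block pair in $\str M$; this can be handled by a second level of nested blow-up combined with a further round of Ramsey that homogenises the constants across block pairs within each order-type class. The resulting $\gridramm{k}{t}$ is bounded by an iterated Ramsey number, whose precise form is not needed for the applications.
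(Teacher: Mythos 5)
Your first two paragraphs take the same route as the paper, which also just cites Proposition~3 of \cite{bodirsky_2015} (the product Ramsey theorem, specialized to the full product of two copies of $(\mathbb Q,<)$) and offers no independent proof of the lemma, so for that part there is nothing to compare. One small inaccuracy is worth flagging: a two-element set with two linear orders has only \emph{two} isomorphism types (the orders agree or disagree), not four; the four cases in $\set{-1,1}^2$ arise because the lemma colors \emph{ordered} pairs. To derive the lemma from the structural Ramsey property one should encode $c$ as a $\Gamma^2$-coloring of unordered two-element substructures (recording $c(p,q)$ and $c(q,p)$ for the $<_1$-increasing enumeration of $\set{p,q}$) and apply the Ramsey property once for each of the two isomorphism types; that is routine but should be said.

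The ``self-contained'' sketch in your last paragraph, however, has a genuine gap. Blowing up $M$ into $k$ blocks and running iterated bipartite Ramsey over the $k(k-1)$ ordered block pairs leaves you with a single transversal candidate per block and a map $\gamma\from [k]^2\to\Gamma$ recording the homogenized color of each ordered block pair; what you then need is a copy of $M$ inside the $k$-element block set on which $\gamma$ depends only on $(\ot_1,\ot_2)$. That is verbatim the statement of the lemma, with the same $k$ and the same~$t$. So the proposed ``second level of nested blow-up combined with a further round of Ramsey'' is not a further round --- it is the original problem reappearing unchanged, and the recursion as written does not terminate. What actually closes the argument is either the grid/product Ramsey theorem you already invoked in the first paragraph (take $N$ to be a lexicographic product $[n]\times[n]$, color $2\times 2$ combinatorial subcubes by the $\Gamma^4$-tuple of colors of their diagonal point pairs, homogenize with product Ramsey, and extract a transversal realizing $M$), or a genuine multi-level partite amalgamation. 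As a supposedly elementary alternative to the cited result, the sketch should not be presented as complete; bipartite Ramsey alone is strictly weaker than what is needed here.
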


\cref{lem:grid-ramsey} translates to the following statement on permutations.

\begin{lemma}\label{lem:perm-ramsey}
  Fix a finite set of colors $\Gamma$. For every $k\ge 1$ and permutation $\sigma\in\mathfrak S_k$ there is $N\ge 1$ and a permutation $\pi\in\mathfrak S_N$ such that for every coloring $c\from [N]^2\to \Gamma$ there is a set $U\subseteq [N]$ of size $k$ such that $\sigma$ is the subpermutation of $\pi$ induced by $U$, and $c(i,j)$ depends only on 
  $\ot(i,j)$ and $\ot(\pi(i),\pi(j))$, for all $i,j\in U$.
\end{lemma}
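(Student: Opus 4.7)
The plan is to derive this statement directly from \cref{lem:grid-ramsey} by using the correspondence (recalled in~\cref{subsec:basic-notions}) between permutations and finite sets equipped with two total orders. Recall that a permutation $\sigma \in \mathfrak S_k$ is equivalent data to the structure $\str M_\sigma = ([k], <_1, <_2)$, where $<_1$ is the natural order on $[k]$ and $i <_2 j$ iff $\sigma(i) < \sigma(j)$; subpermutations of $\sigma$ correspond to induced substructures of $\str M_\sigma$.

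First, I would apply \cref{lem:grid-ramsey} to $\str M_\sigma = ([k],<_1,<_2)$ with the color set $\Gamma$, obtaining $N = \gridramm{k}{|\Gamma|}$ and a structure $\str N = ([N], <_1, <_2)$ with two total orders. After relabeling its domain by the $<_1$-rank, I may assume that $<_1$ is the natural order on $[N]$, in which case $<_2$ is captured by a unique permutation $\pi \in \mathfrak S_N$ satisfying $i <_2 j \iff \pi(i) < \pi(j)$. This $\pi$ is the permutation whose existence is claimed by the lemma.

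Next, given an arbitrary coloring $c \from [N]^2 \to \Gamma$, \cref{lem:grid-ramsey} supplies an induced substructure $\str M' \subseteq \str N$ isomorphic to $\str M_\sigma$, together with a function $\gamma \from \set{-1,0,1}^2 \to \Gamma$ such that $c(p,q) = \gamma(\ot_1(p,q),\ot_2(p,q))$ for all distinct $p,q$ in the domain $U \subseteq [N]$ of $\str M'$. By construction $\ot_1(p,q) = \ot(p,q)$ (standard order) and $\ot_2(p,q) = \ot(\pi(p),\pi(q))$, so $c(p,q)$ depends only on the pair $(\ot(p,q),\ot(\pi(p),\pi(q)))$, as required. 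Finally, since $\str M'$ is isomorphic to $\str M_\sigma$ as a set with two orders, the subpermutation of $\pi$ induced by $U$ is exactly $\sigma$.

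There is no real obstacle here beyond the translation between the two languages; the content of the statement is already packaged in \cref{lem:grid-ramsey}, and only the (straightforward) dictionary ``permutation = set with two orders'' has to be unwound.
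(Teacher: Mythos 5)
Your proposal is correct and is precisely the intended derivation: the paper gives no explicit proof of~\cref{lem:perm-ramsey}, simply noting that it is the translation of~\cref{lem:grid-ramsey} via the dictionary between permutations and finite sets with two total orders, which is exactly the translation you spell out. One small point worth flagging: \cref{lem:grid-ramsey} only constrains $c(p,q)$ for \emph{distinct} $p,q$, so the conclusion of~\cref{lem:perm-ramsey} should strictly be read as holding for distinct $i,j\in U$; in all uses in the paper the diagonal is constant anyway, so this is harmless.
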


\section{Effective equivalence of bounded twin-width and no large rich division}\label{sec:approx-tww}

In this section we show the equivalence between \cref{it:bd-tww} and \cref{it:bd-rd}.
As a by-product, we obtain an $f(\text{OPT})$-approximation algorithm for the twin-width of matrices, or ordered graphs.  
We first show that a large rich division implies large twin-width.
This direction is crucial for the algorithm but \emph{not} for the main circuit of implications.

\begin{lemma}\label{lem:richcertificate}
If $M$ has a $2k(k+1)$-rich division ${\mathcal D}$, then $\tww(M) > k$.
\end{lemma}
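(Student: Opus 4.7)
The plan is to assume, for contradiction, that $\tww(M)\le k$, and derive a contradiction from the $r$-richness of $\mathcal D$, with $r=2k(k+1)$. Fix a $(k_1,k_2)$-sequence $\mathcal P_1,\ldots,\mathcal P_N$ witnessing this, with $k_1+k_2\le k$, and write $\mathcal D=(\mathcal D^R,\mathcal D^C)$.

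First I identify a critical step $i^*$, defined as the earliest step at which either some part of $\mathcal D^R$ is entirely contained in a single row part of $\mathcal P_{i^*}^R$, or symmetrically some part of $\mathcal D^C$ lies inside a single column part of $\mathcal P_{i^*}^C$. By the row/column symmetry of the setup, I may assume the former: there exist $D_0\in\mathcal D^R$ and $P_0\in\mathcal P_{i^*}^R$ with $D_0\subseteq P_0$. By minimality of $i^*$, at step $i^*$ no $E\in\mathcal D^C$ is fully inside any single part of $\mathcal P_{i^*}^C$; equivalently, every $E$ has columns in at least two column parts.

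Next, exploiting the error-value bound on $P_0$, there exist at most $k_2$ column parts $Q_1,\ldots,Q_\ell\in\mathcal P_{i^*}^C$ (with $\ell\le k_2$) such that $P_0\cap Q_j$ is non-constant; for every other column part $Q$, the submatrix $P_0\cap Q$ is constant. Consequently, all rows of $D_0\subseteq P_0$ coincide on $C\setminus X$, where $X=Q_1\cup\cdots\cup Q_\ell$; in particular $D_0$ has exactly one distinct row vector when restricted to $C\setminus X$.

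The crux is then to show that $X$ is contained in the union of at most $r$ parts of $\mathcal D^C$; once this is done, the $r$-richness of $\mathcal D$ forces at least $r\ge 2$ distinct rows of $D_0$ on $C\setminus X$, contradicting the previous paragraph. To prove this bound, I plan to use the two facts that (a) each $Q_j$ has at most $k_1$ conflicting column parts in $\mathcal P_{i^*}^C$, and (b) by the choice of $i^*$, no $E\in\mathcal D^C$ lies entirely inside any $Q\in\mathcal P_{i^*}^C$. Informally, any $E$ fully contained in the span of some $Q_j$ must share columns with a conflicting column part of $\mathcal P_{i^*}^C$; pairing such $E$'s against the $\le k_1$ conflicts of $Q_j$ and accounting for the two ``boundary'' $E$'s at the ends of the span of $Q_j$ should give roughly $2(k_1{+}1)$ parts of $\mathcal D^C$ touched per $Q_j$, hence at most $2k_2(k_1{+}1)\le 2k(k{+}1)=r$ in total.

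The main obstacle is precisely this last step. A naive argument based only on $|L(E)|\le k_1+1$ is too weak: a single column part of $\mathcal P_{i^*}^C$ may have a long span crossing many intervals of $\mathcal D^C$, even when it fully contains none of them (as the example of odd- and even-indexed column parts shows). A more careful analysis must therefore either exploit that distinct conflicting column parts account for disjoint ``blocks'' of $\mathcal D^C$-parts, or refine the choice of $i^*$ so that $\mathcal P_{i^*}^C$ is locally simple relative to $\mathcal D^C$ in a useful sense. I expect this to be the technical heart of the argument; the rest of the proof is the clean error/richness accounting outlined above.
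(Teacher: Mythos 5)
Your proposal correctly identifies its own gap, and the gap is real. The issue is structural, not just a missing lemma: you have set up the argument so that the ``bad'' set $X$ is a union of column parts of $\mathcal P_{i^*}^C$, but to invoke $r$-richness you need to delete at most $r$ parts of the \emph{division} $\mathcal D^C$. There is no reason a single column part of $\mathcal P_{i^*}^C$ should span few parts of $\mathcal D^C$ --- this is exactly what you note --- and your choice of critical step $i^*$ (first moment some $\mathcal D^R$-part is swallowed by a $\mathcal P^R$-part) gives no control over how $\mathcal P^C$-parts sit relative to $\mathcal D^C$.

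The paper's proof uses a different critical step and, more importantly, inverts which axis is pruned. Set $t$ to be the first step at which some part of $\mathcal P_t^R$ intersects three parts of $\mathcal D^R$, or some part of $\mathcal P_t^C$ intersects three parts of $\mathcal D^C$; say it is a column part $C_j$ intersecting $C'_a, C'_b, C'_c$ with $a<b<c$. Then the span of $C_j$ covers $C'_b$ entirely, so every $\mathcal P_t^C$-part meeting $C'_b$ conflicts with $C_j$, giving at most $k+1$ such parts; call this set $S$. By minimality of $t$, every $\mathcal P_t^R$-part meets at most two parts of $\mathcal D^R$. Now mark a $\mathcal D^R$-part ``red'' if some row part $R_i\in\mathcal P_t^R$ meeting it has a non-constant interaction with some $C_z\in S$. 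Each $C_z$ has error value at most $k$, and each offending $R_i$ touches at most two $\mathcal D^R$-parts, so there are at most $2k(k+1)$ red parts. Removing them leaves a row set $N$ on which every $C_z\in S$ is constant, so $N\cap C'_b$ has at most $k+1<2k(k+1)$ distinct column vectors, contradicting $2k(k+1)$-richness.

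The key ideas you are missing are: (1) choose the critical step via the ``intersects three $\mathcal D$-parts'' criterion, which simultaneously forces the \emph{other} axis to have every $\mathcal P_t$-part meet at most two $\mathcal D$-parts; (2) pick the \emph{middle} $\mathcal D$-part $C'_b$, which is automatically covered by few $\mathcal P_t^C$-parts by the overlapping bound; (3) apply richness by deleting row parts of $\mathcal D^R$ (the ``red'' ones) and counting distinct \emph{columns} inside $C'_b$, rather than deleting column parts to count distinct rows. Steps (1) and (2) are precisely the refinement you conjectured must exist to make $\mathcal P_{i^*}^C$ ``locally simple relative to $\mathcal D^C$'', and step (3) is what lets you avoid the hopeless problem of covering $\mathcal P$-parts by $\mathcal D$-parts altogether.
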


\begin{proof}
  We prove the contrapositive.
  Let $M$ be a matrix of twin-width at most~$k$.
  In particular, $M$ admits a $(k,k)$-sequence $\mathcal P_1, \ldots, \mathcal P_{n+m-1}$.
  Let $\mathcal D$ be any division of $M$.
  We want to show that $\mathcal D$ is \emph{not} $2k(k+1)$-rich.
  
  Let $t$ be the smallest index such that either a part $R_i$ of ${\mathcal P}_t^R$ intersects three parts of ${\mathcal D}^R$, or a part $C_j$ of ${\mathcal P}_t^C$ intersects three parts of~${\mathcal D}^C$.
  Without loss of generality we can assume that $C_j \in {\mathcal P}_t^C$ intersects three parts $C'_a,C'_b,C'_c$ of ${\mathcal D}^C$, with $a<b<c$ where the parts $C'_1, \ldots, C'_d$ of the division $\mathcal D$ are ordered from left to right.
  Since ${\mathcal P}^C_t$ is a \kop, the subset $S$, consisting of the parts of ${\mathcal P}^C_t$ intersecting $C'_b$, has size at most $k+1$.
  Indeed, $S$ contains $C_j$ plus at most $k$ parts which $C_j$ is in conflict with.
  
  Here a part $R'_s$ of ${\mathcal D}^R$ is called \emph{red} if there exist a part $R_i$ of ${\mathcal P}_t^R$ intersecting $R'_s$ and a part $C_z$ in $S$ such that the submatrix $R_i \cap C_z$ is not constant (see~\cref{fig:bdtww-to-brd}).
  We then say that $C_z$ is a \emph{witness} of $R'_s$ being red.
  Let $N \subseteq R$ be the subset of rows \emph{not} in a red part of ${\mathcal D}^R$.
  Note that for every part $C_z \in S$, the submatrix $N \cap C_z$ consists of the same column vector repeated $|C_z|$ times.
  Therefore $N \cap C'_b$ has at most $k+1$ distinct column vectors.

  \tikzexternaldisable 
  \begin{figure}[h!]
    \centering
    \begin{tikzpicture}
      \def\xb{0}
      \def\xe{10}
      \def\yb{0}
      \def\ye{6}
      \def\bb{-0.4}
      \def\be{-0.1}
      %partition h
      \foreach \i/\j/\c in {0.5/0.8/orange, 2.5/3.2/orange, 5/5.6/orange, 7.8/8/orange, 8.6/9.4/orange, 5.6/6/blue, 3.4/3.8/blue, 2.2/2.5/black!40!green, 3.2/3.4/black!10!yellow, 4.45/4.8/black!10!yellow, 4.8/5/black!60!gray, 3.8/4.45/brown!85!blue}{
        \fill[\c,opacity=0.6] (\i,\yb) -- (\i,\ye) -- (\j,\ye) -- (\j,\yb) -- cycle ;
      }
      \node at (4.14,-0.22) {$C_z$} ;
      \node[circle,inner sep=-0.03cm] (cj) at (4.7,-1) {$C_j$} ;
      \foreach \i in {0.65, 2.85, 5.3, 7.9, 9}{
        \draw[very thick, orange,opacity=0.6] (cj) -- (\i,0.02) ;
      }
      \node [circle,inner sep=-0.03cm] (ri) at (\xe+0.5,3.45) {$R_i$} ;
      \foreach \i in {2.5,4.5}{
        \draw[very thick, cyan,opacity=0.6] (ri) -- (\xe-0.02,\i) ;
      }
      %partition v
      \foreach \i/\j/\c in {4.3/4.7/cyan, 2.4/2.6/cyan}{
        \fill[\c,opacity=0.6] (\xb,\i) -- (\xe,\i) -- (\xe,\j) -- (\xb,\j) -- cycle ;
      }
      %division h
      \foreach \i in {0.2, 2.2, 6, 7.6, 9.8}{
        \draw[thick] (\i,\yb) -- (\i,\ye) ;
      }
      \foreach \i/\j in {1.1/a,4.1/b,8.7/c}{
        \node at (\i, \ye + 0.4) {$C'_\j$} ;
      }
       %division v
      \foreach \j in {0.2, 0.8, 1.4, 1.75, 2.4, 2.8, 3.1, 4.1, 5, 5.4, 5.9}{
        \draw[thick] (\xb,\j) -- (\xe,\j) ;
      }
      \foreach \i/\j/\c in {2.4/2.8/red, 4.1/5/red}{
        \fill[\c] (\bb,\i) -- (\be,\i) -- (\be,\j) -- (\bb,\j) -- cycle ;
      }
      \node at (-0.65,4.55) {$R'_s$} ;
       %zone
      \draw[very thick,red] (3.8,4.3) -- (4.45,4.3) -- (4.45,4.7) -- (3.8,4.7) -- cycle ;
      \draw[very thick,red] (3.8,2.4) -- (4.45,2.4) -- (4.45,2.6) -- (3.8,2.6) -- cycle ;
      \node at (4.125,4.5) {\textcolor{red}{NC}} ;
    \end{tikzpicture}
    \caption{The division $D$ in black. The column part $C_j \in \mathcal P_t^C$, first to intersect three division parts, in orange. Two row parts of $\mathcal D$ turn red because of the non-constant submatrix $C_z \cap R_i$, with $C_z \in S$ and $R_i \in \mathcal D^R$. After removal of the at most $2k|S|$ red parts, $|S| \leqslant k+1$ bounds the number of distinct columns.}
    \label{fig:bdtww-to-brd}    
  \end{figure}
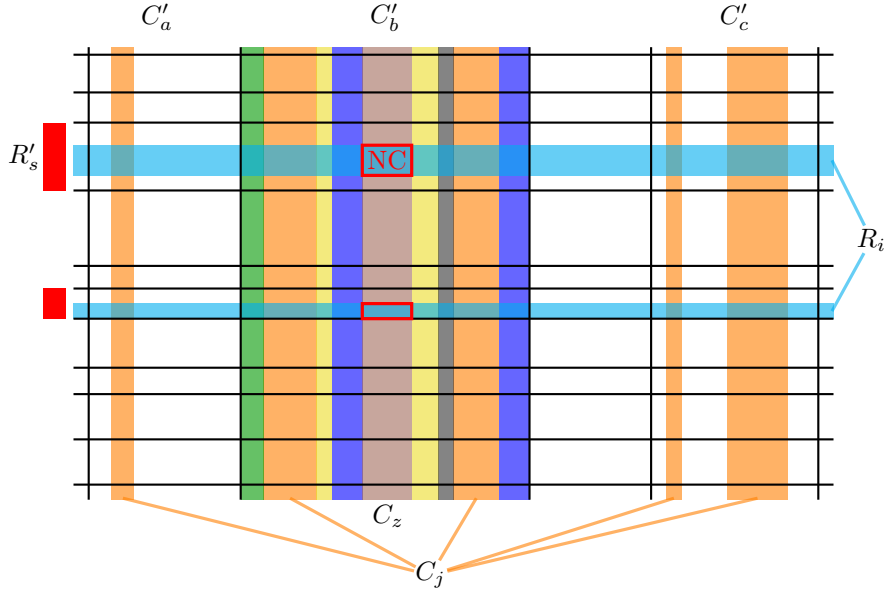
  \tikzexternalenable
  
  Besides, the number of red parts witnessed by $C_z \in S$ is at most $2k$.
  This is because the number of non-constant submatrices $R_i \cap C_z$, with $R_i \in {\mathcal P}_t^R$, is at most $k$ (since $\mathcal P_1, \ldots, \mathcal P_{n+m-1}$ is a $(k,k)$-sequence) and because every $R_i$ intersects at most two parts of ${\mathcal D}^R$ (by definition of $t$).
  Hence the total number of red parts is at most~$2k|S|$, thus at most~$2k(k+1)$.
  Consequently, there is a subset $X$ of at most $2k(k+1)$ parts of ${\mathcal D}^R$, namely the red parts, and a part $C'_b$ of ${\mathcal D}^C$ such that $(R \setminus \cup X) \cap C'_b = N \cap C'_b$ consists of at most $k+1$ distinct column vectors.
  Thus ${\mathcal D}$ is not a $2k(k+1)$-rich division.
\end{proof}

Our main algorithmic result is that  approximating the twin-width of matrices (or ordered graphs) is \FPT.
Let us observe that this remains a challenging open problem for (unordered) graphs.
We finally state~\cref{thm:approx-tww} in the language of matrices, since rich divisions are only natural in that setting.
The previous statement of~\cref{thm:approx-tww}, for ordered binary structures, readily follows.

Indeed recall that the twin-width of an ordered binary structure is defined as the twin-width of its matrix encoding.
Besides a contraction sequence for the matrix can be turned into a contraction sequence for the ordered binary structure.
Every time the $i$-th and $j$-th, say, column parts are merged, we symmetrically merge the $i$-th and $j$-th row parts.
Because our matrix encoding is mixed-symmetric, this produces a sequence with the same error value (see~\cite[Theorem 14]{twin-width1}).
The now-symmetric sequence can then be interpreted as contracting the vertices of a graph, or more generally, the domain elements of a binary structure.  

\begin{reptheorem}{thm:approx-tww}
Given as input an $n \times m$ matrix $M$ over a finite alphabet $A$, and an integer $k$, there is an $2^{2^{O(k^2 \log k)}}(n+m)^{O(1)}$ time algorithm which returns
\begin{itemize}
\item either a $2k(k+1)$-rich division of $M$, certifying that $\tww(M) > k$, 
\item or an $(|A|^{O(k^4)},|A|^{O(k^4)})$-sequence, certifying that $\tww(M) = |A|^{O(k^4)}$.
\end{itemize}
\end{reptheorem}

\begin{proof}
  We try to construct a division sequence ${\mathcal D}_1,\dots, {\mathcal D}_{n+m-1}$ of $M$ such that every ${\mathcal D}_i$ satisfies the following properties $\mathscr P^R$ and $\mathscr P^C$.
 Let $r$ be equal to $4k(k+1)+1$.
\begin{compactitem}
\item $\mathscr P^R$: For every part $R_a$ of $\mathcal D_i^R$, there is a set $Y$ of at most $r$ parts of ${\mathcal D}_i^C$, such that the submatrix $R_a \cap (C \setminus \cup Y)$ has at most $r-1$ distinct row vectors. 
\item $\mathscr P^C$: For every part $C_b$ of $\mathcal D_i^C$, there is a set $X$ of at most $r$ parts of ${\mathcal D}_i^R$, such that the submatrix $(R\setminus \cup X) \cap C_b$ has at most $r-1$ distinct column vectors. 
\end{compactitem}
The algorithm is greedy: Whenever we can merge two consecutive row parts or two consecutive column parts in ${\mathcal D}_i$ so that the above properties are preserved, we do so, and obtain ${\mathcal D}_{i+1}$.
We first show that checking properties $\mathscr P^R$ and $\mathscr P^C$ can be done in fixed-parameter time.

\begin{lemma}\label{lem:testingPR-PC}
Whether $\mathscr P^R$, or $\mathscr P^C$, holds can be decided in time $2^{2^{O(k^2 \log k)}} (n+m)^{O(1)}$.
\end{lemma}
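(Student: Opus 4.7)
The plan is a bounded-search-tree algorithm. We focus on $\mathscr P^R$; the case of $\mathscr P^C$ is symmetric, by exchanging the roles of rows and columns. Fix a row part $R_a \in \mathcal D_i^R$: we wish to decide whether there is a set $Y$ of at most $r$ column parts of $\mathcal D_i^C$ such that $R_a \cap (C \setminus \bigcup Y)$ has at most $r-1$ distinct row vectors. We run the following recursion, starting with $Y = \emptyset$. First, compute the distinct row vectors of $R_a$ over $C \setminus \bigcup Y$; if there are at most $r-1$ of them, report \textsc{yes}. Otherwise, pick any $r$ representatives $v_1, \ldots, v_r$ of distinct equivalence classes, and for each of the $\binom{r}{2}$ pairs $(i,j)$, compute the set $P_{ij}$ of column parts in $\mathcal D_i^C \setminus Y$ that contain at least one column on which $v_i$ and $v_j$ disagree. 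Branch on the pair $(i,j)$: if $|Y| + |P_{ij}| \le r$, recurse with $Y \cup P_{ij}$ in place of $Y$; report \textsc{yes} if some branch does so.

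Correctness hinges on a pigeonhole observation. Suppose $Y^\star \supseteq Y$ is any valid solution with $|Y^\star| \le r$ producing at most $r-1$ classes. Then the $r$ representatives $v_1, \ldots, v_r$ cannot remain pairwise inequivalent modulo $Y^\star$, so two of them must collapse, which is equivalent to saying $P_{ij} \subseteq Y^\star$ for some pair $(i,j)$. That pair is explored in the branching step, and the recursion with $Y \cup P_{ij} \subseteq Y^\star$ is still valid; an induction on $r - |Y|$ closes the argument.

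For the complexity, each branching step enlarges $|Y|$ by at least one, since $v_i \neq v_j$ forces $P_{ij} \neq \emptyset$. Hence the recursion depth is at most $r$, while the branching factor is at most $\binom{r}{2} = O(r^2)$, yielding at most $O(r^{2r}) = 2^{O(r \log r)}$ leaves; each node does only polynomial work to compute the equivalence classes, the representatives, and the sets $P_{ij}$. Substituting $r = 4k(k+1)+1 = O(k^2)$ and summing over the (at most $n$) row parts $R_a$ gives a total running time comfortably within $2^{2^{O(k^2 \log k)}}(n+m)^{O(1)}$. The only subtlety to execute carefully is the pigeonhole argument: it is crucial that the branching need only consider $r$ representatives of the current equivalence classes rather than all of $R_a$, since this is precisely what makes the branching factor depend only on $r$ (and hence on $k$) rather than on the potentially large number of distinct row vectors in $R_a$.
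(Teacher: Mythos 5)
Your bounded-search-tree algorithm is correct and takes a genuinely different route from the paper's. The paper's proof is a kernelization argument: it first forces into $Y$ every column part $C_b$ whose zone $R_a \cap C_b$ already has more than $r-1$ distinct rows, observes that then $R_a \cap (C \setminus \bigcup Y)$ must have at most $(r-1)^{r+1}$ distinct rows (or else the answer is NO), shrinks each remaining zone to at most $(r-1)^{r+1}$ rows and $|A|^{r-1}$ columns, truncates repeated zone types to $r+1$ copies, and finally brute-forces over all choices of $Y$ from this kernel — which is where the doubly-exponential $2^{2^{O(k^2\log k)}}$ comes from. Your branching argument avoids the kernel entirely: picking $r$ representatives of distinct classes modulo the current $Y$ and branching over the $\binom{r}{2}$ candidate pairs that must collapse gives a search tree of branching factor $O(r^2)$ and depth at most $r$ (since $v_i\neq v_j$ modulo $Y$ forces $P_{ij}$ to contain a column part outside $Y$), hence $2^{O(r\log r)}=2^{O(k^2\log k)}$ nodes with polynomial work per node — a single exponential, which certainly fits within the claimed bound and is in fact tighter. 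The pigeonhole step is carried out correctly: for any solution $Y^\star\supseteq Y$, two of the $r$ representatives coincide modulo $Y^\star$, so for that pair $P_{ij}\subseteq Y^\star\setminus Y$ and $Y\cup P_{ij}\subseteq Y^\star$ is a live branch, and the induction on $r-|Y|$ closes the argument. Both proofs are valid; yours is simpler, gives a better exponent, and its exponential factor does not depend on the alphabet size $|A|$.
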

\begin{proof}
  We show the lemma with $\mathscr P^R$, since the case of $\mathscr P^C$ is symmetric.
  For every $R_a \in \mathcal D_i^R$, we denote by $\mathscr P^R(R_a)$ the fact that $R_a$ satisfies the condition $\mathscr P^R$ starting at ``there is a set $Y$.''
  If one can check $\mathscr P^R(R_a)$ in time $T$, one can thus check $\mathscr P^R$ and $\mathscr P^C$ for the current division $\mathcal D_i$ in time $(|\mathcal D_i^R|+|\mathcal D_i^C|)T \leqslant (n+m)T$.

  To decide $\mathscr P^R(R_a)$, we initialize the set $Y$ with all the column parts $C_b \in \mathcal D_i^C$ such that the zone $R_a \cap C_b$ contains more than $r-1$ distinct rows.
  Indeed these parts \emph{have to} be in $Y$.
  At this point, if $R_a \cap (C \setminus \cup Y)$ has more than $(r-1)^{r+1}$ distinct rows, then $\mathscr P^R(R_a)$ is false.
  Indeed, each further removal of a column part divides the number of distinct rows in $R_a$ by at most $r-1$.
  Thus after at most $r$ further removals, more than $r-1$ distinct rows would remain.

  Let us suppose instead that $R_a \cap (C \setminus \cup Y)$ has at most $(r-1)^{r+1}$ distinct rows.
  We keep one representative for each distinct row.
  For every $C_b \in \mathcal D_i^C \setminus Y$, the number of distinct \emph{columns} in zone $R_a \cap C_b$ is at most $|A|^{r-1}$.
  In each of these zones, we keep only one representative for every occurring column vector.
  Now every zone of $R_a$ has dimension at most $(r-1)^{r+1} \times |A|^{r-1}$.
  Therefore the maximum number of distinct zones is $\exp(\exp(O(r \log r)))=\exp(\exp(O(k^2 \log k)))$.

  If a same zone $Z$ is repeated in $R_a$ more than $r$ times, at least one occurrence of the zone will not be included in $Y$.
  In that case, putting copies of $Z$ in $Y$ is pointless: it eventually does not decrease the number of distinct rows.
  Thus if that happens, we keep exactly $r+1$ copies of~$Z$.
  Now $R_a$ has at most $(r+1) \cdot \exp(\exp(O(k^2 \log k))) = \exp(\exp(O(k^2 \log k)))$ zones.
  We can try out all $\exp(\exp(O(k^2 \log k)))^r$, that is, $\exp(\exp(O(k^2 \log k)))$ possibilities for the set $Y$, and conclude whether or not one of them works.
\end{proof}
  
  Two cases can arise.
  
\medskip

\textbf{Case 1.} The algorithm terminates on some division ${\mathcal D}_i$ and no merge is possible.

\noindent Let us assume that ${\mathcal D}_i^R:=\{R_1, \ldots, R_s\}$ and ${\mathcal D}_i^C:=\{C_1, \ldots, C_t\}$, where the parts are ordered by increasing vector indices.
We consider the division $\mathcal D$ of $M$ obtained by merging in $\mathcal D_i$ the pairs $\{R_{2a-1},R_{2a}\}$ and $\{C_{2b-1},C_{2b}\}$, for every $1 \leqslant a \leqslant \lfloor s/2 \rfloor$ and $1 \leqslant b \leqslant \lfloor t/2 \rfloor$. 
%If $s$ is odd, we also merge  $\{R_s\}$ with the last part of ${\mathcal D}^R$. Similar merge if $t$ is odd.
Let $C'_j$ be any column part of ${\mathcal D}^C$.
Since the algorithm has stopped, for every set~$X$ of at most $(r-1)/2$ parts of $\mathcal D^R$, the matrix $(R \setminus \cup X) \cap C'_j$ has at least $r$ distinct (column) vectors.
This is because $(r-1)/2$ parts of~${\mathcal D}^R$ corresponds to at most $r-1$ parts of ${\mathcal D}_i^R$.
The same applies to the row parts, so we deduce that $\mathcal D$ is $(r-1)/2$-rich, that is, $2k(k+1)$-rich.
Therefore, by \cref{lem:richcertificate}, $M$~has twin-width greater than $k$.
   
\medskip

\textbf{Case 2.} The algorithm terminates with a full sequence ${\mathcal D}_1,\dots, {\mathcal D}_{n+m-1}$. 

\noindent Given a division ${\mathcal D}_i$ with ${\mathcal D}_i^R:=\{R_1,\dots,R_s\}$ and ${\mathcal D}_i^C:=\{C_1,\dots,C_t\}$, we now define a partition ${\mathcal P}_i$ that refines ${\mathcal D}_i$ and has small error value.
To do so, we fix a, say, column part $C_j$ and show how to partition it further in ${\mathcal P}_i$.

By assumption on $\mathcal D_i$, there exists a subset $X$ of at most $r$ parts of ${\mathcal D}_i^R$ such that $(R \setminus \cup X) \cap C_j$ has less than $r$ distinct column vectors.
We now denote by $F$ the set of parts $R_a$ of ${\mathcal D}_i^R$ such that the zone $R_a \cap C_j$ has at least $r$ distinct rows and $r$ distinct columns.
Such a zone is called \emph{full}.
Observe that $F \subseteq X$.
Moreover, for every $R_a$ in $X \setminus F$, the total number of distinct column vectors in $R_a \cap C_j$ is at most $\max(r,|A|^{r-1})=|A|^{r-1}$, assuming that the alphabet $A$ has at least two letters.
Indeed, if the number of distinct columns in $R_a \cap C_j$ is at least $r$, then the number of distinct rows is at most $r-1$.

In particular, the total number of distinct column vectors in $(R \setminus \cup F) \cap C_j$ is at most $w := r(|A|^{r-1})^r$; a multiplicative factor of $|A|^{r-1}$ for each of the at most $r$ zones $R_a \in X \setminus F$, and a multiplicative factor of~$r$ for $(R \setminus \cup X) \cap C_j$.
We partition the columns of $C_j$ accordingly to their subvector in $(R\setminus \cup F) \cap C_j$ (by grouping columns with equal subvectors together).
The partition ${\mathcal P}_i$ is obtained by refining, as described for $C_j$, all column parts and all row parts of ${\mathcal D}_i$.

By construction, ${\mathcal P}_i$ is a refinement of ${\mathcal P}_{i+1}$ since every full zone of ${\mathcal D}_i$ remains full in ${\mathcal D}_{i+1}$.
Hence if two columns belong to the same part of~${\mathcal P}_i$, they continue belonging to the same part of~${\mathcal P}_{i+1}$.
Besides, ${\mathcal P}_i$ is a \opar{w} of $M$, and its error value is at most $r \cdot w$ since non-constant zones can only occur in full zones (at most $r$ per part of $\mathcal D_i$), which are further partitioned at most $w$ times in~${\mathcal P}_i$.
To finally get a contraction sequence, we greedily merge parts to fill the intermediate partitions between ${\mathcal P}_i$ and ${\mathcal P}_{i+1}$.
Note that all intermediate refinements of $\mathcal P_{i+1}$ are \opars{w}.
Moreover the error value of a column part does not exceed $r \cdot w$.
Finally the error value of a row part can increase during the intermediate steps by at most $2w$.
All in all, we get a $(w,(r+2) \cdot w)$-sequence.
This implies that $M$ has twin-width at most~$(r+2) \cdot w = |A|^{O(k^4)}$.

The running time of the overall algorithm follows from \cref{lem:testingPR-PC}. 
\end{proof}

%In particular, if the matrix has twin-width $k$, we can find in polytime a sequence for twin width at most $r.w$, which is order of $\alpha ^{r^4}$. However, this bound can be improved by observing that bounded twin width implies bounded VC-dimension $d$ (close to $k$), thus we can change $\alpha^{r-1}$ into roughly $r^d$. Not a big deal.

The approximation ratio, of $2^{O(\text{OPT}^4)}$, can be analyzed more carefully by observing that bounded twin-width implies bounded VC dimension.
Then the threshold $|A|^{r-1}$ can be replaced by $r^d$, where $d$ upperbounds the VC dimension.
As a direct corollary of our algorithm, if the matrix $M$ does not admit any large rich division, the only possible outcome is a contraction sequence.
Considering the size of $A$ as an absolute constant, we thus obtain the following.
\begin{theorem}\label{thm:brd-to-bdtww}
If $M$ has no $r$-rich division, then $\tww(M) = 2^{O(r^2)}$.
\end{theorem}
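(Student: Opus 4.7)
The plan is to derive \cref{thm:brd-to-bdtww} as a direct corollary of \cref{thm:approx-tww}, which has just been proved. The key observation is that richness is monotone: if a division is $k'$-rich in the sense of \cref{subsec:rds}, then it is also $k$-rich for every $k \le k'$, simply because any subset $Y$ of size at most $k$ is also of size at most $k'$, and requiring at least $k'$ distinct row/column vectors is a stronger condition than requiring $k$.

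First, I would choose the parameter $k$ fed into the algorithm of \cref{thm:approx-tww} to be the smallest nonnegative integer satisfying $2k(k+1) \ge r$; thus $k = O(\sqrt{r})$. Running that algorithm on $M$ with this $k$ yields either a $2k(k+1)$-rich division of $M$, or an $(|A|^{O(k^4)},|A|^{O(k^4)})$-sequence. By the monotonicity observation, a $2k(k+1)$-rich division is in particular an $r$-rich division, which is ruled out by the hypothesis on $M$. Hence the algorithm must produce the second outcome, and we conclude $\tww(M) \le |A|^{O(k^4)}$.

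Substituting $k = O(\sqrt{r})$ gives $\tww(M) = |A|^{O(r^2)}$. Since $|A|$ is treated as an absolute constant (as announced in the paragraph preceding the theorem), this simplifies to $\tww(M) = 2^{O(r^2)}$, which is exactly the desired conclusion.

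There is essentially no obstacle: the heavy lifting is done by \cref{thm:approx-tww}, and the only content here is the monotonicity remark together with the choice of $k$. The only tiny subtlety worth spelling out explicitly in the write-up is that the contrapositive formulation of \cref{thm:approx-tww} — "no $2k(k+1)$-rich division implies a good contraction sequence" — is what is being invoked, and that the hypothesis of no $r$-rich division transfers down to no $2k(k+1)$-rich division via monotonicity.
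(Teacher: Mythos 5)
Your proposal is correct and follows exactly the route the paper takes: the paper simply states \cref{thm:brd-to-bdtww} as a direct corollary of the algorithm of \cref{thm:approx-tww}, observing that when $M$ has no large rich division the algorithm's only possible outcome is a contraction sequence, and then absorbing $|A|$ into the constant. You merely make explicit the two small details the paper leaves implicit — the monotonicity of $r$-richness in $r$ and the choice $k = O(\sqrt r)$ so that $2k(k+1)\ge r$ and $|A|^{O(k^4)} = 2^{O(r^2)}$ — which is exactly what a careful write-up of the paper's remark would do.
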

This is the direction which is important for the circuit of implications.
The algorithm of~\cref{thm:approx-tww} further implies that \cref{thm:brd-to-bdtww} is effective.

\section{Large rich divisions imply large rank divisions}\label{sec:rich-to-gr}

In this section we show how to extract a large rank division from a huge rich division.
We remind the reader that a rank-$k$ division is a $k$-division in which every zone has at least~$k$ distinct rows or at least~$k$ distinct columns.
%A $(k+1)$-rank division is a $k$-rich division since the deletion of $k$ zones in a column of the division leaves a zone with rank at least $k$, hence with at least $k$ distinct row vectors.
%The goal of this section is to provide a weak converse of that statement.
We recall that $\mt$ is the Marcus-Tardos bound of~\cref{thm:marcustardos}.
%For simplicity, we show the following theorem in the case $\mathbb F = \mathbb F_2$, but the proof readily extends to any finite field by setting $K$ to $|\mathbb F|^{|\mathbb F|^k \mt(k|\mathbb F|^k)}$.

\begin{theorem}\label{thm:rd-to-gr}
  Let $A$ be a finite set, and $K := |A|^{k \mt(k^2)}$.
  Every $A$-matrix $M$ with a $K$-rich division ${\mathcal D}$ has a~rank-$k$ division.
\end{theorem}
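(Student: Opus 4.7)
The plan is to extract a rank-$k$ $k$-division by invoking the Marcus--Tardos theorem on a suitable indicator defined on the grid of the given rich division.

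First, I would set up notation: let $\mathcal D^R=\{R_1,\ldots,R_{d_R}\}$ and $\mathcal D^C=\{C_1,\ldots,C_{d_C}\}$ be the row and column parts of $\mathcal D$, and for each cell $(a,b)$ let $\delta_a(C_b)$ and $\gamma_b(R_a)$ denote the number of distinct row vectors and distinct column vectors in the zone $R_a\cap C_b$. Since a row of $R_a$ is determined by its restrictions to the various $C_b$'s, $K$-richness applied with $Y=\emptyset$ yields the fundamental product inequality $\prod_b\delta_a(C_b)\geq K$, and symmetrically $\prod_a\gamma_b(R_a)\geq K$.

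Call a cell $(a,b)$ \emph{heavy} if $\delta_a(C_b)\geq k$ or $\gamma_b(R_a)\geq k$. The key intermediate statement is that the heavy cells can be arranged into a $k^2$-division of the grid with a heavy cell in every zone; then retaining every $k$-th cut coarsens this to a $k$-division of the grid, which lifts along the boundaries of $\mathcal D$ to a $k$-division of $M$ in which every zone contains a heavy sub-zone and therefore has $\geq k$ distinct rows or $\geq k$ distinct columns. Such a $k^2$-division is exactly what \cref{thm:marcustardos} delivers as soon as the indicator of heavy cells has at least $\mt(k^2)\cdot\max(d_R,d_C)$ ones.

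To certify this density, I would argue per row (and symmetrically per column). Suppose a row part $R_a$ has fewer than $\mt(k^2)$ heavy cells in the row direction, i.e., cells with $\delta_a(C_b)\geq k$, and collect those column parts into $Y$; as $|Y|<\mt(k^2)\leq K$, $K$-richness forces $\prod_{b\notin Y}\delta_a(C_b)\geq K$ with every factor bounded by $k-1$. Since $K=|A|^{k\mt(k^2)}$, taking logarithms base $k-1$ forces at least $\log_{k-1}K=k\mt(k^2)\log_{k-1}|A|$ of those cells to be \emph{$2$-heavy} (i.e., $\delta_a(C_b)\geq 2$); a symmetric argument applies to column parts. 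The remaining step is to convert this abundance of $2$-heavy cells into heavy cells in every zone of the $k$-division, by grouping column parts so that within each zone several $2$-heavy cells belonging to the same row $R_a$ multiply to $\prod\delta_a(C_b)\geq k$, producing $\geq k$ distinct rows inside the zone. The main difficulty lies precisely in this last step -- the simultaneous coarsening of rows and columns must respect both row- and column-side inequalities -- and the specific exponent $k\mt(k^2)$ in $K$ is the smallest that lets the Marcus--Tardos threshold $\mt(k^2)$ and the extra multiplicative factor $k$ close the argument.
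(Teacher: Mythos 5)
Your opening step (defining \emph{heavy} cells as those with at least $k$ distinct rows or $k$ distinct columns) matches the paper's notion of a \emph{red} zone, and the final Marcus--Tardos $\to$ $k^2$-division $\to$ $k$-division coarsening is also the paper's skeleton. But the middle of your argument has a genuine gap, which you flag yourself: you need the indicator of heavy cells to be dense enough for Marcus--Tardos, and then you discover that after discarding heavy cells you only control $2$-heavy cells, whose rank you cannot aggregate into the supercells of a \emph{single, globally chosen} $k$-division. That aggregation does not come for free. In fact, a $K$-rich division may contain \emph{no} heavy cell at all: take a $0,1$-matrix whose division has $2^{k\mt(k^2)}$ row and column parts in which every zone has exactly two distinct rows and two distinct columns; this is easily $K$-rich (the product of the $\delta_a(C_b)$'s over $2^{k\mt(k^2)}-K$ surviving column parts is astronomically larger than $K$), yet for $k\ge 3$ there is not a single heavy cell to feed to Marcus--Tardos. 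So the density claim you aim for is false as stated, and the ``grouping column parts so $\prod\delta_a(C_b)\ge k$'' repair is $R_a$-specific and cannot be synchronized across all row parts at once.

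The idea you are missing is the paper's two-color scheme. Besides the red zones (your heavy cells), the paper colors a zone $R_i\cap C_j$ \emph{blue} if it contains a row vector not appearing in any non-red zone $R_{i'}\cap C_j$ with $i'<i$ (a \emph{blue witness}). Your product inequality then shows that in each column part $C_j$ the number of \emph{colored} (red or blue) zones is at least $\mt(k^2)$: if fewer, the $K$-richness forces $\ge K$ distinct column vectors in the uncolored rows of $C_j$, hence $\ge \log_{|A|}K = k\mt(k^2)$ distinct row vectors there, and all of these must occur among the blue zones (each carrying $<k$ distinct rows), forcing $\ge \mt(k^2)$ blue zones. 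Marcus--Tardos then gives a $k^2$-division with a colored zone in every cell. Finally, in the resulting $k$-division by $k\times k$ blocks, a block either contains a red zone (done) or contains a diagonal of $k$ cells each holding a blue zone; extending the $k$ blue witnesses across the block yields $k$ pairwise-distinct row vectors, because a blue witness from a later row part cannot coincide with one from an earlier row part within the same column part. That diagonal-of-witnesses argument is what replaces your unresolved ``simultaneous coarsening'' step and makes the rank bound work even when no individual cell is heavy. A smaller issue in your write-up is the arithmetic: you measure the surplus in $\log_{k-1}$ and get $k\mt(k^2)\log_{k-1}|A|$ sub-heavy cells, but this is below $k\mt(k^2)$ when $|A|<k-1$; the paper measures in $\log_{|A|}$ precisely to avoid this dependence.
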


\begin{proof}
  Without loss of generality, we can assume that ${\mathcal D}^C$ has size at least the size of ${\mathcal D}^R$.
  We color \emph{red} every zone of $\mathcal D$ which has at least $k$ distinct rows or at least $k$ distinct columns.
  We now color \emph{blue} a zone $R_i \cap C_j$ of $\mathcal D$ if it contains a row vector $r$ (of length~$|C_j|$) which does not appear in any non-red zone $R_{i'} \cap C_j$ with $i'< i$.
  We then call $r$ a \emph{blue witness} of $R_i \cap C_j$.
  
  Let us now denote by $U_j$ the subset of ${\mathcal D}^R$ such that every zone $R_i \cap C_j$ with $R_i \in U_j$ is \emph{uncolored}, i.e., neither red nor blue.
  Since the division $\mathcal D$ is $K$-rich, if the number of \emph{colored} (i.e., red or blue) zones $R_i \cap C_j$ is less than $K$, the matrix $(\cup U_j )\cap C_j$ has at least $K$ distinct column vectors.
  So $(\cup U_j )\cap C_j$ has at least $\log_{|A|} K = k \mt(k^2)$ distinct \emph{row} vectors.
  By design, every row vector appearing in some uncolored zone $R_i \cap C_j$ must appear in some blue zone $R_{i'} \cap C_j$ with $i' < i$.
  Therefore at least $k \mt(k^2)$ distinct row vectors must appear in some blue zones within column part $C_j$.
  Since a blue zone contains less than $k$ distinct row vectors (otherwise it would be a red zone), there are, in that case, at least $k \mt(k^2)/k = \mt(k^2)$ blue zones within $C_j$.
  Therefore in any case, the number of colored zones $R_i \cap C_j$ is at least $\mt(k^2)$ per $C_j$.

  Thus, by \cref{thm:marcustardos}, we can find $D'$ a $k^2 \times k^2$ division of~$M$, coarsening~$D$, with at least one colored zone of $D$ in each \emph{cell} of $D'$.
  Now we consider $D''$ the $k \times k$ division of $M$, coarsening $D'$, where each \emph{supercell} of $D''$ corresponds a $k \times k$ square block of cells of $D'$ (see~\cref{fig:rd-to-gr}).
  Our goal is to show that every supercell $Z$ of $D''$ has at least $k$ distinct rows or $k$ distinct columns.
  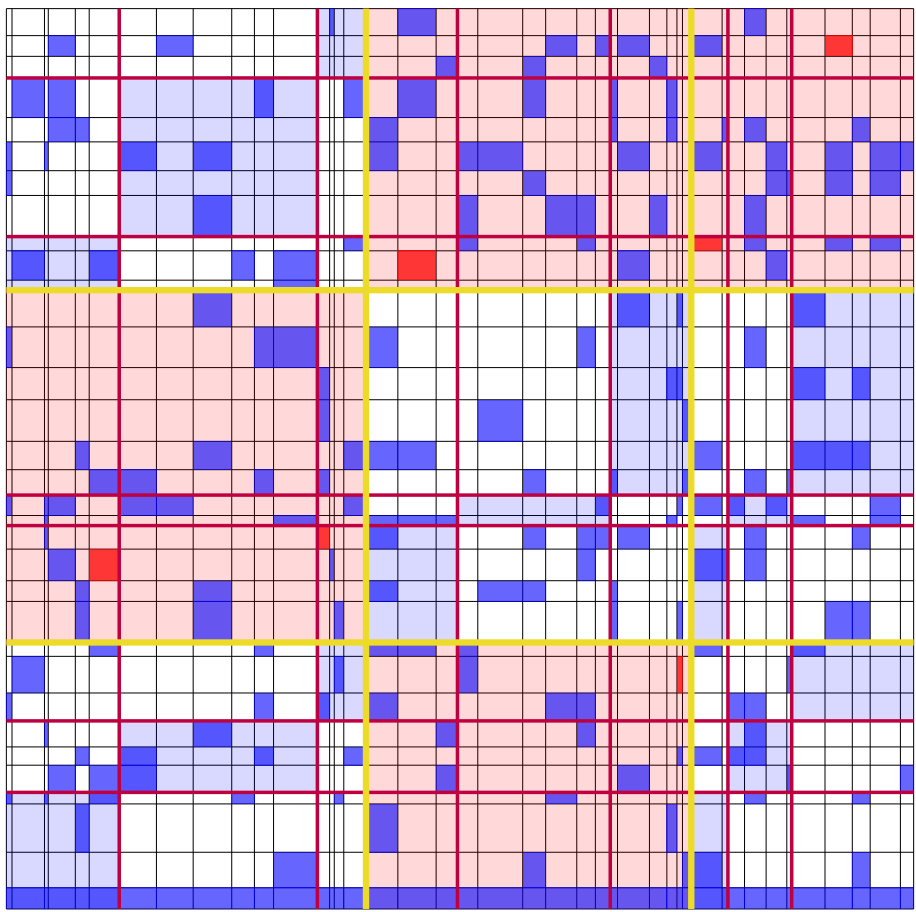
\begin{figure}[h!]
    \centering
    \resizebox{350pt}{!}{
    \begin{tikzpicture}
      \def\hb{0}
      \def\he{10}
       \def\vb{0}
       \def\ve{10}
       \def\e{0}
       %vertical sep
       \def\vert{{0, 0.062, 0.42, 0.462, 0.76, 0.913, 1.244, 1.654, 2.059, 2.484, 2.734, 2.945, 3.428, 3.563, 3.614, 3.719, 3.964, 4.315, 4.737, 4.972, 5.196, 5.691, 5.944, 6.287, 6.492, 6.656, 6.736, 7.088, 7.28, 7.391, 7.452, 7.549, 7.889, 7.953, 8.136, 8.374, 8.602, 8.656, 9.023, 9.324, 9.521, 9.854, 10}}
       %horizontal sep
       \def\hori{{0, 0.235, 0.63, 1.167, 1.295, 1.597, 1.8, 2.089, 2.399, 2.806, 2.96, 3.416, 3.646, 3.997, 4.258, 4.37, 4.597, 4.879, 5.194, 5.656, 6.013, 6.465, 6.873, 6.984, 7.31, 7.468, 7.925, 8.2, 8.52, 8.79, 9.23, 9.47, 9.7, 10}}
       %rich division D
       \foreach \i in {0,...,42}{
         \draw[very thin] (\vert[\i],\vb) -- (\vert[\i],\ve) ;
       }
       \foreach \j in {0,...,33}{
         \draw[very thin] (\hb,\hori[\j]) -- (\he,\hori[\j]) ;
       }
       %meta red zone
       \foreach \i/\j/\k/\l in {0/10/16/22,16/22/31/33,31/22/42/33,16/0/31/10}{
         \fill[red,opacity=0.15] (\vert[\i]+\e,\hori[\j]+\e) -- (\vert[\k]-\e,\hori[\j]+\e) -- (\vert[\k]-\e,\hori[\l]-\e) -- (\vert[\i]+\e,\hori[\l]-\e) -- cycle ;
       }
       %meta blue zone
       \foreach \i/\j/\k/\l in {0/0/6/4,6/4/12/7,12/7/16/10, 16/10/19/14,19/14/25/16,25/16/31/22, 31/10/33/14,33/14/37/16,37/16/42/22, 31/0/33/4,33/4/37/7,37/7/42/10, 0/22/6/25,6/25/12/30,12/30/16/33}{
         \fill[blue,opacity=0.15] (\vert[\i]+\e,\hori[\j]+\e) -- (\vert[\k]-\e,\hori[\j]+\e) -- (\vert[\k]-\e,\hori[\l]-\e) -- (\vert[\i]+\e,\hori[\l]-\e) -- cycle ;
       }
       %blue zones
       \pgfmathsetseed{1} 
       \foreach \i in {0,...,41}{
         \foreach \j in {0,...,32}{
         \pgfmathsetmacro{\ip}{\i+1}
         \pgfmathsetmacro{\jp}{\j+1}
         \pgfmathtruncatemacro\x{7 * random()}
         \ifthenelse{\x = 1}{\fill[blue,opacity=0.6] (\vert[\i]+\e,\hori[\j]+\e) -- (\vert[\ip]-\e,\hori[\j]+\e) -- (\vert[\ip]-\e,\hori[\jp]-\e) -- (\vert[\i]+\e,\hori[\jp]-\e) -- cycle ;}{}
         }
       }
       \foreach \i in {0,...,4,6,7,8,10,11,12,14,15,...,41}{
         \pgfmathsetmacro{\ip}{\i+1}
         \fill[blue,opacity=0.6] (\vert[\i]+\e,\hori[0]+\e) -- (\vert[\ip]-\e,\hori[0]+\e) -- (\vert[\ip]-\e,\hori[1]-\e) -- (\vert[\i]+\e,\hori[1]-\e) -- cycle ;
       }
       \foreach \i/\j in {31/9,31/31}{
         \pgfmathsetmacro{\ip}{\i+1}
         \pgfmathsetmacro{\jp}{\j+1}
         \fill[blue,opacity=0.6] (\vert[\i]+\e,\hori[\j]+\e) -- (\vert[\ip]-\e,\hori[\j]+\e) -- (\vert[\ip]-\e,\hori[\jp]-\e) -- (\vert[\i]+\e,\hori[\jp]-\e) -- cycle ;
       }
       %red zones
       \foreach \i/\j in {17/23,38/31,29/8,5/12,12/13,31/24}{
         \pgfmathsetmacro{\ip}{\i+1}
         \pgfmathsetmacro{\jp}{\j+1}
         \fill[red,opacity=0.75] (\vert[\i]+\e,\hori[\j]+\e) -- (\vert[\ip]-\e,\hori[\j]+\e) -- (\vert[\ip]-\e,\hori[\jp]-\e) -- (\vert[\i]+\e,\hori[\jp]-\e) -- cycle ;
       }
       %Marcus-Tardos grid D'
       \foreach \i in {6,12,16,19,25,31,33,37}{
         \draw[purple,line width=1.1pt] (\vert[\i],\vb) -- (\vert[\i],\ve) ;
       }
       \foreach \j in {4,7,10,14,16,22,25,30}{
         \draw[purple,line width=1.1pt] (\hb,\hori[\j]) -- (\he,\hori[\j]) ;
       }
       %grid rank D''
       \foreach \i in {16,31}{
         \draw[black!10!yellow,line width=2pt] (\vert[\i],\vb) -- (\vert[\i],\ve) ;
       }
       \foreach \j in {10,22}{
         \draw[black!10!yellow,line width=2pt] (\hb,\hori[\j]) -- (\he,\hori[\j]) ;
       }
    \end{tikzpicture}
    }
    \caption{In black (purple, and yellow), the rich division~$D$.
      In purple (and yellow), the Marcus-Tardos division $D'$ with at least one colored zone of $D$ per cell.
      In yellow, the rank-$k$ division~$D''$.
      Each supercell of $D''$ has large rank, either because it contains a red zone (light red) or because it has a diagonal of cells of $D'$ with a blue zone (light blue).}
    \label{fig:rd-to-gr}
  \end{figure}
  If the supercell $Z$ contains a red zone of $D$, the property immediately holds for $Z$.
  If not, each of the $k \times k$ cells of $D'$ within the supercell $Z$ contains at least one blue zone of $D$.
  Let $Z_{i,j}$ be the cell in the $i$-th row block and $j$-th column block of hypercell $Z$, for every $i, j \in [k]$.
  Consider the diagonal cells $Z_{i,i}$ ($i \in [k]$) of $D'$ within the supercell $Z$.
  In each of them, there is at least one blue zone witnessed by a row vector, say, $\tilde{r_i}$.
  Let $r_i$ be the prolongation of $\tilde{r_i}$ up until the two vertical limits of $Z$.
  We claim that every $r_i$ (with $i \in [k]$) is distinct.
  Indeed by definition of a blue witness, if $i < j$, $\tilde{r_j}$ is different from all the row vectors below it, in particular from $r_i$ restricted to these columns.
  So $Z$ has at least $k$ distinct row vectors.
\end{proof}

\section{Rank Latin divisions}\label{sec:latin}

In this section, we show a Ramsey-like result which establishes that every (hereditary) matrix class with unbounded grid rank can encode all the \mbox{$n$-permutations} with some of its $2n \times 2n$ matrices.
In particular and in light of the previous sections, this proves the small conjecture for ordered graphs.

We recall that a rank-$k$ $d$-division of a matrix $M$ is a $d$-by-$d$ division of $M$ whose every zone has rank at least $k$, and \emph{rank-$k$ division} is a short-hand for \emph{rank-$k$ $k$-division}.
Then a matrix class $\mathcal M$ has bounded grid rank if there is an integer $k$ such that no matrix of $\mathcal M$ admits a rank-$k$ division.

Henceforth it will be more convenient to only work with $0,1$-matrices.
\cref{lem:finite-to-binary} allows us to do so.
It directly implies that for every matrix class $\mathcal M$ over a finite alphabet $A$, if $\mathcal M$  unbounded grid rank then there is some $a\in A$ such that the class of $0,1$-matrices $s_a(\mathcal M)$ 
obtained from $\mathcal M$ by replacing $a$ by $1$ and each other letter by $0$, has unbounded grid rank, too.
% We denote by $\ramm{k}{r}$ the smallest integer $t$ such that every clique $K_t$ edge-colored with $r$ colors admits a monochromatic clique of size $k$.
% Similarly we denote by $\bipramm{k}{r}$ the smallest integer $t$ such that every biclique $K_{t,t}$ edge-colored with $r$ colors admits a monochromatic biclique $K_{k,k}$.

% \begin{lemma}\label{lem:rank-selection}
%   Let $A$ be a finite alphabet and $k\in\Nn$.
%   If a matrix $M$ over the alphabet $A$ has more than $k^{|A|-1}$ distinct rows 
%   then there is $a\in A$ such that $s_a(M)$ has more than $k$ distinct rows. 
% \end{lemma}
% \begin{proof}
  
% \end{proof}

\begin{lemma}\label{lem:finite-to-binary}
  Let $A$ be a finite alphabet, $k,d\in\Nn$  and let $M$ be any $A$-matrix admitting a rank-$K$ $D$-division for $K=(k-1)^{|A|-1}+1$ and $D=\bipramm{d}{|A|}$.
  Then for some $a\in A$ the $0,1$-matrix $s_a(M)$ admits a rank-$k$ $d$-division. 
  In particular, if a class $\mathcal M$ of $A$-matrices has unbounded grid rank then there is some $a\in A$ such that $s_a(\mathcal M)$ has unbounded grid rank.
\end{lemma}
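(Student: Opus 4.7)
The plan is to combine a pigeonhole argument on $a$-selections of rows (and columns) with the bipartite Ramsey theorem applied to the zones of the given rank-$K$ $D$-division.

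First, I would establish the following pigeonhole observation: if a zone $Z$ of $M$ has at least $(k-1)^{|A|-1}+1$ distinct rows, then some letter $a\in A$ is such that $s_a(Z)$ has at least $k$ distinct rows. The key point is that a row vector over $A$ is determined by the tuple of its $a$-selections for $a$ ranging over any subset $A'\subseteq A$ of size $|A|-1$, since the positions not marked by any $a\in A'$ must carry the unique remaining letter of $A$. Hence if every $a$-selection of $Z$ had at most $k-1$ distinct row-projections, the number of distinct rows of $Z$ would be at most $\prod_{a\in A'}(k-1)=(k-1)^{|A|-1}$, contradicting the hypothesis. The symmetric statement with columns in place of rows follows by the same argument.

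Next, let $\mathcal D$ be the given rank-$K$ $D$-division of $M$. For each zone $Z$ of $\mathcal D$, the rank-$K$ condition witnesses either many distinct rows or many distinct columns in $Z$; applying the pigeonhole observation in the corresponding direction provides a letter $a(Z)\in A$ such that $s_{a(Z)}(Z)$ has at least $k$ distinct rows or at least $k$ distinct columns. This assigns an $|A|$-coloring to the $D\times D$ zones of $\mathcal D$. Viewing these zones as the edges of $K_{D,D}$ and using $D=\bipramm{d}{|A|}$, the bipartite Ramsey theorem supplies $d$ row intervals $R_{i_1},\dots,R_{i_d}$ and $d$ column intervals $C_{j_1},\dots,C_{j_d}$ of $\mathcal D$ such that every zone $R_{i_p}\cap C_{j_q}$ receives the same color $a\in A$.

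Finally, I would coarsen $\mathcal D$ into a $d$-division $\mathcal D'$ of $M$ by merging each unselected row (resp.\ column) interval into an adjacent selected one, so that each of the $d^2$ zones of $\mathcal D'$ contains exactly one of the $R_{i_p}\cap C_{j_q}$. Since enlarging a zone (in either direction) can only weakly increase the number of distinct rows and the number of distinct columns, each zone of $\mathcal D'$ viewed in $s_a(M)$ inherits the property of having at least $k$ distinct rows or at least $k$ distinct columns. Thus $\mathcal D'$ is a rank-$k$ $d$-division of $s_a(M)$. For the ``in particular'' statement, suppose $\mathcal M$ has unbounded grid rank: for every $k$ one finds $M_k\in\mathcal M$ admitting a rank-$N$ $N$-division with $N:=\max(K,D)$ and $d:=k$; coarsening gives a rank-$K$ $D$-division and the lemma yields some $a_k\in A$ with $s_{a_k}(M_k)$ admitting a rank-$k$ $k$-division. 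Since $A$ is finite, pigeonhole fixes one letter $a$ appearing infinitely often, so $s_a(\mathcal M)$ has unbounded grid rank. I do not foresee any serious obstacle; the only mildly delicate point is the sharp exponent $|A|-1$ in the pigeonhole bound, which relies on the redundancy that knowing all but one of the $a$-selections determines the last.
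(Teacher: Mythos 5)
Your proposal is correct and matches the paper's own proof essentially step for step: the same pigeonhole observation with exponent $|A|-1$ (each row is determined by its $a$-selections over all but one letter), the same $|A|$-coloring of the zones of the rank-$K$ $D$-division, the same application of bipartite Ramsey to extract a monochromatic $d\times d$ sub-grid, and the same coarsening to a $d$-division of $s_a(M)$. The handling of the ``in particular'' part via a final pigeonhole over the finite alphabet is also the intended (if unstated) argument in the paper.
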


% \begin{lemma}\label{lem:finite-to-binary}
%   Let $A$ be a finite alphabet, $k$ be any natural, $r := {|A| \choose 2}$, and $K := \max(\bipramm{k}{r},\ramm{k}{r})$.
%   Let $M$ be an $A$-matrix admitting a rank-$K$ division.
%   Then there is a mapping $p: A \to \set{0,1}$ such that the 0,1-matrix $(p(M_{i,j}))_{i,j}$ admits a rank-$k$ division. 
% \end{lemma}
\begin{proof}
  First note that  if an $A$-matrix $N$ has at least $K$ distinct rows or columns
  then there is some $a\in A$ such that $s_a(N)$ has at least $k$ distinct rows or columns.
  Indeed, suppose that $s_a(N)$ has at most $k-1$ distinct rows, for each $a \in A$.
  Let $a_0 \in A$ be any letter.
  Each row $v$ of $N$ (treated as a matrix with one row) is uniquely determined by the tuple $(s_a(v))_{a\in A\setminus \set {a_0}}$ of all its $a$-selections with $a\neq a_0$.
  As $s_a(v)$ is a row of $s_a(N)$, there are $k-1$ possible values for $s_a(v)$, by assumption.
  Hence, $(s_a(v))_{a \in A \setminus \{a_0\}}$ ranges over a set of size at most $(k-1)^{|A|-1}=K-1$, which yields the conclusion.
  
  We now prove the statement in the lemma.
  Let $\mathcal D$ be a rank-$K$ division of $M$.
  By definition in every cell $C$ of $\mathcal D$ there are (at least) $K$ distinct row or $K$ distinct column vectors. By the above, there is some $a\in A$ such that $s_a(C)$ 
  has at least $k$ distinct row or $k$ distinct column vectors. 
  We label $C$ by $a$.

  There are only $|A|$ possible labels for the cells and $D = \bipramm{d}{|A|}$.
  Thus by \cref{th: bipRamsey}, there is a fixed letter, say, $a\in A$ and a $d$-division $\mathcal D'$ coarsening the $D$-division $\mathcal D$ such that each cell of $\mathcal D'$ contains a cell of $\mathcal D$ labeled by $a$.
  By construction, $\mathcal D'$ is a rank-$k$ $d$-division of $s_a(M)$.
\end{proof}

Let $\id$ be the $k \times k$ identity matrix, and $\ao$, $\az$, $\utr$, and $\ltr$ be the $k \times k$ 0,1-matrices that are full 1, full 0, full-1 upper triangular, and full-1 lower triangular, respectively.
More precisely, the $k \times k$ full-1 upper (resp.~lower) triangular matrix is a 0,1-matrix with a 1 at position $(i,j) \in [k]^2$ if and only if $i \leqslant j$ (resp.~$i \geqslant j$).
Let $A^\mathrm{M}$ be the vertical mirror of matrix $A$, that is, its reflection about a vertical line separating the matrix in two equal parts.\footnote{i.e., column $\lceil n/2 \rceil$ if the number $n$ of columns is odd, and between columns $n/2$ and $n/2+1$ if $n$ is even} 
The following Ramsey-like result states that every $0,1$-matrix with huge rank (or equivalently a huge number of distinct row or column vectors) admits a regular matrix with large rank. 

\begin{theorem}
\label{Trk}
 There is a function $T: \mathbb N^+ \to \mathbb N^+$ such that for every natural $k$, every matrix with at least $T(k)$ rows or at least $T(k)$ columns contains as a submatrix one of the following $k \times k$ matrices: $\id$, $\ao-\id$, $\utr$, $\ltr$, $\id^\mathrm{M}$, $(\ao-\id)^\mathrm{M}$, $\utr^\mathrm{M}$, $\ltr^\mathrm{M}$.  
\end{theorem}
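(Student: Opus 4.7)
The plan is to iterate Ramsey's theorem, with an Erd\H{o}s--Szekeres step to control column ordering. Transposing $M$ if necessary permutes the eight target patterns among themselves, since $\id^T = \id$, $(\ao - \id)^T = \ao - \id$, $\utr^T = \ltr$, and the four mirror variants are each invariant under transposition; so I may assume $M$ has at least $T(k)$ distinct rows $r_1, r_2, \ldots, r_N$ in matrix order. (The discussion preceding the theorem makes clear that ``rows'' should be read as ``distinct row vectors''; otherwise an all-zero matrix would be a trivial counterexample.) For $N$ large enough, I three-color each pair $\{r_i, r_j\}$ (with $i < j$) by the componentwise relation $\prec$, $\succ$, or incomparable, and apply~\cref{th: Ramsey} to extract a large monochromatic subset.

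In the $\prec$-chain case $r_{i_1} \prec r_{i_2} \prec \cdots$, I pick for each consecutive pair a column $c_j$ with $r_{i_j}(c_j) = 0$ and $r_{i_{j+1}}(c_j) = 1$; since any column can witness the transition at most once along a chain, the $c_j$'s are pairwise distinct. I then apply the Erd\H{o}s--Szekeres theorem to the sequence of matrix-column positions of the $c_j$'s to extract a monotone subsequence of length $k+1$. A direct computation shows that the $(k+1) \times k$ submatrix on the corresponding rows and columns has $(l, m)$-entry $[l > m]$ in the increasing case and $[l + m > k+1]$ in the decreasing case, so after trimming one extremal row we obtain $\ltr$ or $\ltr^{\mathrm{M}}$ respectively. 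The symmetric $\succ$-chain case will analogously produce $\utr$ or $\utr^{\mathrm{M}}$.

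In the incomparable case, for each pair $i < j$ both leftmost columns $\ell_{01}(i,j)$ (where $r_i = 0$, $r_j = 1$) and $\ell_{10}(i,j)$ (where $r_i = 1$, $r_j = 0$) exist. A 2-color Ramsey on pairs, partitioning by $\sign(\ell_{01}(i,j) - \ell_{10}(i,j))$, lets me assume WLOG $\ell_{01}(i,j) < \ell_{10}(i,j)$ throughout. A further Ramsey on triples stabilizes the order type of the six values $\ell_{01}, \ell_{10}$ associated to each $i < j < k$. Only finitely many order types are consistent with the definitions of $\ell_{01}, \ell_{10}$; for each I combine the uniform structure with a final Erd\H{o}s--Szekeres step on matrix-column positions to extract one of the remaining four patterns $\id$, $\ao-\id$, $\id^{\mathrm{M}}$, or $(\ao-\id)^{\mathrm{M}}$, with the ``mirror'' outcome arising from a decreasing Erd\H{o}s--Szekeres. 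The function $T(k)$ is obtained by composing Ramsey's function and the Erd\H{o}s--Szekeres bound a constant number of times, yielding tower-type growth in $k$.

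The main obstacle is the case analysis in the incomparable case: after the Ramsey iterations several configurations for the interleaving of $\ell_{01}$ and $\ell_{10}$ remain, and each must be verified to yield a \emph{proper} $k \times k$ submatrix (rows and columns in the correct matrix-order, both distinct in the restriction) matching one of the 8 target patterns. The bookkeeping of column choices---distinct across pairs and correctly ordered---is delicate but follows the same Ramsey/Erd\H{o}s--Szekeres template as the chain case.
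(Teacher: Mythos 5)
The paper gives no proof of this theorem; it labels it folklore and cites Gravier et al.\ and Ding--Oporowski--Oxley--Vertigan, so there is no internal argument to compare against. Your overall plan --- Ramsey on the componentwise order type of pairs of distinct rows (chain versus antichain), with Erd\H{o}s--Szekeres to normalize column positions --- is the right kind of argument and matches the spirit of those references. Your chain case is complete and correct: a column can witness the $0\to1$ transition at most once along a $\prec$-chain, so the witnessing columns are distinct, and ES plus a one-row trim indeed produces $\ltr$, $\utr$ or a mirror.

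There is, however, a genuine flaw in the antichain case: you assert that the Ramsey iterations must produce one of the four diagonal patterns $\id$, $\ao-\id$, $\id^{\mathrm{M}}$, $(\ao-\id)^{\mathrm{M}}$. This is false. Take the $n\times(2n-2)$ matrix with rows $r_i(j)=[j<i]$ for $j\le n-1$ and $r_i(j)=[j<2n-i]$ for $n\le j\le 2n-2$; for $n=4$ the rows are $000111$, $100110$, $110100$, $111000$. These rows are pairwise incomparable, $\ell_{01}(i,j)<\ell_{10}(i,j)$ holds for every pair, and the triple order type is already uniform, yet every column is an up-step or a down-step, so \emph{no} $3\times3$ submatrix equal to $\id$, $\ao-\id$ or a mirror exists for any $n$. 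What does appear is $\mathbf L_{n-1}$ on columns $1,\dots,n-1$. The reason is structural: once $\ell_{01}<\ell_{10}$, $\ell_{01}$ \emph{is} the leftmost-disagreement column, which is ultrametric --- for $i<j<k$ the two smallest of $\ell_{01}(i,j),\ell_{01}(i,k),\ell_{01}(j,k)$ coincide, and $\ell_{01}(i,j)=\ell_{01}(j,k)$ is impossible (examine $r_j$ at that column). One Ramsey step then forces $\ell_{01}$ to depend on a single index and to be monotone in it, so the witnessed columns come out automatically monotone in matrix position (your final ES step is vacuous there); a further Ramsey on the unconstrained entries above (resp.\ below) the witnesses decides between a diagonal pattern and a triangle. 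So the antichain case can legitimately output $\utr$, $\ltr$ and their mirrors, which your expected list omits, and as written the case analysis would not close. Once you allow all eight outcomes the approach should go through, but the bookkeeping you defer is precisely where the proof's content lies, and the ultrametric observation above is the tool that makes it finite.
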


The previous theorem is a folklore result.
For instance, it can be readily derived from Gravier et al.~\cite{Gravier04} or from \cite[Corollary 2.4.]{Ding96} combined with the Erd\H{o}s-Szekeres theorem. 

Let $\rami$ be the set of the eight matrices of~\cref{Trk}.
The first four matrices are called \emph{diagonal}, and the last four (those defined by vertical mirror) are called \emph{anti-diagonal}.
By~\cref{Trk}, if a matrix class $\mathcal M$ has unbounded grid rank, then one can find in $\mathcal M$ arbitrarily large divisions with a matrix of $\rami$ as submatrix in each zone of the division, for arbitrarily large $k$.
We want to acquire more control on the horizontal-vertical interactions between these submatrices of~$\rami$.
We will prove that in large rank divisions, one can find so-called \emph{rank Latin divisions}.

An \emph{embedded submatrix} $M'$ of a matrix $M$ is a submatrix of $M$ together with the implicit information of the position of $M'$ in $M$.
In particular, we will denote by $\row(M')$, respectively $\col(M')$ the rows of $M$, respectively columns of $M$, intersecting precisely at $M'$. 
The argument of $\row(\cdot)$ or $\col(\cdot)$ is implicitly cast in an embedded submatrix of $M$.
In particular, $\row(M)$ simply denotes the set of rows of~$M$.
A \emph{contiguous} (embedded) submatrix is defined by a \emph{zone}, that is, a set of consecutive rows and a set of consecutive columns.
The \emph{$(i,j)$-cell} of a $d$-division $\mathcal D$, for any $i, j \in [d]$, is the zone formed by the $i$-th row block and the $j$-th column block of $\mathcal D$.
We will often denote that zone by $\mathcal D_{i,j}$.
 
A~\emph{rank-$k$ Latin $d$-division} of a matrix $M$ is a $d$-division $\mathcal D$ of $M$ such that for every $i, j \in [d]$ there is a contiguous embedded submatrix $M_{i,j} \in \rami$ in the $(i,j)$-cell of $\mathcal D$ satisfying: 
\begin{compactitem}
 \item $\{\row(M_{i,j})\}_{i,j}$ partitions $\row(M)$, and $\{\col(M_{i,j})\}_{i,j}$ partitions $\col(M)$.
 \item $\row(M_{i,j}) \cap \col(M_{i',j'})$ equals $\ao$ or $\az$, whenever $(i,j) \neq (i',j')$.
\end{compactitem}
Note that since the submatrices $M_{i,j}$ are supposed contiguous, the partition is necessarily a \opar{0}, hence a division.
A~\emph{rank-$k$ pre-Latin $d$-division} is the same, except that the second item need not be satisfied. 

  \tikzexternaldisable 
  \begin{figure}[h!]
    \centering
    \begin{tikzpicture}[scale=.4]
      \def\e{0.05}
      \pgfmathsetseed{17} 
    %\draw[opacity=100] (0, 0) grid (18, 18);

    %Autres zones
    \foreach \i in {0,...,8}{
       \foreach \j in {0,...,8}{
        \pgfmathsetmacro{\ii}{2*\i}
        \pgfmathsetmacro{\ip}{\ii+2}
        \pgfmathsetmacro{\jj}{2*\j}
        \pgfmathsetmacro{\jp}{\jj+2}
        \pgfmathsetmacro{\col}{ifthenelse(\j==mod(\i,3)*3+floor(\i/3),"white",ifthenelse(rnd >.5, "white", "black"))}
        \fill[\col] (\ii,\jj) -- (\ii,\jp) -- (\ip,\jp) -- (\ip,\jj) -- cycle;}
    }
    
    %Zones permutation universelle
    \foreach \a/\b in {0/1,1/0, 2/6,3/6,3/7, 4/12,5/13, 6/2,7/3, 8/8,9/8,9/9,10/15,11/14,11/15, 12/4,12/5,13/5, 14/10,15/11, 16/16,16/17,17/16}{
      \fill (\a,\b) -- (\a,\b+1) -- (\a+1,\b+1) -- (\a+1,\b) -- cycle;
    }

    \draw[line width=3pt, scale=6, color=black!10!yellow, opacity=100] (0, 0) grid (3, 3);

     \foreach \k in {0,...,8}{
        \pgfmathsetmacro{\i}{2*\k}
        \pgfmathsetmacro{\j}{2*((mod(\k,3)*3+floor(\k/3))}
        \draw[line width=2pt, red] (\i+\e,\j+\e) -- (\i+\e,\j+2-\e) -- (\i+2-\e,\j+2-\e) -- (\i+2-\e,\j+\e) -- cycle;      
    }
\end{tikzpicture}
    \caption{A $18 \times 18$ $0,1$-matrix with a rank-$2$ Latin \mbox{$3$-division} (in yellow) where 1~entries are depicted in black, 0~entries, in white, and every $M_{i,j}$ is highlighted in red.}
    \label{fig: Latin}
  \end{figure}
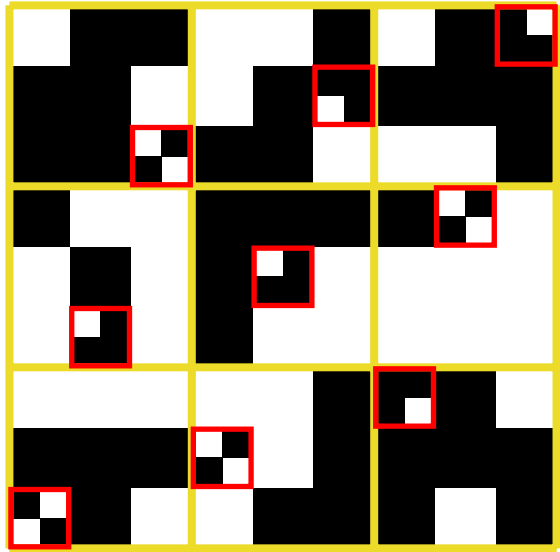
  \tikzexternalenable

We can now state our technical lemma.

\begin{lemma}
\label{lem: cstrams}
 For every positive integer $k$, there is an integer $K$ such that every $0,1$-matrix $M$ with a rank-$K$ division has a submatrix with a rank-$k$ Latin division. 
\end{lemma}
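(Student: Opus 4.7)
The plan is a two-stage Ramsey argument. In the first stage, I would use Theorem~\ref{Trk} to extract, in each cell of a chosen $k\times k$ sub-grid of the given rank-$K$ division, an $\rami$-submatrix $M_{i,j}$ of large size $N$, arranged so that the row sets used within a common row block are pairwise disjoint, and similarly for the column sets within a common column block (which will secure the partition condition of the Latin division). In the second stage, I would iteratively apply the bipartite Ramsey theorem (Theorem~\ref{th: bipRamsey}) to shrink these $\rami$-submatrices in tandem so that every off-diagonal cross block becomes constant, which yields the required rank-$k$ Latin $k$-division on the submatrix carried by the selected rows and columns.

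For parameters: each cell of the target Latin grid appears in $2(k^2-1)$ ordered pairs of distinct cells, so each cell will undergo at most $P = 2(k^2-1)$ bipartite-Ramsey shrinkings in the second stage. Define the tower $N_0 = k$, $N_{j+1} = \bipramm{N_j}{2}$, and set $N := N_P$, so that $P$ successive shrinkings starting from size $N$ leave $\rami$-submatrices of size at least $k$. Then choose $K$ as a sufficiently tall tower of exponentials above $N$: by the classical bound $r\le 2^c$ relating the numbers of distinct rows and columns in a $0,1$-matrix, every rank-$K$ cell has both at least $\log_2 K$ distinct rows and at least $\log_2 K$ distinct columns, so that even after the greedy excision of the (at most $kN$) rows and (at most $kN$) columns used by previously processed cells in the same row/column block, the remaining restricted cell still has at least $T(N)$ distinct rows or columns and thus contains an $\rami$-submatrix of size $N$ by Theorem~\ref{Trk}. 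Processing the cells of the sub-grid greedily in this way produces the pre-Latin configuration of $\rami$-submatrices $M_{i,j}$ with the desired disjointness properties.

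The second stage processes the $O(k^4)$ ordered pairs $((i,j),(i',j'))$ of distinct cells one at a time. At each step, apply Theorem~\ref{th: bipRamsey} to the current cross block between the rows of $M_{i,j}$ and the columns of $M_{i',j'}$; this yields equal-sized subsets on which the cross block is a constant ($0$- or $1$-)block. Restrict each of $M_{i,j}$ and $M_{i',j'}$ to the corresponding principal sub-$\rami$-matrix, which preserves the $\rami$-type because $\rami$ is closed under principal submatrices (using the obvious center-reversal reindexing of columns for the four mirror variants). Crucially, the row/column disjointness already achieved in the first stage and the constancy of previously processed cross blocks both persist under further restrictions, since these two properties are monotone under row/column deletion. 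After all ordered pairs have been processed, each $M_{i,j}$ still has size at least $N_0 = k$ and every off-diagonal cross block is constant, so the induced submatrix on the union of the selected rows and columns admits the required rank-$k$ Latin $k$-division. The main obstacle is the coordination of the first stage: one must simultaneously secure row-disjointness inside each row block, column-disjointness inside each column block, and the persistence of enough rank in each restricted cell to invoke Theorem~\ref{Trk}; this is controlled precisely by the $r\le 2^c$ inequality and the choice of $K$ as a tall enough tower, which is where the final estimate for $K$ comes from.
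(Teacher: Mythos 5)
Your second stage mirrors the paper's (iterated bipartite Ramsey to make the cross blocks constant), but your first stage has a genuine gap: you arrange the $k^2$ submatrices $M_{i,j}$ so that their row supports within a common row block are pairwise \emph{disjoint} (and similarly for columns), but you never secure the requirement that each $M_{i,j}$ be a \emph{contiguous} embedded submatrix of the final submatrix $\tilde M$. In the definition of a rank-$k$ Latin $k$-division, each $M_{i,j}$ must occupy consecutive rows and consecutive columns of $\tilde M$. Within a given row block $R_i$ of your chosen $k\times k$ sub-grid, the rows of $\tilde M$ falling in $R_i$ are $\bigcup_{j}\row(M_{i,j})$, and nothing in your greedy extraction forces the sets $\row(M_{i,j})$, $j\in[k]$, to occupy pairwise non-interleaved sub-intervals; they could perfectly well alternate position by position. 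Disjointness secures the partition condition, but it does not imply contiguity, and the shrinking in the second stage does not cure interleaving.

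The paper sidesteps this by using a $k^2\times k^2$ division rather than a $k\times k$ one. It coarsens the fine $k^2\times k^2$ division to a $k\times k$ division $\mathcal D'$, and for each $(i,j)\in[k]^2$ selects $M_{i,j}$ from the \emph{fine} cell at position $\bigl(j+(i-1)k,\ i+(j-1)k\bigr)$. Since $(i,j)\mapsto j+(i-1)k$ and $(i,j)\mapsto i+(j-1)k$ are bijections from $[k]^2$ onto $[k^2]$, the selected matrices lie in pairwise distinct fine row blocks and pairwise distinct fine column blocks, so their row and column supports are automatically disjoint \emph{intervals}: contiguity comes for free, and no greedy excision or $r\le 2^c$ bookkeeping is needed, which also gives the much smaller bound $K=T\bigl(\bipri{k}{k^4-k^2}\bigr)$. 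There is also a smaller issue in your choice of $N$: each application of \cref{th: bipRamsey} works from the $\min$ of the two current sizes, so a cell can drop several levels of your tower in a single step if its partner was already shrunk, and the cap ``$2(k^2-1)$ shrinkings per cell'' therefore does not bound the total level drop by $2(k^2-1)$ for an arbitrary processing order. You would need either to schedule the $k^2(k^2-1)$ ordered pairs in round-robin fashion, or, as the paper does, shrink \emph{all} $k^2$ cells at every step and take $\kappa=\bipri{k}{k^4-k^2}$.
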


\begin{proof}
  We start by showing the following claim, a first step in the global cleaning process of \cref{lem: cstrams}.
  We recall that $T(\cdot)$ is the function of~\cref{Trk}.
\begin{claim}\label{Claimrk}
  Let $M$ be a $0,1$-matrix with a rank-$T(\kappa)$ $d^2$-division $\mathcal D$.
  There is a $\kappa d^2 \times \kappa d^2$ submatrix $\tilde M$ of $M$ with a rank-$\kappa$ pre-Latin $d$-division; i.e., a $d$-division $\mathcal D'$, coarsening $\mathcal D$, such that the $(i,j)$-cell of $\mathcal D'$ contains $M_{i,j} \in \mathcal N_\kappa$ as a contiguous submatrix, $\{\row(M_{i,j})\}_{i,j \in [d]}$ partitions $\row(\tilde M)$, and $\{\col(M_{i,j})\}_{i,j \in [d]}$, $\col(\tilde M)$.
\end{claim}
\begin{proof}[Proof of the claim]
  Let $\mathcal D^R$ be $(R_1, \ldots, R_{d^2})$ and, $\mathcal D^C$ be $(C_1,\ldots, C_{d^2})$.
  Let $\mathcal D'$ be the coarsening of $\mathcal D$ defined by
  $$\mathcal D'^R := (\bigcup_{i \in [d]} R_i, \bigcup_{i \in [d+1, 2d]} R_i, \ldots, \bigcup_{i \in [(d-1)d+1, d^2]} R_i),~\text{and}$$
  $$\mathcal D'^C := (\bigcup_{j \in [d]} C_j, \bigcup_{j \in [d+1, 2d]} C_j, \ldots, \bigcup_{j \in [(d-1)d+1, d^2]} C_j).$$ 
  By~\cref{Trk}, each cell of $\mathcal D$ contains a submatrix in $\mathcal N_\kappa$. 
  Thus there are $d^2$ such submatrices in each cell of $\mathcal D'$.
  For every $i, j \in [d]$, we keep in $\tilde M$ the $\kappa$ rows and $\kappa$ columns of a single submatrix of $\mathcal N_\kappa$ in the $(i,j)$-cell of $\mathcal D'$, and more precisely, one $M_{i,j}$ in the $(j+(i-1)d,i+(j-1)d)$-cell of $\mathcal D$.
  In other words, we keep in the $(i,j)$-cell of $\mathcal D'$, a submatrix of $\mathcal N_\kappa$ in the $(j,i)$-cell of $\mathcal D$ restricted to $\mathcal D'$.\footnote{Or for readers familiar with the game ultimate tic-tac-toe, at positions of moves forcing the next move in the symmetric cell about the diagonal.}
  The submatrices $M_{i,j}$ are contiguous in $\tilde M$.
  The set $\{\row(M_{i,j})\}_{i,j \in [d]}$ partitions $\row(\tilde M)$ since $j+(i-1)d$ describes $[d]^2$ when $i \times j$ describes $[d] \times [d]$.
  Similarly $\{\col(M_{i,j})\}_{i,j \in [d]}$ partitions $\col(\tilde M)$.
\end{proof}

We recall that $\bipramm{k}{2}$ is the minimum integer $b$ such that every $2$-edge coloring of the biclique $K_{b,b}$ contains a monochromatic $K_{k,k}$.
We set $\bipri{k}{1} := \bipramm{k}{2}$, and for every integer $s \geqslant 2$, we denote by $\bipri{k}{s}$, the minimum integer $b$ such that every $2$-edge coloring of $K_{b,b}$ contains a monochromatic $K_{q,q}$ with $q = \bipri{k}{s-1}$.
We set $\kappa := \bipri{k}{k^4-k^2}$ and $K := \max(T(\kappa),k^2) = T(\kappa)$, so that applying~\cref{Claimrk} on a rank-$K$ division (hence in particular a \mbox{rank-$T(\kappa)$} $k^2$-division) gives a rank-$\kappa$ pre-Latin $k$-division, with the $k^2$ submatrices of $\mathcal N_\kappa$ denoted by $M_{i,j}$ for $i, j \in [k]$.

At this point the zones $\row(M_{i,j}) \cap \col(M_{i',j'})$, with $(i,j) \neq (i',j')$, are arbitrary.
We now gradually extract a subset of $k$ rows and the $k$ \emph{corresponding columns} (i.e., the columns crossing at the diagonal if $M_{i,j}$ is diagonal, or at the anti-diagonal if $M_{i,j}$ is anti-diagonal) within each $M_{i,j}$, to turn the rank pre-Latin division into a rank Latin division.  
To keep our notation simple, we still denote by $M_{i,j}$ the initial submatrix $M_{i,j}$ after one or several extractions. 

For every (ordered) pair $(M_{i,j},M_{i',j'})$ with $(i,j) \neq (i',j')$, we perform the following extraction (in any order of these ${k^2 \choose 2}$ pairs).
Let $s$ be such that all the $M_{a,b}$'s have size $\bipri{k}{s}$.
We find two subsets of size $\bipri{k}{s-1}$, one in $\row(M_{i,j})$ and one in $\col(M_{i',j'})$, intersecting at a constant $\bipri{k}{s-1} \times \bipri{k}{s-1}$ submatrix.
In $M_{i,j}$ we keep only those rows and the corresponding columns, while in $M_{i',j'}$ we keep only those columns and the corresponding rows.
In every other $M_{a,b}$, we keep only the first $\bipri{k}{s-1}$ rows and corresponding columns.

After this extraction performed on the $k^4 - k^2$ zones $\row(M_{i,j}) \cap \col(M_{i',j'})$ (with $(i,j) \neq (i',j')$), we obtain the desired rank-$k$ Latin division (on a submatrix of $M$).
\end{proof}

A simple consequence of~\cref{lem: cstrams} is that every class $\mathcal M$ with unbounded grid rank satisfies $|\mathcal M_n| \geq \lfloor \frac n 2\rfloor!$.
Indeed there is a simple injection from \mbox{$n$-permutations} to $2n \times 2n$ submatrices of any rank-2 Latin $n$-division.  
This is enough to show that classes of unbounded grid rank are not small.
We will need some more work to establish the sharper lower bound of $n!$.

\section{Matrix classes with unbounded grid rank }\label{sec:more-ramsey}
In this section, we prove our main result concerning matrix classes,~\cref{thm:equiv}.
The plan is to refine the cleaning of rank Latin divisions, and prove the following.
\begin{theorem}
\label{thm:Ram6}
Let $\mathcal M$ be a matrix class over a finite alphabet with unbounded grid rank.\\
Then some $a$-selection $s_a(\mathcal M)$ of $\mathcal M$ includes $\mathcal F_\sym$ for some $\sym \in \six$.
\end{theorem}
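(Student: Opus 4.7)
The plan is to extract structure gradually: large rank divisions (from unbounded grid rank), then rank-Latin divisions (via \cref{lem: cstrams}), then fully uniform blocks (via iterated Ramsey), and finally identify one of the six $\mathcal F_\sym$ classes by matching the cleaned pattern against $F_\sym$. First, by \cref{lem:finite-to-binary}, some $a$-selection $s_a(\mathcal M)$ also has unbounded grid rank, so I may assume we work with a class of $0,1$-matrices. Then \cref{lem: cstrams} guarantees that for every (large) $K$, some matrix in $s_a(\mathcal M)$ has a submatrix admitting a rank-$K$ Latin $K$-division: a $K \times K$ division whose $(i,j)$-cell contains an embedded $M_{i,j} \in \rami$, whose rows (resp.\ columns) partition all the rows (resp.\ columns), and whose cross-cell zones $\row(M_{i,j}) \cap \col(M_{i',j'})$ for $(i,j) \neq (i',j')$ are constants $c((i,j),(i',j')) \in \{0,1\}$.

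Next comes a round of Ramsey cleaning. By bipartite Ramsey (\cref{th: bipRamsey}) applied to the $8$-coloring of cells by their type in $\rami$, I would pass to a sub-division of size $d \times d$ (with $d$ a Ramsey root of $K$) in which all $M_{i,j}$ share a common type $X$. Then, by a product-Ramsey argument in the spirit of \cref{lem:grid-ramsey}, iterated to absorb the separate row-block and column-block coordinates, I would further refine so that $c((i,j),(i',j'))$ depends only on the two sign relations $\mathrm{sgn}(i-i')$ and $\mathrm{sgn}(j-j')$. The outcome is, for every $n$, an arbitrarily large matrix in $s_a(\mathcal M)$ whose block structure is completely prescribed by a fixed pair $(X,c)$.

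At the block level, the cells hosting copies of $X$ sit at positions $((i,j),(i,j))$ in the $d^2 \times d^2$ super-grid, and since rows are lex-ordered by (row block, row group) while columns are lex-ordered by (column block, column group), these positions trace out the shuffle permutation $\pi_d$ on $[d^2]$ defined by $\pi_d((i-1)d+j) = (j-1)d+i$. A crucial elementary fact I would rely on is that $\pi_d$ contains every $\sigma \in \mathfrak S_d$ as a sub-pattern: the positions $p_j=(j-1)d+\sigma(j)$ for $j \in [d]$ are strictly increasing in $j$, and $\pi_d(p_j)=(\sigma(j)-1)d + j$ is ordered in $j$ according to $\sigma(j)$, so the pattern of $\pi_d$ on $(p_1,\dots,p_d)$ is exactly $\sigma$.

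Finally I would run a case analysis: for each pair $(X,c)$ one reads off a $\sym \in \six$ such that the cleaned matrix realizes $F_\sym(\pi_d)$, hence by hereditariness and the sub-pattern property of $\pi_d$ contains $F_\sym(\sigma)$ for every $\sigma \in \mathfrak S_n$ whenever $d \geq n$, proving $\mathcal F_\sym \subseteq s_a(\mathcal M)$. For instance, $X = \id$ with $c \equiv 0$ makes the cleaned matrix literally the permutation matrix of $\pi_d$, giving $\mathcal F_=$; $X = \utr$ with $c((i,j),(i',j'))=1$ exactly when $(i,j)$ precedes $(i',j')$ in row-lex order yields $F_{\le_R}(\pi_d)$ and hence $\mathcal F_{\le_R}$. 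The remaining cleaned configurations are reduced to these by symmetries: complementation swaps $\mathcal F_=$ with $\mathcal F_\neq$ and toggles the inequality direction, reversing the order of rows (or replacing $X$ by its horizontal mirror) swaps $\le_R$ with $\ge_R$, and transposition plus the analogous column reversals swap the $R$-variants with the $C$-variants. The main obstacle is this case analysis: verifying that every cleaned combination of $X$ and $c$ genuinely matches one of the six $\mathcal F_\sym$, possibly after an additional selection of submatrix to eliminate constants that would otherwise mask the permutation-based structure.
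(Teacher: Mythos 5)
Your proposal follows the paper's overall strategy quite closely: reduce to $0,1$-matrices via \cref{lem:finite-to-binary}, extract a rank Latin division via \cref{lem: cstrams}, apply a product-Ramsey cleaning in the spirit of \cref{lem:grid-ramsey} so the off-diagonal cross-cell constants depend only on order types, then extract universal permutation patterns. The extra bipartite-Ramsey step to make all $M_{i,j}$ share a common type $X$ is harmless but unnecessary: every $X\in\rami$ is non-constant, so (as in \cref{lem: factl3}) one can always pick a row and column of $M_{i,\sigma(i)}$ whose intersection equals whatever diagonal value is required, without first forcing the $M_{i,j}$ to agree.

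There is, however, a real gap in the final step. The Ramsey cleaning only guarantees that the cross-cell constant $c((i,j),(i',j'))$ depends on $\bigl(\mathrm{sgn}(i-i'),\mathrm{sgn}(j-j')\bigr)$, yielding a general $\eta\from\set{-1,1}^2\to\set{0,1}$: the cleaned matrix (restricted to one row and one column per block) is $F_\eta(\pi_d)$, and only $6$ of the $16$ possible $\eta$'s are among the $\sym\in\six$. If $\eta$ depends genuinely on both coordinates — e.g.\ the ``XOR'' function $\eta(1,1)=\eta(-1,-1)=0$, $\eta(1,-1)=\eta(-1,1)=1$ — then $F_\eta(\pi_d)$ is not equal to $F_\sym(\pi_d)$ for any $\sym\in\six$, and the symmetries you list (complementation, row/column reversal, transposition) permute $\eta$'s among themselves without collapsing a two-coordinate-dependent $\eta$ onto a one-coordinate-dependent one. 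Your observation that $\pi_d$ contains every $\sigma$ as a sub-pattern is used only to vary the permutation, not to prune $\eta$. What is missing is precisely the paper's \cref{lem:reduce eta's}: one shuffles $\sigma$ with the identity permutation and then keeps a rectangular submatrix (even columns, first $k$ rows) chosen to \emph{avoid all diagonal entries}; on that rectangle only the two values $\eta(1,1)$ and $\eta(-1,1)$ appear, so the submatrix is $F_\gamma(\sigma)$ for a $\gamma$ depending on one coordinate, i.e.\ for one of the six $\sym$. Your shuffle $\pi_d$ does have both ``$\sigma$-like'' and ``identity-like'' entries and could plausibly support an analogous off-diagonal rectangle selection, but that selection and the resulting two-value computation are not spelled out, and without them the claim that every cleaned $(X,c)$ yields some $\mathcal F_\sym$ is unjustified.
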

We refer the reader to \cref{subsec:patterns} for the definition of $\mathcal F_\sym$
and $a$-selections.
\cref{thm:Ram6} will later simplify our task when we move to the growth of ordered graphs.
Moreover, it easily yields~\cref{thm:equiv}, as we will prove in~\cref{thm:m-nip-implies-pa} that each of the classes $\mathcal F_\sym$ is independent and intractable.
% While proving the theorem, we will improve the previous lower bound $|\mathcal M_n| \geq \lfloor \frac n 2\rfloor!$ to $|\mathcal M_n| \geq n!$ for matrix classes $\mathcal M$ of unbounded grid rank,
% and prove that such classes efficiently interpret the class of all graphs. This will prove~\cref{thm:equiv}. 
In addition, we show that the six classes in~\cref{thm:Ram6} constitute a minimal family, in the sense that none of the classes is contained in another.

\subsection{Finding $k!$ distinct $k \times k$ matrices when the grid rank is unbounded}\label{subsec: perm-mats}
In this section, we prove that matrix classes of unbounded grid rank have at least factorial growth. 
Apart from that, we prove the following, weaker variant of~\cref{thm:Ram6}.
  \begin{theorem}
  \label{cor: Ram16}
  Let $\mathcal M$ be a $0, 1$-matrix class with unbounded grid rank.
  Then there exists $\eta:\{-1,1\}\times \{-1,1\}\rightarrow\{0,1\}$ such that $\mathcal F_\eta \subseteq \mathcal M$.
  \end{theorem}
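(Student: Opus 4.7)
The plan is to start from a rank-$k$ Latin $d$-division (obtained via \cref{lem: cstrams} from the assumption of unbounded grid rank) and apply two Ramsey-style cleanings: first, to force all embedded submatrices $M_{i,j}$ to share the same type in $\mathcal N_k$; then, to force the colors of the constant cross zones to depend only on the order types of the cell indices. The cleaned structure will encode every permutation pattern and yield the desired $\mathcal F_\eta$.

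Concretely, fix a target $n$ and aim to realize every $\sigma\in\mathfrak S_n$ as a submatrix of some matrix in $\mathcal M$. For $K$ sufficiently large, \cref{lem: cstrams} produces $M\in\mathcal M$ carrying a rank-$k$ Latin $d$-division for $k,d$ as large as we wish. Label each cell by the type of $M_{i,j}\in\mathcal N_k$, which gives an edge-coloring with $8$ colors of the complete bipartite graph whose sides are the row- and column-blocks. Applying bipartite Ramsey (\cref{th: bipRamsey}) with $d\geq\bipramm{N}{8}$ produces subsets $I,J\subseteq[d]$ of size $N$ such that every $M_{i,j}$ with $(i,j)\in I\times J$ shares one common type $T$. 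Inside this uniform subgrid the only remaining data is the $\{0,1\}$-valued coloring $C$ of the constant cross zones $\row(M_{i,j})\cap\col(M_{i',j'})$ for distinct $(i,j),(i',j')\in I\times J$.

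We next invoke \cref{lem:perm-ramsey} with color set $\{0,1\}$: for a suitable $N$ and permutation $\pi\in\mathfrak S_N$, every $2$-coloring of $[N]^2$ contains a subset $U$ of size $n$ realizing any prescribed sub-permutation $\sigma$ of $\pi$, with the coloring on pairs of $U$ depending only on the pair $(\ot(a,b),\ot(\pi(a),\pi(b)))$. We apply this with $c(a,b):=C((i_a,j_{\pi(a)}),(i_b,j_{\pi(b)}))$, where $I=\{i_1<\dots<i_N\}$ and $J=\{j_1<\dots<j_N\}$. For distinct $a,b\in U$ we have $\pi(a)\neq\pi(b)$, so both order types are non-zero and the dependence reduces to a function $\eta\from\{-1,1\}^2\to\{0,1\}$. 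In each cell $(i_a,j_{\pi(a)})$ of the resulting sub-transversal we select the row and the column at a fixed position $(\rho,\kappa)\in[k]^2$; the induced $n\times n$ submatrix of $M$ has diagonal entries equal to $T(\rho,\kappa)$ and off-diagonal entries equal to $\eta$ on the corresponding signs, hence is a copy of $F_\eta(\sigma)$ (with the appropriate value on the permutation diagonal).

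To obtain a single $\eta$ working for all $n$, we target for each $n$ a universal permutation $\sigma^*_n$ containing every $\sigma\in\mathfrak S_n$ as a sub-pattern (such $\sigma^*_n\in\mathfrak S_{N_n}$ exists for $N_n$ large enough). Applying the construction above to $\sigma^*_n$ yields, for each $n$, a pair $(T_n,\eta_n)$ in the finite set $\mathcal N_k\times\{0,1\}^{\{-1,1\}^2}$ together with a copy of $F_{\eta_n}(\sigma^*_n)$ in $\mathcal M$. By pigeonhole, one fixed pair $(T,\eta)$ occurs for arbitrarily large $n$; since every $\sigma\in\mathfrak S_n$ is a sub-permutation of $\sigma^*_n$, hereditarity of $\mathcal M$ then yields $\mathcal F_\eta\subseteq\mathcal M$. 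The main technical point is the cross-zone cleaning: simultaneously controlling a $\{0,1\}$-valued coloring of pairs of cells while realizing an arbitrary target permutation pattern. This is exactly the product-Ramsey content packaged by \cref{lem:perm-ramsey}, which is tailor-made for the job.
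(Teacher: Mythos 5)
Your proof is correct and follows essentially the same route as the paper: extract a rank-$k$ Latin division via \cref{lem: cstrams}, apply the product Ramsey lemma to force the constant cross-zone colors to depend only on the order types of the cell indices, recover a copy of $F_\eta(\sigma)$ as a submatrix, and pigeonhole over the $16$ possible functions $\eta$. Two small simplifications in the paper are worth noting: the preliminary bipartite-Ramsey step that forces every $M_{i,j}$ to share a common type $T$ is unnecessary, since the proof of \cref{lem: factl3} instead picks the row and column inside each $M_{i,\sigma(i)}$ on the fly, exploiting only that every matrix of $\mathcal N_k$ (for $k\geq 2$) has both a $0$ and a $1$ entry; and your detour through a universal permutation $\sigma^*_n$ plus hereditariness is replaced there by applying \cref{lem:grid-ramsey} directly to the full grid $[k]^2$ with its two lexicographic orders, which embeds every $\sigma\in\mathfrak S_k$ simultaneously from a single cleaned division.
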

  The sixteen classes $\mathcal F_\eta$ are defined below,
  and include the classes $\mathcal F_\sym$ for $\sym\in\six$. In the following section, we will reduce those sixteen classes down to six.

\medskip
Recall that the \emph{order type} $\ot(x,y)$ of a pair $(x,y)$ of elements in a totally ordered set is equal to $-1$ if $x > y$, $0$ if $x=y$, and $1$ if $x < y$. 

\begin{definition}
\label{def: mats}
Let $k \geq 1$ be an integer and $\eta:\{-1,1\}\times \{-1,1\}\rightarrow\{0,1\}$.
For every $\sigma \in \mathfrak{S}_k$ we define the $k \times k$ matrix $F_{\eta}(\sigma) = (f_{i,j})_{1\leq i,j\leq k}$ by setting for every $i,j\in \left[ k \right]$,
  $$f_{i,j} := \left\{
    \begin{array}{ll}
        \eta(\ot(\sigma^{-1}(j),i),\ot(j,\sigma(i))) & \mbox{if } \sigma(i)\neq j \\
        1 - \eta(1,1) & \mbox{if } \sigma(i) = j. \\        
        \end{array}
    \right. $$
Finally $\mathcal{F}_{\eta}$ is the submatrix closure of $\sg{F_{\eta}(\sigma)~|~\sigma \in \mathfrak{S}_n, n\geq 1}.$
\end{definition}

These matrices generalize reorderings of matrices in $\mathcal{N}_k$.
For example, we find exactly the permutation matrices (reorderings of $\id$) when $\eta$ is constant equal to~$0$ and their complement when $\eta$ is constant equal to~$1$.
See~\cref{fig: encodage} for more interesting examples of such matrices.

The six classes $\mathcal F_\sym$'s with $\sym \in \six$ correspond to six of the sixteen possible encodings~$\eta$.
More specifically, $\mathcal F_\sym = \mathcal F_\eta$ where $\eta$ is defined as follows, depending on $\sym$:
\begin{compactitem}
\item[($=$)] $\eta(x,y)=0$ for every $(x,y) \neq (1,1)$;%, and $\sym$ is $=$;
\item[($\neq$)] $\eta(x,y)=1$ for every $(x,y) \neq (1,1)$;%, and $\sym$ is $\neq$;
\item[($\le_R$)] $\eta(1,1)=\eta(-1,1)=1$ and $\eta(-1,-1)=\eta(1,-1)=0$;%, and $\sym$ is $\le_C$;
\item[($\ge_R$)] $\eta(1,1)=\eta(-1,1)=0$ and $\eta(-1,-1)=\eta(1,-1)=1$;%, and $\sym$ is $\ge_C$;
\item[($\le_C$)] $\eta(-1,-1)=\eta(-1,1)=1$ and $\eta(1,1)=\eta(1,-1)=0$;%, and $\sym$ is $\ge_R$;
\item[($\ge_C$)] $\eta(-1,-1)=\eta(-1,1)=0$ and $\eta(1,1)=\eta(1,-1)=1$.%, and $\sym$ is $\le_R$;
\end{compactitem}
A careful reader might notice that the entries at positions $(i,\sigma(i))$ differ between the encodings of, say, $\mathcal F_{\le_R}$ and $\mathcal F_{\eta}$ with $\eta(1,1)=\eta(-1,1)=1$ and $\eta(-1,-1)=\eta(1,-1)=0$.
The latter class could rather be denoted by $\mathcal F_{<_R}$.
As the $\mathcal F_\sym$'s are closed under taking submatrices, we already observed that $\mathcal F_{\geqslant R} = \mathcal F_{> R}$ holds.

  \tikzexternaldisable 
  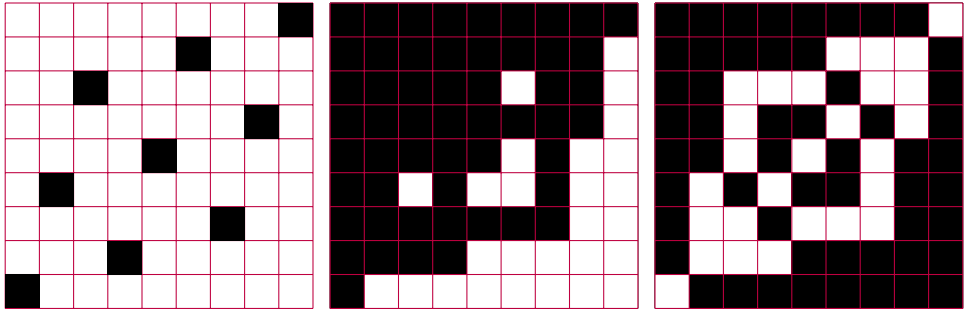
\begin{figure}[h!]
    \centering
    \begin{tikzpicture}[scale = .45]
\begin{scope}
    \draw[purple] (0, 0) grid (9, 9);
    \foreach \i in {0,...,8}{
        \pgfmathsetmacro{\ip}{\i+1}
        \pgfmathsetmacro{\j}{(mod(\i,3)*3+mod(floor(\i/3),3)}
        \pgfmathsetmacro{\jp}{\j+1}
        \fill[black] (\i,\j) -- (\i,\jp) -- (\ip,\jp) -- (\ip,\j) -- cycle;
    }
  \end{scope}

    \begin{scope}[xshift=9.5cm]
    \draw (0, 0) grid (9, 9);    
    
    \foreach \i in {0,...,8}{
        \foreach \j in {0,...,8}{
        \pgfmathsetmacro{\ip}{\i+1}
        \pgfmathsetmacro{\sigi}{(mod(\i,3)*3+mod(floor(\i/3),3)}
        \pgfmathsetmacro{\jp}{\j+1}
        \pgfmathsetmacro{\sigj}{3*mod(\j,3)+mod(floor(\j/3),3)}
        \pgfmathsetmacro{\col}{ifthenelse(\i > \sigj,ifthenelse(\j<\sigi,"white","black"),"black")}
        \fill[\col] (\i,\j) -- (\i,\jp) -- (\ip,\jp) -- (\ip,\j) -- cycle;
        }

    }
    \draw[purple] (0, 0) grid (9, 9);            
    \end{scope}

    \begin{scope}[xshift=19cm]
    \draw (0, 0) grid (9, 9);    
    \foreach \i in {0,...,8}{
        \foreach \j in {0,...,8}{
        \pgfmathsetmacro{\ip}{\i+1}
        \pgfmathsetmacro{\sigi}{(mod(\i,3)*3+mod(floor(\i/3),3)}
        \pgfmathsetmacro{\jp}{\j+1}
        \pgfmathsetmacro{\sigj}{3*mod(\j,3)+mod(floor(\j/3),3)}
        \pgfmathsetmacro{\col}{ifthenelse(\i < \sigj,ifthenelse(\j>\sigi,"black","white"),ifthenelse(\j<\sigi,"black","white"))}
        \fill[\col] (\i,\j) -- (\i,\jp) -- (\ip,\jp) -- (\ip,\j) -- cycle;
        }
    }
    \draw[purple] (0, 0) grid (9, 9);        
    \end{scope}    
    
    \end{tikzpicture}
    \caption{Left: $9 \times 9$ permutation matrix $M_{\sigma}$.
      Center: The matrix $F_{\eta_1}(\sigma)$ with $\eta_1 (1,1)=0$ and $\eta_1 (-1,-1) = \eta_1 (-1,1) = \eta_1(1,-1) =1$.
      Right: The matrix $F_{\eta_2}(\sigma)$ with $\eta_2 (1,1) = \eta_2 (-1,-1) =1$ and $\eta_2 (-1,1) = \eta_2 (1,-1) =0$.
    }
    \label{fig: encodage}
  \end{figure}
  \tikzexternalenable

With the next lemma, we get even cleaner universal patterns out of a large rank Latin division. We use the notation of~\cref{lem:grid-ramsey}.

\begin{lemma}
\label{lem: factl3}
Let $k \geq 1$ be an integer.
Let $M$ be a $0,1$-matrix with a rank-$k$ Latin $N$-division with $N := \gridramm{k}{2}$.
Then there exists $\eta:\{-1,1\}\times \{-1,1\} \rightarrow\{0,1\}$ such that the submatrix closure of $M$ contains the set $\sg{F_{\eta}(\sigma)~|~ \sigma \in {\mathfrak{S}}_k}$.
\end{lemma}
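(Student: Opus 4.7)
The plan is to Ramsey-reduce the cross-zone data of the Latin $N$-division so that every $F_\eta(\sigma)$ is then obtained by a canonical extraction. Recall that each cell $(r,c)$ of the division carries an embedded $k\times k$ submatrix $M_{r,c}\in\rami$, and for any two distinct cells $(r_1,c_1),(r_2,c_2)$ the cross-zone $\row(M_{r_1,c_1})\cap\col(M_{r_2,c_2})$ is constant, recording a single bit $\alpha(r_1,c_1,r_2,c_2)\in\set{0,1}$. I would first apply \cref{lem:grid-ramsey} (with two colors) to clean $\alpha$ so that, after passing to a $k\times k$ sub-grid $A\times B\subseteq[N]\times[N]$ of cells (which, after relabeling, I identify with $[k]\times[k]$), the value $\alpha(r_1,c_1,r_2,c_2)=\tilde\eta(\ot(r_1,r_2),\ot(c_1,c_2))$ depends only on the two coordinate-wise order types, for some fixed $\tilde\eta\from\set{-1,1}^2\to\set{0,1}$. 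The natural way to set this up is to view the cell set $[N]^2$ with its row-major and column-major linear orders, so that on any pair of cells differing in both coordinates the two combined-order types recover exactly $(\ot(r_1,r_2),\ot(c_1,c_2))$, and then invoke \cref{lem:grid-ramsey} on that structure.

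I then set $\eta(x,y):=\tilde\eta(-x,-y)$ and $\delta:=1-\eta(1,1)$. Given $\sigma\in\mathfrak S_k$, I select inside the clean sub-grid the $k$ cells $M_{i,\sigma(i)}$ for $i\in[k]$ and, within each of them, pick a local row $\rho_i$ and local column $\gamma_i$ such that the corresponding entry of $M_{i,\sigma(i)}$ equals $\delta$. This is possible because every matrix in $\rami$ contains both $0$s and $1$s (for $k\ge 2$; the case $k=1$ is trivial). Let $r_i\in\row(M_{i,\sigma(i)})$ and $c_{\sigma(i)}\in\col(M_{i,\sigma(i)})$ denote the corresponding rows and columns of $M$, and let $F$ be the $k\times k$ submatrix of $M$ on rows $r_1<\dots<r_k$ and columns $c_1<\dots<c_k$. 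A direct check shows $F=F_\eta(\sigma)$: the diagonal entries $(i,\sigma(i))$ lie in $M_{i,\sigma(i)}$ at the chosen local positions and equal $\delta=1-\eta(1,1)$, while each off-diagonal entry $(i,j)$ with $j\neq\sigma(i)$ lies in the cross-zone between cells $(i,\sigma(i))$ and $(\sigma^{-1}(j),j)$ and so equals $\tilde\eta(\ot(i,\sigma^{-1}(j)),\ot(\sigma(i),j))=\eta(\ot(\sigma^{-1}(j),i),\ot(j,\sigma(i)))$, which matches the definition of $F_\eta(\sigma)$. Crucially, $\eta$ is fixed by the Ramsey step and does not depend on $\sigma$, so the same $\eta$ serves every permutation at once.

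The main obstacle is the Ramsey step itself: \cref{lem:grid-ramsey} is stated for a single set with two orders and a coloring of its pairs, whereas the cross-zone coloring naturally lives on a 2D grid of cells. The care lies in encoding $[N]^2$ with two linear orders that recover the coordinate-wise order types on non-coincident pairs, and in arranging that the substructure returned by the lemma can be interpreted as an honest $k\times k$ sub-grid of cells (rather than some unrelated $k$-element substructure of the combined orders). Once this cleaning is set up, the extraction and verification above are essentially bookkeeping.
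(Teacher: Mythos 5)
Your proposal is correct and follows essentially the same route as the paper: color pairs of cells by their cross-zone constant, invoke the product Ramsey argument (\cref{lem:grid-ramsey}) on $[N]^2$ with the two lexicographic orders to get a $k\times k$ sub-grid of cells whose cross-zone bit depends only on the coordinate-wise order types, then extract $F_\eta(\sigma)$ by picking one row and one column inside each diagonal cell $M_{i,\sigma(i)}$ hitting the value $1-\eta(1,1)$. The subtlety you flag about the lemma yielding an honest product sub-grid rather than an arbitrary isomorphic substructure is real, but the paper's own proof treats it identically (implicitly using the product form of the theorem), so this is not a departure from their argument.
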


\begin{proof}
  Let $(\mathcal R, \mathcal C)$ be the rank-$k$ Latin $N$-division, with $\mathcal R := \sg{R_1, \ldots, R_N}$ and $\mathcal C := \sg{C_1, \ldots, C_N}$, so that every row of $R_i$ (resp. column of $C_i$) is smaller than every row of $R_j$ (resp. column of~$C_j$) whenever $i < j$.
  Let $M_{i,j}$ be the \emph{chosen} contiguous submatrix of $\rami$ in $R_i \cap C_j$ for every $i, j \in [N]$.
  Recall that, by definition of a rank Latin division, $\{\row(M_{i,j})\}_{i,j \in [N]}$ partitions $\row(M)$ (resp.~$\{\col(M_{i,j})\}_{i,j \in [N]}$ partitions $\col(M)$) into intervals. 
  
  Let $\str N := ([N]^2, <_1, <_2)$, where $<_1$ is the lexicographic order on $[N]^2$ which first orders according to the first coordinate and then the second one, while $<_2$ is the lexicographic order on $[N]^2$ which first orders according to the second coordinate and then the second one.  
  
  We now define a coloring $c : [N]^4 \to \sg{0,1}$ as follows: for every $(i,j) \neq (i',j') \in \left[ N \right]^2$, we let $c((i,j),(i',j')) \in \sg{0,1}$ be the value of the constant entries in $\row(M_{i,j}) \cap \col(M_{i',j'})$. We choose arbitrary colors when $(i,j)=(i',j')$. By~\cref{lem:grid-ramsey}, there are two sets $R,C \in \binom{\left[ N \right]}{k}$ such that for every $i,i',j,j'\in [R]$, the value $c((i,j),(i',j'))$ only depends on $\ot_1((i,j),(i',j'))$ and $\ot_2((i,j),(i',j'))$. In particular when $i\neq i'$ and $j\neq j'$, this value only depends on $\ot(i,i')$ and $\ot(j,j')$.

  Let $\eta:\{-1,1\}\times \{-1,1\}\rightarrow\{0,1\}$ be such that for every $i\neq i'\in [N]$ and $j\neq j'\in [N]$ we have: 
  $$c((i,j),(i',j'))=\eta(\ot(j,j'), \ot(i,i')). $$
  
  In terms of the rank Latin division, this means that for every $i < i' \in R$ and $j < j' \in C$,
  \begin{itemize}
  \item $\col(M_{i,j}) \cap \row(M_{i',j'})$ has constant value $\eta(-1,-1)$,
  \item $\row(M_{i,j}) \cap \col(M_{i',j'})$ has constant value $\eta(1,1)$,
  \item $\col(M_{i',j}) \cap \row(M_{i,j'})$ has constant value $\eta(-1,1)$, and
  \item $\row(M_{i',j}) \cap \col(M_{i,j'})$ has constant value $\eta(1,-1)$.
  \end{itemize}
  
  \tikzexternaldisable 
  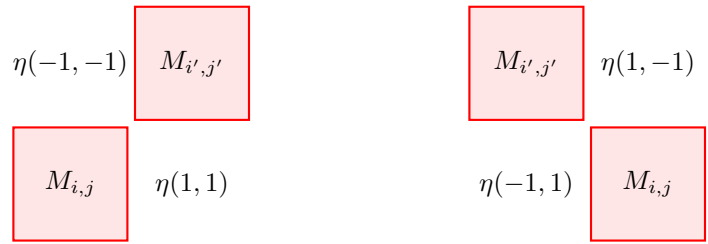
\begin{figure}[h!]
    \centering
    \begin{tikzpicture}
    \def\z{1.5}
    \foreach \i/\j in {0/0,1.6/1.6, 6/1.6,7.6/0}{
      \draw[thick, red] (\i,\j) -- (\i,\j+\z) -- (\i+\z,\j+\z) -- (\i+\z,\j) -- cycle ;
      \draw[fill, red, opacity=0.1] (\i,\j) -- (\i,\j+\z) -- (\i+\z,\j+\z) -- (\i+\z,\j) -- cycle ;
    }
    \node at (0.75,0.75) {$M_{i,j}$} ;
    \node at (8.35,0.75) {$M_{i,j}$} ;
    \node at (2.35,2.35) {$M_{i',j'}$} ;
    \node at (6.75,2.35) {$M_{i',j'}$} ;

    \node at (0.75,2.35) {$\eta(-1,-1)$} ;
    \node at (2.35,0.75) {$\eta(1,1)$} ;
    \node at (6.75,0.75) {$\eta(-1,1)$} ;
    \node at (8.35,2.35) {$\eta(1,-1)$} ;
    \end{tikzpicture}
    \caption{How zones are determined by $\eta$, $\ot(i,i')$, and $\ot(j,j')$.}
    \label{fig:eta}
  \end{figure}
  \tikzexternalenable
  
  In other words, $\row(M_{i,j}) \cap \col(M_{i',j'})$ is entirely determined by $\eta$, $\ot(i,i')$, and $\ot(j,j')$ (see~\cref{fig:eta}).
  
  Let $\sigma \in \mathfrak{S}_k$.
  We now show how to find $F_{\eta}(\sigma)=(f_{i,j})_{1\leq i,j \leq k}$ as a submatrix of $M$. 
  For every $i \in \left[ k \right]$, we choose a row $r_i \in \row(M_{i, \sigma(i)})$ and a column $c_{\sigma(i)} \in \col(M_{i, \sigma(i)})$ such that the entry of $M$ at the intersection of $r_i$ and $c_{\sigma(i)}$ has value $f_{i, \sigma(i)}$.
  This is possible since the submatrices $M_{i,j}$ are in ${\mathcal N}_k$ and have disjoint row and column supports.
  We consider the $k \times k$ submatrix $M'$ of $M$ with rows $\sg{r_i~|~i \in \left[k\right]}$ and columns $\sg{c_i~|~i \in \left[ k \right]}$.

  By design $M' = F_{\eta}(\sigma)$ holds.
  Let us write $M' := (m_{i,j})_{1\leq i,j \leq k}$ and show for example that if $\ot(\sigma^{-1}(j),i)=-1~ \mathrm{and} ~\ot(j,\sigma(i))=1$ for some $i,j\in \left[ k \right]$, then we have $m_{i,j} = \eta(-1,1) = f_{i,j}$.
  The other cases are obtained in a similar way.
  Let $i' := \sigma^{-1}(j) > i$ and $j' := \sigma(i)>j$.
  In $M'$, $m_{i,j}$ is obtained by taking the entry of $M$ associated to the row $r_i$ of the matrix $M_{i,\sigma(i)} = M_{i,j'}$ and the column $c_j$ of $M_{\sigma^{-1}(j), j}= M_{i', j}$.
  The entry $m_{i,j}$ lied in $M$ in the zone $\row(M_{i,j'}) \cap \col(M_{i',j})$ with constant value $\eta(-1,1)$. 
\end{proof}

We now check that $\sigma \in {\mathfrak{S}}_k \mapsto F_{\eta}(\sigma)$ is indeed injective.

\begin{lemma}
\label{lem: factl4}
For every $k\geq1$ and $\eta:\{-1,1\}\times \{-1,1\}\rightarrow\{0,1\}$\emph{:} 
$$\left|\sg{F_{\eta}(\sigma)~|~\sigma \in {\mathfrak{S}}_k }\right| = k!$$
\end{lemma}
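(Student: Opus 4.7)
The plan is to establish the stronger statement that the map $\sigma \in \mathfrak{S}_k \mapsto F_\eta(\sigma)$ is injective, from which $|\{F_\eta(\sigma) \mid \sigma \in \mathfrak{S}_k\}| = |\mathfrak{S}_k| = k!$ follows immediately. I will prove injectivity by induction on $k$, with trivial base case $k = 1$. For the induction step, I argue two things: \textbf{(a)} the last row of $F_\eta(\sigma)$ uniquely determines $\sigma(k)$, and \textbf{(b)} deleting that row together with column $\sigma(k)$ from $F_\eta(\sigma)$ produces $F_\eta(\sigma'')$ for the reduced permutation $\sigma'' \in \mathfrak{S}_{k-1}$ obtained from $\sigma$ by removing $k$ from the domain and $\sigma(k)$ from the codomain, then relabeling. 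Combining (a) and (b) with the induction hypothesis recovers $\sigma$ from $F_\eta(\sigma)$.

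For (a), write $d := \eta(1,1)$ and $b := \eta(1,-1)$, so the diagonal entry $1-d$ is distinct from $d$. For every $j \neq \sigma(k)$ one has $\sigma^{-1}(j) < k$, hence $\ot(\sigma^{-1}(j), k) = 1$, and \cref{def: mats} then pins the entries of row $k$ to only three possible values: $d$ for $j < \sigma(k)$, the special value $1-d$ for $j = \sigma(k)$, and $b$ for $j > \sigma(k)$. Since $b \in \{d, 1-d\}$, row $k$ takes one of two shapes---either a sequence of $d$'s with a single $1-d$ at position $\sigma(k)$ (when $b = d$), or a prefix of $d$'s followed by a suffix of $(1-d)$'s with the transition occurring exactly at position $\sigma(k)$ (when $b = 1-d$). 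In either shape, $\sigma(k)$ is uniquely read off.

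Step (b) is a routine verification: if $\psi \from [k] \setminus \{\sigma(k)\} \to [k-1]$ denotes the order-preserving bijection and $\sigma''(i) := \psi(\sigma(i))$ for $i \in [k-1]$, then relabeling columns via $\psi$ preserves each order type $\ot(j, \sigma(i))$ appearing in the definition of $F_\eta$, and the identity $\sigma^{-1}(j_0) = (\sigma'')^{-1}(\psi(j_0))$ for $j_0 \neq \sigma(k)$ handles the first argument of $\eta$; the diagonal case $\sigma(i) = j_0$ is likewise preserved by $\psi$. The main conceptual step is (a); what makes it work is that on row $k$ the first argument of $\eta$ is pinned to the fixed sign $+1$, collapsing its four generic values to just two and letting the special entry $1-d$ at position $\sigma(k)$ stand out. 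The symmetric argument on column $1$ (where the second argument of $\eta$ is pinned to $+1$) would equally allow one to recover $\sigma^{-1}(1)$ at each step; by contrast, row $1$ or column $k$ need not determine $\sigma(1)$ or $\sigma^{-1}(k)$ on their own---for example, when $\eta(-1,1) = \eta(-1,-1) = 1-d$, row $1$ of $F_\eta(\sigma)$ is constant---so the choice of row $k$ (or column $1$) for the recursion is essential.
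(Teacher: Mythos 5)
Your proof is correct, and although it follows the paper's overall plan — recover $\sigma$ one value at a time from a single row and recurse — it reads row $k$ where the paper reads row $1$, and the difference is essential, not cosmetic. In row $1$ every off-diagonal entry has $\sigma^{-1}(j)>1=i$, so the first argument of $\eta$ is pinned to $-1$ and the row is $\eta(-1,1),\ldots,\eta(-1,1),\,1-\eta(1,1),\,\eta(-1,-1),\ldots,\eta(-1,-1)$ (the paper's description of the right-hand block as $\eta(1,1)$ is a slip); when $\eta(-1,1)=\eta(-1,-1)=1-\eta(1,1)$ this row is constant and cannot reveal $\sigma(1)$, so the paper's stated rule (``last index $j$ with $f_{1,j}\neq f_{1,j+1}$, else $j=k$'') does not work for such $\eta$. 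Reading row $k$ instead pins the first argument of $\eta$ to $+1$, so only the two values $\eta(1,1)$ and $1-\eta(1,1)$ can occur, and $\sigma(k)$ is always the first column carrying $1-\eta(1,1)$ — exactly the observation you make in your closing sentence. Your deletion step is a routine order-type check that you correctly sketch (preservation under the relabeling $\psi$ is what is needed, and holds). So your proposal is not only valid; it in fact repairs a gap in the paper's argument.
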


\begin{proof}
We let $k\geq1$ and $\eta:\{-1,1\}\times \{-1,1\}\rightarrow\{0,1\}$. 
The inequality $\left|\sg{F_{\eta}(\sigma)~|~\sigma \in {\mathfrak{S}}_k }\right| \leq k!$ simply holds.
We thus focus on the converse inequality.

When we read out the first row (bottom one) of $F_{\eta}(\sigma) = (f_{i,j})_{1\leq i,j\leq k}$ by increasing column indices (left to right), we get a possibly empty list of values $\eta(-1,1)$, one occurrence of $1-\eta(1,1)$ at position $(1,\sigma(1))$, and a possibly empty list of values $\eta(1,1)$.
The last index $j$ such that $f_{1,j} \neq f_{1,j+1}$, or $j = k$ if no such index exists, thus corresponds to $\sigma(1)$.
We remove the first row and the $j$-th column and iterate the process on the rest of the matrix.
\end{proof}

By piecing \cref{lem: cstrams,lem: factl3,lem: factl4} together, we get:

\begin{theorem}\label{thm:fact-implies-bgr}
 Every $0,1$-matrix class $\mathcal M$ with unbounded grid rank satisfies $|\mathcal M_k| \geqslant k!$, for every integer~$k$. 
\end{theorem}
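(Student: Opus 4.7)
The plan is to chain together the three lemmas proved just before the statement: the structural result \cref{lem: cstrams}, the Ramsey cleanup \cref{lem: factl3}, and the injectivity count \cref{lem: factl4}. Fix $k\ge 1$ and set $N:=\gridramm{k}{2}$. Applying \cref{lem: cstrams} not with parameter $k$ but with parameter $N$ yields an integer $K$ such that every $0,1$-matrix admitting a rank-$K$ division contains a submatrix carrying a rank-$N$ Latin $N$-division; in particular, this submatrix also carries a rank-$k$ Latin $N$-division, which is precisely the hypothesis needed for \cref{lem: factl3}.

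Since $\mathcal M$ has unbounded grid rank, there exists some $M\in\mathcal M$ admitting a rank-$K$ division. By the choice of $K$, some submatrix $M'$ of $M$ has a rank-$k$ Latin $N$-division. Because $\mathcal M$ is submatrix-closed, $M'\in\mathcal M$, and the whole submatrix closure of $M'$ is contained in $\mathcal M$ as well. Then \cref{lem: factl3}, applied to $M'$, produces a map $\eta\colon\{-1,1\}\times\{-1,1\}\to\{0,1\}$ such that every $F_\eta(\sigma)$, with $\sigma\in\mathfrak S_k$, appears as a submatrix of $M'$, hence lies in $\mathcal M$. Each $F_\eta(\sigma)$ is a $k\times k$ matrix, so this gives a family of $k\times k$ matrices sitting in $\mathcal M_k$.

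Finally, \cref{lem: factl4} tells us that the map $\sigma\mapsto F_\eta(\sigma)$ is injective on $\mathfrak S_k$, so this family has cardinality exactly $k!$. Consequently $|\mathcal M_k|\ge k!$, which is the claimed bound.

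There is no real obstacle here: each of the three hard steps has already been done in the preceding subsection, and all that remains is to feed the correct parameter into \cref{lem: cstrams} (namely $N=\gridramm{k}{2}$ rather than $k$) so that its conclusion meets the hypothesis of \cref{lem: factl3}. The only point worth double-checking in the write-up is that submatrix-closedness of $\mathcal M$ is used twice: once to place $M'$ inside $\mathcal M$, and once to conclude that all the matrices $F_\eta(\sigma)$ inherited from the submatrix closure of $M'$ are themselves in $\mathcal M$.
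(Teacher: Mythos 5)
Your proof is correct and follows essentially the same chain as the paper's own: apply \cref{lem: cstrams} with a large enough parameter to obtain a rank-$k$ Latin $N$-division in some submatrix of a matrix in $\mathcal M$, feed this into \cref{lem: factl3} to produce the family $\set{F_\eta(\sigma) : \sigma\in\mathfrak S_k}\subseteq\mathcal M_k$, and conclude by the injectivity from \cref{lem: factl4}. Your choice $N=\gridramm{k}{2}$ is in fact the cleaner instantiation, since it matches the hypothesis of \cref{lem: factl3} exactly, whereas the paper writes $N$ as an iterated-Ramsey quantity that appears to be an artifact of an earlier formulation; the substance is identical.
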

\begin{proof}
  %Let $\mathcal M$ be a class of matrices with unbounded grid rank.
  We fix $$k \geq 1,~n :={\rm R}_{16}(k),~N := {\rm R}_{16}^{(\binom{n}{2}+1)}(k).$$
  Now we let $K := K(N)$ be the integer of~\cref{lem: cstrams} sufficient to get a \mbox{rank-$N$} Latin division.
  As $\mathcal M$ has unbounded grid rank, it contains a matrix~$M$ with grid rank at least~$K$.
  By~\cref{lem: cstrams}, a submatrix $\tilde{M} \in \mathcal M$ of $M$ admits a rank-$N$ Latin division, from which we can extract a rank-$k$ Latin \mbox{$N$-division} (since $k \leqslant N$).
  By \cref{lem: factl3} applied to $\tilde{M}$, there exists $\eta$ such that $\sg{F_\eta(\sigma)~|~\sigma \in \mathfrak{S}_k} \subseteq \mathcal M_k$.
  By \cref{lem: factl4}, this implies that $|\mathcal M_k| \geqslant k!$. 
\end{proof}

\begin{proof}[Proof of~\cref{cor: Ram16}]
  We just showed that for every matrix class $\mathcal M$ of unbounded grid rank, for every integer $k$, there is an $\eta(k):\{-1,1\}\times \{-1,1\}\rightarrow\{0,1\}$ such that $\sg{F_{\eta(k)}(\sigma)~|~\sigma \in \mathfrak{S}_k} \subseteq \mathcal M_k \subseteq \mathcal M$.
As there are only 16 possible \emph{functions~$\eta$}, the sequence $\eta(1),\eta(2),\ldots$ contains at least one function $\eta$ infinitely often.
Besides for every $k' < k$, $\sg{F_{\eta}(\sigma)~|~\sigma \in \mathfrak{S}_{k'}}$ is included in the submatrix closure of $\sg{F_{\eta}(\sigma)~|~\sigma \in \mathfrak{S}_k}$. This proves $\mathcal M\supseteq \mathcal F_\eta$.
\end{proof}
\subsection{Minimal family of six unavoidable classes}\label{sec:six}

\cref{cor: Ram16} shows that each matrix class with unbounded twin-width contains one of the sixteen classes $\mathcal F_\eta$.
We will now see that some of these classes are contained in some others.
We say that a mapping $\eta\from\set{-1,1}\times\set{-1,1}\to\set{0,1}$ \emph{depends only on one coordinate} if either $\eta(x,y)=\eta(x',y)$ for all $x,x',y\in\set{-1,1}$, or $\eta(x,y)=\eta(x,y')$ for all $x,y,y'\in\set{-1,1}$.
Note that there are only six such functions~$\eta$.
Indeed, once we fix the coordinate (first or second) $\eta$ depends on, there are four mappings from $\set{-1,1}$ to $\set{0,1}$.
This adds up to eight mappings but the constant-0 and constant-1 mappings are each counted twice, hence a total of six functions.

These six mappings $\eta$ correspond to the six classes $\mathcal F_=$, $\mathcal F_{\neq}$, $\mathcal F_{\le R}$, $\mathcal F_{\ge R}$, $\mathcal F_{\le C}$, and $\mathcal F_{\ge C}$ defined in~\cref{subsec:patterns}, where we recall that the matrix encoding a given permutation $\sigma$ respectively follows the Iverson brackets
\[[\sigma(i)=j],\quad [\sigma(i)\neq j],\quad [i \le \sigma^{-1}(j)],\quad [i\ge \sigma^{-1}(j)],\quad [j \le \sigma(i)],~\text{and}~\quad[j \ge \sigma(i)]
\]
for its entry at position $(i,j)$.
Thus to establish the refined milestone of the section, \cref{thm:Ram6}, we shall prove the following.

\begin{lemma}\label{lem:reduce eta's}
  Let $\eta\from \set{-1,1}\times \set{-1,1}\to \set{0,1}$.
Then there is a $\gamma\from\set{-1,1}\times\set{-1,1}\to\set{0,1}$ depending only on one coordinate, such that $\mathcal F_\eta\supseteq\mathcal F_{\gamma}$.
\end{lemma}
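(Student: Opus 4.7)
My plan is to distinguish the 16 possible functions $\eta\from\set{-1,1}^2\to\set{0,1}$ into the 6 \emph{good} ones that depend on at most one coordinate (corresponding to the six classes $\mathcal F_=,\mathcal F_{\neq},\mathcal F_{\ge R},\mathcal F_{\le R},\mathcal F_{\ge C},\mathcal F_{\le C}$) and the 10 \emph{bad} ones that depend on both coordinates. For good $\eta$ the conclusion is immediate with $\gamma:=\eta$, so the real work lies in the bad case.

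For bad $\eta$ I use a slicing argument: since $\eta$ depends on both coordinates, not both of the one-variable slices $\eta(\cdot,+1)$ and $\eta(\cdot,-1)$ can be constant (otherwise $\eta$ would depend only on its second coordinate). Pick $v\in\set{-1,+1}$ with $\eta(\cdot,v)$ non-constant; then $\eta(\cdot,v)$ is either $[x=+1]$ or $[x=-1]$, so $\gamma(x,y):=\eta(x,v)$ is good and corresponds to $\mathcal F_{\ge C}$ or $\mathcal F_{\le C}$. To establish $\mathcal F_\gamma\subseteq\mathcal F_\eta$ I will show that every $F_\gamma(\sigma)$ with $\sigma\in\mathfrak S_k$ arises as a submatrix of some $F_\eta(\pi)$; the idea is to choose rows $R$ and columns $C$ of $F_\eta(\pi)$ so that the second order type $\ot(c_b,\pi(r_a))$ is frozen to $v$ across the submatrix, while the first order type $\ot(\pi^{-1}(c_b),r_a)$ records $\sigma$.

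I describe the construction for $v=+1$; the case $v=-1$ is handled by the mirror construction that places $C$ below $\pi(R)$ instead of above. Given $\sigma\in\mathfrak S_k$, set $\pi(2a-1):=2k+\sigma(a)$, $\pi(2a):=2a-1$, and $\pi(2k+b):=2b$ for $a,b\in[k]$, and take $R:=\set{2,4,\dots,2k}$ and $C:=\set{2k+1,\dots,3k}$. A direct computation gives $\pi(r_a)=2a-1<2k+b=c_b$ (so $\ot(c_b,\pi(r_a))=+1$ and no diagonal entries appear) and $\pi^{-1}(c_b)=2\sigma^{-1}(b)-1$, whence $\ot(\pi^{-1}(c_b),r_a)=\sign(2\sigma^{-1}(b)-1-2a)$ equals $+1$ when $\sigma^{-1}(b)>a$ and $-1$ otherwise (including the tie $\sigma^{-1}(b)=a$). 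Hence the $(a,b)$-entry of the $R\times C$ submatrix is $\eta(\ot_1,+1)$, which agrees with the corresponding entry of $F_\gamma(\sigma)$: off the diagonal this is immediate from $\gamma(x,y)=\eta(x,+1)$, and at the tie positions $\sigma^{-1}(b)=a$ the value $\eta(-1,+1)=1-\eta(+1,+1)=1-\gamma(+1,+1)$, by non-constancy of the slice, matches the diagonal prescription of $F_\gamma(\sigma)$.

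The main obstacle is precisely this last piece of diagonal bookkeeping: the construction outputs $\eta(-1,+1)$ at the positions where $F_\gamma(\sigma)$ requires $1-\gamma(+1,+1)$, and these values coincide only because $\eta(\cdot,+1)$ is non-constant, which is exactly why the slicing direction $v$ was chosen the way it was. Modulo this, the argument is uniform across all ten bad $\eta$'s, reducing the lemma to the two symmetric constructions for $v=\pm 1$.
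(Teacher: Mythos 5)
Your proposal follows essentially the same approach as the paper: choose a non-constant slice to define $\gamma$, build a shuffled permutation from $\sigma$ in which the order type not used by $\gamma$ is frozen at the slice index, and exhibit $F_\gamma(\sigma)$ as the resulting submatrix of $F_\eta(\pi)$, the non-constancy of the slice being precisely what reproduces the complemented diagonal prescription $1-\gamma(1,1)$ of $F_\gamma$. Your shuffle is a $3k$-permutation where the paper's is a $2k$-permutation interleaving $\sigma$ with the identity, and your order-type sign $\ot(x,y)=\sign(x-y)$ is the reverse of the paper's stated $\sign(y-x)$ (which under a literal reading of \cref{def: mats} would place $\eta(+1,v)$ rather than the needed $\eta(-1,v)$ at the $\sigma$-diagonal of the submatrix); but the paper's \cref{def: mats} is itself out of sync with the explicit entry values asserted in its own proofs of \cref{lem: factl4} and \cref{lem:reduce eta's}, so this is a matter of conventions, and your diagonal bookkeeping — the crux of the construction — is correctly identified.
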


\begin{proof}
If $\eta$ depends only on one coordinate, we are done.
In particular, we can suppose that $\eta$ is not constant.
Then there are $x, y, x', y' \in \set{-1,1}$ with $x=x'$ or $y=y'$ such that $\eta(x,y) \neq \eta(x',y')$. 

We will assume that $\eta(1,1) \neq \eta(-1,1)$.
The three other cases ($\eta(1,1) \neq \eta(1,-1)$, $\eta(-1,-1) \neq \eta(1,-1)$, and $\eta(-1,-1) \neq \eta(-1,1)$) are similar and correspond to rotating~\cref{fig:16-to-6} by a 90, 180, 270-degree angle, respectively.
We choose $\gamma\from\set{-1,1}\times\set{-1,1}\to\set{0,1}$ by setting $\gamma(x,y)=\eta(x,1)$ for $x,y\in\set{-1,1}$.
By construction $\gamma$ depends only on the first coordinate.
We are left with proving that $\mathcal F_\eta \supseteq \mathcal F_\gamma$.

   Let $\sigma$ be any $k$-permutation.
   We build a $2k$-permutation $\tau$ as a perfect shuffle of $\sigma$ and the identity $k$-permutation.
   More precisely, $\tau$ has its 1 entries at positions $(i,2\sigma(i)-1)$ and $(k+i,2i)$, for every $i \in [k]$. 
   See~\cref{fig:16-to-6} for an illustration on a particular $5$-permutation.
   
   \tikzexternaldisable 
   \begin{figure}[h!]
     \centering
     \begin{tikzpicture}[scale=0.5]
       \begin{scope}
    \foreach \i/\j in {1/3,2/5,3/1,4/2,5/4}{
        \pgfmathsetmacro{\ip}{\i+1}
        \pgfmathsetmacro{\jp}{\j+1}
        \fill[black] (\i,\j) -- (\i,\jp) -- (\ip,\jp) -- (\ip,\j) -- cycle;
    }
    \draw[purple] (1, 1) grid (6, 6);
       \end{scope}

        \begin{scope}[xshift=7cm]
    \foreach \iq/\j in {1/3,2/5,3/1,4/2,5/4}{
      \pgfmathsetmacro{\i}{2 * \iq - 1}
      \pgfmathsetmacro{\ip}{\i+1}
      \pgfmathsetmacro{\jp}{\j+1}
        \fill[black] (\i,\j) -- (\i,\jp) -- (\ip,\jp) -- (\ip,\j) -- cycle;
    }
    \foreach \i/\j in {2/6,4/7,6/8,8/9,10/10}{
        \pgfmathsetmacro{\ip}{\i+1}
        \pgfmathsetmacro{\jp}{\j+1}
        \fill[blue] (\i,\j) -- (\i,\jp) -- (\ip,\jp) -- (\ip,\j) -- cycle;
    }
    \draw[purple] (1, 1) grid (11, 11);
        \end{scope}

        \begin{scope}[xshift=19cm]
    \foreach \iq/\j in {1/3,2/5,3/1,4/2,5/4}{
      \pgfmathsetmacro{\i}{2 * \iq - 1}
      \pgfmathsetmacro{\ip}{\i+1}
      \pgfmathsetmacro{\jp}{\j+1}
        \fill[black] (\i,\j) -- (\i,\jp) -- (\ip,\jp) -- (\ip,\j) -- cycle;
    }
    \foreach \i/\j in {2/6,4/7,6/8,8/9,10/10}{
        \pgfmathsetmacro{\ip}{\i+1}
        \pgfmathsetmacro{\jp}{\j+1}
        \fill[black] (\i,\j) -- (\i,\jp) -- (\ip,\jp) -- (\ip,\j) -- cycle;
    }
     \draw[purple] (1, 1) grid (11, 11);
    \foreach \i/\j in {2/3,4/5,6/7,8/9,10/11}{
      \draw[very thick, blue, fill opacity=0.2, fill=blue] (\i,1) -- (\j,1) -- (\j,6) -- (\i,6) -- cycle ;
    }
       \end{scope}
     \end{tikzpicture}
     \caption{Left: An example of a $k$-permutation $\sigma$ with $k=5$. Center: The matrix of the corresponding $2k$-permutation $\tau$, where the ``initial'' 1 entries are still in black, and those coming from the identity are in blue. Right: The submatrix where $F_\eta(\tau)$ is only populated by the (distinct) values $\eta(1,1)$ and $\eta(-1,1)$.}
     \label{fig:16-to-6}
   \end{figure}
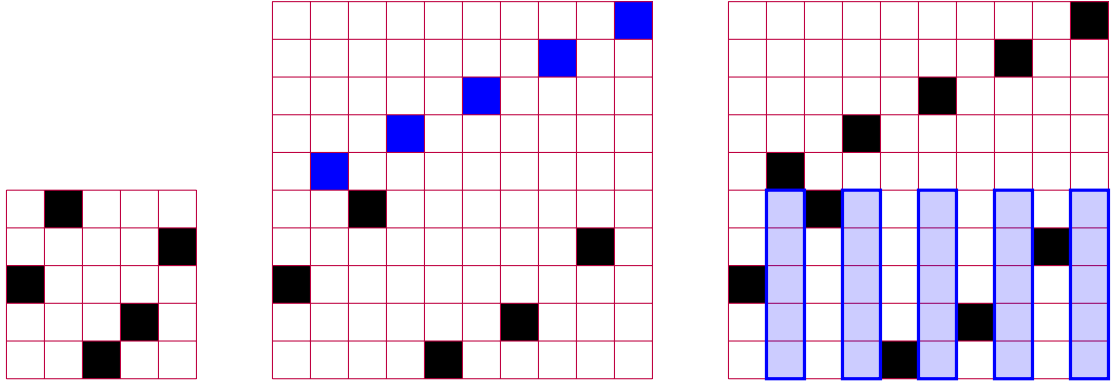
   \tikzexternalenable

We claim that $F_\gamma(\sigma)$ appears as the submatrix $N$ of $F_\eta(\tau)$ obtained by keeping the even-indexed columns and the first $k$ rows (see shaded blue area in the right matrix of~\cref{fig:16-to-6}).    
Indeed as the 1 entry of $\tau$ in each kept column is above the kept rows, $N$ depends only on $\eta(1,1)$ and $\eta(-1,1)$ (recall~\cref{fig:eta}).
Specifically $N_{i,j} = \eta(1,1)$ if $j < \sigma(i)$ and $N_{i,j} = \eta(-1,1)$ otherwise.
As $\gamma(x,y) = \eta(x,1)$, the encoding~$\gamma$ follows~\cref{fig:eta} where $\eta(1,-1)$ is replaced by $\eta(1,1)$, and $\eta(-1,-1)$ is replaced by $\eta(-1,1)$.
Thus it also holds that $F_\gamma(\sigma)_{i,j} = \eta(1,1)$ if $j < \sigma(i)$ and $\eta(-1,1)$ otherwise.
Hence $N = F_\gamma(\sigma)$.
This proves that $F_\gamma(\sigma) \in \mathcal F_\eta$ since $\mathcal F_\eta$ is closed under submatrices.
And we conclude that  $\mathcal F_\gamma \subseteq \mathcal F_\eta$.
\end{proof}

This proves \cref{thm:Ram6}.
\begin{proof}[Proof of~\cref{thm:Ram6}]
  Follows from~\cref{lem:finite-to-binary,cor: Ram16,lem:reduce eta's}.
\end{proof}

We end this section showing that the set of six encoding functions $\gamma$'s depending only on one coordinate is minimal, in the sense of the following lemma. 

\begin{lemma}
 \label{lem:6minimality}
 Let $\gamma, \gamma':\set{-1,1}\times \set{-1,1}\to \set{0,1}$ be two distinct functions depending only on one coordinate. Then $\mathcal F_\gamma$ and $\mathcal F_{\gamma'}$ are incomparable for $\subseteq$, i.e., neither $\mathcal F_\gamma \subseteq \mathcal F_{\gamma'}$ nor $\mathcal F_{\gamma'}\subseteq \mathcal F_\gamma$ hold.
\end{lemma}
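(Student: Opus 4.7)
The plan is to exhibit, for each pair of distinct functions $\gamma,\gamma'\from\set{-1,1}\times\set{-1,1}\to\set{0,1}$ that depend on only one coordinate, a matrix in $\mathcal F_\gamma\setminus\mathcal F_{\gamma'}$; since $\gamma$ and $\gamma'$ play symmetric roles, this yields both non-inclusions. Recall that the six such $\gamma$ correspond exactly to the six classes $\mathcal F_=,\mathcal F_{\neq},\mathcal F_{\le_R},\mathcal F_{\ge_R},\mathcal F_{\le_C},\mathcal F_{\ge_C}$.

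The first step is to attach to each of these six classes a simple hereditary invariant satisfied by every one of its matrices. Every $M\in\mathcal F_=$ (resp.~$\mathcal F_{\neq}$) has at most one $1$ (resp.~one $0$) per row and per column; every $M\in\mathcal F_{\le_R}$ (resp.~$\mathcal F_{\ge_R}$) has each column of the form $1^a0^b$ (resp.~$0^a1^b$); symmetrically, $\mathcal F_{\le_C}$ and $\mathcal F_{\ge_C}$ impose the analogous row-monotonicity invariant. Each invariant is preserved under submatrices and is immediate from~\cref{def: mats} on the generators $F_\gamma(\sigma)$, hence holds on all of $\mathcal F_\gamma$.

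The second step is to produce a small table of witnesses. The $2\times 2$ zero matrix $\begin{pmatrix}0&0\\0&0\end{pmatrix}$ violates the invariant of $\mathcal F_{\neq}$ but belongs to every other class (for example, it is the $\set{3,4}\times\set{1,2}$ submatrix of $F_{\le_R}(\sigma)$ with $\sigma$ the identity on $[4]$, and also a submatrix of a large $F_=(\sigma)$); symmetrically, $\begin{pmatrix}1&1\\1&1\end{pmatrix}$ separates every other class from $\mathcal F_=$. The $2\times 2$ identity $\begin{pmatrix}1&0\\0&1\end{pmatrix}$ lies in both $\mathcal F_=$ and $\mathcal F_{\neq}$ (it appears as the $\set{1,2}\times\set{1,2}$ submatrix of $F_{\neq}((2,1,4,3))$) yet violates all four monotonicity invariants, separating $\set{\mathcal F_=,\mathcal F_{\neq}}$ from the four monotonicity classes. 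For the remaining pairs among the four monotonicity classes, small matrices like $\begin{pmatrix}0&1\\0&0\end{pmatrix}\in\mathcal F_{\le_R}\setminus\mathcal F_{\le_C}$, $\begin{pmatrix}1&0\\1&0\end{pmatrix}\in\mathcal F_{\le_C}\setminus\mathcal F_{\ge_C}$, and $\begin{pmatrix}1&0\\1&1\end{pmatrix}\in\mathcal F_{\le_C}\setminus\mathcal F_{\le_R}$ suffice, each realized as an explicit submatrix of a small generator (e.g., $\begin{pmatrix}1&0\\1&0\end{pmatrix}$ is the $\set{1,3}\times\set{1,3}$ submatrix of $F_{\le_C}((1,3,2))$).

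The last step is pure bookkeeping over all $\binom{6}{2}=15$ unordered pairs, which the three involutive symmetries of the setup reduce to a handful of cases: complementing $0\leftrightarrow 1$ swaps $\mathcal F_=\leftrightarrow\mathcal F_{\neq}$, $\mathcal F_{\le_R}\leftrightarrow\mathcal F_{\ge_R}$, and $\mathcal F_{\le_C}\leftrightarrow\mathcal F_{\ge_C}$; reversing row order swaps $\mathcal F_{\le_R}\leftrightarrow\mathcal F_{\ge_R}$ and fixes the rest; transposing swaps $\mathcal F_{\le_R}\leftrightarrow\mathcal F_{\le_C}$ and $\mathcal F_{\ge_R}\leftrightarrow\mathcal F_{\ge_C}$. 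I do not foresee any conceptual obstacle; the only mild subtlety is that some witnesses with repeated row or column patterns (such as $\begin{pmatrix}1&0\\1&0\end{pmatrix}$) are not of the form $F_\gamma(\sigma)$ for any $\sigma\in\mathfrak S_2$ and must instead be realized inside $F_\gamma(\sigma)$ for a $\sigma$ of size at least $3$.
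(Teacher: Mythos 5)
Your proof is correct and takes a genuinely different route from the paper's. The paper fixes a single universal witness $N=F_\gamma(\tau)$ with $\tau=(135)(24)$ and argues directly from the structure of $N$ (reading off the entry pattern along an extremal row and column) that any one-coordinate encoding $\eta$ with $N\in\mathcal F_\eta$ must equal $\gamma$, handling all $\binom{6}{2}$ pairs in one stroke. You instead attach to each of the six classes a transparent hereditary invariant (at most one $1$ per row and column for $\mathcal F_=$, at most one $0$ for $\mathcal F_\neq$, columns of shape $1^a0^b$ respectively $0^a1^b$ for $\mathcal F_{\le_R}$, $\mathcal F_{\ge_R}$, and the transposed row invariants for $\mathcal F_{\le_C}$, $\mathcal F_{\ge_C}$), and then exhibit tiny $2\times 2$ witnesses lying in one class but breaking the invariant of another, with the complement/row-reverse/transpose symmetries of the setup trimming the case analysis. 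Both arguments are valid; yours is more modular and arguably more transparent since each invariant is a one-line property and each witness is visibly correct, at the cost of the explicit enumeration. I checked that your three monotonicity witnesses $\begin{pmatrix}0&1\\0&0\end{pmatrix}$, $\begin{pmatrix}1&0\\1&0\end{pmatrix}$, $\begin{pmatrix}1&0\\1&1\end{pmatrix}$ together with row-reversal alone already yield all twelve non-inclusions among the four monotonicity classes, so the symmetry reduction does close; in a final write-up you should still spell out the realizability of each witness inside a small generator (for instance $\begin{pmatrix}0&1\\0&0\end{pmatrix}$ is the $\set{2,3}\times\set{1,2}$ submatrix of $F_{\le_R}(\mathrm{id}_3)$), a point you correctly flag yourself.
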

\begin{proof}
  We consider the permutation product of two transpositions $\tau:=(135)(24)$ over $[5]$.
  Let $\gamma:\set{-1,1}\times \set{-1,1}\to \set{0,1}$ be an encoding function that only depends on one coordinate and $N:=F_{\gamma}(\tau)$. 
 
 Up to a symmetric argument, we assume that $\gamma(x,y)=\gamma(x,y')$ for every $x,y,y'\in \set{0,1}$, that is, $\gamma$~depends on the first coordinate. 
 Let $\eta:\set{-1,1}\times \set{-1,1}\to \set{0,1}$ only depending on one coordinate be such that $N\in \mathcal{F}_{\eta}$.
 We will show that $\gamma = \eta$, which implies the desired result. 
 
 As $N\in \mathcal{F}_{\eta}$, there exists some permutation $\sigma$ such that $M:=F_{\eta}(\sigma)$ contains $N$ as a submatrix. 
 
 \medskip\noindent
 \mbox{\textbf{Case 1.}}
 \medskip\noindent
  $\eta$ depends only on the first coordinate, i.e., $\eta(x,y)=\eta(x,y')$ for every $x,y,y'\in \set{0,1}$. 
  
  In this case, observe that every row of $M$ consists of a sequence of consecutive entries with value $\eta(-1,-1)= \eta(-1, 1)$, then an entry with value $1-\eta(1,1)$, and then a sequence of consecutive entries with value $\eta(1,1)= \eta(1, -1)$. Moreover the first row (the bottommost in~\cref{fig:6min}) of $N$ has exactly the values $\gamma(-1,-1), \gamma(-1,-1), 1-\gamma(1,1), \gamma(1,1), \gamma(1,1)$, thus we must have $\eta(-1,-1)=\gamma(-1,-1)$ and $\eta(1,1)=\gamma(1,1)$. Thus we conclude that $\eta = \gamma$.

\tikzexternaldisable 
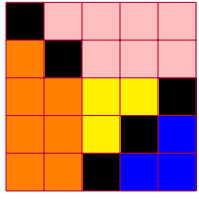
\begin{figure}[h!]
     \centering
    \begin{tikzpicture}[scale=0.5]
       \begin{scope}

    \foreach \i/\j in {1/5,2/4,3/1,4/2,5/3}{
        \pgfmathsetmacro{\ip}{\i+1}
        \pgfmathsetmacro{\jp}{\j+1}
        \fill[black] (\i,\j) -- (\i,\jp) -- (\ip,\jp) -- (\ip,\j) -- cycle;
    }
    
    \foreach \i/\j in {2/5,3/5,3/4,4/5,4/4,5/5,5/4}{
        \pgfmathsetmacro{\ip}{\i+1}
        \pgfmathsetmacro{\jp}{\j+1}
        \fill[pink] (\i,\j) -- (\i,\jp) -- (\ip,\jp) -- (\ip,\j) -- cycle;
    }
   \foreach \i/\j in {1/1,1/2,1/3,1/4,2/1,2/2,2/3}{
        \pgfmathsetmacro{\ip}{\i+1}
        \pgfmathsetmacro{\jp}{\j+1}
        \fill[orange] (\i,\j) -- (\i,\jp) -- (\ip,\jp) -- (\ip,\j) -- cycle;
    }    
   \foreach \i/\j in {3/2,3/3,4/3}{
        \pgfmathsetmacro{\ip}{\i+1}
        \pgfmathsetmacro{\jp}{\j+1}
        \fill[yellow] (\i,\j) -- (\i,\jp) -- (\ip,\jp) -- (\ip,\j) -- cycle;
    }    
   \foreach \i/\j in {4/1,5/1,5/2}{
        \pgfmathsetmacro{\ip}{\i+1}
        \pgfmathsetmacro{\jp}{\j+1}
        \fill[blue] (\i,\j) -- (\i,\jp) -- (\ip,\jp) -- (\ip,\j) -- cycle;
    }    
    \draw[purple] (1, 1) grid (6, 6);
       \end{scope}
    \end{tikzpicture}
    \caption{The matrix $N$ in the proof of~\cref{lem:6minimality}.
      The entries with value $\eta(1,1), \eta(-1,-1), \eta(1,-1),$ $\eta(-1,1), 1-\eta(1,1)$ are respectively associated to the colors blue, yellow, pink, orange, and black.}
     \label{fig:6min}
\end{figure}
\tikzexternalenable   
  
 \medskip\noindent
 \mbox{\textbf{Case 2.}}
 \medskip\noindent
   $\eta$ depends only on the second coordinate, i.e., $\eta(x,y)=\eta(x',y)$ for every $x,x',y\in \set{0,1}$. 
    
 In that case, observe that every column of $M$ consists of a sequence of consecutive entries with value $\eta(1,1)= \eta(-1, 1)$, then an entry with value $1-\eta(1,1)$, and then a sequence of consecutive entries with value $\eta(-1,-1)= \eta(1, -1)$. Moreover the last  column of $N$ has exactly the values $\gamma(1,1), \gamma(1,1), 1-\gamma(1,1), \gamma(-1,-1), \gamma(-1,-1)$, thus we must have $\eta(-1,-1)=\gamma(-1,-1)$ and $\eta(1,1)=\gamma(1,1)$. Now observe that the fourth column of $N$ has exactly the values $\gamma(1,1), 1-\gamma(1,1), \gamma(-1,-1) ,\gamma(1,-1), \gamma(1,-1)$. As $\gamma(1,1)=\eta(1,1)$, we must have $\gamma(1,-1)=\gamma(-1,-1)$.
 Thus $\gamma$ is constant, and so is $\eta$. In particular we have $\gamma = \eta$ and we are done. 
\end{proof}

\begin{corollary}\label{cor:stanley-wilf1}
  The classes $\mathcal F_\sym$ for $\sym\in\six$ are precisely all the matrix classes $\mathcal M$
  with growth $2^{\omega(n)}$ such that every proper subclass of $\mathcal M$ has growth $2^{O(n)}$. 
\end{corollary}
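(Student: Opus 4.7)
\textbf{Proof proposal for Corollary~\ref{cor:stanley-wilf1}.}
The plan is to combine three ingredients already in the paper: (a) each $\mathcal F_\sym$ has growth at least $n!$ (\cref{lem: factl4}); (b) every matrix class of superexponential growth contains some $\mathcal F_{\sym'}$ (the implication \cref{it:exp-speed}$\to$\cref{it:ramsey} of \cref{thm:equiv}, itself a consequence of \cref{thm:Ram6}); and (c) the six classes $\mathcal F_\sym$ for $\sym\in\six$ are pairwise incomparable under inclusion (\cref{lem:6minimality}).

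\emph{First direction: each $\mathcal F_\sym$ is a Stanley-Wilf class.} Fix $\sym\in\six$. By \cref{lem: factl4} (applied to the encoding $\gamma$ corresponding to $\sym$), the class $\mathcal F_\sym$ contains at least $n!$ distinct $n\times n$ matrices, so it has growth $2^{\omega(n)}$. Now let $\mathcal M\subsetneq \mathcal F_\sym$ be a proper (submatrix-closed) subclass. Since $\mathcal M\subseteq\mathcal F_\sym$, for every $\sym'\neq \sym$ in $\six$ we have $\mathcal F_{\sym'}\not\subseteq\mathcal M$: otherwise $\mathcal F_{\sym'}\subseteq \mathcal F_\sym$, contradicting \cref{lem:6minimality}. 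Moreover $\mathcal F_\sym\not\subseteq\mathcal M$ by the assumption that $\mathcal M$ is proper. Hence $\mathcal M$ contains none of the six classes $\mathcal F_{\sym'}$, and by the equivalence \cref{it:ramsey}$\leftrightarrow$\cref{it:exp-speed} of \cref{thm:equiv} we conclude that $\mathcal M$ has growth $2^{O(n)}$.

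\emph{Second direction: every Stanley-Wilf class equals some $\mathcal F_\sym$.} Let $\mathcal M$ be a matrix class of growth $2^{\omega(n)}$ whose every proper subclass has growth $2^{O(n)}$. Since $\mathcal M$ does not have growth $2^{O(n)}$, the equivalence \cref{it:ramsey}$\leftrightarrow$\cref{it:exp-speed} of \cref{thm:equiv} yields some $\sym\in\six$ with $\mathcal F_\sym\subseteq \mathcal M$. But $\mathcal F_\sym$ itself has growth at least $n!$, so by the Stanley-Wilf assumption $\mathcal F_\sym$ cannot be a \emph{proper} subclass of $\mathcal M$. Therefore $\mathcal M=\mathcal F_\sym$.

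The only step I expect to be slightly delicate is invoking \cref{thm:equiv} correctly for general (not a priori $0,1$) matrix classes: strictly speaking, the ``$2^{O(n)}$ versus containing one of the six $\mathcal F_\sym$'' dichotomy in \cref{thm:equiv} is stated via $a$-selections when the alphabet is larger than $\set{0,1}$. However, if some $a$-selection $s_a(\mathcal M)$ contains $\mathcal F_\sym$ then $\mathcal M$ itself contains $\mathcal F_\sym$ (up to submatrices, since $s_a$ only relabels letters), so the argument goes through verbatim; this is the one point I would double-check when writing the formal proof.
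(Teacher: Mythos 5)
Your proposal is correct and matches the paper's proof: both directions rest on the superexponential-growth dichotomy of Theorem~\ref{thm:equiv} together with the pairwise incomparability of the six classes from Lemma~\ref{lem:6minimality}. Your closing worry about $a$-selections is moot here, since the corollary concerns $0,1$-matrix classes, for which the only selections are the identity and complementation, and the family $\{\mathcal F_\sym : \sym\in\six\}$ is closed under complementation.
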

\begin{proof}
  Each of the classes $\mathcal F_\sym$ has growth $n!=2^{\omega(n)}$.
  Suppose $\mathcal M$ is a proper subclass of $\mathcal F_\sym$ with growth $2^{\omega(n)}$. Then $\mathcal M$ has unbounded twin-width, and hence contains one of the classes $\mathcal F_{\sym'}$ for some $\sym'\in\six$. But then $\mathcal F_{\sym'}$ is a proper subclass of $\mathcal F_\sym$, contradicting~\cref{lem:6minimality}. 

  On the other hand, if $\mathcal M$ is some class with growth $2^{\omega(n)}$, then $\mathcal M$ contains some class $\mathcal F_{\sym}$, which also has growth $2^{\omega(n)}$.
\end{proof}

\subsection{Matrix classes of unbounded twin-width are independent}\label{sec:NIP}
The goal of this section is to prove the following.

\begin{theorem}\label{thm:m-nip-implies-pa}
	Let  $\mathcal M$ be a (hereditary) class of matrices over a finite alphabet. If $\mathcal M$ has unbounded twin-width then $\mathcal M$ efficiently interprets the class of all graphs. In particular, $\mathcal M$ is independent and \FO~model checking is $\AW[*]$-hard on $\mathcal M$.
\end{theorem}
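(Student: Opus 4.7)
My plan is to apply Theorem~\ref{thm:Ram6} to reduce to the case where, for some letter $a$ of the alphabet of $\mathcal M$ and some $\sym\in\six$, the $a$-selection $s_a(\mathcal M)$ contains $\mathcal F_\sym$, and then to build the desired efficient interpretation as the composition $\text{Graphs}\to\mathscr M\to\mathcal F_\sym\to s_a(\mathcal M)\to\mathcal M$. The leftmost link to graphs is exactly Lemma~\ref{lem:matchings are dependent}; the middle links are handled by direct encodings; the rightmost ``lift'' from $s_a(\mathcal M)$ back to $\mathcal M$ is the delicate point where efficiency has to be earned.

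First I would show that each $\mathcal F_\sym$ efficiently interprets the class $\mathscr M$ of ordered matchings. From a permutation $\sigma\in\mathfrak S_n$ one computes $F_\sym(\sigma)\in\mathcal F_\sym$ in $O(n^2)$ time, and $\sigma$ (hence the corresponding ordered matching) is \FO-recoverable from $F_\sym(\sigma)$ by a formula depending only on $\sym$: for $\sym\in\set{=,\neq}$ via the unique anomalous entry in each row, and for $\sym\in\set{\le_R,\ge_R,\le_C,\ge_C}$ via the unique ``transition'' between constant blocks in each row or column, which is first-order definable using the order~$<$. Composing with Lemma~\ref{lem:matchings are dependent} then yields an efficient interpretation $\mathcal F_\sym\to\text{Graphs}$.

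Next, $\mathcal M$ interprets $s_a(\mathcal M)$ via the natural interpretation $\mathsf I_a$ sending the matrix encoding of $M\in\mathcal M$ to that of $s_a(M)$: $\mathsf I_a$ retains the total order and the row/column unary predicates of the encoding and defines the sole edge relation by the atomic formula $E_a$. By the hypothesis $\mathcal F_\sym\subseteq s_a(\mathcal M)$, the composition of $\mathsf I_a$ with the previous interpretation gives an interpretation $\mathcal M\to\text{Graphs}$ whose image contains the class of all graphs.

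The main obstacle is the efficiency of the $\mathsf I_a$-preimage: given a graph $G$ of size $n$, one must produce in polynomial time some $M\in\mathcal M$ whose $a$-selection contains $F_\sym(\pi_G)$ at an \FO-identifiable position, where $\pi_G$ is the permutation associated to $G$ by Lemma~\ref{lem:matchings are dependent}. For this I would use, as non-uniform advice depending on $\mathcal M$, a single polynomial-size witness $W\in\mathcal M$ of sufficiently high twin-width (which exists because $\mathcal M$ has unbounded twin-width), and then run the effective pipeline assembled in the previous sections---the algorithm of Theorem~\ref{thm:approx-tww} to extract a large rich division, Theorem~\ref{thm:rd-to-gr} to convert it into a large rank division, Lemma~\ref{lem: cstrams} to refine it to a large rank Latin division, and Lemmas~\ref{lem: factl3} and~\ref{lem:reduce eta's} to pinpoint the $\mathcal F_\sym$-pattern inside $s_a(W)$---all in polynomial time, returning the corresponding submatrix of $W$, which lies in $\mathcal M$ by hereditarity. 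Once the efficient interpretation $\text{Graphs}\to\mathcal M$ is in hand, independence of $\mathcal M$ is automatic since independence transfers along interpretations and is exhibited by the class of all graphs, while $\AW[*]$-hardness of \FO~model checking follows at once from Corollary~\ref{cor:AW-hard}.
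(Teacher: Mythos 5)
Your architecture is the paper's: by \cref{thm:Ram6}, some $a$-selection $s_a(\mathcal M)$ contains $\mathcal F_\sym$ for some $\sym\in\six$; each $\mathcal F_\sym$ efficiently interprets the class $\mathscr M$ of ordered matchings (\cref{lem:independent-matrices}), where your ``transition between constant blocks'' formula for $\sym\in\set{\le_R,\ge_R,\le_C,\ge_C}$ is precisely the paper's $\phi(x,y)$; and $\mathscr M$ efficiently interprets all graphs by \cref{lem:matchings are dependent}. You are also right that the lift from $s_a(\mathcal M)$ back to $\mathcal M$ is the one real subtlety: when $|A|>2$, knowing that $F_\sym(\sigma_G)\in s_a(\mathcal M)$ does not determine the non-$a$ entries of a witness $M\in\mathcal M$, so there is no canonical preimage to output, whereas for $0,1$-matrices the $s_a$-preimage is unique and the paper's two-line proof, which simply writes that $\mathcal M$ contains some $\mathcal F_\sym$, covers that case directly.

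Your proposed repair of the lift, however, is not sound. One cannot fix ``a single polynomial-size witness $W\in\mathcal M$'' as advice: for a size-$n$ input, $s_a(W)$ must contain $F_\sym(\sigma_G)$ where $|\sigma_G|$ grows with $n$, so $W$ must grow with $n$; and a family $(W_n)_n$ built into the machine is non-uniform --- a look-up table, not an algorithm --- which satisfies neither the paper's definition of efficient interpretation nor the requirement for an $\FPT$-reduction establishing $\AW[*]$-hardness. Independently, the pipeline you want to run on $W$ is not polynomial-time in $|W|$: \cref{thm:approx-tww} does output a rich division, but the subsequent steps rich division $\to$ rank division (\cref{thm:rd-to-gr}) $\to$ rank Latin division (\cref{lem: cstrams}) $\to$ template (\cref{lem: factl3}, \cref{lem:reduce eta's}) are Ramsey-theoretic existence arguments with tower-type constants, not $\mathrm{poly}(|W|)$-time extraction procedures. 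A sound repair is to carry the full alphabet $A$ through the Ramsey arguments of \cref{sec:latin,sec:more-ramsey} so that the off-diagonal zones of the Latin division are constant over $A$, not merely over $\set{0,1}$; this produces a fixed template $T\colon\set{-1,0,1}^2\to A$ such that the $A$-matrix built from $\sigma_G$ using $T$ lies in $\mathcal M$, is computable from $G$ in linear time, and has $a$-selection $F_\sym(\sigma_G)$.
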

Recall (see~\cref{lem:matchings are dependent}) that the class $\mathscr M$ of all ordered matchings efficiently interprets the class of all graphs.
To prove~\cref{thm:m-nip-implies-pa} it remains to show that each of the classes $\mathcal F_\sym$ efficiently interprets $\mathscr M$.

\begin{lemma}\label{lem:independent-matrices}
  For each $\sym\in\six$, 
  the class $\mathcal F_\sym$ efficiently interprets the class $\mathscr M$ of all ordered matchings.
\end{lemma}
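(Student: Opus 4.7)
The plan is to exhibit, for each $\sym \in \six$, a simple first-order interpretation $\mathsf I_\sym$ in the signature of $0,1$-matrices (with unary predicates $R$ and $C$ marking rows and columns, the total order $<$, and the binary relation $E_1$ encoding $1$-entries) such that, for every permutation $\sigma$, $\mathsf I_\sym(F_\sym(\sigma))$ is isomorphic to the ordered matching determined by $\sigma$. Since $F_\sym(\sigma) \in \mathcal F_\sym$ can be constructed from $\sigma \in \mathfrak S_n$ in time $O(n^2)$, and every ordered matching on $2n$ vertices corresponds to a unique such $\sigma$, this immediately yields the desired efficient interpretation. Each $\mathsf I_\sym$ preserves the entire domain and the total order, and only redefines the edge relation via a formula $\varphi_\sym(x,y)$.

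For $\mathcal F_=$, the matrix $F_=(\sigma)$ is exactly the permutation matrix of $\sigma$, whose $1$-entries already coincide with the matching edges, so we take $\varphi_=(x,y) := E_1(x,y) \lor E_1(y,x)$. For $\mathcal F_{\neq}$, the $0$- and $1$-entries are swapped, and we use the negated variant $\varphi_{\neq}(x,y) := (R(x) \land C(y) \land \lnot E_1(x,y)) \lor (R(y) \land C(x) \land \lnot E_1(y,x))$.

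For the four remaining classes, $F_\sym(\sigma)$ is a ``half-plane'' thickening of the permutation matrix: each column (for the $\le_R$ and $\ge_R$ cases) or each row (for the $\le_C$ and $\ge_C$ cases) is padded with $1$-entries above or below the matched $1$-entry. The matched positions can then be recovered as the ``boundary'' $1$-entries, namely those whose immediate successor or predecessor in the row or column order is either absent or a $0$-entry. Using $R$, $C$, and $<$, one first-order defines the row-successor relation by $\mathrm{succ}_R(x,x') := R(x) \land R(x') \land x < x' \land \lnot \exists x''.\,(R(x'') \land x < x'' \land x'' < x')$, and analogously for columns. Then for $\mathcal F_{\le_R}$ the matched pair $(\sigma^{-1}(j),j)$ is the unique $1$-entry in column $j$ with no $1$-entry as its row-successor, so we let $\varphi_{\le_R}(x,y)$ assert (symmetrized in $x$ and $y$) that $E_1(x,y)$ holds and no row-successor $x'$ of $x$ satisfies $E_1(x',y)$. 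Analogous formulas, obtained by swapping successor with predecessor and/or rows with columns, handle the three remaining cases. There is no real obstacle: the proof reduces to a routine case analysis verifying that $\mathsf I_\sym(F_\sym(\sigma))$ reproduces the ordered matching of $\sigma$, which is immediate from the explicit definition of $F_\sym(\sigma)$.
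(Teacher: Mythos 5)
Your proof is correct and follows essentially the same approach as the paper: it constructs $F_\sym(\sigma)$ from the matching in polynomial time and then recovers the matching edges by a first-order formula that extracts the boundary of the half-plane pattern (using predicates $R$, $C$, the order $<$, and the entry relation). The only cosmetic difference is that you identify boundary $1$-entries via a definable row/column successor relation, whereas the paper phrases it as "the largest column $y$ with $E(x,y)$"; both are routine and equivalent.
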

\begin{proof}Fix $\sym\in\six$.
  We construct an interpretation $\interp I$ such that 
  $\interp I(\mathcal F_\sym)$ contains all the ordered matchings of $\mathscr M$.
  Given an ordered matching $H$ with vertices $u_1<\ldots<u_n<v_1<\ldots<v_n$,
  let $\sigma\in \frak S_n$ be the permutation such that $u_i$ is adjacent to $v_{\sigma(i)}$, for $1\le i\le n$.
  Let $M$ be the $0,1$-matrix $F_\sym(\sigma)$.
  Clearly, $M$ can be constructed from $H$ in polynomial time. We now describe an interpretation $\mathsf I$ 
  such that $\mathsf I(M)$ is isomorphic to $H$. The interpretation $\mathsf I$ will not depend on $H$, but only on $\sym$.

  Recall that the matrix $M$ is viewed as a structure with two unary predicates $R,C$ indicating the rows and columns, respectively, a total order $<$ on $R\cup C$,
  and a binary relation $E\subseteq  R\times C$ defining the non-zero entries in $M$.
  
  If $\sym$ is `$=$' then the interpretation $\mathsf I$ simply forgets the unary predicates $R$ and $C$.
  If $\sym$ is `$\neq$' then the interpretation $\mathsf I$ replaces edges with non-edges.
  In either case, $\mathsf I(M)$ is isomorphic to $H$.

  Suppose $\sym$ is `$\leqslant_C$', the other cases being symmetric.
  Recall that in the matrix $M=F_\sym(\sigma)$ the entries of $M$ are defined by the Iverson bracket $[j \leqslant \sigma(i)]$.
  Hence, given $i$, $\sigma(i)$ is the largest value $j$ such that $M$ has entry $1$ at position $(i,j)$.
This can be defined by a first-order formula:
\[\phi(x,y):= R(x)\land C(y)\land E(x,y)\land \forall z.(C(z)\land y<z)\rightarrow \neg E(x,z).\]
The interpretation $\mathsf I$, given  $M=F_\sym(\sigma)$ with relations $R,C,<$, and $E$, outputs the matching with the same domain $R\cup C$, order $<$, and edge relation defined by the formula $\phi(x,y)\lor\phi(y,x)$.
Then $\mathsf I(M)$ is isomorphic to the ordered matching $H$.
\end{proof}
\cref{thm:m-nip-implies-pa} now follows:
\begin{proof}[Proof of~\cref{thm:m-nip-implies-pa}]
By~\cref{lem:matchings are dependent,lem:independent-matrices} and transitivity,
each of the classes $\mathcal F_\sym$ efficiently interprets the class of all graphs. By~\cref{thm:Ram6}, every matrix class $\mathcal M$ with unbounded twin-width contains one of the classes $\mathcal F_\sym$. Hence $\mathcal M$ efficiently interprets the class of all graphs. It follows that $\mathcal M$ is independent and \FO~model checking is $\AW[*]$-hard on $\mathcal M$ (see~\cref{cor:AW-hard}).
\end{proof}

\subsection{Proof of~\cref{thm:equiv}}
We can now conclude with the proof of \cref{thm:equiv}, which we restate below for arbitrary finite alphabets, with all the conditions negated to ease the reasoning.

\begin{reptheorem}{thm:equiv}
  Given a class $\mathcal M$ of matrices over a finite alphabet $A$, the following are equivalent.
  \begin{enumerate}[label=($\neg${\roman*}),ref=$\neg$\roman*]
  \item \label{nit:bd-tww} $\mathcal M$ has unbounded twin-width.
  \item \label{nit:bd-gr} $\mathcal M$ has unbounded grid rank.
  \item \label{nit:ramsey} $\mathcal M$ is not pattern-avoiding.
  \item \label{nit:nip} $\mathcal M$  interprets the class of all graphs.
  \item \label{nit:m-nip} $\mathcal M$  transduces the class of all graphs.
  \item \label{nit:fact-speed} $\mathcal M$ has growth at least $n!$.
  \item \label{nit:exp-speed} $\mathcal M$ has growth at least $2^{\omega(n)}$.
  \item \label{nit:lin-mc} \FO~model checking is not $\FPT$~on $\mathcal M$. 
  (The implication to \cref{nit:lin-mc}  holds if $\FPT\neq \AW[*]$.)  
   \item \label{nit:bd-rd} For all $r \in\mathbb N$ there is a matrix $M \in \mathcal M$ which admits an $r$-rich division.
  \end{enumerate}
  \end{reptheorem}
\begin{proof}
  The implication \cref{nit:bd-tww}$\rightarrow$\cref{nit:bd-rd}
  is by \cref{thm:brd-to-bdtww}, and
  \cref{nit:bd-rd}$\rightarrow$\cref{nit:bd-gr} is by~\cref{thm:rd-to-gr}.
  By~\cref{lem:finite-to-binary} if $\mathcal M$ has unbounded grid rank then some $a$-selection $s_a(\mathcal M)$ has unbounded grid rank.
  Then \cref{nit:bd-gr}$\rightarrow$\cref{nit:ramsey} follows from~\cref{thm:Ram6} applied to $s_a(\mathcal M)$. The implication
  \cref{nit:ramsey}$\rightarrow$\cref{nit:fact-speed} is clear, as each of the classes $\mathcal F_\sym$ has factorial growth.
  The implications 
  \cref{nit:ramsey}$\rightarrow$\cref{nit:nip},
   and~\cref{nit:ramsey}$\rightarrow$\cref{nit:lin-mc} (assuming $\FPT\neq\AW[*]$) 
   are by \cref{lem:independent-matrices}.
   The implications \cref{nit:nip}$\rightarrow$\cref{nit:m-nip} and \cref{nit:fact-speed}$\rightarrow$\cref{nit:exp-speed} are immediate. The implication
   \cref{nit:m-nip}$\rightarrow$\cref{nit:bd-tww}
   is by~\cite{twin-width1}
   \cref{nit:exp-speed}$\rightarrow$\cref{nit:bd-tww} is by \cite{twin-width2},
   whereas \cref{nit:lin-mc}$\rightarrow$\cref{nit:bd-tww} follows from~\cite{twin-width1} and~\cref{thm:approx-tww}.
\end{proof}

\section{Classes of ordered graphs with unbounded twin-width}\label{sec:matchings}

We now move to the world of hereditary classes of ordered graphs.
In this language, we will refine the lower bound on the growth of classes of ordered graphs, in order to match the conjecture of Balogh, Bollob\'as, and Morris~\cite{Balogh06}.
We will also establish that bounded twin-width, NIP, monadically NIP, and tractable (provided that $\FPT \neq \AW[*]$) are all equivalent. 
This will prove our main result,~\cref{thm:main}.

\subsection{Twenty-five unavoidable classes of ordered graphs}\label{sec:jump}
In the context of graph classes,
we rename the parameters $\six$ to $\sax$,
since we interpret the rows/columns as left/right vertices, respectively.

In this section we prove the following two theorems.
  \begin{theorem}\label{thm:25}
    There exist 25 classes of ordered graphs with unbounded twin-width, namely the classes $\mathscr P$ and $\mathscr M_{\sym, \lambda, \rho}$ for $\sym\in\sax$ and $\lambda,\rho\in\set{0,1}$, such that every hereditary class of ordered graphs with unbounded twin-width includes at least one of these classes.  %The class $\mathscr P$ has growth $n!$ and the classes $\mathscr M_{\sym, \lambda, \rho}$ have growth at least $\lfloor\frac n 2\rfloor!$. In particular, they all have unbounded twin-width.
  \end{theorem}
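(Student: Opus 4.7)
The plan is to derive the theorem from the matrix classification (Theorem~\ref{thm:Ram6}) via a Ramsey-cleaning argument. The easier content, that each of the 25 listed classes has unbounded twin-width, follows by checking that each $\mathscr M_{\sym,\lambda,\rho}$ and the class $\mathscr P$ contain the pattern $F_{\sym'}(\sigma)$ in their adjacency matrices for every permutation $\sigma$ (with $\sym'$ the matrix analogue of $\sym$), so each has unbounded grid rank and hence, by Theorem~\ref{thm:equiv}, unbounded twin-width.

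For the main content, let $\CC$ be a hereditary ordered graph class of unbounded twin-width, and let $\mathcal M$ be the submatrix closure of $\sg{M(G):G\in\CC}$, where $M(G)$ is the adjacency matrix of $G$ along its vertex order. Since the twin-width of an ordered graph equals that of its matrix encoding, $\mathcal M$ has unbounded twin-width. By Theorem~\ref{thm:Ram6} there is a fixed $\sym\in\six$ with $\mathcal F_{\sym}\subseteq \mathcal M$: for every $n$ and every $\pi\in\mathfrak S_n$ there is $G_\pi\in\CC$ together with row/column subsets $I_\pi=\sg{i_1<\dots<i_n}$ and $J_\pi=\sg{j_1<\dots<j_n}$ of $V(G_\pi)$ whose bipartite submatrix is $F_{\sym}(\pi)$.

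The next step is to uniformize how $I_\pi$ and $J_\pi$ sit inside the vertex order of $G_\pi$, and what the induced edges look like within each side. Fix a target pattern $\tau\in\mathfrak S_k$ to realize, and apply Lemma~\ref{lem:perm-ramsey} with the color set $\Gamma$ that records, for each ordered pair $(a,b)\in[N]^2$, the triple consisting of $\ot(i_a,j_b)\in\sg{-1,0,1}$ together with the edge bits $[i_ai_b\in E(G_\pi)]$ and $[j_aj_b\in E(G_\pi)]$ for $a\neq b$. The lemma yields $U\subseteq[N]$ on which $\tau$ is the subpermutation of $\pi$, and every color depends only on $\ot(a,b)$ and $\ot(\pi(a),\pi(b))$. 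A case analysis on the stabilized coloring then pinpoints the output class: if $i_a<j_b$ holds uniformly on $U\times U$ (or the reverse inequality holds uniformly), then $I:=\sg{i_a:a\in U}$ and $J:=\sg{j_{\pi(a)}:a\in U}$ are disjoint intervals of vertices, the bipartite layer is $F_{\sym}(\tau)$, and the induced edges within $I$ and within $J$ are each constant (equal to some $\lambda,\rho\in\sg{0,1}$ determined by the Ramsey color), placing the induced ordered subgraph in some $\mathscr M_{\sym',\lambda,\rho}$; in the remaining interleaved/overlap regime, the cleaned pattern forces $I$ and $J$ to be identified via the bijection induced by $\pi$, so the induced ordered subgraph encodes $\tau$ as an ordered permutation graph (or its edge complement), giving $\mathscr P$. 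Letting $\tau$ range over all permutations and using that $\CC$ is hereditary, one class among the 25 (determined by $\sym$ and the Ramsey outcome, independently of $\tau$) is contained in $\CC$.

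The main obstacle is the final case analysis: ruling out genuinely interleaved stabilized colorings without first collapsing into one of the two regimes above, and tracking the precise correspondence between the matrix parameter $\sym\in\six$ and the ordered-graph parameter $\sym'\in\sax$, including the sign-flips that arise when $J$ precedes $I$ in the vertex order. This bookkeeping is what pins down the exact bipartite class $\mathscr M_{\sym',\lambda,\rho}$ produced and distinguishes it from the permutation-graph alternative $\mathscr P$.
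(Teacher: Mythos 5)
Your high-level plan—apply \cref{thm:Ram6} to the adjacency-matrix class, then Ramsey-clean via \cref{lem:perm-ramsey}—matches the paper's route in spirit, but the final case analysis you sketch does not close, and you have misidentified where the class $\mathscr P$ comes from.

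Two concrete problems. First, \cref{lem:perm-ramsey} does not give you a dichotomy between ``$i_a<j_b$ uniformly'' and a single residual regime. It only makes the color (including the order type of $(i_a,j_b)$) a function of $\ot(a,b)$ and $\ot(\pi(a),\pi(b))$, so a genuinely interleaved outcome such as ``$i_a<j_b$ iff $a<b$'' survives cleaning. Your claim that this interleaved outcome ``forces $I$ and $J$ to be identified'' and yields $\mathscr P$ is unsubstantiated; no argument in the proposal produces $\mathscr P$ from it. Second, even when $I$ and $J$ are disjoint intervals, the Ramsey outcome does \emph{not} force the intra-side edges to be constant. What you get is that $[i_ai_b\in E]$ depends only on $\ot(a,b)$ and $\ot(\pi(a),\pi(b))$, i.e., exactly the $(f,g)$-regularity of \cref{lem:bip2} with $f$ possibly non-constant. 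In the paper, $\mathscr P$ arises precisely from this non-constant case (\cref{lem:from 96 to 25}: if $f$ or $g$ is non-constant, one of $G[L],G[R]$ is a permutation graph or its complement), not from interleaving. So your two regimes are neither exhaustive nor correctly labeled.

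The missing idea is the matching-permutation trick of \cref{lem:bip1}: instead of working with $F_\sym(\sigma)$ directly, the paper uses $F_\sym(\widetilde\sigma)$ for the $2n$-permutation $\widetilde\sigma$ whose permutation matrix consists of $M_\sigma$ and $M_{\sigma^{-1}}$ placed in two anti-diagonal blocks. Because the $1$s live in those blocks, among the row-halves $U_1<U_2$ and column-halves $V_1<V_2$, one of the pairs $(U_1,V_2)$ or $(V_1,U_2)$ is automatically a pair of disjoint intervals of vertices in the correct order, which produces an $(\sym,\sigma)$-matching. This \emph{precedes} the Ramsey step, so by the time you uniformize (via \cref{lem:perm-ramsey}), the interleaving problem has already been eliminated and you only need to control the intra-side edge functions $f,g$—exactly what \cref{lem:bip2,lem:from 96 to 25} do. Without this trick, you would have to handle interleaving by hand, and the proposal offers no mechanism for that.
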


\begin{theorem}\label{thm:hardness}
  Let $\CC$ be a  class of ordered graphs with unbounded twin-width. Then $\CC$ efficiently interprets the class of all graphs. In particular, \FO~model checking is $\AW[\ast]$-hard and $\CC$ is independent.
\end{theorem}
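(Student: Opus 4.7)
The plan is to reduce, via Theorem~\ref{thm:25}, to showing that each of the 25 canonical obstruction classes $\mathscr P$ and $\mathscr M_{\sym,\lambda,\rho}$ efficiently interprets the class of all graphs. Since efficient interpretations compose, and by Lemma~\ref{lem:matchings are dependent} the class $\mathscr M$ of ordered matchings efficiently interprets the class of all graphs, it is enough to efficiently interpret $\mathscr M$ in each of these 25 classes. This parallels Lemma~\ref{lem:independent-matrices} for the six matrix classes $\mathcal F_\sym$, but requires handling the bipartite structure with possible cliques on either side.

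For each of the 24 classes $\mathscr M_{\sym,\lambda,\rho}$, the required efficient interpretation is built as follows. Given an ordered matching $H$ with underlying permutation $\sigma$, the canonical graph $H[\sym,\lambda,\rho] \in \mathscr M_{\sym,\lambda,\rho}$ is computable from $H$ in polynomial time and shares its domain with $H$. It remains to exhibit a single FO interpretation $\mathsf I$ that recovers $H$ from $H[\sym,\lambda,\rho]$. This reduces to two ingredients: a formula $\nu_L(x)$ selecting the left part $\{a_1,\ldots,a_n\}$, and a formula $\phi(x,y)$ picking out the matching edges from the adjacency pattern dictated by $\sym$. The bipartition is FO-definable in all eight combinations of $(\lambda,\rho)$: when $\lambda=\rho=0$, a left vertex is one whose neighbors all come after it in the order; when $\lambda$ or $\rho$ equals $1$, one side is a clique, but the bipartition can still be distinguished by the shape of the cross-side adjacency (for instance, in $\mathscr M_{\neq,1,1}$ each vertex has a unique non-neighbor, which lies on the opposite side in the order, so $x$ is on the left iff its unique non-neighbor is $>x$). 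Once $\nu_L$ is defined, the matching edge of a left vertex is the unique right neighbor extremising the adjacency pattern: the only neighbor for `$=$', the only non-neighbor for `$\neq$', the $<$-minimum or $<$-maximum neighbor for $\sym \in \{\le_l,\le_r\}$ or $\sym \in \{\ge_l,\ge_r\}$ respectively. The edge relation of $H$ is then recovered by $\phi(x,y) \lor \phi(y,x)$.

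For the class $\mathscr P$ of ordered permutation graphs, one proceeds slightly differently because a permutation graph on $n$ vertices encodes a permutation of $[n]$ whereas the associated ordered matching has $2n$ vertices. Given an ordered matching $H$ on $2n$ vertices with underlying permutation $\sigma$, one constructs in polynomial time a permutation $\tau$ on $\Theta(n)$ elements (for instance, shuffling $\sigma$ with an identity block and separating the two blocks with extremal values so that left and right parts become FO-definable within $G_\tau$), whose associated ordered permutation graph $G_\tau$ admits a fixed FO interpretation recovering $H$. Alternatively, one encodes the matrix class $\mathcal F_=$ inside $\mathscr P$ by this same shuffling trick and then composes with Lemma~\ref{lem:independent-matrices}. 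The main obstacle is the uniform case analysis across the 24 combinations $(\sym,\lambda,\rho)$ for the bipartition formula, together with the explicit shuffling encoding for $\mathscr P$; both are routine once the patterns are identified. The ``in particular'' statements then follow immediately: efficient interpretation of the class of all graphs makes $\CC$ independent by definition, and yields $\AW[*]$-hardness of \FO~model checking via Corollary~\ref{cor:AW-hard}.
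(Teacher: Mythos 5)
Your proposal is correct in outline but routes differently from the paper. The paper derives Theorem~\ref{thm:hardness} directly from Lemma~\ref{lem:bip2} (extracting $\mathscr R_{\sym,f,g}\subseteq\CC$ for some $\sym\in\sax$ and $f,g\colon\{-1,1\}\to\{0,1\}$), then Lemma~\ref{lem: find middle point} (a uniform interpretation of $\mathscr M$ inside $\mathscr R_{\sym,f,g}$), then Lemma~\ref{lem:matchings are dependent}. You instead detour through Theorem~\ref{thm:25} to land on one of the 25 classes $\mathscr P$ or $\mathscr M_{\sym,\lambda,\rho}$ and then propose a separate interpretation for each. This is redundant: Theorem~\ref{thm:25} is itself proved via Lemma~\ref{lem:bip2} and Lemma~\ref{lem: find middle point}, and the 24 classes $\mathscr M_{\sym,\lambda,\rho}$ are exactly the constant-$f,g$ instances of $\mathscr R_{\sym,f,g}$, so you end up redoing, more laboriously, work that was already done to prove the theorem you invoke. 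Moreover, the paper's Lemma~\ref{lem: find middle point} avoids your bipartition case analysis entirely by a cleaner trick: it pins down the middle point $z=\max L$ as the unique element for which $\mu(\cdot,\cdot;z)$ encodes a bijection between $\{x\le z\}$ and $\{x>z\}$, so no explicit $\nu_L$ is needed, and $\mathscr P$ never arises as a separate case (it only appears in Lemma~\ref{lem:from 96 to 25} when $f$ or $g$ is non-constant, but Lemma~\ref{lem: find middle point} handles all $\mathscr R_{\sym,f,g}$ directly). Finally, a caution: your "routine" bipartition formulas shift in shape and sign per case more than you acknowledge — e.g.\ in $\mathscr M_{=,1,1}$ the correct criterion is "adjacent to all earlier vertices," not "all neighbors after," and in $\mathscr M_{\le_r,1,0}$ it is "has a neighbor after" — which is precisely the bookkeeping the paper's unified construction is designed to skip.
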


Let  $\sigma\in\mathfrak S_n$ be a permutation.
For a parameter $\sym\in\sax$,
an \emph{$(\sym,\sigma)$-matching} is an ordered graph~$G$ with vertices 
$a_1<\ldots<a_n<b_1<\ldots<b_n$
such that  $a_i$ and $b_j$ are adjacent in $G$ if and only if there 
is a $1$ on position $(i,j)$ in the
matrix $F_\sym(\sigma)$, where now $\sym$ is treated as an element of $\six$.
In other words, the adjacency between $a_i$ and $b_j$ is defined by the Iverson bracket 
\[[\sigma(i)=j],\quad [\sigma(i)\neq j],\quad [i \le \sigma^{-1}(j)],\quad [i\ge \sigma^{-1}(j)],\quad [j \le \sigma(i)],~\text{or}~\quad[j \ge \sigma(i)]
\]
depending on the parameter $\sym\in\sax$.
The vertices  ${a_1,\ldots,a_n}$ are called the \emph{left} vertices, while the vertices  ${b_1,\ldots,b_n}$ are the \emph{right} vertices in the $(\sym,\sigma)$-matching $G$.

\medskip

As a first step, we 
use~\cref{thm:Ram6} to find arbitrary $(\sym,\sigma)$-matchings in graph classes of unbounded twin-width.

  \begin{lemma} 
  \label{lem:bip1}
  Let $\mathscr C$ be a hereditary class of ordered graphs with unbounded twin-width.
  Then there exists $\sym\in\sax$ such that for every  $n\ge 1$ and permutation $\sigma\in\mathfrak{S}_n$, the class $\mathscr C$ contains an $(\sym,\sigma)$-matching.
  \end{lemma}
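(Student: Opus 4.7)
The strategy is to transport \cref{thm:Ram6} from matrices to ordered graphs through the matrix encoding, and then to arrange via Ramsey that the row vertices of the extracted bipartite structure lie before the column vertices in the host graph's ordering.

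Since $\mathscr C$ has unbounded twin-width, so does its class $\mathcal M = \{M(G) : G \in \mathscr C\}$ of matrix encodings, which uses the two-letter alphabet $\{0,2\}$. By \cref{thm:Ram6}, for some $\sym \in \six$ and $a \in \{0,2\}$ we have $\mathcal F_\sym \subseteq s_a(\mathcal M)$; replacing every graph in $\mathscr C$ by its edge complement if $a = 0$ (which only permutes the six values of $\sym$), we may assume $\mathcal F_\sym \subseteq \mathcal M$ and identify $\sym$ with its counterpart in $\sax$. A short uniform-in-$\sym$ calculation shows the key fact: for any $\pi \in \mathfrak S_N$ and any $U \subseteq [N]$ inducing the subpermutation $\sigma$ of $\pi$, the restriction of $F_\sym(\pi)$ to rows $U$ and columns $\pi(U)$ equals $F_\sym(\sigma)$.

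Fix $\sigma \in \mathfrak S_n$ and take $N$ large. Since $F_\sym(\pi) \in \mathcal F_\sym \subseteq \mathcal M$ for every $\pi \in \mathfrak S_N$, we obtain $G \in \mathscr C$ whose matrix encoding contains $F_\sym(\pi)$ as a submatrix, with row vertices $r_1 < \cdots < r_N$ and column vertices $c_1 < \cdots < c_N$ in $V(G)$. The goal is to find $U \subseteq [N]$ of size $n$ such that $\sigma$ is the subpermutation of $\pi$ on $U$, and such that all $r_i$ (for $i \in U$) precede all $c_j$ (for $j \in \pi(U)$) in $V(G)$. Granting such a $U$, the induced ordered subgraph of $G$ on $\{r_i : i \in U\} \cup \{c_j : j \in \pi(U)\}$ is an $(\sym,\sigma)$-matching lying in $\mathscr C$ by hereditary closure.

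To find $U$, apply \cref{lem:perm-ramsey} with a palette $\Gamma$ recording, for each pair $(i,j) \in [N]^2$, the relative order in $V(G)$ of the vertices $r_i, r_j, c_i, c_j, c_{\pi(j)}$. After Ramsey the interleaving pattern on $U$ becomes a fixed function of the two order types associated with $\pi$, reducing the analysis to a bounded list of uniform patterns. In the ``rows before columns'' pattern we are done; in the symmetric ``columns before rows'' pattern, by swapping the roles of rows and columns we obtain instead an $(\sym^T,\sigma^{-1})$-matching for the transpose $\sym^T \in \sax$ of $\sym$ (defined by swapping $l \leftrightarrow r$). Mixed patterns are handled by a finer Ramsey, performed on a much larger $\pi$ chosen so as to contain enough structure, to force one of the two separated patterns. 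A final pigeonhole over the two possible symbols, applied over arbitrarily large ``universal'' $\sigma$ containing every bounded permutation as a subpermutation, consolidates a single choice of $\sym \in \sax$ that works for all $\sigma$. The main obstacle is this Ramsey separation: \cref{lem:perm-ramsey} returns a uniform pattern of interleaving but does not dictate which, so case analysis over the finitely many uniform patterns, combined with further Ramsey cleanings and pigeonhole, is required to guarantee both the separation and a uniform choice of $\sym$.
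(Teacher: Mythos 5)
Your proof has a genuine gap at the step you yourself flag as ``the main obstacle'', and the paper sidesteps it with a device you are missing. The paper does not use \cref{lem:perm-ramsey} for this lemma at all. After extracting some $\sym\in\six$ with $\mathcal F_\sym\subseteq\mathcal M$ via \cref{thm:Ram6}, it encodes each target $\sigma\in\mathfrak S_n$ by the \emph{matching permutation} $\widetilde\sigma\in\mathfrak S_{2n}$, whose matrix has the permutation matrices of $\sigma$ and $\sigma^{-1}$ as its two anti-diagonal blocks. In an embedded copy of $F_\sym(\widetilde\sigma)$ inside the adjacency matrix of some $H\in\mathscr C$, the $2n$ row vertices split into intervals $U_1<U_2$ and the $2n$ column vertices into $V_1<V_2$ (rows, and columns, of a submatrix are each internally ordered), and an elementary dichotomy forces one of the pairs $(U_1,V_2)$ or $(V_1,U_2)$ to be separated in the vertex order of $H$: either $\max U_1<\min V_2$, or else $\max V_1<\min V_2\le\max U_1<\min U_2$. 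Each of those two blocks of $F_\sym(\widetilde\sigma)$ carries a copy of $\sigma$, so a separated copy always materializes, with no Ramsey argument whatsoever.

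Your Ramsey cleaning, by contrast, can make the interleaving of row vertices and column vertices \emph{uniform} but never guarantees it is \emph{separated}. A uniformly alternating interleaving $r_{i_1}<c_{\pi(i_1)}<r_{i_2}<c_{\pi(i_2)}<\cdots$ is a perfectly valid Ramsey outcome, and it is Ramsey-stable: extracting a subsequence and re-applying Ramsey yields another uniformly alternating pattern, never a separated one. So ``further Ramsey cleanings on a much larger $\pi$'' cannot, by themselves, dispose of the mixed patterns; what is actually required is to build separation into the ambient permutation $\pi$, which is exactly what $\widetilde\sigma$ achieves and what your proposal lacks as a concrete construction. Your closing pigeonhole over universal permutations to fix a single $\sym\in\sax$ is a correct and in fact necessary idea (the paper needs it too, since the block $(V_1,U_2)$ encodes the parameter with $l\leftrightarrow r$ swapped), but it only fixes the choice of the symbol and does not rescue the separation step.
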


  \begin{proof}
    Let $\mathcal M$ be the submatrix closure of the set of adjacency matrices of graphs in $\mathscr C$, along their respective orders.
    $\mathcal M$ has unbounded twin-width (see last paragraph of~\cref{subsec:graph-theory}), and hence unbounded grid rank.
    By \cref{thm:Ram6}, there exists some $\sym\in\six$ such that $\mathcal{F}_{\sym}\subseteq \mathcal M$.
    
    Let $\sigma\in \mathfrak{S}_n$ be a permutation.
    We construct a $(\sym,\sigma)$-matching $G\in\mathscr C$.
    Consider its associated \emph{matching permutation} $\widetilde{\sigma}\in \mathfrak{S}_{2n}$ defined by
  \[\widetilde{\sigma}(i):=\begin{cases}
          \sigma(i)+n&\text{if }i \leq n\\
      \sigma^{-1}(i-n) &\text{if } n+1\leq i \leq 2n.
    \end{cases} \]
  In other words $M_{\widetilde{\sigma}}$ consists of the two blocks $M_{\sigma}$ and $M_{\sigma^{-1}}$ on its anti-diagonal. 
  We have $F_{\sym}(\widetilde{\sigma})\in \mathcal M$, so there exists a graph $H\in \mathcal C$ such that $F_{\sym}(\widetilde{\sigma})$ is a submatrix of its adjacency matrix.
  Denote by $U_1, U_2$ the (disjoint) ordered sets of vertices corresponding to the rows indexed respectively by $\sg{1, \ldots,n}$ and $\sg{n+1, \ldots,2n}$, such that $\max(U_1)<\min(U_2)$.
  Take similarly $V_1, V_2$ associated to the column indices.
  If $\max(U_1)<\min(V_2)$ we let $A=U_1$ and $B=V_2$; otherwise, $\min(U_2) > \max(U_1) \geq \min(V_2) > \max(V_1)$ and we let $A=V_1$ and $B=U_2$.
    Then, if $a_1 < \dots < a_n$ are the elements of $A$ and $b_1 < \dots < b_n$ are the elements of $B$, we have $a_n < b_1$ and $a_ib_j\in E(H)$ if and only if $F_\sym(\sigma)$ has a $1$ on position $(i,j)$. Hence  $G=H[A\cup B]$ is an $(\sym,\sigma)$-matching in $\mathscr C$.
  \end{proof}
  
  As a second step, we make the $(\sym,\sigma)$-matchings more organized, by 
  controlling the edges among the left and right parts.
  Let $f,g\from\set{-1,1}\to\set{0,1}$ be two functions.
An $(\sym,\sigma)$-matching $G$ with vertices $a_1<\ldots<a_n<b_1<\ldots<b_n$  is 
\emph{$(f,g)$-regular} if the following hold for $1\le i<j\le n$:
\begin{align*}
[E(a_i,a_j)]&=f(\ot({\sigma(i)},\sigma(j)))\text{\qquad\qquad \it \text{and}}&
[E(b_i,b_j)]&=g(\ot({\sigma^{-1}(i)},\sigma^{-1}(j))).\\
\end{align*}
Let $\mathscr R_{\sym,f,g}$ denote the hereditary closure of the class of all $(f,g)$-regular $(\sym,\sigma)$-matchings, for all permutations~$\sigma$.

\medskip

  We now further improve the statement of~\cref{lem:bip1} to obtain one of the 96 classes~$\mathscr R_{\sym,f,g}$. To this end, from 
  a huge $(\sym,\pi)$-matching we will extract a large $(f,g)$-regular $(\sym,\sigma)$-matching, for some $f$ and $g$.

  \begin{lemma}
    \label{lem:bip2}
  Let $\CC$ be a hereditary class of ordered graphs with unbounded twin-width.
  Then there exist $\sym\in\sax$  and $f,g\from\set{-1,1}\to\set{0,1}$ such that $\CC\supseteq\mathscr R_{\sym,f,g}$.
  \end{lemma}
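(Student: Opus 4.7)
The plan is to strengthen Lemma~\ref{lem:bip1} by applying a Ramsey argument (Lemma~\ref{lem:perm-ramsey}) to control the edges within the left and right parts of the matchings. By Lemma~\ref{lem:bip1}, fix $\sym\in\sax$ such that $\CC$ contains an $(\sym,\pi)$-matching for every permutation $\pi$. For each $k\ge 1$, choose a \emph{universal} permutation $\sigma'\in \mathfrak S_K$ containing every $\sigma\in\mathfrak S_k$ as an induced subpermutation; this is a standard construction, one may for instance take $K=k\cdot k!$ and let $\sigma'$ concatenate encodings of all elements of $\mathfrak S_k$. Applying Lemma~\ref{lem:perm-ramsey} to $\sigma'$ with color alphabet $\Gamma=\set{0,1}^2$ yields $N\in\Nn$ and a permutation $\pi\in\mathfrak S_N$ with the stated Ramsey property. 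Lemma~\ref{lem:bip1} then provides an $(\sym,\pi)$-matching $G\in\CC$ on vertices $a_1<\cdots<a_N<b_1<\cdots<b_N$.

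Define the coloring $c\from [N]^2\to \set{0,1}^2$ by $c(i,j) := ([E(a_i,a_j)],\; [E(b_{\pi(i)},b_{\pi(j)})])$. By Lemma~\ref{lem:perm-ramsey}, there exists $U\subseteq [N]$ of size $K$ such that the subpermutation of $\pi$ induced by $U$ is $\sigma'$ and $c(i,j)$ depends only on $(\ot(i,j),\ot(\pi(i),\pi(j)))$ for all $i,j\in U$. Restricting attention to pairs $i<j$, the color $c(i,j)$ is determined by $\ot(\pi(i),\pi(j))\in\set{-1,1}$, which defines two functions $f_k,g_k\from \set{-1,1}\to\set{0,1}$. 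A short bookkeeping, reindexing the $b$-side via $\pi$ and converting to the order type of $\sigma'^{-1}$, verifies that the sub-matching of $G$ on $\set{a_i : i\in U}\cup\set{b_j : j\in \pi(U)}$ is an $(f_k,g_k)$-regular $(\sym,\sigma')$-matching.

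Since $(f_k,g_k)$-regularity is preserved under induced sub-matchings and $\sigma'$ contains every $\sigma\in\mathfrak S_k$ as an induced subpermutation, $\CC$ contains an $(f_k,g_k)$-regular $(\sym,\sigma)$-matching for every $\sigma\in\mathfrak S_k$. As there are only $4\cdot 4 = 16$ possible pairs $(f,g)$, the pigeonhole principle yields a fixed pair $(f,g)$ with $(f_k,g_k)=(f,g)$ for infinitely many $k$. For any permutation $\sigma$ it suffices to choose such a $k\ge |\sigma|$, producing an $(f,g)$-regular $(\sym,\sigma)$-matching in $\CC$; heredity of $\CC$ then gives $\mathscr R_{\sym,f,g}\subseteq\CC$, as required. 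The main technical point lies in this bookkeeping: verifying that the Ramsey-produced adjacencies on the $b$-side, naturally indexed via $\pi$, match the definition of $(f,g)$-regularity, which is phrased in terms of $\ot(\sigma^{-1}(i),\sigma^{-1}(j))$.
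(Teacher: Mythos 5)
Your proof is correct and follows essentially the same approach as the paper: apply \cref{lem:bip1} to fix $\sym$, then use \cref{lem:perm-ramsey} with colors $\{0,1\}^2$ to control the adjacencies inside the left and right parts, and finally pigeonhole over the $16$ possible pairs $(f,g)$. The only cosmetic difference is that you apply Ramsey directly to a universal permutation $\sigma'_k$ for each $k$ and pigeonhole over $k$, whereas the paper first derives an $(f,g)$-regular $(\sym,\sigma)$-matching for each individual $\sigma$ and only then pigeonholes along a universal chain $\sigma_1 \subset \sigma_2 \subset \cdots$; the two organizations are interchangeable, and your ``bookkeeping'' step (re-indexing the $b$-side via $\pi$ and converting to order types of $\sigma'^{-1}$) is exactly the check that is also implicit in the paper's proof.
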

  \begin{proof}Let $\sym\in\sax$ be given by~\cref{lem:bip1}. We first show that for every $k$ and permutation $\sigma\in \mathfrak S_k$
there is some regular $(\sym,\sigma)$-matching $H\in \CC$.
    Fix a permutation $\sigma\in \mathfrak S_k$.
    Let $N$ and $\pi\in \mathfrak S_N$  be given by~\cref{lem:perm-ramsey}.
By~\cref{lem:bip1} some $(\sym,\pi)$-matching $G$ belongs to $\CC$. Denote its vertices $a_1<\ldots<a_N<b_1<\ldots<b_N$.

Let $c\from [N]^2\to \set{0,1}^2$ be such that 
\[c(i,j)=([E(a_i,a_j)],[E(b_{\pi(i)},b_{\pi(j)}])\qquad\text{for $i,j\in[N]$},\]
where $[E(u,v)]=1$ if the vertices $u,v$  are adjacent in $G$ 
and $0$ otherwise.

Apply~\cref{lem:perm-ramsey} to $c$.
There is a subset $U\subseteq [N]$ 
such that the subpermutation of $\pi$ induced by $U$ is isomorphic to $\sigma$ and
such that $c(i,j)$ depends only on $\ot(i,j)=\ot(a_i,a_j)$ and on $\ot(\pi(i),\pi(j))=\ot(b_{\pi(i)},b_{\pi(j)})$ for 
$u,v\in U$.
Hence, the subgraph $H$ of $G$ 
induced by $\setof{a_i}{i\in U}\cup\setof{b_{\pi(i)}}{i\in U}$ is an $(f,g)$-regular $(\sym,\sigma)$-matching in $\CC$, for some $f,g\from\set{-1,1}
\to\set{0,1}$.

\medskip
Let $\sigma_1,\sigma_2,\sigma_3,\ldots$
be a sequence of permutations such that 
$\sigma_n$ is a subpermutation of $\sigma_{n+1}$, for all $n\ge 1$,
and such that for every $k$ and permutation $\sigma\in\mathfrak S_k$ there is some $n$ such that $\sigma$ is a subpermutation of $\sigma_n$.
For each $n\ge 1$ let $f_n,g_n\from\set{-1,1}\to\set{0,1}$ be such that there is an $(f_n,g_n)$-regular $(\sym,\sigma_n)$-matching in~$\CC$.
Since there are finitely many possible pairs $(f_n,g_n)$,
by taking a subsequence we may assume that  $f_n=f$ and $g_n=g$ for some fixed $f,g\from\set{-1,1}\to\set{0,1}$.
Note that if $\sigma$ is a subpermutation of $\sigma'$
then the $(f,g)$-regular $(\sym,\sigma)$-matching is an induced subgraph of the $(f,g)$-regular $(\sym,\sigma')$-matching.
It follows that $\CC$ contains all $(f,g)$-regular $(\sym,\sigma)$-matchings.
  \end{proof}
  
 \begin{lemma}\label{lem:from 96 to 25}
   If one of the functions $f,g\from\set{-1,1}\to\set{0,1}$ is non-constant then $\mathscr R_{\sym,f,g}\supseteq \mathscr P$.
   Otherwise, $\mathscr R_{\sym,f,g}=\mathscr M_{\sym,\lambda,\rho}$,
   where $\lambda,\rho\in\set{0,1}$ are such that $\lambda=f(-1)=f(1)$ and $\rho=g(-1)=g(1)$.
 \end{lemma}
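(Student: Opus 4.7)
The proof splits naturally into the two alternatives asserted by the lemma, and I would handle the easier one first.

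\textbf{Case when $f, g$ are both constant.} Suppose $f\equiv\lambda$ and $g\equiv\rho$. Unfolding definitions, an $(f,g)$-regular $(\sym,\sigma)$-matching on vertices $a_1<\dots<a_n<b_1<\dots<b_n$ satisfies $[E(a_i,a_j)]=\lambda$ and $[E(b_i,b_j)]=\rho$ for all $i<j$, while the bipartite adjacency $[E(a_i,b_j)]$ is given by the $(i,j)$-entry of $F_\sym(\sigma)$. This matches exactly the construction of the ordered graph $H[\sym,\lambda,\rho]$ from the matching $H$ whose matching permutation is $\sigma$, as described before~\cref{lem: class P}. Hence the generating families of $\mathscr R_{\sym,f,g}$ and $\mathscr M_{\sym,\lambda,\rho}$ coincide, and taking hereditary closures gives equality.

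\textbf{Case when one of $f, g$ is non-constant.} By symmetry, assume $f$ is non-constant; the case where only $g$ is non-constant is analogous after exchanging the roles of the left and right vertex sides (using $\sigma^{-1}$ in place of $\sigma$ and $g$ in place of $f$). Fix an arbitrary permutation $\pi\in\mathfrak S_n$; the goal is to show $G_\pi\in\mathscr R_{\sym,f,g}$. The key observation is that the adjacency on the left side of any $(f,g)$-regular $(\sym,\sigma)$-matching is governed by $\sigma$ alone, independently of $\sym$ and of $g$: for $i<j$, the pair $a_i a_j$ is an edge if and only if $f(\ot(\sigma(i),\sigma(j)))=1$. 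Since $f(1)\neq f(-1)$, this edge relation becomes either $\{\,ij : \sigma(i)>\sigma(j)\,\}$ or its complement, depending on which of $f(1), f(-1)$ equals $1$.

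\textbf{Reducing to permutation graphs.} In the first subcase (say $f(-1)=1, f(1)=0$), picking $\sigma:=\pi$ makes the induced subgraph on $\{a_1,\dots,a_n\}$ of any $(f,g)$-regular $(\sym,\sigma)$-matching isomorphic to the ordered permutation graph $G_\pi$. In the second subcase ($f(-1)=0, f(1)=1$), the induced graph on the left side is $\overline{G_\pi}$, the edge complement of $G_\pi$; by~\cref{lem: class P}, $\mathscr P$ is closed under edge complements, so $\overline{G_\pi}=G_{\pi'}$ for some permutation $\pi'$, and we simply apply the construction to $\pi'$ in place of $\pi$. In either subcase, the $(f,g)$-regular $(\sym,\sigma)$-matching belongs to $\mathscr R_{\sym,f,g}$ by definition, and restricting to the left vertices (using that $\mathscr R_{\sym,f,g}$ is the hereditary closure, hence closed under induced subgraphs) yields $G_\pi\in\mathscr R_{\sym,f,g}$. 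As $\pi$ was arbitrary, $\mathscr P\subseteq\mathscr R_{\sym,f,g}$.

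\textbf{Remarks on the obstacles.} There is essentially no hard step: the proof is almost entirely an unfolding of definitions, together with the auxiliary fact (\cref{lem: class P}) that $\mathscr P$ is closed under edge complements. The only point requiring minor care is that the statement is independent of $\sym$, since the adjacency on the left side (and on the right side) does not depend on $\sym$ in the definition of $(f,g)$-regularity; the parameter $\sym$ only controls the bipartite part, which is irrelevant when we pass to the induced subgraph on the left or right vertices alone.
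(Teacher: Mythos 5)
Your proof is correct and takes essentially the same route as the paper: both arguments observe that the left-induced subgraph of an $(f,g)$-regular $(\sym,\sigma)$-matching is $G_\sigma$ or its edge complement when $f$ is non-constant, invoke closure of $\mathscr P$ under edge complements, and note the constant case is immediate from the definitions. The only cosmetic difference is that the paper treats the $f$ and $g$ sides in parallel rather than by a symmetry remark.
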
 
\begin{proof}Consider an $(f,g)$-regular $(\sym,\sigma)$-matching $G$ and let $G[L]$ and $G[R]$ be its ordered subgraphs induced by the left and right vertices, respectively.

  If $f(1)=0$ and $f(-1)=1$ then by definition, $G[L]$ is isomorphic to $G_\sigma$.
  Similarly, if $g(1)=0$ and $g(-1)=1$ then $G[R]$ is isomorphic to $G_{\sigma^{-1}}$.
  It follows that if $f(1)<f(-1)$ or $g(1)<g(-1)$ then $\mathscr R_{\sym,f,g}$ contains $\mathscr P$.

  On the other hand, if $f(1)>f(-1)$ then $G[L]$ is isomorphic to the edge complement of $G_\sigma$,
  and if $g(1)>g(-1)$ then $G[R]$ is isomorphic to the edge complement of $G_{\sigma^{-1}}$. Therefore, if $f(1)>f(-1)$ or $g(1)>g(-1)$ then $\mathscr R_{\sym,f,g}$ contains the class of edge complements of ordered graphs in $\mathscr P$,
but this class is again
 $\mathscr P$ (see~\cref{lem: class P}).
%  , since the edge complement of $G_\sigma$ is $G_{\bar \sigma}$ where $\bar\sigma$
% is such that $\bar\sigma(i)<\bar\sigma(j)$ if and only if $\sigma(i)>\sigma(j)$.

If $f$ and $g$ are constantly equal to $\lambda$ and $\rho$, respectively, 
then $\mathscr R_{\sym,f,g}=\mathscr M_{\sym,\lambda,\rho}$ by definition.
\end{proof}

To prove \cref{thm:hardness}, it suffices to 
show that each of the classes $\mathscr R_{\sym,\lambda,\rho}$ efficiently interprets the class of all graphs.

\begin{lemma}\label{lem: find middle point}
For every $\sym\in\sax$ and $f,g\from\set{-1,1}\to\set{0,1}$, the class $\mathscr R_{\sym,f,g}$ efficiently interprets the class $\mathscr M$ of all ordered matchings, via an interpretation $\interp I$ which maps $n$-element structures to $n$-element structures. In particular, $\mathscr R_{\sym,f,g}$ has growth at least $\lfloor\frac n 2\rfloor!$.
\end{lemma}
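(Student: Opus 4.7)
The plan is to construct, for each $\sym\in\sax$ and $f,g\from\set{-1,1}\to\set{0,1}$, an explicit simple interpretation $\interp I_{\sym,f,g}$ that preserves the domain and the underlying order (via $\delta(x):= x=x$ and $\rho_<(x,y):= x<y$), redefining only the edge relation through a formula $\rho_E$. Given any ordered matching $H$ on $2n$ vertices with underlying permutation $\sigma\in\mathfrak S_n$, we let $G$ be the $(f,g)$-regular $(\sym,\sigma)$-matching on the same ordered vertex set $a_1<\ldots<a_n<b_1<\ldots<b_n$; this is polynomial-time computable from $H$ and lies in $\mathscr R_{\sym,f,g}$ by construction. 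It then suffices to design $\rho_E$ so that $G\models\rho_E(u,v)$ exactly when $\{u,v\}$ is a matching edge of $H$.

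The technical core is an FO definition, inside $G$, of the Left/Right partition $\{a_i\}\uplus\{b_j\}$. Once such a predicate $\mathsf{Left}(v)$ is available, the matching edges are recovered via a $\sym$-specific characterization: for $\sym={=}$ they are exactly the cross-edges of $G$; for $\sym={\neq}$ they are exactly the cross non-edges; and for each of the four inequality parameters they are the extremal cross-edges in the direction dictated by $\sym$---for instance, for $\sym=\leq_l$, the matching edge incident to a right vertex $y$ joins $y$ to its largest left neighbor. Each such characterization is directly FO-expressible in terms of $\mathsf{Left}$, $<$, and $E$.

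To define $\mathsf{Left}$ we split on $(f,g)$. If $f=0$, then every neighbor of $a_i$ is a cross-neighbor and hence lies above $a_i$, while every $b_j$ always has a cross-neighbor strictly below (easy case check for each $\sym$); thus we take $\mathsf{Left}(v):=\neg\exists w<v.\,E(v,w)$, and the case $g=0$ is symmetric. The delicate case is $f=g=1$: although both halves are internally cliques, the cross-structure still forces all non-neighbors of an $a$-vertex to lie above it and all non-neighbors of a $b$-vertex to lie below it. So
$$\mathsf{Left}(v):=\forall w.\,w\neq v\rightarrow (E(v,w)\lor v<w)$$
classifies every \emph{non-universal} vertex correctly. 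Universal vertices (adjacent to every other vertex) may additionally appear on each side: for each inequality $\sym$ exactly one left and one right such vertex arise (e.g.\ $a_1$ and $b_{\sigma(n)}$ for $\sym=\leq_l$), while for $\sym\in\{=,\neq\}$ none arise when $n\geq 2$. They are separated by appending the clause that the universal left vertex has another universal vertex strictly above it, whereas the universal right does not; this yields a single FO formula for $\mathsf{Left}$ in the $f=g=1$ case.

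The growth lower bound $|\mathscr R_{\sym,f,g}|_n\geq\lfloor n/2\rfloor!$ is then immediate: the efficient interpretation produces an injection from the $\lfloor n/2\rfloor!$ pairwise non-isomorphic ordered matchings on $n$ vertices (one per permutation in $\mathfrak S_{\lfloor n/2\rfloor}$) into $\mathscr R_{\sym,f,g}$-structures of the same size. The main obstacle is precisely the $f=g=1$ analysis and the book-keeping required to classify universal vertices uniformly across all six values of $\sym$; the remaining cases reduce to routine neighborhood criteria.
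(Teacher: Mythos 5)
Your case analysis for defining $\mathsf{Left}$ treats $f$ and $g$ as \emph{constants} (the cases are ``$f\equiv 0$'', ``$g\equiv 0$'', ``$f\equiv g\equiv 1$''), but the lemma is stated for arbitrary \emph{functions} $f,g\from\set{-1,1}\to\set{0,1}$, of which there are four each. You cover $8$ of the $16$ pairs and omit all pairs where $f$ or $g$ is a non-constant function (and neither is constantly $0$). This gap is genuine: if, say, $f(1)=0$ and $f(-1)=1$, then $G[L]$ is the permutation graph $G_\sigma$, so a left vertex $a_j$ can have both a neighbor $a_i<a_j$ (when $\sigma(i)>\sigma(j)$) and a non-neighbor $a_{i'}<a_j$ (when $\sigma(i')<\sigma(j)$), causing both your ``no neighbor below'' formula and your ``all non-neighbors above'' formula to fail on $L$ for generic $\sigma$. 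Moreover the lemma is invoked with general $f,g$ (via Lemma~\ref{lem:bip2} in the proof of Theorem~\ref{thm:hardness}), so the non-constant case cannot simply be set aside.

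The paper's proof circumvents the problem by never explicitly defining the $L/R$ partition. Instead it existentially quantifies a boundary $z$: the auxiliary formula $\mu(x,y;z)$ picks, for each $x\le z$, the extremal (or unique) $y>z$ with the $\sym$-appropriate cross-edge pattern, and the formula $\rho(z)$ asserts that $\mu(\cdot,\cdot;z)$ is the graph of a bijection between $\setof{x}{x\le z}$ and $\setof{x}{x>z}$. The bijection requirement forces these two sets to have equal size, which pins $z$ down uniquely as $\max L$; and since $\mu$ inspects only cross-edges, the whole argument is blind to $f$ and $g$. That counting trick is precisely the ingredient your proposal is missing. If you wish to salvage the $\mathsf{Left}$-based route, you would still need some mechanism of this kind to locate the boundary in the non-constant cases, at which point the two approaches converge.
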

\begin{proof}
  Suppose $\sym$ is `$\leqslant_r$', the other cases being similar.
Let $H$ be an ordered matching with left vertices $L$ and right vertices $R$, 
and let $\sigma\from L\to R$ be the bijection mapping each $v\in L$ to 
its neighbor in $R$.

Let $G$ be the  $(f,g)$-regular $(\sym,\sigma)$-matching, with vertices $L$ and $R$.
As $G$ is regular, it is fully determined
by $H$, $\sym,\lambda,\rho$, 
and can be constructed from $H$ in polynomial time.

\medskip
We now show how to define the edges of $H$ from $G$, using an \FO~formula not depending on $H$.

The following formula with parameter $z$ and two variables $x,y$ expresses that $x\le z$ and $y$ is the least neighbor of $E$ larger than $z$:
	\[\mu(x,y;z)\equiv (x\le z)\land (y>z)\land E(x,y)\land \forall y'. (z<y'< y)\rightarrow \neg E(x,y').\]
	
	Let $\rho(z)$ be the formula expressing that the set of pairs 
$x,y$ satisfying $\mu(x,y;z)$ defines the graph of a bijection between $\setof{x}{x\le z}$ and $\setof{x}{x>z}$.
There is a unique vertex in $G$ satisfying $\rho(z)$, namely the largest element of $L$,
	since  $|L|=|R|$.
	Consider the formula 
  \[\phi(x,y)\equiv \exists z.\rho(z)\land (\mu(x,y;z)\lor\mu(y,x;z)).\] Then 
	$\phi(x,y)$ defines in $G$ the matching 
  $H$, directed from $L$ to $R$.
	Hence, $\phi(x,y)\lor\phi(y,x)$ defines 
  the edges of the matching $H$ in $G$,
  whereas $x<y$ defines the order of $H$.
  Together those formulas form an interpretation  $\interp I$ such that $\interp I(G)=H$. 

  As the interpretation $\interp I$ from~\cref{lem:matchings are dependent} preserves the domains of the structures, and there are $\lfloor\frac n 2\rfloor!$ ordered matchings with $n$ elements, it follows that the class $\mathscr R_{\sym,\lambda,\rho}$ has growth at least $\lfloor\frac n 2\rfloor!$.
\end{proof}

\medskip
\cref{thm:25,thm:hardness} now follow.
\begin{proof}[Proof of~\cref{thm:25}]
  By 
  \cref{lem:bip2,lem: find middle point,lem:from 96 to 25}, every class with unbounded twin-width contains one of the classes $\mathscr M_{\sym,\lambda,\rho}$ or $\mathscr P$. The former have growth at least $\lfloor \frac n 2\rfloor!$ by~\cref{lem: find middle point}, while the latter has growth $n!$ by~\cref{lem: class P}. In particular, they have unbounded twin-width by~\cref{thm:small}.
  % It remains to show that $\mathscr P$ has growth $n!$. This is because the mapping which maps an $n$-permutation $\pi$ to the ordered graph $G_\pi\in\mathscr P$ is injective, as $\pi$ can be recovered as the unique permutation satisfying $\pi(i)>\pi(j)\Leftrightarrow ij\in E(G_\pi)$, for all $1\le i<j\le n$.
\end{proof}
\begin{proof}[Proof of~\cref{thm:hardness}]
  Follows by \cref{lem:bip2,lem: find middle point,lem:matchings are dependent}.
\end{proof}

% \begin{corollary}
%   Let $\CC$ be a class of ordered graphs with unbounded twin-width.
%   Then:
%   \begin{itemize}
%     \item $\CC$ is independent,
%     \item \FO~model checking is $\AW[*]$-hard on $\CC$,
%     \item $\CC$ has growth at least $\lfloor\frac n 2\rfloor!$.
%   \end{itemize}
% \end{corollary}
In the next section, we improve the lower bound on the growth of the classes $\mathscr M_{\sym,\lambda,\rho}$ further, to match the growth  of the hereditary closure of the class of matchings.

\subsection{Lowerbounding the growth of $\mathscr M_{\sym,\lambda,\rho}$}

There is still a bit of work to get the exact value of $\sum_{k=0}^{\lfloor n/2\rfloor}\binom{n}{2k}\,k!$ conjectured in \cite{Balogh06} as a lower bound of the growth of hereditary classes of ordered graphs
with superexponential growth.
We show how to derive this bound for $\mathscr M_{\sym,\lambda,\rho}$ in each case of $\sym, \lambda, \rho$. 

% \begin{lemma}
% \label{lem:strict_nonstrict}
% 	For every $\lambda,\rho\in\{0,1\}$ we have
% 	$\mathscr M_{\leq_r,\lambda,\rho}=\mathscr M_{<_r,\lambda,\rho}$.
% \end{lemma}
% \begin{proof}
% 	Let $M\in \mathscr M_{=,0,0}$ be an ordered matching with vertices $a_1<\dots<a_n<b_1<\dots<b_n$ and matching function $\sigma$.
% 	Let $G=\enc_{\leq_r,\lambda,\rho}$.
% 	We consider the matching $M'$ with vertices $a_1'<a_1<\dots<a_n'<a_n<b_1<b_1'<\dots<b_n<b_n'$, where $a_i$ is matched with $b_{\sigma(i)}'$ and $a_i'$ is matched with $b_{\sigma(i)}$. Let $H=\enc_{<_r,\lambda,\rho}(M')$.
% 		Then $a_i$ is adjacent to $b_j$ if and only if $j\leq\sigma(i)$.  It follows that the subgraph of $H$ induced by $\{a_1,\dots,a_n,b_1,\dots,b_n\}$ is $G$, thus $G$ and its induced subgraphs belong to $\mathscr M_{<_r,\lambda,\rho}$. Thus $\mathscr M_{\leq_r,\lambda,\rho}\subseteq\mathscr M_{<_r,\lambda,\rho}$.
% 		The converse inclusion is proved in a similar way.
% \end{proof}

\begin{lemma}
\label{lem:inj}
	For every $\sym\in\sax$ and $\lambda,\rho\in\set{0,1}$ and every positive integer $n$ we have
	\begin{align}\label{eq:balogh}
|(\mathscr M_{\sym,\lambda,\rho})_n|\geq \sum_{k=0}^{\lfloor n/2\rfloor}\binom{n}{2k}\,k!,
  \end{align}
  with equality in the case when $\sym$ is `$=$' and $\lambda=\rho=0$.
\end{lemma}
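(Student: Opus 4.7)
The plan splits into two parts: a direct enumeration establishing equality for the baseline case $(\sym,\lambda,\rho)=(\text{`}=\text{'},0,0)$, and an injection argument for the remaining $23$ cases.

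For $\mathscr M_{=,0,0}$, each $n$-vertex element is an induced subgraph of an ordered matching, i.e., a disjoint union of edges and isolated vertices in which every edge's left endpoint precedes its right endpoint. Listing the $2k$ matched positions in increasing order as $s_1<\dots<s_{2k}$, the ordering constraint forces $\{s_1,\ldots,s_k\}$ to be the left endpoints and $\{s_{k+1},\ldots,s_{2k}\}$ to be the right endpoints, so the graph is determined by the $2k$-subset $S\subseteq[n]$ together with the pairing bijection $\tau\colon\{s_1,\ldots,s_k\}\to\{s_{k+1},\ldots,s_{2k}\}$. Distinct $(S,\tau)$ yield distinct graphs, giving $|(\mathscr M_{=,0,0})_n|=\sum_k\binom{n}{2k}\,k!$ and hence the claimed equality.

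For each remaining triple $(\sym,\lambda,\rho)$, I would construct an injection $\Phi_{\sym,\lambda,\rho}$ from the set of partial matchings $(k,S,\tau)$ into $(\mathscr M_{\sym,\lambda,\rho})_n$. Given $(k,S,\tau)$, set the canonical split to $p=s_k$ (with $p=0$ when $k=0$) and $q=n-p$, and extend the partial matching into a full ordered matching $H$ by pairing each unmatched real vertex with a ``virtual'' partner placed beyond the real positions (virtual right vertices far to the right, virtual left vertices far to the left). Define $\Phi_{\sym,\lambda,\rho}(k,S,\tau)$ to be the induced subgraph of $H[\sym,\lambda,\rho]$ on the $n$ real positions; this graph lies in $(\mathscr M_{\sym,\lambda,\rho})_n$ by construction.

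Verifying injectivity is the crux of the argument and will be done case by case. When $\lambda=1$ or $\rho=1$, the clique on the corresponding side reveals the split $p$ directly, and the bipartite part then recovers $(S,\tau)$. When $\sym\in\{\le_l,\ge_l,\le_r,\ge_r\}$, the threshold-like adjacency pattern between the two sides recovers the underlying permutation, hence the matching; when $\sym$ is ``$\neq$'', the matching manifests as missing edges between the two sides, which can be read off. The principal obstacle is resolving small-$k$ collisions---most visibly for $\sym=\text{`}\neq\text{'}$ with $\lambda=\rho=0$, where the naive choice of split can send the empty partial matching and a certain $k=1$ partial matching to the same $n$-vertex graph---and these are handled by a local adjustment of the canonical split or the virtual-vertex placement. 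Symmetries of the family, namely edge complementation (which permutes $\lambda\leftrightarrow 1-\lambda$, $\rho\leftrightarrow 1-\rho$, and the sign/type of $\sym$) and left-right reversal (which swaps the subscripts $l\leftrightarrow r$), reduce the $23$ non-baseline cases to a small number of representatives, each handled by explicit verification.
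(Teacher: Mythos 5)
Your baseline case for $\mathscr M_{=,0,0}$ is correct and matches the paper's, and your overall strategy (symmetry reduction, then an injection from ``partial ordered matchings'' parameterized by $(k,S,\tau)$) is essentially the approach the paper takes. However, your specific construction of the injection $\Phi_{\sym,\lambda,\rho}$ has a genuine flaw that makes it fail for the threshold patterns $\sym\in\{\le_l,\le_r,\ge_l,\ge_r\}$.

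The problem is your uniform placement of virtual partners ``far to the left / far to the right''. Consider $\sym=\le_r$, $\lambda=\rho=0$, $n=4$. Take $(k,S,\tau)=(1,\{2,3\},2\mapsto 3)$: with split $p=s_1=2$, vertex $1$ is unmatched in $L$ and gets a virtual right partner far beyond $4$. In the permutation of $H$, this forces $\sigma$ to send $1$ to the maximal right index, so in $H[\le_r,0,0]$ the unmatched vertex $1$ becomes adjacent to \emph{every} real right vertex. Tracing through the ranks, $\Phi(1,\{2,3\},2\mapsto 3)$ has edge set $\{13,14,23\}$. Now take $(k,S,\tau)=(2,\{1,2,3,4\},1\mapsto 4,2\mapsto 3)$: here there are no virtuals, the permutation is the transposition, and $\Phi$ again produces exactly $\{13,14,23\}$. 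Two distinct parameters, same image --- the map is not injective. The paper avoids this precisely by a case-dependent placement: for $\sym=\le_r$ the virtual right vertices are inserted \emph{between} $\max L$ and $\min R$ (so that an unmatched left vertex is matched to a right vertex of \emph{minimal} rank and hence gets \emph{no} edges to real right vertices), while they go far to the right only for $\sym\in\{=,\ge_r\}$. This choice ensures the induced subgraph on the real positions equals the partial-matching graph you intend, and the recovery/injectivity argument then goes through. The small-$n$ collision you flag for $\lambda=\rho=1$ is real and the paper handles it by a separate verification for $n\le 3$; the vague ``local adjustment'' will not substitute for the case-dependent virtual placement, which is the key missing ingredient in your plan.
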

\begin{proof}
We first observe that symmetries allow to reduce the number of classes to consider.

\begin{claim}
\label{cl:red}
	It is sufficient to consider the following $7$ classes: $\mathscr M_{\sym,0,\rho}$ with $\sym\in\{=,\leq_r,\geq_r\}$ and $\rho\in\set{0,1}$, and $\mathscr M_{=,1,1}$.
\end{claim}
\begin{claimproof}
	Considering the complements of the graph and/or a reverse linear order we can first restrict our study to the case where $\sym\in \{=,\leq_r\}$. If $\lambda=0$ these classes are in the claimed list, so assume $\lambda=1$. If $\sym$ is `$=$' we consider the class with reversed linear order, which has type $\mathscr M_{=,\lambda',1}$ for some $\lambda'\in\set{0,1}$, and is therefore in the claimed list.
		Finally, if the class is
		$\mathscr M_{\leq_r,1,\rho}$ 	we consider the class of its edge complements, which is equal\footnote{To be more explicit, the class of edge complements of graphs in $\mathscr M_{\leq_r,1,\rho}$ directly  corresponds to a class which could be denoted $\mathscr M_{>_r,0,\rho'}$, which is however equal to $\mathscr M_{\ge_r,0,\rho'}$
    as the classes are hereditary; see analogous discussion following~\cref{def: mats}.} to $\mathscr M_{\geq_r,0,\rho'}$ where $\rho'=1-\rho$.
\end{claimproof}

  Let $\mathscr M_{\sym,\lambda,\rho}$ be one of the 7 classes above.
  We now prove that~\cref{eq:balogh} holds for any given $n\ge 1$.
In the case of $\mathscr M_{=,1,1}$ we assume $n>3$ and verify the claim for the values $n=1,2,3$ by separately. Indeed, the first $3$ values of the growth function are $1,2$, and $6$, so~\eqref{eq:balogh} holds.

    In the general case, 
     we obtain the claimed number of distinct $n$-element ordered graphs $G$ in $\mathscr M_{\sym,\lambda,\rho}$ by the following process.

      Pick a number $k$ with $0\le 2k\le n$ and $2k$ elements $a_1<\ldots<a_k<b_1<\ldots<b_k$ in $[n]$.
      Pick a matching $E$ between the $a_i$'s and $b_j$'s in one of $k!$ possible ways.
      If $k=0$ let $m=n$, otherwise, 
      let $m=b_1-1$.
     Denote $L=[1,m]$ and $R=[m+1,n]$.
     Then $E$ is an `ordered partial matching' with vertices $L\cup R$ (see~\cref{fig:M=00}).
      Construct the ordered graph $G$ with vertices $[n]=L\cup R$ and edges:
      \begin{itemize}
        \item $E$ if  $\sym$ is `$=$'
        \item $\setof{uv'}{uv\in E, u<v\le v', v'\in R}$ if $\sym$ is `$\ge_r$',
        \item $\setof{uv'}{uv\in E, u<v'\le v, v'\in R}$ if $\sym$ is `$\le_r$',        
      \end{itemize}
      and additionally form in $G$ a clique on $L$  if $\lambda=1$ and a clique on $R$ if $\rho=1$.

      Clearly, the number of possible outcomes of the above process is at most $\sum_{k=0}^{\lfloor n/2\rfloor}\binom{n}{2k}\,k!$.
      Observe that every ordered graph $G\in \mathscr M_{=,0,0}$ can be obtained as a result of the above process, so $|(\mathscr M_{=,0,0})_n|\le \sum_{k=0}^{\lfloor n/2\rfloor}\binom{n}{2k}\,k!$.

      We now argue that different  choices made above lead to non-isomorphic outcomes $G$. Hence, the number of possible outcomes is exactly
      $\sum_{k=0}^{\lfloor n/2\rfloor}\binom{n}{2k}\,k!$.      
      To this end, we show how to recover $E$ from $G$.

      Suppose first that $\lambda=0$.
      Then $m$ is the largest vertex in $[n]$ such that $[1,m]$ forms an independent set, since $m+1=b_1$ is adjacent to $a_{\sigma^{-1}(1)}<b_1$
      (unless $k=0$, but then $[1,n]$ is still an independent set).
      Also,
       for $1\le i\le k$, the vertex  $b_{\sigma(i)}$ is the 
      unique, resp. smallest, resp. largest, neighbor of $a_i$ which is larger than $m$, depending on $\sym\in\set{=,\ge_r,\le_r}$.
      Moreover, all remaining vertices $u\in L\setminus\set{a_1,\ldots,a_k}$ are isolated in $G$. This allows us to  recover $a_1,\ldots,a_k,b_1,\ldots,b_k$ as well as the matching  $E$. 
      
      Suppose now that $\lambda,\rho=1$ and $\sym$ is `$=$'.
      Then $m$ is the largest vertex $v$ such that $[1,v]$ forms a clique in $G$,
  unless $m=1,k=1,a_1=1,b_1=2$,
  so that $G$ 
  is the ordered graph $G_n$ obtained from the clique on $[2,n]$ by adding the edge joining $1$ with $2$.
  So unless $G=G_n$, we can recover $m$ as described above.
  If $G=G_n$, then $m$ can be still determined, since $m+1$ is the smallest vertex $v'$ such that $[v',n]$ forms a clique, unless $m=n-1,k=2,a_{1}=n-1,b_1=n$  so that $G$ is the ordered graph $G_n'$ obtained from the  clique on $[1,n-1]$ by adding the edge joining $(n-1)$ with $n$. 
  However, if $G=G_n=G_n'$ implies $n=3$,
    but we have assumed that $n>3$. 
Hence, we can determine $m$, given $G$. Then $E$ is recovered from $G$ by removing all the edges $uv$ with $u<v\le m$ or $m<u<v$.

 \begin{figure}\centering
  \includegraphics[scale=0.55,page=10]{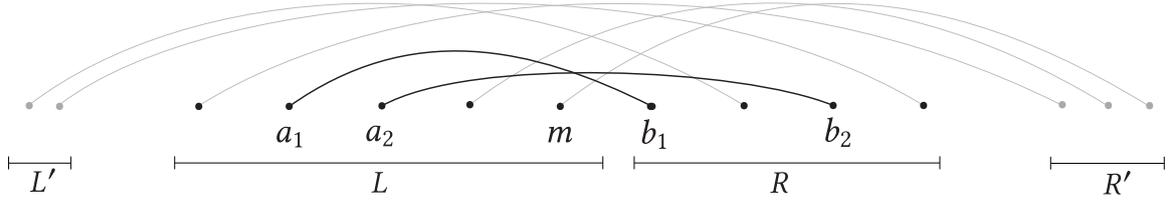}
  \caption{
    The construction from~\cref{lem:inj}. The process described in the proof starts with an ordered partial matching, marked with black vertices and edges above. This is then completed to an ordered matching, by matching the isolated vertices with newly added vertices. The vertices $R'$ are inserted after $\max R$ if $\sym$ is `$=$' or `$\ge_r$', and between $\max L$ and $\min R$ if $\sym$ is `$\le_r$'.
  }
  \label{fig:M=00}
\end{figure}

    \medskip
      
      It remains to verify that $G\in\mathscr M_{\sym,\lambda,\rho}$. 
      Let $A'=L\setminus\set{a_1,\ldots,a_k}$ and $B'=R\setminus\set{b_1,\ldots,b_k}$.
      Extend the ordered partial matching $E$ to an ordered matching $H$ (see \cref{fig:M=00}) as follows.
      Create a set $L'$ of $|B'|$ new vertices matched with $B'$ arbitrarily, such that $\max L'<\min L=1$.
      Also, create a set $R'$ of $|A'|$ new vertices matched with $A'$ arbitrarily, such that $\min R'>\max R=n$ if $\sym$ is `$=$' or `$\ge_r$',
      and $m=\max L<\min R'<\max R'<\min R=m+1$ if $\sym$ is `$\le_r$'.
      Then $H$ is an ordered matching with vertices  $(L\cup L')\cup (R\cup R')$.
      Construct the ordered graph $G'$ with the same vertices and order, and with edges:
      \begin{itemize}
        \item $E(H)$ if  $\sym$ is `$=$'
        \item $\setof{uv'}{uv\in E(H), v\le v', v'\in R\cup R'}$ if $\sym$ is `$\ge_r$',
        \item $\setof{uv'}{uv\in E(H), v'\le v, v'\in R\cup R'}$ if $\sym$ is `$\le_r$',        
      \end{itemize}
      and additionally form in $G'$ a clique on $L\cup L'$ if $\lambda=1$ and a clique on $R\cup R'$ if $\rho=1$. 
      Then $G'=H[\sym,\lambda,\rho]$ in the terminology of~\cref{subsec:patterns}, so $G'\in\mathscr M_{\sym,\lambda,\rho}$. As $G$ is an induced subgraph of $G'$, this shows $G\in\mathscr M_{\sym,\lambda,\rho}$.
Altogether, this proves~\cref{eq:balogh}.
    \end{proof}

\subsection{Proof of~\cref{thm:main}}
Finally, we can piece together the proof of our main result,~\cref{thm:main}, 
which we restate below in full generality for arbitrary classes of ordered, binary structures (see \cref{subsec:fmt} for the definition of an atomic type $\tau$ and the interpretation $\interp I_\tau$).

\begin{reptheorem}{thm:main}
	Let $\CC$ be a hereditary class of finite ordered binary structures.
	Then either $\CC$ satisfies conditions~(i)-(v), or $\CC$ satisfies conditions~(i')-(v') below:
	\begin{multicols}{2}%		
	\begin{enumerate}[label={(\roman*)},ref=\roman*]
			\item\label{git:tww} $\CC$ has bounded twin-width			
			 \item\label{git:matrix} $\CC$ has bounded~grid rank
			\item\label{git:small} $\CC$ has growth $2^{\Oof(n)}$
			
			 \item\label{git:transduce} $\CC$ does not transduce the class of all graphs
			 \item\label{git:easy} \FO~model checking  is \FPT~on $\CC$
			
	\end{enumerate}
	\begin{enumerate}[label={(\roman*')},ref=\roman*']
	\item\label{git:utww} 	$\CC$ has unbounded twin-width
	\item\label{git:patterns} there is some atomic type $\tau(x,y)$ such that 
	  $\interp I_\tau(\CC)$ contains $\mathscr P$ or one of the 24 classes $\mathscr M_{\sym,\lambda,\rho}$
	 \item\label{git:big} $\CC$ has growth at least $\sum_{k=0}^{\lfloor n/2 \rfloor} {n \choose 2k}k!\ge\lfloor \frac n 2\rfloor!$
	 
	  \item\label{git:interp} $\CC$ interprets the class of all graphs
	  \item\label{git:hard} \FO~model checking  is $\AW[*]$-hard on~$\CC$.
\end{enumerate}
		\end{multicols}
\end{reptheorem}

\begin{proof}%[Proof of~\cref{thm:main}]
  If $\CC$ has bounded twin-width then~\cref{git:small} holds by~\cite{twin-width2}, \cref{git:transduce} holds by~\cite{twin-width1}, and~\cref{git:easy} holds by~\cite{twin-width1} and~\cref{thm:approx-tww},
  and finally, \cref{git:matrix} holds by~\cref{thm:equiv}~\cref{it:bd-gr},\cref{it:bd-tww}.

  Conversely, if $\CC$ has unbounded twin-width then the class $\mathcal M=\setof{M(\str A)}{\str A\in\CC}$ of adjacency matrices of $\CC$ has unbounded twin-width. Recall that $M(\str A)$ is the adjacency matrix of $\str A$ over the alphabet $A_\Sigma$ consisting of atomic types of pairs of elements (see definition preceding \cref{lem:reducing matrices to structures}). By \cref{thm:equiv} \cref{it:bd-tww} \cref{it:lin-mc}, model checking is $\AW[*]$-hard on $\cal M$. 
 By~\cref{lem:reducing matrices to structures}, this yields $\AW[*]$-hardness for $\CC$, proving~\cref{git:hard}.
  By~\cref{thm:equiv} \cref{it:ramsey},\cref{it:bd-tww} there is some letter (atomic type) $\tau\in A_\Sigma$ such that the selection $s_\tau(\cal M)$ contains one of the classes $\mathcal F_\sym$, so in particular, $s_\tau(\cal M)$ has unbounded twin-width. 

\begin{claim}
  The class $\interp I_\tau(\CC)$ has unbounded twin-width.
\end{claim}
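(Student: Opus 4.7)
The plan is to prove the contrapositive: if $\interp I_\tau(\CC)$ has bounded twin-width, then so does $s_\tau(\mathcal M)$, contradicting what was just established. The key point is that for each $\str A\in\CC$ the $0,1$-matrix $s_\tau(M(\str A))$ arises as an entrywise projection of the matrix encoding of the ordered graph $\interp I_\tau(\str A)$. Since entrywise projections do not increase twin-width --- constant zones project to constant zones, so every $(k,k)$-sequence of a matrix remains a $(k,k)$-sequence of its projection, the overlap parameter depending only on the partitions --- this will suffice.

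Before the main argument, I would dispose of degenerate choices of $\tau$. If $\tau(x,y)$ implies $x=y$, then $s_\tau(M(\str A))$ is supported on the diagonal for every $\str A$, so $s_\tau(\mathcal M)$ has twin-width zero, contradicting the hypothesis. If $\tau$ implies $x>y$, the atomic type $\tau'(x,y):=\tau(y,x)$ implies $x<y$; the matrices $s_{\tau'}(M(\str A))$ are transposes of $s_\tau(M(\str A))$, while $\interp I_{\tau'}(\CC)=\interp I_\tau(\CC)$ as classes of ordered graphs (since graph edges are undirected), so one may assume $\tau$ implies $x<y$.

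Under that assumption, let $E_{\str A}$ be the matrix encoding of the ordered graph $\interp I_\tau(\str A)$ as defined in \cref{subsec:graph-theory}, whose entries lie in a fixed finite alphabet recording both adjacency and the order type of each pair. Define the entrywise projection $\pi$ sending the single symbol for ``edge and $x<y$'' to $1$ and every other symbol to $0$. A direct inspection shows $\pi(E_{\str A}) = s_\tau(M(\str A))$: both matrices have a $1$ at position $(x,y)$ exactly when $x<y$ and the atomic type of $(x,y)$ in $\str A$ equals $\tau$. Hence $\tww(s_\tau(M(\str A)))\le \tww(E_{\str A})=\tww(\interp I_\tau(\str A))$ for every $\str A\in\CC$, and uniform boundedness of the right-hand side would contradict unboundedness of twin-width on $s_\tau(\mathcal M)$.

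The main obstacle, though mild, is keeping the bookkeeping straight between the two ``matrix'' viewpoints used in the excerpt --- the matrix encoding of an ordered binary structure from \cref{subsec:graph-theory} and the adjacency matrix $M(\str A)$ over the alphabet of two-variable atomic types --- and verifying carefully that the projection $\pi$ above really does turn one into the other. Once this is checked, the projection lemma concludes the proof.
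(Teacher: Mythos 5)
Your argument has a genuine gap at the step where you claim that an entrywise projection $\pi$ turns the matrix encoding $E_{\str A}$ of $\interp I_\tau(\str A)$ into $s_\tau(M(\str A))$. The matrix encoding defined in \cref{subsec:graph-theory} records in each entry only the non-order relations of the structure: for an ordered (undirected) graph the sole encoded relation $E$ is symmetric, so the entry at $(x,y)$ is $2$ if $E(x,y)$ holds and $0$ otherwise, and it equals the entry at $(y,x)$. Contrary to your description, the entry does \emph{not} record the order type of the pair; there is no symbol for ``edge and $x<y$'' in that alphabet. Consequently $E_{\str A}$ is a symmetric matrix, while $s_\tau(M(\str A))$ (once you have normalized $\tau$ to imply $x<y$) is strictly upper-triangular, and no fixed entrywise map applied to a symmetric matrix can produce a non-zero strictly upper-triangular one. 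The caveat you raise at the end of the proposal --- that one must verify the projection really turns one matrix into the other --- is exactly where the argument breaks.

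The idea is salvageable, but not for free. One repair is to show that zeroing out the lower triangle of a mixed-symmetric matrix increases twin-width only by a bounded amount: after passing to a symmetric contraction sequence (cf.\ \cite[Theorem 14]{twin-width1}), only the diagonal zones $R_a\cap C_a$ straddle the diagonal, so the mask adds at most one error value per part. The paper takes a cleaner route: it introduces the interpretation $\interp I_\tau'$ producing the ordered \emph{directed} graph with an edge from $u$ to $v$ exactly when $\tau(u,v)$ holds. Since this edge relation is antisymmetric, the matrix encoding of $\interp I_\tau'(\str A)$ genuinely distinguishes $(x,y)$ from $(y,x)$ via the $1$ and $-1$ entries, its $0,1$-projection is precisely $s_\tau(M(\str A))$, and $\interp I_\tau(\CC)$ interprets $\interp I_\tau'(\CC)$ (by orienting each undirected edge consistently with $<$), so the unbounded twin-width of $\interp I_\tau'(\CC)$ transfers to $\interp I_\tau(\CC)$ because bounded twin-width is preserved by interpretations.
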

\begin{proof}
  Let $\interp I_\tau'$ be the interpretation which, given $\str A\in\CC$, produces the 
  ordered directed graph with the same domain and order as $\str A$, such that there is an edge from $u$ to $v$ (possibly $u=v$) if and only if $\tau(u,v)$ holds in $\str A$. 
  Note that for $\str A\in \CC$, the matrices $s_\tau(M(\str A))$ and $M(\interp I_\tau'(\str A))$ are equal, 
  as both matrices have entry equal to $1$ if $\tau(a,b)$ holds in $\str A$ and $0$ otherwise. 
  Hence, $s_\tau(\cal M)$ is equal to the class of adjacency matrices of the ordered graphs in $\interp I_\tau'(\CC)$.
As $\cal M$ has unbounded twin-width, also $\interp I_\tau'(\CC)$ has unbounded twin-width.
  As the class $\interp I_\tau(\CC)$ interprets the class $\interp I_\tau'(\CC),$ it follows that $\interp I_\tau(\CC)$ has unbounded twin-width, too.
\end{proof}
   
Since $\interp I_\tau(\CC)$ is a class of ordered graphs, we may apply the results of the preceding sections. In particular, $\interp I_\tau(\CC)$ contains one of the 25 classes $\mathscr P$ and $\mathscr M_{\sym,\lambda,\rho}$ by~\cref{thm:25}, proving~\cref{git:patterns}. By~\cref{lem:inj}, $\interp I_\tau(\CC)$, and so also $\CC$, has growth at least $\sum_{k=0}^{\lfloor n/2\rfloor}\binom{n}{2k}\,k!$, proving~\cref{git:big}. By~\cref{thm:hardness}, $\interp I_\tau(\CC)$, and so also $\CC$, interprets the class of all graphs. This proves~\cref{git:interp}, and concludes the theorem.
\end{proof}

\subsection{Minimality}
\begin{lemma}
	None of the classes $\mathscr P$ and $\mathscr M_{\sym,\lambda,\rho}$ is included in another.
\end{lemma}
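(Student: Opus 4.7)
The plan is, for every pair of distinct classes $\CC_1, \CC_2$ among the $25$ listed, to exhibit a graph in $\CC_1\setminus \CC_2$. I split the argument into comparisons involving $\mathscr P$ and comparisons purely among the classes $\mathscr M_{\sym,\lambda,\rho}$.

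\medskip

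\textbf{Comparisons involving $\mathscr P$.}
For $\mathscr P\not\subseteq \mathscr M_{\sym,\lambda,\rho}$, I argue by counting. The map $\pi\mapsto G_\pi$ is injective (one recovers $\pi$ from $G_\pi$ via $\pi(v)=1+|\set{u<v\colon uv\notin E}|+|\set{u>v\colon uv\in E}|$), so $|\mathscr P_n|=n!$. On the other hand, if $G_\pi$ is an induced subgraph of some $H[\sym,\lambda,\rho]$, then the left/right split of $V(H)$ pulls back to a split of $\set{1,\ldots,n}$ at some $k$ such that $\pi|_{[1,k]}$ and $\pi|_{[k+1,n]}$ are each monotone (increasing if the corresponding side of $H$ is independent, decreasing if it is a clique, per $\lambda$ and $\rho$). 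For each $(\lambda,\rho)$, such a $\pi$ is determined by $k$ together with the subset of values taken on $[1,k]$, giving at most $\sum_k\binom{n}{k}=2^n$ permutations. Summing over the $4$ choices of $(\lambda,\rho)$, at most $4\cdot 2^n$ permutations~$\pi$ satisfy $G_\pi\in\bigcup_{(\sym,\lambda,\rho)}\mathscr M_{\sym,\lambda,\rho}$, which for $n$ large is much less than $n!$. Conversely, $\mathscr M_{\sym,\lambda,\rho}\not\subseteq \mathscr P$ is handled by producing, for each triple $(\sym,\lambda,\rho)$, a canonical graph $H_n[\sym,\lambda,\rho]$ (for instance with $H_n$ the identity matching) and checking directly that it is not a permutation graph: the monotonicity requirements along the two sides combined with the cross-edge constraints force a contradictory chain of strict inequalities among the $\pi$-values as soon as $n\ge 2$.

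\medskip

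\textbf{Comparisons among the $\mathscr M$-classes.}
Fix $(\sym,\lambda,\rho)\neq(\sym',\lambda',\rho')$ and let $G:=H_n[\sym,\lambda,\rho]$, where $H_n$ is an ordered matching on $2n$ vertices whose associated permutation $\sigma$ is chosen generically (e.g., the reverse permutation). I claim $G\notin\mathscr M_{\sym',\lambda',\rho'}$ for $n$ large. Suppose $G$ is an induced subgraph of some $H'[\sym',\lambda',\rho']$. The embedding induces an ordered split of $V(G)$ into an initial segment $L'$ and a final segment $R'$, mapped respectively into the left and right sides of~$H'$. The first key step is to show that for $n$ large this split must agree, up to a bounded number of boundary vertices, with the canonical split $\set{a_1,\ldots,a_n}\mid\set{b_1,\ldots,b_n}$ of~$G$: any substantially different split would force $L'$ (or $R'$) to contain vertices from both sides of $G$, and the induced subgraph on such a set cannot be a clique or an independent set, as it would have to be once embedded into one side of~$H'$. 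Once the canonical split is pinned down, the clique/independent-set status of each side of $G$ forces $\lambda'=\lambda$ and $\rho'=\rho$. Finally, the cross-edge matrix of $G$ is $F_\sym(\sigma)$ and must appear as a submatrix of $F_{\sym'}(\tau)$ for some permutation~$\tau$; by the matrix-level minimality statement~\cref{lem:6minimality}, this forces $\sym'=\sym$, a contradiction.

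\medskip

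\textbf{Main obstacle.} The delicate part is pinning down the split and discriminating between the six values of $\sym$. To reduce the case analysis, I exploit two structural symmetries of the $24$ classes $\mathscr M_{\sym,\lambda,\rho}$: edge complementation, which bijectively sends $\mathscr M_{\sym,\lambda,\rho}$ to $\mathscr M_{\bar\sym,1-\lambda,1-\rho}$ for a suitable involution $\sym\mapsto\bar\sym$ on $\sax$; and reversal of the vertex order, which swaps $\lambda$ with $\rho$ while exchanging the $l$ and $r$ subscripts in $\sym$. Together these bring the analysis down to a handful of representative pairs, each dispatched by direct inspection along the lines sketched above, with the final discrimination of $\sym$ relying on~\cref{lem:6minimality}.
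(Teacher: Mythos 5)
Your high-level plan — split into $\mathscr P$ vs.\ $\mathscr M$ comparisons and $\mathscr M$ vs.\ $\mathscr M$ comparisons, and within the latter pin down the left/right split so that $\lambda,\rho$ and then $\sym$ are forced — matches the paper's strategy, and the counting argument you use for $\mathscr P\not\subseteq\mathscr M_{\sym,\lambda,\rho}$ (noting $|\mathscr P_n|=n!$ while any split with monotone halves caps the number of usable permutations at $4\cdot 2^n$) is a legitimate alternative to the paper's single six-vertex witness. However, there is a genuine gap in the $\mathscr M$ vs.\ $\mathscr M$ part, precisely in the choice of a witness matching.

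The reverse permutation is a bad witness. It is an involution, $\sigma_{\mathrm{rev}}=\sigma_{\mathrm{rev}}^{-1}$, and moreover the associated matrix $F_{\le_R}(\sigma_{\mathrm{rev}})$ has entries $[i+j\le n+1]$, which is \emph{symmetric}. Consequently $F_{\le_R}(\sigma_{\mathrm{rev}})=F_{\le_C}(\sigma_{\mathrm{rev}})$, and likewise $F_{\ge_R}(\sigma_{\mathrm{rev}})=F_{\ge_C}(\sigma_{\mathrm{rev}})$. In the graph language this means that, for the reverse matching $H_n$, the ordered graphs $H_n[\le_l,\lambda,\rho]$ and $H_n[\le_r,\lambda,\rho]$ are \emph{identical}: your candidate witness for $\mathscr M_{\le_l,\lambda,\rho}\not\subseteq\mathscr M_{\le_r,\lambda,\rho}$ belongs to both classes, so it separates nothing, and the argument collapses on exactly the pairs that are hardest to discriminate. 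A related issue is that you invoke \cref{lem:6minimality} at the level of classes, but what you actually need is the specific-matrix computation inside its proof (which uses $\tau=(135)(24)$, a permutation for which $F_{\le_R}(\tau)\neq F_{\le_C}(\tau)$); to port that computation you would have to use a matching whose permutation has that non-symmetry property, not $\sigma_{\mathrm{rev}}$. The paper sidesteps all of this by working with the explicit $8$-vertex ordered matching with permutation $3412$ and simply tabulating degree sequences — that permutation is also an involution, but its matrices $F_{\le_R}$ and $F_{\le_C}$ are genuinely different, which is the property your ``generic'' choice lacks. Your symmetry reduction (complementation and order reversal) is fine and is also used by the paper, but with the reverse permutation as the witness the case analysis cannot close.

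A secondary, smaller point: you argue that the split induced by the embedding must agree with the canonical one ``up to a bounded number of boundary vertices'' and then claim the $\sym$-matrix of the cross-edges ``must appear as a submatrix of $F_{\sym'}(\tau)$.'' If the split is off by even one vertex, the cross-matrix you recover is not $F_\sym(\sigma)$ (one column has migrated to the other side), so you must either show the split is \emph{exactly} canonical or pass to a slightly smaller induced subgraph where it is. This is fixable, but as written the step is not yet airtight.
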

\begin{proof}
We first prove that the class $\mathscr P$ is incomparable with the classes $\mathscr M_{\sym,\lambda,\rho}$: The class $\mathscr P$ contains the ordered graph $\xy
\POS(0,0)*\cir<1pt>{}="a", (3,0)*\cir<1pt>{}="b",
	(6,0)*\cir<1pt>{}="c",
	(9,0)*\cir<1pt>{}="d",
	(12,0)*\cir<1pt>{}="e",
	(15,0)*\cir<1pt>{}="f",
\POS"a"\ar@/^.5ex/@{-} "b",
\POS"c"\ar@/^.5ex/@{-} "d",
\POS"e"\ar@/^.5ex/@{-} "f",
\endxy$ (as witnessed by the permutation $(214365)$) but none of the classes $\mathscr M_{\sym,\lambda,\rho}$ does (as all the ordered graphs in these classes admits a vertex partition into two intervals $L$ and $R$ with $\max L<\min R$, each 
	inducing a complete or an edgeless graph).
On the other hand, none of 
$\xy
\POS(0,0)*\cir<1pt>{}="a", (3,0)*\cir<1pt>{}="b",
	(6,0)*\cir<1pt>{}="c",
\POS"a"\ar@/^1ex/@{-} "c"
\endxy\,$ and $\xy
\POS(0,0)*\cir<1pt>{}="a", (3,0)*\cir<1pt>{}="b",
	(6,0)*\cir<1pt>{}="c",
\POS"a"\ar@/^.5ex/@{-} "b",
\POS"b"\ar@/^.5ex/@{-} "c"
\endxy$ are ordered permutation graphs, but each class $\mathscr M_{\sym,\lambda,\rho}$ contains one of these.
Indeed, as this set of two ordered graphs is closed by complement and order reversal it is sufficient to consider the next $6$ classes: $\mathscr M_{\sym,0,\rho}$ with $s\in\{=,\leq_r,\geq_r\}$ and $\mathscr M_{=,1,1}$ (see \cref{cl:red}). For $\mathscr M_{=,1,1}$ we immediately check that $\xy
\POS(0,0)*\cir<1pt>{}="a", (3,0)*\cir<1pt>{}="b",
	(6,0)*\cir<1pt>{}="c",
\POS"a"\ar@/^.5ex/@{-} "b",
\POS"b"\ar@/^.5ex/@{-} "c"
\endxy\in \mathscr M_{=,1,1}$. It is easily checked that $\xy
\POS(0,0)*\cir<1pt>{}="a", (3,0)*\cir<1pt>{}="b",
	(6,0)*\cir<1pt>{}="c",
\POS"a"\ar@/^1ex/@{-} "c"
\endxy\in \mathscr M_{\sym,0,0}$ for $s\in\{=,\leq_r,\geq_r\}$ and that $\xy
\POS(0,0)*\cir<1pt>{}="a", (3,0)*\cir<1pt>{}="b",
	(6,0)*\cir<1pt>{}="c",
\POS"a"\ar@/^.5ex/@{-} "b",
\POS"b"\ar@/^.5ex/@{-} "c"
\endxy\in \mathscr M_{\sym,\lambda,\rho}$ for $s\in\{=,\leq_r,\geq_r\}$ and $(\lambda,\rho)\neq(0,0)$. Hence $\mathscr P$ is incomparable with classes $\mathscr M_{\sym,\lambda,\rho}$.

Thus we can restrict our attention to the comparison of classes $\mathscr M_{\sym,\lambda,\rho}$ and $\mathscr M_{\sym',\lambda',\rho'}$.
We consider the ordered matching 
$M=\xy
\POS(0,0)*\cir<1pt>{}="a", (3,0)*\cir<1pt>{}="b",
	(6,0)*\cir<1pt>{}="c",
	(9,0)*\cir<1pt>{}="d",
	(12,0)*\cir<1pt>{}="e",
	(15,0)*\cir<1pt>{}="f",
	(18,0)*\cir<1pt>{}="g",
	(21,0)*\cir<1pt>{}="h",
\POS"a"\ar@/^1.5ex/@{-} "g",
\POS"b"\ar@/^1.5ex/@{-} "h",
\POS"c"\ar@/^.75ex/@{-} "e",
\POS"d"\ar@/^.75ex/@{-} "f"
	\endxy$ and let $G=M[\sym,\lambda,\rho]$ be the graph in $\mathscr M_{\sym,\lambda,\rho}$ originating from $M$. Assume $G$ is an induced subgraph of a graph $G'\in\mathscr M_{\sym',\lambda',\rho}'$, originating from an ordered matching $M'$, so that $G'=M'[\sym',\lambda',\rho']$, and that $M'$ has a minimal number of vertices.
Note that there is exactly one vertex-partition of $G'$ into intervals $L,R$ with $\max L<\min R$ such that both $L$ and $R$ are either cliques or independent sets. This readily implies $\lambda'=\lambda$ and $\rho'=\rho$.
 By deleting the internal edges of the parts $L$ and $R$ we reduce to the case where $\lambda=\rho=0$. 
Every ordered graph in $\mathscr M_{\sym',0,0}$ with parts $L$ and $R$ with $|L|=|R|=4$ has the following properties: if $s'$ is `$=$' then the maximum degree is at most $1$, if $s'$ is `$\neq$' then the minimum 
	degree is at least $3$, if $s'$ is `$\geq_r$' then the last vertex has maximum degree in $R$, if $s'$ is `$\leq_r$' then $\min R$ has maximum degree in $R$, if $s'$ is `$\leq_l$' then the first vertex has maximum degree in $L$, and if $s'$ is `$\leq_r$' then $\min L$ has maximum degree in $L$. 
When $s$ is $=,\neq,\leq_r,\geq_r,\leq_l,\geq_l$ the degrees of the vertices of $G$ are $(1,1,1,1,1,1,1,1)$, $(3,3,3,3,3,3,3,3)$,
	$(3,4,1,2,4,3,2,1)$,
	$(2,1,4,3,1,2,3,4)$, 
	$(4,3,2,1,3,4,1,2)$, and
	$(1,2,3,4,2,1,4,3)$. Thus
	it is easily checked that the only possible choice is $s'=s$.
\end{proof}

By the same argument as in~\cref{cor:stanley-wilf1}, we get:
\begin{corollary}\label{cor:stanley-wilf2}
  The 25 classes  $\mathscr P$ and $\mathscr M_{\sym,\lambda,\rho}$  for $\sym\in\sax$ and $\lambda,\rho\in\set{0,1}$ are precisely all the hereditary classes of ordered graphs $\CC$
  with growth $2^{\omega(n)}$ such that every proper subclass of $\CC$ has growth $2^{O(n)}$. 
\end{corollary}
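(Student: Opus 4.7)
The strategy is essentially identical to the one used for~\cref{cor:stanley-wilf1}, now relying on the ordered-graph analogues of the relevant ingredients: the growth lower bounds for the 25 classes, the trichotomy of~\cref{thm:main}, the list of unavoidable classes in~\cref{thm:25}, and the pairwise incomparability proved in the lemma just above.

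First, I would verify that each of the 25 classes is indeed a Stanley-Wilf class, i.e., has growth $2^{\omega(n)}$ while every proper hereditary subclass has growth $2^{O(n)}$. The growth lower bound is immediate: $\mathscr P$ has growth $n!$ by~\cref{lem: class P}, and each $\mathscr M_{\sym,\lambda,\rho}$ has growth at least $\lfloor n/2\rfloor!$ by~\cref{lem:inj}, in particular $2^{\omega(n)}$. For minimality, suppose $\CC \subsetneq \mathscr P$ (resp.\ $\CC\subsetneq \mathscr M_{\sym,\lambda,\rho}$) is a proper hereditary subclass with growth $2^{\omega(n)}$. Then by~\cref{thm:main}, $\CC$ has unbounded twin-width, and by~\cref{thm:25} it must contain one of the 25 classes $\CC'$. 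But then $\CC'\subseteq\CC\subsetneq\mathscr P$ (resp.\ $\mathscr M_{\sym,\lambda,\rho}$), contradicting the fact, proven in the preceding lemma, that none of the 25 classes is strictly included in another.

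Conversely, let $\CC$ be any hereditary Stanley-Wilf class of ordered graphs, i.e., with growth $2^{\omega(n)}$ and with every proper hereditary subclass of growth at most $2^{O(n)}$. Since $\CC$ has superexponential growth, \cref{thm:main} (\cref{git:small} $\Leftrightarrow$ \cref{git:tww}) implies that $\CC$ has unbounded twin-width, so by~\cref{thm:25} there exists one of the 25 classes $\CC'$ with $\CC'\subseteq \CC$. As $\CC'$ itself has growth $2^{\omega(n)}$ by the first step, the hypothesis on $\CC$ forces $\CC'=\CC$. This exhibits $\CC$ as one of the 25 listed classes and completes the classification.

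There is no real obstacle: the only mildly delicate point is making sure that ``proper subclass'' is interpreted in the hereditary sense and that the incomparability of the 25 classes (which is what blocks a further refinement) is applied in the correct direction; both are already in place.
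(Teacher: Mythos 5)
Your proposal is correct and follows essentially the same route the paper takes: the paper proves \cref{cor:stanley-wilf2} by the phrase ``by the same argument as in \cref{cor:stanley-wilf1},'' and \cref{cor:stanley-wilf1}'s proof is exactly the two-part argument you wrote out (superexponential growth of each unavoidable class, minimality via incomparability, and the converse via the unavoidability theorem), transposed from the six matrix classes to the $25$ ordered-graph classes.
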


\newcommand{\tp}{\mathrm{tp}}%type

\newcommand{\tup}[1]{\bar{#1}} %tuple
\newcommand{\St}[1][]{\mathrm{S}^{#1}}
\newcommand{\Types}[1][]{\mathrm{Types}^{#1}}

\renewcommand{\cal}[1]{\mathcal {#1}}
\renewcommand{\le}{\leqslant}
\renewcommand{\ge}{\geqslant}
\renewcommand{\phi}{\varphi}
\newcommand{\N}{\mathbb{N}}
\newcommand{\Z}{\mathbb{Z}}
\newcommand{\Ll}{{\mathcal L}}
\newcommand{\Rr}{{\mathcal R}}

\newcommand{\Av}{\mathrm{Av}}
\renewcommand{\subset}{\subseteq}
\newcommand{\ind}[1][]{%
  \mathrel{
    \mathop{
      \vcenter{
        \hbox{\oalign{\noalign{\kern-.3ex}\hfil$\vert$\hfil\cr
              \noalign{\kern-1.5ex}
              $\vert$\cr\noalign{\kern-.3ex}}}
      }
    }\displaylimits_{#1}
  }
}

\newcommand{\nind}[1][]{%
  \mathrel{
    \mathop{
      \vcenter{
        \hbox{\oalign{\noalign{\kern-.3ex}\hfil$\nmid$\hfil\cr
              \noalign{\kern-1.5ex}
              $\vert$\cr\noalign{\kern-.3ex}              
              }}
      }
    }\displaylimits_{#1}
  }
}

\section{Model-theoretic characterizations}
\label{sec:model theory}
In this section, we present further model-theoretic characterizations of classes of bounded twin-width,
as well as prove more general results concerning arbitrary classes of structures, over an arbitrary signature. In particular, we generalize 
the implications~
\cref{it:bd-gr}$\rightarrow$
\cref{it:ramsey}$\rightarrow$
\cref{it:m-nip} from Theorem~\ref{thm:equiv}
to arbitrary classes of structures,
by proving~\cref{thm:summary-mt0}.
Namely, we show that every monadically dependent class of structures excludes certain grid-like patterns,
and every class of structures which excludes such grid-like patterns satisfies  a property generalizing bounded grid rank.

We start with defining the  notion of a restrained class,
generalizing the notion of bounded grid rank for matrices.
First, we introduce a notion generalizing the concept of the number of distinct rows in a zone of a matrix, in arbitrary structures.

\medskip
In this section, whenever $\str S$ is a structure then we identify $\str S$ with its domain, when writing e.g. $a\in\str S$ or $A\subset \str S$. We also write $\str S^{\tup x}$ for the set of all \emph{valuations} $\tup a$ of a set of variables $\tup x$ in $\str S$,
where a valuation is a function $\tup a\from \tup x\to \str S$.

% If $\phi(\tup x;\tup y)$ is a formula with free variables contained in $\tup x$ and $\tup y$,
% and $\tup b\in \str S^{\tup y}$ is a tuple of elements of $\str S$
% then  $\phi(\str S;\tup  b)$ denotes the set of all 
% tuples $\tup a\in\str S^{\tup x}$ such that $\phi(\tup a;\tup b)$ holds in $\str S$.
Let $\Delta(\tup u;\tup v)$ be a finite set of formulas $\theta(\tup u;\tup v)$ with free variables contained in $\tup u$ and $\tup v$.
For a structure $\str S$,
 tuple  $\tup a\in \str S^{\tup u}$ and a set $B\subset \str S$  define the \emph{$\Delta$-type of $\tup a$ over $B$} as:
 \[\tp^\Delta(\tup a/B)=\setof{(\theta,\tup b)\in \Delta\times B^{\tup v}}{\str S\models\theta(\tup a;\tup b)}.\]		
% \end{definition}
% Equivalently -- up to bijection -- $\tp^\theta(\tup a/B)$ is the set of formulas $\theta(\tup u;\tup b)$ with parameters $\tup b$ from $B$ that are satisfied by $\tup a$ in $\str S$. 
For a set $A\subset \str S$, denote
\[\Types[\Delta](A/B):=\setof{\tp^\Delta(\tup a/B)}{\tup a\in A^{\tup u}}.\]
% Note that if $\Types[\Delta](A/B)\le k$ then
% $\Types[\hat\Delta](B/A)\le 2^{k|\Delta|}$,
% where $\theta(\tup v; \tup u)\in\hat\Delta(\tup v;\tup u)$ if and only if $\theta(\tup u;\tup v)\in\Delta(\tup u;\tup v)$.
\begin{example}
	Let $M$ be a 0-1 matrix, viewed as an (ordered) binary structure  with the unary predicate $R\subset M$ indicating the rows and the binary relation $E$ defining the entries of the matrix. Let $\Delta(u,v)=\set{E(u,v)}$.
	Let  $B\subset M\setminus R$ be a set of columns of $M$.
	Then, for a row $a\in R$, the set $\tp^\Delta(a/B)$ corresponds to the set of those columns $b\in B$ with a non-zero entry in row $a$.
	For a set of rows $A\subset R$,
	$|\Types[\Delta](A/B)|$ is the number of distinct rows in the submatrix of $M$ with rows $A$ and columns $B$.
\end{example}

% \medskip

% If $\Sigma$ is a binary signature then denote
% \[\Types[\Sigma](A/B):=\setof{\tp^R(a/B)}{\tup a\in A, R\in\Sigma}.\]
% Similarly as above, if $\Types[\Sigma](A/B)\le k$ then $\Types[\Sigma](B/A)\le 2^{|\Sigma| k}$.
% Moreover, if $\Types[R](A/B)\le k_R$ for all $R\in\Sigma$ then $\Types[\Sigma](A/B)\le \prod_{R\in \Sigma}k_R$.

% \begin{definition}
% 	Fix $k,t\in\N$. An ordered structure $\str A$ 
% 	over a binary signature $\Sigma$
% 	is \emph{$(k,t)$-simple} if for every two partitions 
% 	$\cal L,\cal R$ of the domain of $\str A$ into convex subsets, if $|\cal L|\ge t$ and $|\cal R|\ge t$ then there is some $A\in \cal L$ and $C\in \cal R$ such that $\Types[\theta](A/B)\le k$,
% 	for all quantifier-free formulas $\theta(x,y)$ with two free variables. 
% \end{definition}
% A class $\CC$ is $(k,t)$-simple if every $\str A\in\CC$ is $(k,t)$-simple.

% \medskip

% The following result generalizes 
%  Theorem~\ref{thm:brd-to-bdtww} to classes of binary, ordered structures.

% \begin{proposition}\label{prop:restrained-tww}
% 	Let $\CC$ be a class of finite, ordered binary structures. Then $\CC$ has bounded twin-width if and only if $\CC$ is $(k,t)$-simple, for some $k,t\in\N$.
% \end{proposition}
% \begin{proof}	
% \end{proof}

Let $\phi(x;\tup y)$ be a formula and $\str S$ a structure.
A \emph{$\phi$-definable disjoint family} 
is a family $\cal R$ of pairwise disjoint of subsets of $\str S$, where for each $R\in\cal R$ there is $\tup b\in\str S^{\tup y}$ with $R=\setof{a\in\str S}{\str S\models\phi(a;\tup b)}$.
For example, if $\str S$ is a finite ordered structure and 
$\cal R$ is a  partition of $\str S$ into convex sets, then $\cal R$ is a $\phi$-definable family of pairwise disjoint sets, for $\phi(x;y_1,y_2)=y_1\le x\le y_2$.

% \paragraph{$\theta$-types over a set.}

\begin{definition}[Restrained class]\label{def:restrained}
A  class $\CC$ of structures is 
\emph{restrained}
if the following condition holds.
Let $\phi(x;\bar y)$, $\psi(x;\bar z)$ and be formulas  over the signature of $\CC$,
and let $\Delta(\tup u;\tup v)$ be a finite set of formulas.
Then there are natural numbers $t$ and $k$ such that for any $\str S\in \CC$ and any
$\phi$-definable disjoint family $\cal R$  and $\psi$-definable disjoint family $\cal L$ with  $|\cal R|=|\cal L|\ge t$ there are $R\in \cal R$ and $L\in \cal L$ with $|\Types[\Delta](R/L)|< k$.
\end{definition}

The following proposition is an analogue of the statement that matrices of bounded grid rank have bounded twin-width.

\begin{proposition}\label{prop:restrained-tww}
	Let $\CC$ be a class of finite, ordered binary structures. If $\CC$ is restrained then $\CC$ has bounded  twin-width.
\end{proposition}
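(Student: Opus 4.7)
The plan is to derive a contradiction from assuming that $\CC$ is restrained but has unbounded twin-width. By the definition of twin-width for ordered binary structures, and by~\cref{thm:equiv}~\cref{it:bd-tww}$\leftrightarrow$\cref{it:bd-gr} applied to the class of matrix encodings of structures in $\CC$ (a class of matrices over the finite alphabet $\set{-1,0,1,2}^p$), this is equivalent to assuming that the matrix encodings $M(\str S)$ for $\str S\in\CC$ have unbounded grid rank.

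First I would feed suitable formulas to the restrained condition. Take $\phi(x;y_1,y_2):=\psi(x;y_1,y_2):=(y_1\le x\le y_2)$, so that $\phi$-definable disjoint families are exactly families of pairwise disjoint intervals of $\str S$, and take $\Delta(u;v)$ to consist, for every binary relation symbol $E$ of the signature of $\CC$, of both atomic formulas $E(u,v)$ and $E(v,u)$. The definition of restrained then yields thresholds $t,k\in\N$ such that any two disjoint-interval families $\cal R,\cal L$ of equal size at least~$t$ admit a pair $(R,L)\in\cal R\times\cal L$ with $|\Types[\Delta](R/L)|<k$.

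Next I would translate distinct rows and columns of the matrix encoding into $\Delta$-types. Two elements $x,x'\in R$ produce the same row-vector in the cell $R\times L$ of $M(\str S)$ precisely when, for every binary relation $E$ and every $y\in L$, both $E(x,y)\leftrightarrow E(x',y)$ and $E(y,x)\leftrightarrow E(y,x')$ hold, i.e., when $\tp^\Delta(x/L)=\tp^\Delta(x'/L)$. Hence $|\Types[\Delta](R/L)|$ equals the number of distinct rows in $R\times L$, and symmetrically $|\Types[\Delta](L/R)|$ counts distinct columns. Using the assumed unbounded grid rank, for $K:=\bipramm{t}{2}$ there is some $\str S\in\CC$ whose encoding admits a rank-$k$ $K$-division $\cal D$; color each of its cells by $\mathsf r$ if it contains at least $k$ distinct rows and otherwise by $\mathsf c$ (in which case the rank-$k$ condition forces at least $k$ distinct columns). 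The bipartite Ramsey theorem~(\cref{th: bipRamsey}) then yields $t$ row-intervals and $t$ column-intervals whose $t^2$ pairwise intersections share a single color.

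These selected intervals form two $\phi$-definable disjoint families $\cal R,\cal L$ of size~$t$. In the monochromatic $\mathsf r$ case, every cell $(R,L)\in\cal R\times\cal L$ satisfies $|\Types[\Delta](R/L)|\ge k$, directly contradicting the restrained condition; the $\mathsf c$ case is handled by exchanging the roles of $\cal R$ and $\cal L$ in the restrained inequality, which is legitimate since $\phi=\psi$, and noting that $|\Types[\Delta](L/R)|\ge k$. The main obstacle is the translation between the matrix-theoretic notion of ``distinct rows or columns in a cell'' and the model-theoretic counts $|\Types[\Delta](\cdot/\cdot)|$, which is why $\Delta$ must carry both orderings of each binary relation; modulo this dictionary, the argument reduces to a standard Ramsey-style cleaning followed by a single application of restrainedness.
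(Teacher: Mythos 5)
Your proof is correct and follows essentially the same route as the paper's: the same interval formula $\phi=\psi\equiv y_1\le x\le y_2$ for the definable disjoint families, the same use of a finite set $\Delta$ of binary atoms so that $|\Types[\Delta](R/L)|$ counts distinct rows in the cell $R\times L$ of the adjacency matrix, and the same reduction of bounded twin-width to bounded grid rank of the encoding matrices. The only variation is in handling the ``at least $k$ distinct rows \emph{or} at least $k$ distinct columns'' disjunction of the rank-$k$ condition: you bipartite-Ramsey-clean the cell coloring and then invoke restrainedness in whichever direction the homogeneous color dictates (legitimate because $\phi=\psi$), while the paper avoids the Ramsey step entirely by choosing the grid-rank threshold $p=2^{k2^{|\Delta|}}$ large enough that, over an alphabet of size at most $2^{|\Delta|}$, any cell with $p$ distinct columns already has at least $k$ distinct rows, so restrainedness is applied once in a single direction.
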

\begin{proof}
	Consider the formula $\phi(x;y_1,y_2)\equiv y_1\le x\le y_2$..
Then every partition of a structure $\str S\in\CC$ into convex sets is a $\phi$-definable 
disjoint family. 

Let $\Delta(u,v)$ be the set of atomic formulas over the signature of $\CC$ with free variables $u$ and $v$.
Let $k,t$ be the numbers as in the definition of a restrained class, applied to the formulas $y_1\le x\le y_2$ and $z_1\le x\le z_2$.
Thus, for every two convex partitions $\cal R$ and $\cal L$ of $\str S\in\CC$ into at least $t$ parts each
there are $R\in\cal R$ and $L\in\cal L$ with 
\begin{align}\label{eq:something}
|\Types[\Delta](R/L)|< k
\end{align}

Let $M$ be the adjacency matrix of $\str S$,
that is, the $\str S\times\str S$ matrix 
whose entry at position $(a,b)$ 
is the atomic type of $(a,b)$ in $\str S$
(up to a certain encoding as described in~\cref{subsec:graph-theory}, which is however irrelevant here).
We  show that $M$ has grid rank less than
$p:=2^{k2^{|\Delta|}}$.

Suppose that $\str S$ has grid rank 
at least $p$.
Then there are divisions $\cal L$ of the rows of $M$ and $\cal R$ of the columns of $M$,
 into  $p>k$ parts each,
 such that each zone has at least $p$ different rows or at least $p$ different columns.
 As the size of the alphabet of $M$ is at most $2^{|\Delta|}$, this implies that each zone has at least $k$ different rows.
 Hence, for all  $L\in \cal L$ and $R\in\cal R$ 
 we have that $|\Types[\Delta](L/R)\ge k|$, which contradicts~\cref{eq:something}.
\end{proof}

We now define a notion which generalizes the notion of avoiding certain patterns. In this case, rather than defining patterns which encode  all permutations, it is more convenient to define patterns which encode grids, in the following way.

Fix any signature $\Sigma$ and  a first-order formula $\phi(\tup x,\tup y, z)$, where $\tup x$ and $\tup y$ are sets of variables and $z$ is a single variable. An \emph{$m\times n$ grid}  defined by $\phi$  in a structure $\str S$
is a triple of sets $A\subset \str S^{\tup x}$, $B\subset  \str S^{\tup y}$ and $C\subset\str S$ with $|A|=m$, $|B|=n$ and $|C|=m\times n$, such that the relation 
\[\setof{(\tup a,\tup b,c)\in A\times B\times C}{\str S\models\phi(\tup a,\tup b,c)}\]
is the graph of a bijection from  $A\times B$ to $C$. 
More explicitly, for each $c\in C$ there is a unique pair $(\tup a,\tup b)\in A\times B$ such that $\str S\models \phi(\tup a,\tup b,c)$,
and conversely, for each $(\tup a,\tup b)\in A\times B$ there is a unique $c\in C$ such that $\str S\models\phi(\tup a,\tup b,c)$.

\begin{definition}[Defining large grids]\label{def:grids}
	A class of structures $\CC$ \emph{defines large grids} if there is a formula $\phi(\tup x,\tup y,z)$ such that for all $n\in\N$, $\phi$ defines an  $n\times n$ grid in some structure $\str S\in \CC$. 
\end{definition}

Intuitively,  if $\CC$ defines large grids then  the product of two sets $A\times B$ can be represented by a set of single elements $C$ in some structure $\str S\in\CC$. Hence 
an arbitrary relation $R\subset A\times B$ can be represented by some subset of $C$, so $\CC$ is  monadically independent. This is 
expressed by the following.
% more formally proved below.

\begin{lemma}\label{lem:no-large-grids}
	If $\CC$ defines large grids then $\CC$ is monadically independent.
\end{lemma}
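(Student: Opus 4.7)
The plan is to directly exhibit a monadic lift of $\CC$ over which some formula is independent, using the formula $\phi$ witnessing that $\CC$ defines large grids to encode arbitrary bipartite relations through a single added unary predicate. This realizes the informal intuition stated right before the lemma.

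First, I would introduce a fresh unary predicate symbol $U$ and let $\CC^+$ be the monadic lift of $\CC$ whose members are all expansions of structures in $\CC$ by an arbitrary interpretation of $U$. The candidate independent formula is
\[\varphi(\tup x,\tup y)\ :=\ \exists z\,\bigl(U(z)\land \phi(\tup x,\tup y,z)\bigr).\]
Showing that $\varphi$ is independent over $\CC^+$ immediately yields monadic independence of $\CC$ and completes the proof.

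To verify independence of $\varphi$, I would fix an arbitrary binary relation $R\subseteq A\times B$ between finite sets of sizes $m$ and $n$. By hypothesis there is some $\str S\in\CC$ in which $\phi$ defines an $m\times n$ grid $(A',B',C')$; fix bijections $\iota_A\from A\to A'$ and $\iota_B\from B\to B'$, and let $f\from A'\times B'\to C'$ denote the unique bijection whose graph is defined by $\phi$. Expand $\str S$ to a structure $\str C\in\CC^+$ by setting $U^{\str C}:=\setof{f(\iota_A(a),\iota_B(b))}{(a,b)\in R}$, and take $\tup u_a:=\iota_A(a)$ and $\tup v_b:=\iota_B(b)$. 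The uniqueness clause in the definition of a grid ensures that for each pair $(\tup u_a,\tup v_b)$ the only $z\in\str S$ satisfying $\phi(\tup u_a,\tup v_b,z)$ is $z=f(\iota_A(a),\iota_B(b))$. Hence $\str C\models\varphi(\tup u_a,\tup v_b)$ if and only if this element lies in $U^{\str C}$, which by construction holds if and only if $R(a,b)$, as required.

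I do not anticipate any significant obstacle: the proof is a direct instantiation of the definitions, exploiting that an $m\times n$ grid lets one address each cell of $A\times B$ by a single element of $C$, so marking a subset of $C$ with one unary predicate realizes every bipartite relation. The only points worth checking carefully are that $\tup u_a$ and $\tup v_b$ must be tuple-valuations in $\str S^{\tup x}$ and $\str S^{\tup y}$ (not individual elements), and that the grid's bijectivity/uniqueness is precisely what guarantees the $\exists z$ quantifier in $\varphi$ selects the intended witness.
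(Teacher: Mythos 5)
Your proposal is correct and follows essentially the same argument as the paper's proof: both expand a grid-defining structure by a single unary predicate $U$ marking exactly those grid cells corresponding to pairs in $R$, and both use the bijectivity of the grid to show that $\exists z.\,U(z)\land\phi(\tup x,\tup y,z)$ decodes $R$ and is therefore independent over the monadic lift.
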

\begin{proof}Suppose $\CC$ defines large grids and let $R\subset A\times B$ be a binary relation between two finite sets $A,B$. Then there is a structure $\str S\in\CC$ 
	and sets $A',B',C$ such that  $\phi(\tup x,\tup y,z)$ defines a grid $(A',B',C)$ in  $\str S$,
	and $|A|=|A'|$ and $|B|=|B'|$.
	Fix an arbitrary bijections $f\from A\to A'$ and $g\from B\to B'$.
	Let $U\subseteq C$ be such that for all $c\in C$,
\[c\in U\qquad \Leftrightarrow\qquad  \str S\models \phi(f(a),g(b),c)\text{ for some $a\in A$ and $b\in B$ with $R(a,b)$}.\]
As $\phi$ defines a bijection between $A\times B$ and $C$, it follows that 
	 for all $a\in A$ and $b\in B$, 
	 \[R(a,b)\quad\Leftrightarrow\quad \str S\models 
	 \exists z.\phi(f(a),g(b),c)\land U(z).\]
	 
Consider the monadic lift $\CC^+$ of $\CC$ 
which consists of all structures $\str S\in\CC$ expanded with a unary predicate $U$ which is interpreted as a finite set.
The above shows that the formula $\exists z.\phi(\tup x,\tup y,z)\land U(z)$ is independent over  $\CC^+$. Hence, $\CC$ is not monadically dependent.
\end{proof}

The following result
generalizes the implications~
\cref{it:bd-gr}$\rightarrow$
\cref{it:ramsey}$\rightarrow$
\cref{it:m-nip} from Theorem~\ref{thm:equiv}
to arbitrary classes of structures -- 
 finite or infinite, ordered or unordered, and over an arbitrary signature. 
It also involves a notion which we call \emph{1-dimensionality}, 
which is a model-theoretic notion originating from Shelah (it is called \emph{finite satisfiability dichotomy} in~\cite{braunfeld2021characterizations}).
This is a central tool in the study of monadically dependent classes.
It is defined in the following section, in terms of a variant of forking independence -- a key concept in stability theory, generalizing e.g. independence in vector spaces or algebraic independence.

\begin{theorem}\label{thm:restrained}
  Let $\CC$ be a class of structures over a fixed signature.\\
  Then the implications
	\cref{it:reg-trans}$\leftrightarrow$\cref{it:reg-md}$\leftrightarrow$\cref{it:reg-lg}$\leftrightarrow$\cref{it:reg-1-dim}$\rightarrow$\cref{it:reg-reg} hold among the following conditions.
	\begin{enumerate}[label={(\arabic*)},ref=\arabic*]
		\item\label{it:reg-trans} $\CC$ does not transduce the class of all graphs,
		\item\label{it:reg-md} $\CC$ is monadically dependent,
		\item\label{it:reg-lg} $\CC$ does not define large grids,
		\item\label{it:reg-1-dim} $\CC$ is $1$-dimensional (cf. Def.~\ref{def:1-dim}),
		\item\label{it:reg-reg} $\CC$ is restrained.
	\end{enumerate}
\end{theorem}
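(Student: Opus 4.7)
The plan is to reduce the theorem to four separate tasks corresponding to the implications shown in the diagram, three of which are either cited or essentially done in the excerpt, with the genuinely new content being $\cref{it:reg-1-dim}\rightarrow\cref{it:reg-reg}$.

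First I would dispose of the equivalence of conditions~\cref{it:reg-trans}--\cref{it:reg-1-dim}. The equivalence $\cref{it:reg-trans}\leftrightarrow\cref{it:reg-md}$ is exactly~\cref{thm:baldwin-shelah} of Baldwin--Shelah. For $\cref{it:reg-md}\rightarrow\cref{it:reg-lg}$ I would invoke the contrapositive of~\cref{lem:no-large-grids}: if $\CC$ defines large grids, then $\CC$ is monadically independent, since the $\phi$-defined grid lets us encode every bitmap $R\subseteq A\times B$ using a single unary predicate $U$ picking out the image in the ``cells'' component~$C$. The implication $\cref{it:reg-lg}\rightarrow\cref{it:reg-1-dim}$ is Shelah's classical theorem on the Hanf number for stability (\cite{shelah:hanfnumbers}), and the reverse $\cref{it:reg-1-dim}\rightarrow\cref{it:reg-md}$ is the recent result of Braunfeld--Laskowski~\cite{braunfeld2021characterizations}. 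Since all of these are cited, this part of the proof consists in pointing at the appropriate results and checking that the implications compose correctly around the cycle.

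The real work is $\cref{it:reg-1-dim}\rightarrow\cref{it:reg-reg}$: \emph{$1$-dimensionality implies that $\CC$ is restrained}. I would argue by contrapositive. Suppose $\CC$ is not restrained. Then there exist formulas $\phi(x;\bar y)$, $\psi(x;\bar z)$ and a finite set $\Delta(\bar u;\bar v)$ witnessing the failure: for every $t,k$ one can find $\str S_{t,k}\in\CC$ and $\phi$- and $\psi$-definable disjoint families $\mathcal R$ and $\mathcal L$ of size $\ge t$ such that for every $R\in\mathcal R$ and $L\in\mathcal L$ one has $|\Types[\Delta](R/L)|\ge k$. By taking a non-principal ultraproduct (or passing to a suitable elementary extension of a compactness-produced monster model of the theory), I would collapse this sequence of finite counterexamples into a single infinite model $\str S^*$ elementarily equivalent to structures in $\CC$ carrying two infinite, definable, pairwise-disjoint families $(R_i)_{i\in I}$ and $(L_j)_{j\in J}$ of definable sets, with the property that for every $i,j$ the set $\Types[\Delta](R_i/L_j)$ is infinite.

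The next step is a Ramsey / extraction argument: by an indiscernibility theorem (e.g. the modeling property for generalized indiscernibles, or iterated Ramsey) I would pass to sub-families indexed by mutually indiscernible sequences, and inside each $R_i$ pick a sequence of tuples $(\bar a^i_n)_n$ realizing pairwise distinct $\Delta$-types over a fixed $L_j$. Using that there is no restraint, I would arrange that the ``witness'' parameters from different $L_j$'s can be independently chosen, producing a two-dimensional array of $\Delta$-types that refuses to collapse. This is the main obstacle: ensuring that the array is genuinely two-dimensional, i.e. that neither forking-side of the relevant independence relation dominates the other. Concretely, I expect to show that for each $i$ the indiscernible sequence $(\bar a^i_n)_n$ is \emph{non-forking} over the base (in the sense of the variant of independence underlying Def.~\ref{def:1-dim}) while the presence of many $\Delta$-types over \emph{each} $L_j$ forces a second, transverse direction of non-forking witnesses. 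Put together, one obtains a configuration --- in effect, a pair of orthogonal non-forking extensions giving rise to two independent ``dimensions'' of types --- contradicting the assumption that $\CC$ is $1$-dimensional. Formally, this matches the standard schema in stability theory where failure of $1$-dimensionality is witnessed by an array whose rows and columns both admit independent realizations, and where independence is measured via the $\Delta$-types provided by our failure of restrainedness.

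To finish, I would package the above into a clean statement: under $\cref{it:reg-1-dim}$, for every pair of formulas $\phi,\psi$ and every finite $\Delta$, the failure family cannot appear, so bounds $t=t(\phi,\psi,\Delta)$ and $k=k(\phi,\psi,\Delta)$ exist uniformly in $\str S\in\CC$. The main technical hurdle is the ``two-dimensionality'' of the contradiction: controlling dependence between the choices made inside the $R_i$'s and the $L_j$'s so that the counterexample produced is truly bidimensional rather than a one-dimensional artifact (which would be absorbed by $1$-dimensionality without yielding a contradiction). Once this is done, combining with~\cref{prop:restrained-tww} recovers the final sentence of the theorem in the excerpt's statement of~\cref{thm:summary-mt0}: for hereditary classes of finite, binary, ordered structures, \cref{it:reg-reg} feeds back into bounded twin-width, closing the loop with~\cref{thm:main}.
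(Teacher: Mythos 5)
Your handling of the easy implications is fine: $(1)\leftrightarrow(2)$ is Baldwin--Shelah, $(2)\rightarrow(3)$ is \cref{lem:no-large-grids}, $(3)\rightarrow(4)$ is Shelah, and $(4)\rightarrow(2)$ is Braunfeld--Laskowski, exactly as in the paper. Your opening moves for $(4)\rightarrow(5)$ (contrapositive, compactness to land in the elementary closure, then an indiscernibility extraction) also match the paper's proof at the level of strategy.

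However, the heart of $(4)\rightarrow(5)$ is precisely the part you flag as ``the main obstacle,'' and what you sketch there does not produce a proof. The paper does \emph{not} derive the contradiction by constructing a ``two-dimensional array'' of types or a pair of ``orthogonal non-forking extensions.'' Its mechanism is an \emph{induction in which $1$-dimensionality is applied step by step to lengthen an independence $\ind[\str M]$, with one of the two branches of the dichotomy always excluded}, and the contradiction then comes from a separate type-counting argument. Concretely, the preparation lemma arranges not only that $\Types^\theta(\phi(\str N,\tup a_1)/\psi(\str N,\tup b_0))$ is infinite, but also the crucial \emph{pairwise inconsistency} of $\phi(z;\tup a_0)$ and $\phi(z;\tup a_1)$, the non-satisfiability of $\psi(z;\tup b_0)$ in $\str M$, and the base independence $\tup a_1\ind[\str M]\tup a_0\tup b_0$ (obtained via a Morley-sequence construction over a suitable submodel). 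Then: (i) pairwise inconsistency makes the explicit formula $\zeta(\tup y;a,\tup a_0)=\phi(a;\tup y)\land\neg\exists x.\phi(x;\tup y)\land\phi(x;\tup a_0)$ a witness that adding $a\in\phi(\str N';\tup a_1)$ to the \emph{right} side breaks independence, excluding that branch; (ii) non-satisfiability of $\psi(z;\tup b_0)$ in $\str M$ excludes the other branch when a $b\in\psi(\str N';\tup b_0)$ is added; hence $1$-dimensionality forces $\tup a_1\tup a\ind[\str M]\tup a_0\tup b_0\tup b$, so in particular $\tup a\ind[\str M]\tup b$; but (iii) a counting argument (compare $\hat\theta$-types over $\str M$ with the more-than-$2^{|\str M|}$ many $E_\theta$-classes obtained by compactness) produces $\tup a,\tup b$ with $\tup a\nind[\str M]\tup b$. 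Without (i)--(iii), your proposal has no concrete reason why ``neither forking side dominates'' and no way to close the induction, so as written the argument for $(4)\rightarrow(5)$ has a genuine gap.
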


The equivalence
 $\cref{it:reg-trans}\leftrightarrow\cref{it:reg-md}$ is Theorem~\ref{thm:baldwin-shelah}, due to Baldwin and Shelah.
The implication 
$\cref{it:reg-md}\rightarrow\cref{it:reg-lg}$ is Lemma~\ref{lem:no-large-grids}.
The implication $\cref{it:reg-lg}\rightarrow\cref{it:reg-1-dim}$ is due to Shelah (cf. Prop.~\ref{prop:fs dichotomy}).
The implication~\cref{qit:1-dim}$\rightarrow$\cref{qit:mNIP} has been recently proved by Braunfeld and Laskowski~\cite{braunfeld2021characterizations}.
 Our contribution is the implication~\cref{it:reg-1-dim}$\rightarrow$\cref{it:reg-reg}.

We believe this result may be of independent interest, and possibly of broader applicability than 
just in the context of ordered structures. For example, by Theorem~\ref{thm:restrained}, all graph classes of bounded twin-width (without an order) and all interpretations of {nowhere-dense} classes are restrained.
Conversely, every class of structures which is not restrained  defines large grids.

\medskip

Theorem~\ref{thm:restrained} allows us to 
provide further, model theoretic characterizations of 
hereditary classes of finite, ordered, binary structures of bounded twin-width:

\begin{theorem}
	Let $\CC$ be a hereditary class of finite, ordered, binary structures. Then
	 the following conditions are equivalent:
	\begin{enumerate}[label={(\arabic*)},ref=\arabic*]
		\item\label{qit:trans} $\CC$ does not transduce the class of all finite graphs,
		\item\label{qit:mNIP} $\CC$ is monadically dependent,
		% \item\label{qit:fmNIP} $\CC$ is finitely monadically NIP,
		\item\label{qit:grids} $\CC$ does not {define large grids},% (cf. Def.~\ref{def:grids}),
		\item\label{qit:1-dim} $\CC$ is $1$-dimensional,% (cf. Def.~\ref{def:1-dim}),
		\item\label{qit:restrained} $\CC$ is a {restrained} class,% (cf. Def.~\ref{def:restrained}),
		\item\label{qit:tww} $\CC$ has bounded twin-width,
		\item\label{qit:dependent} $\CC$ is dependent.
	\end{enumerate}
\end{theorem}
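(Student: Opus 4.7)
The plan is to assemble the equivalences by invoking three previously established pieces: Theorem~\ref{thm:restrained}, Proposition~\ref{prop:restrained-tww}, and the main Theorem~\ref{thm:main}. Conditions \cref{qit:trans}--\cref{qit:restrained} already form the implication cycle \cref{qit:trans}$\leftrightarrow$\cref{qit:mNIP}$\leftrightarrow$\cref{qit:grids}$\leftrightarrow$\cref{qit:1-dim}$\rightarrow$\cref{qit:restrained} by Theorem~\ref{thm:restrained}, which holds for arbitrary classes of structures and does not rely on finiteness, order, or binarity. Thus I only need to close a loop that brings in \cref{qit:tww} and \cref{qit:dependent} under the additional hypotheses of the theorem.

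To go from restrained to bounded twin-width, I would invoke Proposition~\ref{prop:restrained-tww}, which is precisely tailored to classes of finite ordered binary structures and gives \cref{qit:restrained}$\rightarrow$\cref{qit:tww} directly. To return from bounded twin-width to the top of the cycle, I would use the implication \cref{git:tww}$\rightarrow$\cref{git:transduce} of Theorem~\ref{thm:main} (equivalently Theorem~\ref{thm:tww implies mNIP}), yielding \cref{qit:tww}$\rightarrow$\cref{qit:trans}. At this point conditions \cref{qit:trans}--\cref{qit:tww} are all mutually equivalent.

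It remains to splice in dependence, \cref{qit:dependent}. One direction, \cref{qit:mNIP}$\rightarrow$\cref{qit:dependent}, is immediate from the definition of monadic dependence (taking the trivial monadic lift). For the converse \cref{qit:dependent}$\rightarrow$\cref{qit:tww}, I would argue by contrapositive: if $\CC$ has unbounded twin-width, then by the implication \cref{git:utww}$\rightarrow$\cref{git:interp} of Theorem~\ref{thm:main}, $\CC$ interprets the class of all finite graphs, and since the class of all finite graphs is independent (the edge formula is independent), a standard pullback along the interpretation produces an independent formula on $\CC$, so $\CC$ is independent. This closes the cycle and establishes the full equivalence.

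The step I expect to require the most care is verifying that the pullback argument in \cref{qit:dependent}$\rightarrow$\cref{qit:tww} is presented cleanly, since an interpretation (not a transduction) suffices precisely because \cref{qit:dependent} is NIP rather than monadic NIP, so no auxiliary unary predicates need to be introduced. All other implications are either immediate, or are invocations of the main theorem and of results whose proofs appear in the body of the paper; in particular, no new combinatorial or model-theoretic content is needed beyond what has already been developed.
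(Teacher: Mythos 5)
Your proposal is correct and follows essentially the same route as the paper: Theorem~\ref{thm:restrained} handles the chain among the first five conditions, Proposition~\ref{prop:restrained-tww} gives restrained $\rightarrow$ bounded twin-width, \cite{twin-width1} (Theorem~\ref{thm:tww implies mNIP}) closes the loop back to (1), and dependence is spliced in via the trivial direction (2)$\rightarrow$(7) together with (7)$\rightarrow$(6) from the (\ref{git:interp})$\leftrightarrow$(\ref{git:tww}) dichotomy of Theorem~\ref{thm:main}. Your extra paragraph spelling out the interpretation-pullback for (7)$\rightarrow$(6) is sound and is exactly what the paper's terse citation of Theorem~\ref{thm:main} abbreviates.
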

\begin{proof}
The implications $\cref{qit:trans}\rightarrow\cref{qit:mNIP}\rightarrow\cref{qit:grids}\rightarrow\cref{qit:1-dim}\rightarrow\cref{qit:restrained}$ follow from Theorem~\ref{thm:restrained}.
The implication~\cref{qit:restrained}
$\rightarrow$\cref{qit:tww} is proved in Proposition~\ref{prop:restrained-tww}.
The implication~\cref{qit:tww}$\rightarrow$\cref{qit:trans} is by~\cite{twin-width1} (cf. \cref{thm:tww implies mNIP}). This proves the equivalence of the first six items.

The implication~\cref{qit:mNIP}$\rightarrow$\cref{qit:dependent} is immediate,
whereas 
the implication~\cref{qit:dependent}$\rightarrow$\cref{qit:tww} is by Theorem~\ref{thm:main},\cref{git:interp},\cref{git:tww}.
\end{proof}

\medskip
To prove Theorem~\ref{thm:restrained},
it remains to prove  that every 1-dimensional class is restrained. First, we need to define 1-dimensionality.

\subsection{1-dimensionality}
We now introduce a wholly model-theoretic notion which can be used to characterize bounded twin-width,
but also arbitrary monadically dependent classes of structures.
For this, we first recall some basic notions from model theory.

By  a \emph{model} we mean a structure which is typically infinite, as opposed to the structures considered earlier, which were typically finite.
We give a brief account of basic notions from model theory in Appendix~\ref{app:model-theory}, although they are not needed to follow the main text below.

The \emph{elementary closure} of a class of structures
$\CC$ is the class of all models $\str M$ that satisfy every sentence $\phi$ that holds in all structures $\str S\in\CC$. 
In particular, if $\CC$ does not define large grids,
then neither does its elementary closure. This is because for any fixed $n\in\N$ the existence of an $n\times n$-grid defined by a fixed formula $\phi(\tup x,\tup y,z)$ can be expressed by a first-order sentence $\phi'$
which existentially quantifies $(|\tup x|+|\tup y|)\cdot n+n^2$ variables, corresponding to sets $A,B,C$ of $\tup x$-tuples, $\tup y$-tuples and single vertices,
and then checks that $\phi$ defines a bijection between $A\times B$ and $C$.

By the compactness theorem (cf. Thm.~\ref{thm:compactness}), if $\CC$ defines large grids, then its elementary closure contains a structure that defines a grid $(A,B,C)$
with $A$ and $B$ of arbitrarily large infinite cardinalities.

% For infinite models $\str M$, it makes sense to consider the  condition that $\str M$ (treated as a singleton class)
% does not define infinite grids (that is, $\omega\times\omega$-grids).
% By the compactness theorem (cf. Theorem~\ref{thm:compactness}), $\CC$ does not define large grids if and only if every $\str M$ in the elementary closure of $\CC$ does not define infinite grids.

\begin{definition}[Elementary extension]
	Let $\str M,\str N$ be two models.
	Then $\str N$ is an \emph{elementary extension} of $\str M$,
	written $\str M\prec \str N$, 
	if the domain of $\str M$ is contained in the domain of $\str N$, and  for  every 
	formula $\phi(\tup x)$ and tuple $\tup a\in\str M^{\tup x}$ of elements of $\str M$,
	\[\str M\models\phi(\tup a)\text{\quad if and only if \quad}\str N\models \phi(\tup a).\]	
\end{definition}
 In other words, it doesn't matter if we evaluate formulas in $\str M$ or in $\str N$. In particular, $\str M$ and $\str N$ satisfy the same sentences.

 \medskip
 A \emph{formula $\phi(\tup x)$ with parameters} from $C\subset \str N$
is a formula using constant symbols denoting elements from $C$.
Such a formula can be evaluated in $\str N$
on a tuple $\tup a\in\str N^{\tup x}$, as expected.
Note that if $\str M\prec \str N$ and $\phi(\tup x)$ is a formula with parameters from $\str N$ and $\tup a\in\str M^{\tup x}$ then it is not necessarily the case that $\str M\models \phi(\tup a)$ if and only if $\str N\models \phi(\tup a)$, although this does hold for formulas with parameters from $\str M$.
\medskip
\begin{definition}[Independence]\label{def:independence}
	Let $\str M$ be a model and $\str N$ its elementary extension. For a tuple $\tup a\in \str N^{\tup x}$ and a set $B\subset \str N$ say that $\tup a$ is \emph{independent} from $B$ over $\str M$, denoted $\tup a\ind[\str M] B$,
	if for every formula $\phi(\tup x)$ with parameters  from  $B\cup \str M$ such that $\str N\models \phi(\tup a)$ there is some $\tup c\in \str M^{\tup x}$ such that $\str N\models \phi(\tup c)$.
\end{definition}
Abusing notation, if $B$ is enumerated by a tuple $\tup b$, 
then we may write $\tup a\ind[\str M]\tup b$.
We write $\nind[\str M]$ for the negation of the relation
$\ind[\str M]$.

% Equivalently,
% $\tp(\tup a/BC)$ is finitely satisfiable in $C$
% (cf. Appendix~\ref{sec:basic}).
% If $\tup a\ind[C]{} B$ does not hold, then we may say that $B$ \emph{isolates} $\tup a$ from $C$.
% Say that $B$ \emph{isolates} $\tup a$ from $C$ if there is some formula $\phi$ with parameters from $B$ which is satisfied by $\tup a$ but not by any $\tup c\in C^{\tup x}$
% Equivalently, there is a formula $\phi(\tup x,\tup y)$ such that $B$ $\phi$-isolates $\tup a$ from $C$.
% Write  $\tup a\ind[C]{}B$ if $\tup a\ind[C]\phi B$ for all $\tup y$ and $\phi(\tup x,\tup y)$.

\begin{example}\label{ex:order-ind}
	Let $\str N$ be $(\mathbb R,\le)$
    and let $\str M$ be 
    the union  of the open intervals 
    $]0,1[$ and $]8,9[$, equipped with the relation $\le$.
	Then $\str M\prec \str N$. This is easy to derive from the fact that $(\mathbb R,\le)$ has quantifier elimination, that is, every formula $\phi(\tup x)$ is equivalent to a quantifier-free formula. 
	\begin{figure}[h!]
		\centering
		\includegraphics[scale=0.35,page=1]{pics}
		\caption{The structures $\str M\prec \str N$, a set $B$ and two tuples, $\tup a,\tup e$, with 
		$\tup a\ind[\str M]B$ and $\tup e\nind[\str M]B$.
		}\label{fig:ind1}
	\end{figure}
	Figure~\ref{fig:ind1} illustrates independence over  $\str M$. 
	
	% This is analysed more precisely  below.

	% Assume more generally that $\str M\prec \str N$ and $\str N=(\mathbb R,\le)$.
	% Let $B\subset \str N$. 

	% Let  $\tup a\in\str N^{\tup x}$. 
	% Then $\tup a\ind[\str M]\tup b$ if and only if every open interval $I$ with endpoints in $B\cup\str M\cup\set{-\infty,+\infty}$ containing an element of $\tup a$ also contains an element of $\str M$.
	
	% For the left-to-right implication, suppose $I$ has endpoints $m_1,m_2$ and contains  $\tup a(x)$ for some $x\in\tup x$. Then the formula $\phi(x)=m_1<x<m_2$ is satisfied by $\tup a$, so it is satisfied by some element $m\in\str M$, and so $m\in I$.

	% The converse implication can be shown using the fact that every formula $\phi(\tup x)$ is equivalent to a boolean combination of inequalities. We omit the details.
% 	We show the converse implication. Suppose that $\tup a$ satisfies a formula $\phi(\tup x)$ with parameters from $\str M\tup b$ that is not satisfied by any tuple $\tup c\in\str M^{\tup x}$. By quantifier elimination, we may assume $\phi(\tup x)$ has no quantifiers, so is a boolean 
% combination of conditions of the form $c<x$ or $x=c$ for $x\in\tup x$ and $c\in\str M\tup b$. Rewriting $\phi$, we may assume it is a disjunction of conjunctions of formulas of the form $c<x$, $c=x$ or $c>x$, for $x\in\tup x$ and $c\in\str M\tup b$. 
	
% 	suppose $\phi(\tup a)$ holds for some formula 

	% \begin{quote}
	% 	$4\ind[\str M]5$,\quad
    %     $6\ind[\str M]5$,\quad
	% 	$(4,6)\nind[\str M]5$,\quad
	% 	$4\ind[\str M](5,6)$,\quad
    %     $(2,3)\ind[\str M]5$,\quad $(6,7)\ind[\str M]5$.
	% \end{quote}
\end{example}

\begin{example}Let
	$\str M\prec \str N$.
	Then  $\tup a\ind[\str M]{}\str M$ for every $\tup a\in\str N^{\tup x}$
	(cf. Lemma~\ref{lem:satisfaction}).
\end{example}

\begin{definition}[1-dimensionality]\label{def:1-dim}
	A model $\str M$ is \emph{1-dimensional}
	if for every $\str M\prec \str N$, tuples  $\bar a,\bar b$ of elements of $\str N$ and $c\in\str N$ a single element, 
if $\tup a\ind[\str M]{}\tup b$ then  $\tup ac\ind[\str M]{}\tup b$ or $\tup a\ind[\str M]{}\tup bc$.
A class $\CC$ of structures is \emph{1-dimensional} if every model in the elementary closure of $\CC$ is 1-dimensional.
\end{definition}

\begin{example}
	Any total order $(X,\le)$ is 1-dimensional.
As an illustration, in the situation in Fig.~\ref{fig:ind1}, consider the tuples $\tup a,\tup b$ marked therein. Then $\tup a\ind[\str M]\tup b$.
Let $c\in\str N$. If $c$ belongs to the interval $]b_1,b_2[$
then $\tup a\ind[\str M]\tup bc$. Otherwise, $\tup ac\ind[\str M]\tup b$.
\end{example}

\begin{example}\label{ex:grid-2-dim}
	Let $\str N=(\mathbb R\times \mathbb R, \sim_1,\sim_2)$ where for $i=1,2$, the relation $\sim_i$ denotes equality of the $i$th coordinates.
	Let $\str M$ be the induced substructure of $\str N$ with domain $I\times I$ for some infinite subset $I\subset \str N$. Then $\str M\prec \str N$. In the situation depicted in Fig.~\ref{fig:ind2}, $a\ind[\str M]b$ but both $ac\nind[\str M]b$ and $a\nind[\str M]bc$. So $\str M$ is not $1$-dimensional. 

	\begin{figure}[h!]
		\centering
		\includegraphics[scale=0.35,page=2]{pics}
		\caption{The structures $\str M\prec \str N$ and elements $a,b,c\in\str N$ with 
		$a\ind[\str M]b, ac\nind[\str M]b$ and $a\nind[\str M]bc$.
		}\label{fig:ind2}
	\end{figure}
\end{example}

The  following result  is essentially \cite[Lemma 2.2]{shelah:hanfnumbers} (see also~\cite{braunfeld2021characterizations}).

\begin{proposition}\label{prop:fs dichotomy}
	If a  model $\str M$ does not define large grids
    then it is 1-dimensional. 
\end{proposition}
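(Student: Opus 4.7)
The plan is to prove the contrapositive: assuming $\str M$ is \emph{not} 1-dimensional, we construct in $\str M$ arbitrarily large grids definable by a single formula. By the failure of 1-dimensionality, we have an elementary extension $\str N \succ \str M$ together with tuples $\tup a, \tup b$ and a single element $c$ in $\str N$ satisfying $\tup a \ind[\str M] \tup b$, $\tup a c \nind[\str M] \tup b$, and $\tup a \nind[\str M] \tup b c$. Unfolding the two non-independences produces formulas $\phi(\tup x, y; \tup z, \tup u)$ and $\psi(\tup x; \tup z, y, \tup u')$ together with parameter tuples $\tup m, \tup m'$ from $\str M$ such that $\str N \models \phi(\tup a, c; \tup b, \tup m) \wedge \psi(\tup a; \tup b, c, \tup m')$, while neither $\phi(\tup x, y; \tup b, \tup m)$ nor $\psi(\tup x; \tup b, c, \tup m')$ has any realization in $\str M$.

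The central step is a compactness and saturation argument producing, for any prescribed $n$, a grid-shaped configuration in a sufficiently saturated elementary extension $\str N^* \succ \str N$. I would extract a mutually $\str M$-independent family $(\tup a_i)_{i<n}$ of realizations of $\tp(\tup a/\str M)$ and a mutually $\str M$-independent family $(\tup b_j)_{j<n}$ of realizations of $\tp(\tup b/\str M)$, and then, using saturation together with the type of $(\tup a,\tup b,c)$ over $\str M$, find for each pair $(i,j)$ an element $c_{i,j}\in\str N^*$ such that the triple $(\tup a_i, \tup b_j, c_{i,j})$ realizes $\tp(\tup a\tup b c/\str M)$. Standard extraction of mutually indiscernible sequences, applied to the two coordinates separately, makes such a configuration consistent.

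I then claim that the formula
\[\Theta(\tup x, \tup z, y) \;\equiv\; \phi(\tup x, y; \tup z, \tup m) \wedge \psi(\tup x; \tup z, y, \tup m')\]
defines, on the sets $A=\setof{\tup a_i}{i<n}$, $B=\setof{\tup b_j}{j<n}$, $C=\setof{c_{i,j}}{i,j<n}$, the graph of the bijection $(\tup a_i, \tup b_j)\mapsto c_{i,j}$. Satisfaction of $\Theta(\tup a_i,\tup b_j,c_{i,j})$ is immediate from the type equality. The crucial uniqueness step --- showing that $\Theta(\tup a_i, \tup b_j, c_{k,l})$ fails whenever $(i,j)\neq(k,l)$ --- is extracted from the non-realizability in $\str M$ of the witnessing formulas combined with mutual independence: for $l\neq j$, one argues that $c_{k,l}$ is independent from $\tup b_j$ over $\str M$ in the finite-satisfiability sense, so if $\phi(\tup a_i, c_{k,l}; \tup b_j, \tup m)$ held in $\str N^*$ then $\phi(\tup x, y; \tup b_j, \tup m)$ would be finitely satisfiable in $\str M$, contradicting its non-realizability; the symmetric use of $\psi$ rules out the case $k\neq i$.

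Once the $n\times n$ grid is built in $\str N^*$, its existence is expressed by a single first-order sentence with parameters $\tup m,\tup m'$ from $\str M$, and hence transfers down to $\str M$ by $\str M\prec \str N^*$. Since $n$ is arbitrary and the formula $\Theta$ is fixed, $\str M$ defines large grids, yielding the contrapositive. The main obstacle is the uniqueness step: it requires carefully arranging the mutually independent sequences so that $\phi$ and $\psi$ each enforce uniqueness on one side of the grid, and it is here that both independence failures are used essentially. This bookkeeping around finite satisfiability and indiscernible extraction is the heart of Shelah's original argument in~\cite{shelah:hanfnumbers}.
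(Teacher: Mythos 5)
The paper does not actually prove this proposition: it is cited as ``essentially \cite[Lemma 2.2]{shelah:hanfnumbers}'' and used as a black box, so there is no in-paper proof to compare against. I will therefore assess your sketch on its own terms.

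Your high-level strategy is correct and matches the expected shape of Shelah's argument: prove the contrapositive; unfold $\tup a c\nind[\str M]\tup b$ and $\tup a\nind[\str M]\tup b c$ into formulas $\phi(\tup x,y;\tup z,\tup m)$ and $\psi(\tup x;\tup z,y,\tup m')$ satisfied at $(\tup a,c,\tup b)$ but unrealizable in $\str M$; set $\Theta=\phi\wedge\psi$, where $\phi$ pins down the $B$-coordinate and $\psi$ pins down the $A$-coordinate; build a grid configuration in a saturated extension; and transfer the grid sentence back to $\str M$ by elementarity. That outline is sound, and you are right that once the grid lives in some $\str N^*\succ\str M$, ``there exist sets $A,B,C$ making $\Theta$ a bijection'' is first-order and so holds in $\str M$ (the parameters $\tup m,\tup m'$ can be folded into $\tup x$ to get a parameter-free defining formula).

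The gap is precisely in the uniqueness step, and it is larger than the two independence failures ``cancelling'' through mutual indiscernibility, which is the impression your write-up gives. Two concrete points. First, your claim ``$c_{k,l}$ is independent from $\tup b_j$ over $\str M$'' is not what the argument needs: $\phi(\tup x,y;\tup b_j,\tup m)$ has both $\tup x$ and $y$ free, so to contradict its unrealizability in $\str M$ you need the \emph{joint} finite satisfiability $\tup a_i\, c_{k,l}\ind[\str M]\tup b_j$, not merely that of $c_{k,l}$. Second, and more seriously, the needed independences are \emph{two-sided}: for the $\psi$-side you must have $\tup a_i\ind[\str M]\tup b_j c_{k,j}$ for both $k<i$ and $k>i$. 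A coheir/Morley sequence of $\tp(\tup a/\str M\tup b)$ gives this only for parameters constructed \emph{earlier} (one direction); the $c_{k,j}$ for $k>i$ are constructed after $\tup a_i$, so the independence is not free. Likewise, for the $\phi$-side you need $\tup a_i\,c_{k,l}\ind[\str M]\tup b_j$ for all $l\neq j$, and the $c_{k,l}$'s are entangled with \emph{several} $\tup b$'s and $\tup a$'s. Arranging a construction (interleaving the $c$'s into the Morley steps, or passing to a mutually indiscernible pair à la Lemma~\ref{lem:limit types} and using indiscernibility to recover one of the two directions) so that all of these hold simultaneously is the genuine content of the lemma; your sketch acknowledges this is ``the heart'' of the argument but does not supply it. Finally, note that transferring the unrealizability condition $\phi_0(\str M;\tup b)=\emptyset$ to the copies $\tup b_j$ (and similarly $\psi_0(\str M;\tup b,c)=\emptyset$ to $(\tup b_j,c_{k,j})$) is not a first-order consequence of having the same type over $\str M$; it uses that one works inside a homogeneous/monster extension and pulls back along an $\str M$-automorphism sending $\tup b\mapsto\tup b_j$. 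This is implicit in your use of ``saturation'' but should be stated, since it is the only reason the unrealizability survives the move from the seed $(\tup a,\tup b,c)$ to the grid entries.
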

In particular, every class $\CC$ that does not define large grids is 1-dimensional, proving the implication~\cref{it:reg-lg}$\rightarrow$\cref{it:reg-1-dim} in Theorem~\ref{thm:restrained}.

\subsection{Proof of Theorem~\ref{thm:restrained}}\label{sec:mt-proof}
In this section we prove that every 1-dimensional class is restrained, proving the remaining implication~\cref{it:reg-1-dim}$\rightarrow$\cref{it:reg-reg} in Theorem~\ref{thm:restrained}.

Let $\CC$ be a class which is not restrained. We  show that $\CC$ is not 1-dimensional.
We first construct a structure $\str M$ in the elementary closure of $\CC$ that witnesses that $\CC$ is not restrained in a convenient way.

\medskip

\begin{lemma}\label{lem:preparation}
	Suppose $\CC$ is not restrained.
	Then there exist:
    \begin{itemize}
		\item  formulas $\phi(z,\tup x),\psi(z,\tup y),\theta(\tup u,\tup v)$,
        \item  a structure $\str M$ in the elementary closure of $\CC$, 
        \item an elementary extension $\str N$ of $\str M$,
        \item tuples $\tup a_0,\tup a_1\in \str N^{\tup x}$ and $\tup b_0\in\str N^{\tup y}$,
    \end{itemize}
    such that the following properties hold:
    \begin{enumerate}
        \item\label{it:same-types}  $\tp(\tup a_0/\str M)=\tp(\tup a_1/\str M)$,
        \item\label{it:many-types} 
        the set $\Types[\theta](A/B)$ is infinite, where $A=\phi(\str N,\tup a_1)$ and $B=\psi(\str N,\tup b_0)$,
        \item\label{it:ind-base} $\tup a_1 \ind[\str M]\tup a_{0}\tup b_{0}$,
        \item\label{it:inconsistent}  $\phi(z;\tup a_0)\land \phi(z,\tup a_1)$ is not satisfiable in $\str N$,
        \item\label{it:no-sol} $\psi(z;\tup b_0)$ is not satisfiable in $\str M$.
    \end{enumerate}    
\end{lemma}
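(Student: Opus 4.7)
The plan is to unpack the non-restrained hypothesis, use first-order compactness to move the combinatorial witness into the elementary closure of $\CC$, and then invoke the standard model-theoretic machinery of \emph{Morley sequences in a coheir} to massage the witness into the required form with the independence $\ind[\str M]$.

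\medskip

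First, unfolding the hypothesis, we extract formulas $\phi(z,\tup x),\psi(z,\tup y)$ and a finite set $\Delta(\tup u,\tup v)$ witnessing failure of restraint: for all $t,k\in\N$ some $\str S\in\CC$ carries disjoint $\phi$- and $\psi$-families of sizes at least $t$ with $|\Types[\Delta](R/L)|\ge k$ in every cell. Since $\Delta$ is finite, a pigeonhole lets us replace $\Delta$ by a single formula $\theta\in\Delta$ at the cost of enlarging $k$. Consider now the theory $T^{\ast}$ obtained from $\mathrm{Th}(\CC)$ by adjoining constants $(\tup a_i)_{i<\omega}$, $(\tup b_j)_{j<\omega}$ together with axioms asserting that the families $\{\phi(z,\tup a_i)\}_i$ and $\{\psi(z,\tup b_j)\}_j$ are pairwise disjoint, and that for every triple $(i,j,k)$ there exist $k$ elements of $\phi(z,\tup a_i)$ with pairwise distinct $\theta$-types over $\psi(z,\tup b_j)$. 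The non-restrained assumption makes every finite fragment of $T^{\ast}$ satisfiable inside some $\str S\in\CC$, so compactness produces a model of $T^{\ast}$ in the elementary closure of $\CC$, and passing to a sufficiently saturated elementary extension yields $\bar{\str N}$.

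\medskip

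Next, we reshape the witnesses inside $\bar{\str N}$. By the standard Erd\H{o}s--Rado plus compactness extraction of indiscernibles, we may assume $(\tup a_i)_{i<\omega}$ and $(\tup b_j)_{j<\omega}$ are indiscernible sequences, and we pick a small elementary submodel $\str M\prec\bar{\str N}$ over which both sequences remain indiscernible (the saturation of $\bar{\str N}$ lets us do this with $|\str M|$ below the saturation degree). We then replace $(\tup a_i)$ by a \emph{Morley sequence} built from a global coheir extension of $\tp(\tup a_0/\str M\tup b_0)$, i.e., a completion of this type to a global type that is finitely satisfiable in $\str M$. This classical construction preserves every $T^{\ast}$-axiom listed above, because consecutive terms realize the same type over $\str M\tup b_0$ as the original $\tup a_0$; simultaneously it guarantees that, for each $i$, the type $\tp(\tup a_i/\str M\tup b_0\cup\{\tup a_j:j<i\})$ is finitely satisfiable in $\str M$, which unfolding Def.~\ref{def:independence} is exactly the independence statement $\tup a_i\ind[\str M]\tup b_0\tup a_{<i}$.

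\medskip

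Finally we verify the five properties with $\tup a_0,\tup a_1$ the first two terms of the Morley sequence and $\tup b_0$ the first $\psi$-parameter. Property~\ref{it:same-types} is indiscernibility over $\str M$; property~\ref{it:ind-base} is the coheir/finite-satisfiability property; property~\ref{it:inconsistent} is the disjointness axiom of $T^{\ast}$; property~\ref{it:many-types} holds because ``$\phi(z,\tup a_1)$ carries infinitely many $\theta$-types over $\psi(z,\tup b_0)$'' is a first-order property of $\tp(\tup a_1\tup b_0)=\tp(\tup a_0\tup b_0)$, which is ensured by the $T^{\ast}$-axioms; property~\ref{it:no-sol} follows from $\str M$-indiscernibility of $(\tup b_j)_{j<\omega}$ together with pairwise disjointness of the $\psi$-sets, since indiscernibility forces $\psi(\bar{\str N},\tup b_j)\cap\str M$ to be independent of $j$ while disjointness of two or more $\psi$-sets forces this common subset of $\str M$ to be empty. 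The main obstacle is the compatibility of the combinatorial data (disjointness of $\phi$-sets, many $\theta$-types) with the model-theoretic notion of independence; the Morley-sequence-in-a-coheir construction is precisely the bridge between the two, and is available in any sufficiently saturated model, regardless of whether the ambient theory is NIP or not.
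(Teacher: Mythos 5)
Your plan has the same broad shape as the paper's argument (compactness to move the witness into the elementary closure, Ramsey to extract mutually indiscernible sequences, then a Morley-sequence/finite-satisfiability argument for the independence), but two steps that you treat as routine are in fact where the work lies, and as written the argument has a genuine gap.

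First, the choice of $\str M$: you assert that saturation of $\bar{\str N}$ lets you pick a small elementary submodel over which both sequences remain (mutually) indiscernible. That is not automatic. A generic elementary submodel will break indiscernibility, and cardinality bounds from saturation do nothing to prevent this. The paper's Lemma~\ref{lem:limit types} is precisely the construction you are skipping: stretch each sequence to a $\mathbb{Z}$-indexed indiscernible sequence and build $\str M$ to contain the negative halves; this simultaneously makes the original sequences Morley sequences over $\str M$ of their own average types $\Av_F$, $\Av_{F'}$, which are finitely satisfiable in $\str M$ by construction. Second, and more seriously, your step "replace $(\tup a_i)$ by a Morley sequence built from a global coheir extension of $\tp(\tup a_0/\str M\tup b_0)$" is circular: for such a coheir to exist you need $\tp(\tup a_0/\str M\tup b_0)$ to be finitely satisfiable in $\str M$, which by Definition~\ref{def:independence} is exactly $\tup a_0\ind[\str M]\tup b_0$, i.e.\ a special case of the conclusion. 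And even granting a coheir, your justification that it "preserves every $T^*$-axiom because consecutive terms realize the same type over $\str M\tup b_0$" is wrong: the pairwise-disjointness axioms constrain the 2-type $\tp(\tup a_0\tup a_1)$, which is not determined by the 1-type over $\str M\tup b_0$, so an arbitrary coheir extension can destroy disjointness. The paper avoids both issues by not replacing the sequences at all: it keeps the original $(\tup a_i)$ and $(\tup b_j)$ and uses Lemma~\ref{lem:limit types} to produce an $\str M$ over which they are already Morley sequences.

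A smaller point: your early pigeonhole replacing $\Delta$ by a single $\theta\in\Delta$ is stated too quickly, since the dominant $\theta$ may vary from cell to cell and another Ramsey round would be needed to uniformize. The paper carries $\Delta$ through the whole construction and pigeonholes only at the very end, where a single cell has infinitely many $\Delta$-types and the extraction of one $\theta$ with infinitely many $\theta$-types is immediate.
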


The proof of the lemma is a  standard application of basic tools from model theory: compactness, (mutually) indiscernible sequences and Morley sequences, which are recalled in Appendix~\ref{app:model-theory}. The proof of Lemma~\ref{lem:preparation} is in Appendix~\ref{app:preparation-lemma}. Using the lemma, we now show that $\CC$ is not $1$-dimensional.

% \begin{proposition}\label{prop:disjoint families ind}
%     Let $\str M$ be as in Lemma~\ref{lem:preparation}. Then $\str M$ is not $1$-dimensional.

%     Suppose $\str M$ is $1$-dimensional and
%     let $\str M\prec \str N\prec \str U$.
% Let $\phi(x;\tup y)$, $\psi(x,\tup z)$ be  formulas and $\bar a_\omega\in \str U^{\tup y}$ 
% satisfy \[\bar a_\omega\ind[\str M]\str N.\] Let  two sequences $\setof{\bar a_i}{i<\omega}\subset \str N^{\tup y}$ and $\setof{\bar b_j}{j<\omega}\subset \str N^{\tup z}$ be given such that:
% \begin{itemize}
%     \item $\setof{\bar b_j}{j<\omega}\subset \str N^{\tup z}$ is indiscernible over $\str M$,
%     \item $\tup a_i$ and  $\tup a_\omega$ have equal types over $\str M\bar a_{<i} \bar b_{<\omega}$, for $i<\omega$.
%     \item The families $(\phi(x;\bar a_i))_{i<\omega}$ and $(\psi(x;\bar b_j))_{j<\omega}$ are both pairwise inconsistent.
% \end{itemize}

% Then for any formula $\theta(\bar u;\bar v)$
% and  $i,j<\omega$
% the set $\Types[\theta](A/B)$ is finite, where $A = \phi(\str N;\bar a_i)$ and $B = \psi(\str N;\bar b_j)$.
% \end{proposition}
Through the rest of Section~\ref{sec:mt-proof},
we fix $\str M$ and $\str N$ as in Lemma~\ref{lem:preparation} and use the notation from the lemma.

\begin{claim}\label{claim:counting}
There is an elementary extension $\str N'$ of $\str N$,
and a tuple $\tup a$ of elements in $\phi(\str N';\tup a_1)$, a tuple $\tup b$ of elements in $\psi(\str N';\tup b_0)$ 
such that $\tup a\nind[\str M]\tup b.$
    %  More precisely, there are $\tup a\in\phi(\str N';\tup a_1)^{\tup u}$ and
	%  $\tup b_+,\tup b_-\in\psi(\str N';\tup b_0)^{\tup v}$
	%  such that:
	%  \begin{itemize}
	% 	 \item $\theta(\tup a;\tup b_+)\land \neg\theta(\tup a;\tup b_-)$ holds,
	% 	 \item $\theta(\tup v;\tup b_+)\land\neg \theta(\tup v;\tup b_-)$ is  not satisfiable in $\str M$.
	%  \end{itemize}
    \end{claim}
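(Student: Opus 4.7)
The plan is to exploit the infinitude of $\Types[\theta](A/B)$ (property~\ref{it:many-types}) via a cardinality comparison against the number of $\theta$-types over $B'$ that are \emph{finitely satisfiable} in $\str M^{\tup u}$.

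First, I pass to an elementary extension $\str N'\succ\str N$ that is $\kappa^+$-saturated for some cardinal $\kappa$ strictly greater than $2^{2^{|\str M|^{|\tup u|}}}$, and set $A':=\phi(\str N';\tup a_1)$ and $B':=\psi(\str N';\tup b_0)$. The key cardinality input is the standard bound that the number of $\theta$-types over $B'$ finitely satisfiable in $\str M^{\tup u}$ is at most $2^{2^{|\str M|^{|\tup u|}}}<\kappa$, since each such type corresponds to an ultrafilter on the Boolean subalgebra of $\mathcal P(\str M^{\tup u})$ generated by the $\theta$-sections $\setof{\tup c\in\str M^{\tup u}}{\str N'\models\theta(\tup c;\tup b)}$ for $\tup b\in(B')^{\tup v}$.

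Second, I use compactness to boost the infinitude of $\Types[\theta](A/B)$ into $|\Types[\theta](A'^{\tup u}/B')|\geq \kappa$. Consider the partial type over $\{\tup a_1,\tup b_0\}$ in variables $(\tup x_\alpha)_{\alpha<\kappa}$ and $(\tup y_{\alpha\beta})_{\alpha<\beta<\kappa}$ expressing that each $\tup x_\alpha$ lies in $A'^{\tup u}$, each $\tup y_{\alpha\beta}$ lies in $(B')^{\tup v}$, and $\theta(\tup x_\alpha;\tup y_{\alpha\beta})\neq\theta(\tup x_\beta;\tup y_{\alpha\beta})$. Every finite subset is satisfiable in $\str N$ itself by property~\ref{it:many-types}, so this partial type is consistent and is realized in $\str N'$ by $\kappa^+$-saturation. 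This produces $\kappa$ tuples in $A'^{\tup u}$ with pairwise distinct $\theta$-types over $B'$.

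Combining the two steps, some $\tup a\in A'^{\tup u}$ has a $\theta$-type $p:=\tp^\theta(\tup a/B')$ which is \emph{not} finitely satisfiable in $\str M^{\tup u}$. Unwinding this, a finite subset of $p$ has no realization in $\str M^{\tup u}$: explicitly, there exist $\tup b^{(1)},\ldots,\tup b^{(k)}\in(B')^{\tup v}$ and signs $\varepsilon_1,\ldots,\varepsilon_k\in\{0,1\}$ such that $\str N'\models\chi(\tup a)$ for
\[
\chi(\tup u)\;:=\;\bigwedge_{i=1}^k\theta(\tup u;\tup b^{(i)})^{\varepsilon_i},
\]
but no $\tup c\in\str M^{\tup u}$ satisfies $\chi$. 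Letting $\tup b$ denote the tuple concatenating all the $\tup b^{(i)}$'s as elements of $B'$, the formula $\chi(\tup u)$ has parameters in $\tup b\cup\str M$ (in fact only in $\tup b$), is satisfied by $\tup a$ in $\str N'$, yet has no realization in $\str M^{\tup u}$. By Definition~\ref{def:independence}, this precisely means $\tup a\nind[\str M]\tup b$, completing the proof.

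The main obstacle will be the coheir cardinality bound in the first step: while the trivial count $2^{2^{|\str M|^{|\tup u|}}}$ of ultrafilters on $\mathcal P(\str M^{\tup u})$ suffices, one must carefully match the saturation of $\str N'$ with this bound and check that the partial type constructed in step two is genuinely realized (not merely consistent) inside $\str N'$.
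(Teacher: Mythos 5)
Your proof is correct, and it reaches the same conclusion as the paper's but by a dual route. The paper pigeonholes on the $B'$-side: it counts the $E_\theta$-classes induced on $\psi(\str N';\tup b_0)^{\tup v}$ (where $E_\theta$ identifies parameter tuples with the same $\hat\theta$-type over $\phi(\str N';\tup a_1)$), boosts the number of classes past $2^{|\str M|}$ via Lemma~\ref{lem:many-classes}, and then finds $\tup b',\tup b''$ with the \emph{same} $\hat\theta$-type over $\str M$ but \emph{different} $E_\theta$-class. The distinguishing $\tup a$ together with $\tup b=\tup b'\tup b''$ witnesses non-independence through the single formula $\theta(\tup u;\tup b')\triangle\theta(\tup u;\tup b'')$, which has no solution in $\str M^{\tup u}$. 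You instead pigeonhole on the $A'$-side: you compare the number of realized $\theta$-types of $A'^{\tup u}$ over $B'$ against the number of $\theta$-types over $B'$ that are finitely satisfiable in $\str M^{\tup u}$ (a coheir count, bounded via the observation that such a type only depends on the trace $\theta(\str M^{\tup u};\tup b)\subset\str M^{\tup u}$ and hence by $2^{2^{|\str M|^{|\tup u|}}}$), then invoke a sufficiently saturated extension to realize $\kappa$-many pairwise $\theta$-inequivalent tuples in $A'^{\tup u}$ and extract one whose $\theta$-type is not a coheir of $\str M$. The paper's version is a bit more economical: it relies only on the compactness lemma already stated (Lemma~\ref{lem:many-classes}) rather than on $\kappa^+$-saturation, uses the simpler bound $2^{|\str M|}$ rather than a double-exponential coheir count, and exhibits the separating formula directly as a single symmetric difference rather than as a finite conjunction extracted a posteriori. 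Your version trades these for a more off-the-shelf model-theoretic argument (counting coheirs), which buys a certain conceptual familiarity but at the cost of heavier machinery (saturation and the infinitary realization of a type in $\kappa$ variables). One small stylistic note: when you invoke $\kappa^+$-saturation to realize a type in $\kappa$-many variable tuples, it is worth noting that the standard back-and-forth recursion does apply here because the parameter set stays of size at most $\kappa$ at every stage; this is routine but not entirely immediate from the bare one-variable definition of saturation.
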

    \begin{proof}
		Let $\hat\theta(\tup v;\tup u)=\theta(\tup u;\tup v)$.
		As $\Types[\theta](A/B)$ is infinite by~\cref{it:many-types}, 
		so is $\Types[\hat\theta](B/A)$.

		Let $E_{\theta}(\tup u;\tup u')$ be the formula defining the equivalence relation on $\psi(\str N;\tup b_0)^{\tup u}$ such that 
		\[E_\theta(\tup b,\tup b')\quad\Leftrightarrow\quad\tp^{\hat\theta}(\bar b/\phi(\str N;\tup a_1)) = \tp^{\hat\theta}(\bar b'/\phi(\str N;\tup a_1))\qquad\qquad\text{for  $\bar b,\bar b'\in\psi(\str N;\tup b_0)^{\tup u}$}.\] More precisely, 
        \[E_\theta(\tup v;\tup v')\quad\equiv\quad \psi(\tup v;\tup b_0)\land \psi(\tup v';\tup b_0)\land \forall \tup u. \phi(\tup u;\tup a_1)\implies (\theta(\tup u;\tup v)\iff \theta(\tup u';\tup v)).\]
		Hence the formula  $E_\theta$ defines infinitely many classes in $\str N$.
        By compactness  there is an elementary extension $\str N'$ of $\str N$ such that  $E_\theta$ induces more than $2^{{|\str M|}}$ equivalence classes in~$\str N'$ (cf. Lemma~\ref{lem:many-classes}). 
		As there are at most $2^{|\str M|}$ distinct $\hat\theta$-types in $\str M$, there exist $\tup b',\tup b''\in\psi(\str N';\tup b_0)$ such that $\tp^{\hat\theta}(\tup b'/\str M)=\tp^{\hat\theta}(\tup b''/\str M)$ and $\neg E_\theta(\tup b',\tup b'')$ holds in $\str N'$. Hence $\theta(\tup a;\tup b')\triangle\theta(\tup a;\
		\tup b'')$ holds in $\str N'$ for some $\tup a\in \phi(\str N';\tup a_1)^{\tup u}$. The claim follows for $\tup b=\tup b'\tup b''$.
% By assumption, 
%         \[\phi(\str N') \ind[\str M] \psi(\str N').\]
%         By Corollary~\ref{cor:counting fs}, $\Types[\theta](\psi(\str N')/\phi(\str N'))$ has cardinality at most $2^{{|\str M|}}$, 
%         which is in contradiction with the number of classes of $E_\theta$ in $\str N'$.
    \end{proof}

\begin{claim}\label{cl:2}
	Let $\str N'$ be an elementary extension of $\str N$ and  $a\in \phi(\str N';\tup a_1)$. Then $\tup a_1\nind[\str M]\tup a_0 a$, as witnessed by the formula
	\begin{align}\label{eq:zeta}\tag{$\ast$}
		\zeta(\tup y;a,\tup a_0) := \phi(a;\tup y)\land \neg\exists x. \phi(x; \bar y)\land \phi(x; \bar a_0).
		\end{align}		
\end{claim}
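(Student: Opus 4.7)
The plan is to verify that the formula $\zeta(\tup y; a, \tup a_0)$ witnesses the non-independence $\tup a_1 \nind[\str M] \tup a_0 a$ by establishing two facts: (a) $\str N' \models \zeta(\tup a_1; a, \tup a_0)$, and (b) no tuple $\tup c \in \str M^{\tup y}$ satisfies $\str N' \models \zeta(\tup c; a, \tup a_0)$. Note that $\zeta$ uses parameters only from $\set{a} \cup \tup a_0 \cup \str M$, so it is an admissible witness according to Definition~\ref{def:independence}.

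For (a), the first conjunct $\phi(a; \tup a_1)$ holds directly from the hypothesis $a \in \phi(\str N'; \tup a_1)$. For the second conjunct $\neg \exists x.\, \phi(x; \tup a_1) \land \phi(x; \tup a_0)$, I would invoke property~(\ref{it:inconsistent}) of Lemma~\ref{lem:preparation}, which says that $\phi(z; \tup a_0) \land \phi(z; \tup a_1)$ is unsatisfiable in $\str N$. Since the negation of this existential is a first-order statement with parameters $\tup a_0, \tup a_1 \in \str N$, elementarity $\str N \prec \str N'$ transfers it verbatim to $\str N'$.

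For (b), the key ingredient is property~(\ref{it:same-types}), namely $\tp(\tup a_0/\str M) = \tp(\tup a_1/\str M)$. Suppose towards contradiction that some $\tup c \in \str M^{\tup y}$ satisfies $\str N' \models \zeta(\tup c; a, \tup a_0)$. Then $\phi(a; \tup c)$ holds together with $\phi(a; \tup a_1)$, so $\str N' \models \exists x.\, \phi(x; \tup c) \land \phi(x; \tup a_1)$. The formula $\chi(\tup w) := \exists x.\, \phi(x; \tup c) \land \phi(x; \tup w)$ uses only parameters from $\str M$ (namely $\tup c$), so property~(\ref{it:same-types}) gives $\str N' \models \chi(\tup a_0)$, contradicting the second conjunct of $\zeta(\tup c; a, \tup a_0)$. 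The only subtle point is the bookkeeping of which elements come from $\str M$ versus $\str N$ or $\str N'$: in the final step it is essential that $\chi$ has parameters purely from $\str M$, so that equality of types over $\str M$ can be invoked. Beyond that, everything is a direct reading-off of the properties already guaranteed by Lemma~\ref{lem:preparation}.
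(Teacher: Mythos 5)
Your proof is correct and follows essentially the same route as the paper's: part (a) uses property~\cref{it:inconsistent} transferred by elementarity, and part (b) uses property~\cref{it:same-types} to move the satisfiability witness from $\tup a_1$ to $\tup a_0$ and derive a contradiction. The only cosmetic difference is that you explicitly name the auxiliary formula $\chi(\tup w)$ when invoking equality of types over $\str M$, which the paper leaves implicit; this is harmless and if anything makes the dependence on $\tup c\in\str M^{\tup y}$ slightly clearer.
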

\begin{proof}
We have that $\zeta(\tup a_1;a,\tup a_0)$ holds since $\phi(x;\tup a_1)$ and $\phi(x;\tup a_0)$ are inconsistent by~\cref{it:inconsistent}. Assume that there is some $\tup a' \in \str M^{\tup y}$ such that $\zeta(\tup a';a,\tup a_0)$ holds. Then \[\exists x.\phi(x;\tup a')\wedge \phi(x;\tup a_1)\] holds in $\str N'$, as witnessed by $x=a$. By property~\cref{it:same-types} and as $\tup a'$ is in $\str M$, this implies that \[\exists x.\phi(x;\tup a')\wedge \phi(x;\tup a_0)\] holds in $\str N'$, contradicting  $\zeta(\tup a';a,\tup a_0)$. Thus  $\zeta(\tup y;a,\tup a_0)$ is not satisfiable in $\str M$. 
In particular, $\tup a_1\nind[\str M]\tup a_0a$, proving the claim.\end{proof}

Fix tuples $\tup a,\tup b$ as in Claim~\ref{claim:counting}.
We show that if $\str M$ is $1$-dimensional then
$\tup a_1\tup a\ind[\str M]\tup a_0\tup b_0\tup b$, implying $\tup a\ind[\str M]\tup b$, contrary  to Claim~\ref{claim:counting}.

\begin{claim}\label{cl:3}Suppose $\str M$ is $1$-dimensional, $\str N'$ is an elementary extension of $\str N$, and let $\tup a$ be a tuple in $\phi(\str N';\tup a_1)$ and $\tup b$  a tuple in $\psi(\str N';\tup b_0)$. Then 
    \[\tup a_1\tup a\ind[\str M]\tup a_{0}\tup b_0\tup b.\]    
	In particular, $\tup a\ind[\str M]\tup b$.
\end{claim}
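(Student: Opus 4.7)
The plan is to build up the desired independence $\tup a_1\tup a \ind[\str M] \tup a_0\tup b_0 \tup b$ from the base case $\tup a_1 \ind[\str M] \tup a_0\tup b_0$ of Lemma~\ref{lem:preparation}\cref{it:ind-base} by adding elements one at a time: first the elements of $\tup b$ to the right, then the elements of $\tup a$ to the left. At each step, $1$-dimensionality of $\str M$ forces the newly added element onto the left or the right of $\ind[\str M]$, and the properties of $\phi$ and $\psi$ from Lemma~\ref{lem:preparation}, combined with Claim~\ref{cl:2}, will let us exclude the wrong choice.

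To add $\tup b$ on the right, enumerate $\tup b = b_1\cdots b_k$ and assume inductively that $\tup a_1 \ind[\str M] \tup a_0\tup b_0 b_1\cdots b_{i-1}$ holds. Applying $1$-dimensionality to the single element $b_i$ yields either $\tup a_1 b_i \ind[\str M] \tup a_0\tup b_0 b_1\cdots b_{i-1}$ or $\tup a_1 \ind[\str M] \tup a_0\tup b_0 b_1\cdots b_i$. The first option is excluded by the formula $\psi(z;\tup b_0)$, whose parameters lie in $\str M$: it is satisfied by $b_i$ in $\str N'$ since $b_i \in \psi(\str N';\tup b_0)$, yet has no realization in $\str M$ by Lemma~\ref{lem:preparation}\cref{it:no-sol}. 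Hence the second option holds, and after $k$ such steps we reach $\tup a_1 \ind[\str M] \tup a_0\tup b_0\tup b$.

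To then add $\tup a$ on the left, enumerate $\tup a = a_1 \cdots a_\ell$, and assume inductively that $\tup a_1 a_1\cdots a_{j-1} \ind[\str M] \tup a_0\tup b_0\tup b$. By $1$-dimensionality for the element $a_j$, either $\tup a_1 a_1\cdots a_j \ind[\str M] \tup a_0\tup b_0 \tup b$ or $\tup a_1 a_1\cdots a_{j-1} \ind[\str M] \tup a_0\tup b_0\tup b a_j$. Claim~\ref{cl:2}, applied inside $\str N'$ to $a := a_j \in \phi(\str N';\tup a_1)$, supplies the formula $\zeta(\tup y;a_j,\tup a_0)$ of~\eqref{eq:zeta}, which holds of $\tup a_1$ in $\str N'$ but is not satisfiable in $\str M$. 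Its parameters $a_j$ and $\tup a_0$ both lie in $\tup a_0\tup b_0\tup b a_j \cup \str M$, so viewing $\zeta$ as a formula with dummy $z_1,\dots,z_{j-1}$ variables matching $a_1,\dots,a_{j-1}$ excludes the second option. Thus the first option holds, and after $\ell$ iterations we obtain $\tup a_1 \tup a \ind[\str M] \tup a_0\tup b_0\tup b$. The ``in particular'' statement $\tup a \ind[\str M]\tup b$ then follows by monotonicity of $\ind[\str M]$ on both sides, which is immediate from Definition~\ref{def:independence}.

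The only real obstacle is the bookkeeping in the second induction: one must verify at every step that the parameters $a_j, \tup a_0$ of the witness formula $\zeta$ indeed lie in the ``right set'' $\tup a_0\tup b_0\tup b a_j$, which they do because $\tup a_0$ is fixed throughout and $a_j$ is precisely the element placed on the right by the bad option. Invocations of Claim~\ref{cl:2} remain valid inside the elementary extension $\str N'$ of $\str N$ because its proof only uses Lemma~\ref{lem:preparation}\cref{it:same-types} and \cref{it:inconsistent}, both of which transfer from $\str N$ to $\str N'$ by elementarity.
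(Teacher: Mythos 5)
Your proof is correct and takes essentially the same approach as the paper: induct on the lengths of $\tup a$ and $\tup b$ from the base case $\tup a_1\ind[\str M]\tup a_0\tup b_0$, and at each step use $1$-dimensionality together with property~\cref{it:no-sol} (for adding a coordinate of $\tup b$) or Claim~\ref{cl:2} (for adding a coordinate of $\tup a$) to rule out the wrong branch. One small slip worth flagging: when excluding the first option in the $b_i$-step, you say that the parameters of $\psi(z;\tup b_0)$ ``lie in $\str M$'' --- but $\tup b_0$ need not (and in fact will not) lie in $\str M$; what actually matters, and what makes the argument go through, is that $\tup b_0$ lies in the right-hand set $\tup a_0\tup b_0 b_1\cdots b_{i-1}$, so $\psi(z;\tup b_0)$ is an admissible witnessing formula for $\nind[\str M]$ in the sense of Definition~\ref{def:independence}. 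Your bookkeeping for the $\tup a$-step correctly identifies the analogous requirement, so the slip appears to be only a misstatement rather than a misunderstanding.
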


\begin{proof}
	 We show the statement by induction on the length of $\tup a$ and $\tup b$. The base case where $\tup a$ and $\tup b$ are empty is given by  property~\cref{it:ind-base} in Lemma~\ref{lem:preparation}.

Assume we know the result for $\tup a, \tup b$ and we want to add an element $b\in\psi(\str N';\tup b_0)$ to $\tup b$. By 1-dimensionality, one of the two cases holds: \[\tup a_1\tup a\,  b \ind[\str M]\tup a_0\tup b_0\tup b\qquad\text{or}\qquad \tup a_1\tup a   \ind[\str M]\tup a_0\tup b_0\tup b\,b.\] 
Note that property~\cref{it:no-sol} implies $b\nind[\str M]\tup b_0$, excluding the first case,
% The first case is excluded by Claim~\ref{cl:1}, 
so the second case must hold, as required.

% We show that $\psi(x;\bar b_0)$ is not satisfiable in $\str M$, which excludes the first case.
% If $\psi(m;\bar b_0)$ for some $b\in\str M$, by indiscernibility of $(\bar b_j:j<\omega)$ over $\str M$, we have that $\psi(m;\bar b_j)$ holds for all $j$. This contradicts pairwise inconsistency of $(\psi(x;\bar b_j):j<\omega)$.
% Hence the second case occurs, as required.

Now assume we want to add $a\in\phi(\str N';\tup a_1)$ to $\tup a$. 
% Consider the formula \[ \zeta(\tup y;a,\tup a_0) := \phi(a;\tup y)\land \neg\exists x. \phi(x; \bar y)\land \phi(x; \bar a_0).\] Then $\zeta(\tup a_1;a,\tup a_0)$ holds since $\phi(x;\tup a_1)$ and $\phi(x;\tup a_0)$ are inconsistent. Assume that there is some $\tup a' \in \str M^{\tup y}$ such that $\zeta(\tup a';a,\tup a_0)$ holds. Then \[\exists x.\phi(x;\tup a')\wedge \phi(x;\tup a_1)\] holds in $\str N'$, as witnessed by $x=a$. By indiscernibility of the sequence $(\tup a_i:i<\omega)$ over $\str M$, \[\exists x.\phi(x;\tup a')\wedge \phi(x;\tup a_0)\] holds in $\str N'$. But that contradicts  $\zeta(\tup a';a,\tup a_0)$. Thus  $\zeta(\tup y;a,\tup a_0)$ is not satisfiable in $\str M$. 
% In particular, $\tup a_1\nind[\str M]\tup a_0a$.
By  1-dimensionality, 
\[\tup a_1 \tup a\, a \ind[\str M]\tup a_0\tup b_0 \tup b \qquad\text{or}\qquad \tup a_1\tup a\ind[\str M]\tup a_0\tup b_0\tup b a,\]
but the second possibility is excluded by Claim~\ref{cl:2}, and the first one concludes the inductive step. This proves Claim~\ref{cl:3}.
\end{proof}

\medskip
Since there are $\tup a\in \phi(\str N';\tup a_1)$ and $\tup b\in\psi(\str N';\tup b_0)$ with
$\tup a\nind[\str M]\tup b$ by Claim~\ref{claim:counting}, Claim~\ref{cl:3} implies $\str M$ is not $1$-dimensional.
This finishes the proof of Theorem~\ref{thm:restrained}.

\appendix

\section{Ordered matchings interpret all graphs}\label{app:matchings}
Here we give a proof of the following.
\begin{replemma}{lem:matchings are dependent}The class $\mathscr M$ of ordered matchings efficiently interprets the class of all graphs.
\end{replemma}
\begin{proof}We prove that the class of all  graphs interprets in $\mathscr M$. 
  Before describing the interpretation of graphs in ordered matchings, we show how the ordered matching $M_G$ corresponding to an ordered  graph $G$ is constructed, in polynomial time. 
	
	Let $G$ be an ordered graph with vertices $v_1<\dots<v_n$ and edges $e_1,\dots,e_m$.
	For $i\in[n]$ and $1\leq j\leq {\rm d}(v_i)$ we define $\epsilon_{i,j}$ as the index of the $j$th edge incident to $v_i$.
	The left vertices of $M_G$ will be
	(in order)	$v_1,\dots, v_n,x,{e_1'}^-,e_1',{e_1'}^+,\dots,{e_m'}^-,e_m',{e_m'}^+$, and $y'$.
	The right vertices of $M_G$ will be 
	(in order) $x'$, $\epsilon_{n,1},\dots,\epsilon_{n,{\rm d}(v_n)},v_n',\dots$, $\epsilon_{1,1},\dots,\epsilon_{1,{\rm d}(v_n)},v_1'$,$y,e_m,\dots,e_1$. The matching $M_G$ matches $v_i$ and $v_i'$, $x$ and $x'$, $y$ and $y'$, $e_i'$ and $e_i$, and finally $\epsilon_{i,j}$ either with ${e_{\epsilon_{i,j}}'}^-$ or ${e_{\epsilon_{i,j}}'}^+$, depending on whether $v_i$ is the smallest or biggest incidence of $e_{\epsilon_{i,j}}$  (see~\cref{fig:M2G}).

\begin{figure}[ht]
	\begin{center}
		\includegraphics[width=\textwidth]{}
	\end{center}
	\caption{Encoding of a graph in a matching. }
	\label{fig:M2G}
\end{figure}

We now prove that there is a simple interpretation $\mathsf G$, which reconstructs $G$ from $M_G$.
First note that $x'$ is definable as the minimum vertex adjacent to a smaller vertex, and $y'$ is definable as the maximum vertex adjacent to a bigger vertex. Also, $x$ is definable from $x'$ and $y$ is definable from $y'$.
%(To define these vertices using a formula defining a function in any ordered graph, $x$ may for example be defined as the minimum neighbor of $x'$ if $x'$ as at least a neighbor, and $x'$ otherwise.) 
Now we can define $v_1,\dots,v_n$ to be the vertices smaller than $x$, ordered with the order of $M_G$. 
	Two vertices $v_i<v_j<x$ are adjacent 
	in the interpretation if there exists an element $e_k>y$ adjacent to a vertex $e_k'$ preceded in the order by an element ${e_k'}^-$ 
	and followed in the order by an element ${e_k'}^+$ with the following properties:
	${e_k'}^-$ is adjacent to a vertex $z^-$ strictly between the neighbor $v_i'$ of~$v_i$ and the neighbor of the successor of $v_i$ in the order and, similarly, ${e_k'}^+$ is adjacent to a vertex $z^+$ strictly between the neighbor $v_j'$ of~$v_j$ and the neighbor of the successor of $v_j$ in the order.
\end{proof}

\section{Reducing the model checking problem for matrices to structures}\label{app:reduction}
In this appendix, we prove:
\begin{replemma}{lem:reducing matrices to structures}
  Let $\CC$ be a class of ordered binary structures and let $\mathcal M=\setof{M(\str A)}{\str A\in\CC}$ be the class of adjacency matrices of structures in $\CC$.  
  Then there is an $\FPT$~reduction of the \FO~model checking problem for $\cal M$ to the \FO~model checking problem for $\CC$.
  \end{replemma}
  \begin{proof}
    Let $\phi$ be a sentence in the signature of $\cal M$. Hence, $\phi$ may use the unary relations $R$ and $C$ denoting the rows/columns respectively, the total order $<$, as well as binary relations $E_\tau(x,y)$, for each atomic type $\tau \in A_\Sigma$. 
    Without loss of generality, by rewriting the sentence $\phi$ to an equivalent one if necessary, we may assume that the variables are partitioned into \emph{row variables} or a \emph{column variables}, and for every subformula of $\phi$ of the form $\exists x.\psi$, the subformula $\psi$ is of the form $R(x)\land\psi'(x)$ if $x$ is a row variable, and $\psi$ is of the form 
    $C(x)\land\psi'(x)$ if $x$ is a column variable.
  
    In this case, construct from $\phi$ a formula $\phi'$ in the signature of $\CC$ as follows:
  \begin{itemize}
    \item replace each atom $R(x)$ or $C(x)$ by $\top$ (denoting true),
    \item if $x$ is a row variable and $y$ is a column variable then replace each atom $x=y$ or $y=x$ by $\bot$ (denoting false),
     each atom $E_\tau(x,y)$ or $E_\tau(y,x)$ by $\tau(x,y)$,
    \item if $x,y$ are both row variables or both column variables, then replace each atom $E_\tau(x,y)$ by $\bot$.
  \end{itemize}
  Then $\phi$ holds in $M(\str A)$ if and only if $\phi'$ holds in $\str A$. This yields the reduction.
  \end{proof}

\section{Model theoretic preliminaries}\label{app:model-theory}

\subsection{Basic notions from model theory}\label{sec:basic}

\subparagraph{Models and theories.} In model theory, structures are called \emph{models},
and we will therefore denote them $\str M,\str N$, etc.
They will typically be infinite.

A (first-order) \emph{theory} is a set $T$ of sentences over a fixed signature. A \emph{model of a theory} $T$ is a model $\str M$ (finite or not) which satisfies all the sentences in $T$, which is denoted $\str M\models T$. We say that $T$ \emph{has a model} if there is some model $\str M$ of $T$.

\emph{The theory} of a class of structures $\CC$ 
is the set $T$ of all sentences $\phi$ such that $\str S\models \phi$ for all $\str S\in \CC$. Trivially, every structure in $\CC$ is a model of $T$, but typically, $T$ has also other models. 
Those can be constructed using the compactness theorem:

\begin{theorem}[Compactness of first-order logic]\label{thm:compactness}
    Let $T$ be a theory such that  every finite subset $T'\subset T$ has a model. Then $T$ has a model.
\end{theorem}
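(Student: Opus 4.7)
The plan is to prove compactness via the Henkin construction, building an explicit model from syntax. First I would expand the signature to obtain a \emph{Henkinization}: for every formula $\phi(x)$ with one free variable (in an appropriately enlarged signature), I introduce a fresh constant symbol $c_\phi$ and add the axiom $(\exists x.\phi(x)) \rightarrow \phi(c_\phi)$. Since each such axiom only introduces one new constant, a standard iterative argument (repeating the construction $\omega$ many times to account for formulas over the new constants) shows that the resulting theory $T^H$ over the enlarged signature $\Sigma^H$ is still finitely satisfiable: any finite fragment of $T^H$ can be satisfied inside a model of a suitable finite fragment of $T$ by interpreting the relevant Henkin constants using witnesses supplied by existential axioms.

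Next I would extend $T^H$ to a \emph{maximally finitely satisfiable} theory $T^\ast$ over $\Sigma^H$ using Zorn's lemma on the poset of finitely satisfiable theories extending $T^H$, ordered by inclusion. Maximality gives the usual syntactic closure properties: for every $\Sigma^H$-sentence $\phi$, exactly one of $\phi$ or $\neg\phi$ lies in $T^\ast$; $\phi \land \psi \in T^\ast$ iff $\phi,\psi \in T^\ast$; and by the Henkin axioms, $\exists x.\phi(x) \in T^\ast$ iff $\phi(c) \in T^\ast$ for some closed $\Sigma^H$-term $c$.

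Now I would construct the \emph{canonical term model} $\str M$ whose domain is the set of closed $\Sigma^H$-terms quotiented by the equivalence relation $s \sim t \iff (s = t) \in T^\ast$ (which is an equivalence relation and a congruence for all function and relation symbols, by maximality). Relations and functions are interpreted in the obvious way on equivalence classes. The heart of the proof is the \emph{truth lemma}, proved by induction on the structure of $\Sigma^H$-sentences: $\str M \models \phi \iff \phi \in T^\ast$. Atomic and Boolean cases are routine from maximal finite satisfiability; the quantifier case is precisely where the Henkin axioms are needed, since they guarantee that any existentially witnessed sentence has a witness among the closed terms, i.e., among the elements of $\str M$. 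Restricting $\str M$ to its $\Sigma$-reduct then yields a model of $T$.

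The main obstacle is the Henkinization step: one must argue carefully that adding the witness axioms does not break finite satisfiability, since new constants appear simultaneously with new existentials that refer to them. Handling this cleanly requires iterating the construction along $\omega$ and, at each finite stage, interpreting the freshly introduced Henkin constants inside a model of the finite fragment under consideration using any element witnessing the corresponding existential (or an arbitrary element, if the existential fails). Once this bookkeeping is in place, the rest of the argument is purely syntactic and proceeds by the routine inductive verification of the truth lemma.
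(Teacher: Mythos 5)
The paper does not prove this theorem: it is the classical compactness theorem of first-order logic, recalled in the appendix as a standard tool (used, e.g., in Lemmas C.2 and C.5 and to pass to elementary extensions and saturated models). Your proposal is a correct and complete sketch of the standard Henkin-style proof: iterated Henkinization along $\omega$ to preserve finite satisfiability while introducing witness constants, a Zorn-lemma extension to a maximally finitely satisfiable theory, the term model on closed terms modulo the definable congruence, the truth lemma by induction on formula structure with the existential step handled by the Henkin axioms, and finally taking the $\Sigma$-reduct. You correctly flag the one place where care is needed (showing the Henkin axioms do not destroy finite satisfiability) and the standard fix (stage the construction so that each new constant is introduced after the existential it witnesses, and interpret fresh constants in a model of the relevant finite fragment). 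Nothing is missing. One could also prove compactness via ultraproducts and {\L}o\'s's theorem, which is a genuinely different route and is more in the spirit of the ultrafilter machinery this paper uses elsewhere (cf.\ the average types $\mathrm{Av}_F$ in Appendix C.2), but for an argument self-contained at the level of this appendix the Henkin proof is the natural choice.
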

For example, let $\CC$ be a class of structures over a signature $\Sigma$, and assume that 
$\CC$ contains structures of arbitrarily large finite size.
Then the models of the theory of $\CC$ also include infinite models of arbitrarily large cardinality.
To see this, consider the theory $T$ of $\CC$
and let $\Sigma'$ extend the signature of $\CC$ 
by an arbitrary set  $C$ of constant symbols. For $c,d\in C$,
let $\phi_{cd}$ be the $\Sigma'$-sentence $c\neq d$.
Then $T\cup\setof{\phi_{cd}}{c,d\in C,c\neq d}$ satisfies the assumption of the compactness theorem, so it has a model $\str M$, and this model has at least the cardinality of $C$.

\subparagraph{Elementary extensions.}
Let $\str M,\str N$ be two models such that the domain of $\str M$ is contained in the domain of $\str N$.
Then $\str N$ is an \emph{elementary extension} of $\str M$,
written $\str M\prec \str N$,
if  for  every 
formula $\phi(\tup x)$ and tuple $\tup a\in\str M^{\tup x}$ of elements of $\str M$,
\[\str M\models\phi(\tup a)\text{\quad if and only if \quad}\str N\models \phi(\tup a).\] In other words, it doesn't matter if we evaluate formulas in $\str M$ or in $\str N$.

A typical way of constructing an elementary extension of $\str M$ 
is by considering the following theory, called the \emph{elementary diagram} of $\str M$. Let $\Sigma$ be the signature of $\str M$,
and let $\Sigma'=\Sigma\cup\str M$, where the elements of $\str M$ are viewed as constant symbols.

     For a $\Sigma$-formula $\phi(\tup y)$ and tuple $\tup a\in \str M^{\tup y}$ write $\phi(\tup a)$ for the $\Sigma'$-sentence 
     obtained by replacing the variables in $\tup y$ by constants in $\str M$, according to $\tup a$. 
     Let $T$ be the $\Sigma'$-theory consisting of all sentences $\phi(\tup a)$, for all $\Sigma$-formulas $\phi(\tup x)$ and tuples $\tup a$ such that $\str M\models\phi(\tup a)$.

Pick a model $\str N'$ of $T$, and 
let $\str N$ denote the $\Sigma$-structure obtained from $\str N'$ by forgetting the constants in $\str M\subset \Sigma'$.
    The interpretation of the constants $m\in\str M$ of $\Sigma'$ in $\str N'$ yields a function $i\from \str M\to \str N$.
    By the definition of $T$,
    for any formula $\phi(\tup y)$ and tuple  $\tup a\in\str M^{\tup y}$, 
    \[\str M\models\phi(\tup a)\text{ if and only if }\str N\models \phi(i(\tup a)).\]
    Therefore, we may view (identyfing each $m\in\str M$ with  $i(m)\in\str N$) the $\Sigma$-structure $\str N$ as an elementary extension of $\str M$.
    
    Reassuming, models of the elementary diagram of $\str M$ correspond precisely to elementary extensions of $\str M$. In particular, by extending the elementary diagram of $\str M$ by an arbitrary set of constants, from compactness we get that $\str M$ has elementary extensions of arbitrarily large cardinality (unless $\str M$ is finite). More generally, we have the following.

\begin{lemma}\label{lem:many-classes}
    Let $\str M$ be a model and let $\alpha(\tup x,\tup x')$ be a formula with $|\tup x|=|\tup x'|$ defining an equivalence relation in $\str M$ with infinitely many classes. Then for every cardinality $\mathfrak n$
    there is an elementary extension $\str N\succ\str M$
    in which 
    $\alpha$ defines an equivalence relation with at least $\mathfrak n$ equivalence classes. 
\end{lemma}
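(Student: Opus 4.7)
The plan is a standard compactness argument. First, I would work with the signature $\Sigma'$ obtained from $\Sigma$ by adding, for each ordinal $i<\mathfrak n$, a fresh tuple of constant symbols $\bar c_i$ (of length $|\tup x|$). Let $T$ be the elementary diagram of $\str M$ in the language $\Sigma\cup\str M$, and consider the $\Sigma'\cup\str M$-theory
\[
T^{*} \;=\; T \;\cup\; \bigl\{\neg\alpha(\bar c_i,\bar c_j)\;:\; i<j<\mathfrak n\bigr\}.
\]
Any model of $T^{*}$ (viewed as a $\Sigma$-structure, with the constants from $\str M$ used to identify $\str M$ as an elementary substructure) is an elementary extension $\str N\succ \str M$ in which the interpretations of the $\bar c_i$ are pairwise $\alpha$-inequivalent, hence witnesses at least $\mathfrak n$ equivalence classes of $\alpha$ in $\str N$.

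By the compactness theorem (Theorem~\ref{thm:compactness}), it suffices to show every finite subset $T_0\subseteq T^{*}$ has a model. Such a $T_0$ mentions only finitely many of the new constant tuples, say $\bar c_{i_1},\dots,\bar c_{i_k}$. Since $\alpha$ defines an equivalence relation on $\str M^{\tup x}$ with infinitely many classes by hypothesis, I can pick tuples $\bar a_1,\dots,\bar a_k\in\str M^{\tup x}$ lying in pairwise distinct $\alpha$-classes of $\str M$. Interpreting $\bar c_{i_j}$ as $\bar a_j$ and every other new constant arbitrarily turns $\str M$ itself (as a $\Sigma\cup\str M$-structure enriched by these constant interpretations) into a model of $T_0$: the diagram sentences hold because we are in $\str M$, and the inequivalences $\neg\alpha(\bar c_{i_j},\bar c_{i_{j'}})$ for $j\neq j'$ hold by the choice of the $\bar a_j$.

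The main (and only) subtlety is a bookkeeping one rather than a mathematical obstacle: one must be careful that the language $\Sigma'\cup\str M$ has enough constants to produce a model of arbitrary prescribed cardinality $\mathfrak n$, but this is built into the standard formulation of compactness (which places no cardinality restriction on the language). Thus compactness yields a model of $T^{*}$, and its $\Sigma$-reduct is the desired elementary extension $\str N\succ\str M$ in which $\alpha$ has at least $\mathfrak n$ equivalence classes, completing the proof.
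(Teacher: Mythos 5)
Your proposal is correct and matches the paper's proof essentially step for step: you form the theory consisting of the elementary diagram of $\str M$ together with the inequivalence sentences $\neg\alpha(\bar c_i,\bar c_j)$ for fresh constants, verify finite satisfiability inside $\str M$ using the infinitely many $\alpha$-classes, and invoke compactness. The only cosmetic difference is that the paper first reduces to $|\tup x|=1$ ``to simplify notation'' while you carry the tuples of constants through directly, which is equally valid and arguably slightly cleaner.
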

\begin{proof}
To simplify notation, assume that $|\tup x|=|\tup x'|=1$.
The  case of $|\tup x|=|\tup x'|=k>1$ proceeds similarly, or can be deduced from the case $k=1$ by extending the domain of $\str M$ by $\str M^k$ and the $k$ projection functions.

Let $\alpha(x,x')$ be  formula defining an equivalence relation ${\sim}$ in $\str M$ with infinitely many classes.
Let $\Sigma$ be the signature of $\str M$.
Fix any set of constants $C$ and let $\Sigma'$ extend  $\Sigma$  by   $C\cup\str M$,
where all the added elements are  constant symbols. For any $c,d\in C$ consider the $\Sigma'$-sentence $\phi_{cd}=\neg\alpha(c, d)$. 
Let $T$ be the $\Sigma'$-theory consisting of:
\begin{itemize}
    \item the sentences $\phi_{cd}$, for all $c\neq d$ in $C$,
    \item the elementary diagram of $\str M$.
\end{itemize} 
We show  that every $T'\subset T$ containing finitely many sentences of the form $\phi_{cd}$ has a model. Let $C'\subset C$ be the finite set of constants appearing in the sentences $\phi_{cd}\in T'$. Let $\str M'$  be the model $\str M$ together with 
each constant $c$ in $\str M\subset \Sigma'$ interpreted as the corresponding element  $c\in \str M$,
and
constants in $C'$ interpreted as pairwise $\sim$-inequivalent elements of $\str M$, and constants in $C\setminus C'$ interpreted as arbitrary elements of $\str M$. This can be done, since there are infinitely many pairwise $\sim$-inequivalent elements in $\str M$. This shows that $T'$ has a model.

By compactness, $T$ has a model $\str N'$. 
This model can be seen as an elementary extension of $T$ together with 
a set of $|C|$ elements which are pairwise inequivalent with respect to the equivalence relation defined by $\alpha$ in $\str N$. Since $C$ was taken arbitrary, this proves the lemma.
\end{proof}

% \subparagraph{Types.}
% Let $\tup x$ be a set of variables.
% A \emph{type} $p(\tup x)$ with variables $\tup x$ 
% is a set of formulas $\phi(\tup x)$ with free variables $\tup x$. A tuple $\tup a\in\str S^{\tup x}$ satisfies a type $p(\tup x)$ if $\str S\models\phi(\tup a)$ for all $\phi\in p(\tup x)$.
% Every tuple $\tup a\in\str S^{\tup x}$ has a largest type,
% namely the set of all formulas $\phi(\tup x)$ such that $\str S\models\phi(\tup a)$. This type is called \emph{the type} of $\tup a$ in $\str S$, and is denoted $\tp(\tup a)$.

% A type with variables $\tup x$ over a signature $\Sigma$ may be seen as a theory over the signature $\Sigma\cup\tup x$,
% where the elements of $\tup x$ are treated as constants. 

\subparagraph{Parameters.}
Let $\str M$ be a model over a signature $\Sigma$ and let $A\subset \str M$ be a set of elements. We may view $\str M$ 
as a model over a signature $\Sigma\cup A$, where the elements of $A$ are seen as constant symbols, interpreted in $\str M$ in the expected way: a constant $a\in A$ is interpreted as the element $a\in\str M$. We call the elements of $A$ \emph{parameters} in this context.
A $\Sigma$-formula \emph{with parameters} from $A$ is a formula over the signature $\Sigma\cup A$.

\subparagraph{Types.}
A \emph{type} with variables $\tup x$ and parameters from $A$,
or a \emph{type over} $A$
is a set $p$ of formulas $\phi(\tup x)$ with parameters from $A$. We may write $p(\tup x)$ to indicate that $p$ has variables~$\tup x$.

If $p(\tup x)$ is a type over $A$ and $B\subset A$ then $p|B$ denotes the subset of $p$ consisting of all formulas with parameters from $B$.
If $\tup b\in\str M^{\tup x}$ is a tuple of elements of $\str M$ then \emph{the type}  of $\tup b$ \emph{over} $A$  in $\str M$ 
is the set of formulas $\phi(\tup x)$ with parameters from $A$ that are satisfied by $\tup b$ in $\str M$. 
This type is denoted $\tp(\tup b/A)$ or $\tp_{\tup x}(\tup b/A)$. Note that 
$\tp(\tup b/A)$ is related to the notion of $\theta$-types
as follows, for every formula $\theta(\tup x;\tup y)$ and tuple $\tup a\in A^{\tup y}$:
\[\theta(\tup x;\tup a)\in \tp(\tup b/A)
\iff \tup a\in \tp^\theta(\tup b/A).\]
In particular, $\tp(\tup b/A)$ is uniquely determined by $\setof{\tp^\theta(\tup b/A)}{\theta(\tup x;\tup y)\text{ is a formula}}$.

A type $p(\tup x)$ is \emph{satisfiable in a set} $C$ if there is some tuple $\tup c\in C^{\tup x}$ which satisfies all the formulas in $p$. 
A type $p(\tup x)$ with parameters from $A\subset \str M$ is \emph{satisfiable} if it is satisfiable 
in some elementary extension $\str N$ of $\str M$.
By compactness, this is equivalent to saying that for any finite conjunction $\phi(\tup x)$ of formulas in $p(\tup x)$ we have $\str M\models\exists \tup x.\phi(\tup x)$.

A type $p(\tup x)$ with parameters from $A$ is 
\emph{complete} if it is satisfiable and for every formula
$\phi(\tup x)$ with parameters from $A$, either $\phi$ or $\neg \phi$ belongs to $p$. Equivalently, $p(\tup x)$ is the type over $A$ of  some tuple $\tup b\in \str N^{\tup x}$, for some elementary extension $\str N$ of $\str M$.
We sometimes say that a type is \emph{partial} to emphasise that it may not be complete.
We denote the set of complete types with variables $\tup x$ and parameters from $A$ by $\St[\tup x](A)$ or simply $\St(A)$, if  $\tup x$ is understood from the context.
Note that we have ommitted the model $\str M$ from the notation. Indeed,
 if $\str M'$ is a model containing the parameters $A$
and satisfying the same sentences with parameters from $A$
as $\str M$, then $\str M$ and $\str M'$ have 
identical sets of complete types $p(\tup x)$ with parameters from $A$.
 Hence, $\St[\tup x](A)$ does not depend on $\str M$, but only on the set of sentences satisfied by $A$ in $\str M$.

\subsection{Finite satisfiability}
A (partial) type $p(\tup x)$ with parameters from $A$ is \emph{finitely satisfiable} in $C$ if every finite subset $p'\subset p$  is satisfiable in $C$. 
Note that $\tup a\ind[\str M]B$ (cf. Def.~\ref{def:independence}) if and only if $\tp(\tup a/\str MB)$ is finitely satisfiable in~$\str M$.

\begin{lemma}\label{lem:satisfaction}
    A type $p(\tup x)$ with parameters from $\str M$ 
    is finitely satisfiable in $\str M$ if and only if it is satisfiable. Consequently, $\tup a\ind[\str M]\str M$ for all $\tup a$ in an elementary extension of $\str M$.
\end{lemma}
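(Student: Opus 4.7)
My plan is to handle each direction of the equivalence separately using standard tools from Subsection~\ref{sec:basic}, and then derive the consequence immediately.

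First, for the easy direction, suppose $p(\tup x)$ is satisfiable, meaning there is an elementary extension $\str M \prec \str N$ and a tuple $\tup b \in \str N^{\tup x}$ satisfying all formulas in $p$. Given any finite $p' = \{\phi_1(\tup x), \ldots, \phi_n(\tup x)\} \subseteq p$, the sentence $\exists \tup x.\bigwedge_{i=1}^n \phi_i(\tup x)$ has parameters from $\str M$ and is witnessed by $\tup b$ in $\str N$. By elementarity ($\str M \prec \str N$), this existential sentence already holds in $\str M$, so $p'$ is satisfiable in $\str M$. Hence $p$ is finitely satisfiable in $\str M$.

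For the converse, suppose $p(\tup x)$ is finitely satisfiable in $\str M$. Let $\Sigma$ be the signature of $\str M$ and extend it to $\Sigma' = \Sigma \cup \str M \cup \{\tup c\}$, where $\tup c$ is a tuple of fresh constant symbols of the same length as $\tup x$. Consider the $\Sigma'$-theory
\[
T \;=\; \mathrm{ElDiag}(\str M) \;\cup\; \setof{\phi(\tup c)}{\phi(\tup x) \in p},
\]
where each formula $\phi(\tup c)$ is obtained from $\phi(\tup x)$ by substituting $\tup c$ for $\tup x$. Any finite $T_0 \subseteq T$ mentions only finitely many formulas $\phi_1(\tup c), \ldots, \phi_n(\tup c)$ coming from $p$; since $p$ is finitely satisfiable in $\str M$, there exists $\tup b \in \str M^{\tup x}$ with $\str M \models \bigwedge_i \phi_i(\tup b)$. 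Interpreting $\tup c$ as $\tup b$ turns $\str M$ into a model of $T_0$. By compactness (Theorem~\ref{thm:compactness}), $T$ has a model; forgetting the interpretation of $\tup c$ and the new constants from $\str M$ as discussed in Subsection~\ref{sec:basic} yields an elementary extension $\str N \succ \str M$ in which the interpretation of $\tup c$ satisfies $p$. Thus $p$ is satisfiable.

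Finally, for the consequence, fix $\str M \prec \str N$ and $\tup a \in \str N^{\tup x}$, and let $p(\tup x) = \tp(\tup a/\str M)$. This type is satisfied by $\tup a$ in $\str N$, hence is satisfiable, so by the equivalence it is finitely satisfiable in $\str M$. Unwinding Definition~\ref{def:independence} with $B = \str M$: if $\phi(\tup x)$ is a formula with parameters from $\str M \cup \str M = \str M$ and $\str N \models \phi(\tup a)$, then $\phi \in p$, so $\phi$ is satisfiable in $\str M$ by some $\tup c \in \str M^{\tup x}$, and elementarity then gives $\str N \models \phi(\tup c)$. Therefore $\tup a \ind[\str M] \str M$. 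I do not foresee any obstacle; the only subtlety is keeping the roles of $\str M$-parameters, the new constants $\tup c$, and the elementary diagram cleanly separated when invoking compactness.
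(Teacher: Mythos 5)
Your proof is correct and follows essentially the same two-step strategy as the paper: the easy direction uses elementarity to pull an existential witness for any finite conjunction back into $\str M$, and the hard direction adds fresh constants and applies compactness to the union of the elementary diagram with the type's formulas. You also spell out the ``Consequently'' clause, which the paper leaves implicit, and that derivation is sound.
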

\begin{proof}
    For the right-to-left implication, assume that $p$ is satisfied by some tuple $\tup c\in\str N^{\tup x}$ for some elementary extension $\str N$ of $\str M$. Pick a finite $p'\subset p$, and suppose $p'=\set{\phi_1,\ldots,\phi_k}$. 
    Consider the formula $\psi:=\phi_1\land\cdots\land \phi_k$.
    Note that $\psi$ may use some parameters from $\str M$.
    So we may write $\psi$ as $\psi=\psi'(\tup x,\tup a)$ 
    where $\psi'(\tup x,\tup z)$ is a formula and $\tup a\in \str M^{\str z}$. 
    
    The formula $\exists_{\tup x}\psi'(\tup x,\tup a)$ holds in $\str N$, as witnessed by $\tup c$. As $\str N$ is an elementary extension of $\str M$, this formula also holds in $\str M$.
    So there is some $\tup m\in \str M$ satisfying $\psi'(\tup b,\tup a)$. Therefore, $p'$ is satisfied by $\tup m$ in $\str M$, proving that $p$ is finitely satisfiable in $\str M$.

    \medskip
    The left-to-right implicaiton is a basic application of the compactness theorem. 
    
    Consider the signature $\Sigma'=\Sigma\cup \str M\cup\tup x$
    extending $\Sigma$ by constant symbols for each element of $\str M$ and each variable in $\tup x$.
    Let $T$ be the theory over $\Sigma'$ consisting of:
    \begin{itemize}
        \item For every formula $\phi(\tup x)\in p$, 
         the $\Sigma'$-sentence obtained from $\phi(\tup x)$ by viewing each parameter $a\in \str M$ as the constant $a\in\str M\subset \Sigma'$, and each variable $x\in\tup x$ as the constant  $x\in \tup x\subset \Sigma'$. 
        \item the elementary diagram of $\str M$.
    \end{itemize}

    Then every finite subset $T'$ of $T$ has a model.
    Indeed, let $p'$ be the set of formulas $\phi(\tup x)$ which occur (as $\Sigma'$-sentences) in $T'$.
    Since $p(\tup x)$ is finitely satisfiable in $\str M$,
    $p'(\tup x)$ is satisfied by some tuple $\tup m\in\str M^{\tup x}$. The pair $(\str M,\tup m)$ may be seen as a $\Sigma'$-structure, where a constant $m\in\str M$ is interpreted by the corresponding element of $\str M$,
    and a constant $x\in\tup x$ is interpreted as $\tup m(x)$.
    Then $(\str M,\tup m)$ is a model of $T'$.

    By compactness, $T$ has a model $\str N'$. 
    This model can be seen as an elementary extension $\str N$ of $\str M$ together with a tuple $\tup c\in\str N^{\tup x}$ of elements (obtained by the interpretation of the constants $\tup x$ in $\str N'$), such that 
$\str N\models \phi(\tup c)$ for every formula $\phi(\tup x)\in p$.
Hence, $\tup c$ satisfies $p(\tup x)$ in $\str N.$
\end{proof}

\subparagraph{Finite satisfiability  and filters.}
Recall that a \emph{filter} on a set $U$ is a nonempty set $F\subset P(U)$ that is closed under taking supersets (if $A\subset B$ then $A\in F$ implies $B\in F$), under binary intersections, and does not contain the empty set. A filter is an \emph{ultrafilter} if for every $A\subset U$, either $A\in F$ or $U\setminus A\in F$. Every filter is contained in some ultrafilter, by the Kuratowski-Zorn lemma.

% Note that a complete type $p(\tup x)\in S^{\tup x}_{\str M}(A)$ with parameters from $A\subset \str M$ 
% is the same thing as an ultrafilter on the set of $A$-definable subsets of $\str M^{\tup x}$, where
% an \emph{$A$-definable} subset of $\str M^{\tup x}$ is a subset of the form $\phi(\str M)$, for some formula $\phi(\tup x)$ with parameters from $A$. 

\medskip

Let $\str N$ be a model, $A\subset\str N$ be a set and $\tup x$ be a set of variables. Fix a filter $F$  on $A^{\tup x}$.
The \emph{average (partial) type}  over $\str N$ is the partial type denoted $\Av_{F}(\tup x)$ such that for every formula $\phi(\tup x)$ with parameters from $\str N$,
\[ \phi(\tup x)\in \Av_{F}(\tup x) \iff \{\tup a \in A^{\tup x} : \str N \models \phi(\tup a)\} \in F.\]

This is a consistent partial type: if say $\phi_1(\tup x),\ldots,\phi_n(\tup x) \in \pi(\tup x)$, then since any finitely many elements of $F$ have non-empty intersection, there is $\tup a\in A^{\tup x}$ which satisfies the conjunction $\phi_1(\tup x)\wedge \cdots \wedge \phi_n(\tup x)$. Hence this conjunction is consistent, indeed we have shown that $\Av_F(\tup x)$ is finitely satisfiable in $A$.

If $F$ is an ultrafilter on $A^{\tup x}$, then $\Av_F(\tup x)$ is a complete type: for every formula $\phi(\tup x)$, either $\phi(\tup x)\in \Av_F(\tup x)$ or $\neg \phi(\tup x)\in \Av_F(\tup x)$.

\begin{lemma}\label{lem:fs filters}
Let $\pi(\tup x)$ be a partial type, then $\pi(\tup x)$ is finitely satisfiable in $A$  if and only if there is a filter $F$ on $A^{\tup x}$ such that $\pi(\tup x) \subseteq \Av_F(\tup x)$.
\end{lemma}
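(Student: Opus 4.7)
The plan is to prove both directions by unwinding the definitions of filter and of the average type, with the key observation that ``finite satisfiability'' corresponds exactly to the ``finite intersection property'' for the associated family of solution sets in $A^{\tup x}$.

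For the right-to-left direction, assume that $\pi(\tup x) \subseteq \Av_F(\tup x)$ for some filter $F$ on $A^{\tup x}$. Given any finite subset $\{\phi_1(\tup x),\ldots,\phi_n(\tup x)\}\subseteq \pi(\tup x)$, by definition of $\Av_F$ each of the solution sets $S_i := \{\tup a\in A^{\tup x}:\str N\models \phi_i(\tup a)\}$ lies in $F$. Since filters are closed under finite intersections, $S_1\cap\cdots\cap S_n\in F$, and since filters do not contain $\emptyset$, this intersection is nonempty. Any witness in it is a tuple of $A$ satisfying every $\phi_i$ simultaneously, so $\pi$ is finitely satisfiable in $A$.

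For the left-to-right direction, assume $\pi(\tup x)$ is finitely satisfiable in $A$, and consider the family $\mathcal S := \{S_\phi : \phi\in\pi\}$ where $S_\phi := \{\tup a\in A^{\tup x}:\str N\models \phi(\tup a)\}$. Finite satisfiability of $\pi$ in $A$ is exactly the statement that $\mathcal S$ has the finite intersection property, so the collection
\[ F := \{X\subseteq A^{\tup x}: X\supseteq S_{\phi_1}\cap\cdots\cap S_{\phi_n}\text{ for some }\phi_1,\ldots,\phi_n\in\pi\} \]
is a proper filter on $A^{\tup x}$. By construction $S_\phi\in F$ for every $\phi\in\pi$, which by definition of $\Av_F$ means $\phi\in\Av_F(\tup x)$; hence $\pi(\tup x)\subseteq \Av_F(\tup x)$.

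There is no real obstacle here: the lemma is essentially a reformulation of finite satisfiability in the filter-theoretic language, and both implications reduce to a one-line application of the defining closure properties of filters. The only point worth noting is to be careful that $F$ is a \emph{proper} filter (i.e.\ $\emptyset\notin F$), which is ensured precisely by the finite intersection property coming from finite satisfiability.
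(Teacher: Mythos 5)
Your proof is correct and follows essentially the same route as the paper: the right-to-left direction uses closure of filters under finite intersections (which the paper points to via the preceding observation that $\Av_F$ is finitely satisfiable in $A$), and the left-to-right direction generates the filter $F$ from the family of solution sets $\{\phi(A):\phi\in\pi\}$, using finite satisfiability as the finite intersection property. The only difference is that you spell out the two implications more explicitly than the paper does.
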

\begin{proof}
We have already observed that $\Av_F(\tup x)$ is finitely satisfiable in $A$. Conversely, assume that $\pi(\tup x)$ is finitely satisfiable in $A$, then define $F_0\subseteq  P(A^{\tup x})$ by: $F_0 = \{\phi(A) : \phi(\tup x)\in \pi(\tup x)\}$. The fact that $\pi(\tup x)$ is finitely satisfiable in $A$ implies that any finitely many elements of $F_0$ have non-empty intersection. Let $F$ be the filter generated by $F_0$. Then we have $\pi(\tup x)\subseteq \Av_F(\tup x)$.
\end{proof}

\begin{lemma}
Let $p(\tup x)\in \St(\str N)$ be a complete type, then $p(\tup x)$ is finitely satisfiable in $A$ if and only if there is an ultrafilter $F$ on $A^{\tup x}$ such that $p(\tup x) = \Av_F(\tup x)$.
\end{lemma}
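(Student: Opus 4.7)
The plan is to handle the two directions separately, leveraging the previous Lemma~\ref{lem:fs filters} for one direction and a standard Kuratowski--Zorn argument plus completeness of $p$ for the other. The backward direction is essentially free, while the forward direction is where the assumption that $p$ is \emph{complete} is crucial.

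For the backward direction ($\Leftarrow$), suppose $p(\tup x) = \Av_F(\tup x)$ for some ultrafilter $F$ on $A^{\tup x}$. Since every ultrafilter is in particular a filter, Lemma~\ref{lem:fs filters} immediately yields that $\Av_F(\tup x)$, and hence $p(\tup x)$, is finitely satisfiable in $A$. No further work is needed here.

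For the forward direction ($\Rightarrow$), assume $p(\tup x)\in \St(\str N)$ is complete and finitely satisfiable in $A$. By Lemma~\ref{lem:fs filters} there exists a filter $F_0$ on $A^{\tup x}$ with $p(\tup x)\subseteq \Av_{F_0}(\tup x)$. By Kuratowski--Zorn, extend $F_0$ to an ultrafilter $F$ on $A^{\tup x}$; since $F_0\subseteq F$, we still have $p(\tup x)\subseteq \Av_F(\tup x)$. Now $\Av_F(\tup x)$ is a complete type (as noted in the discussion preceding Lemma~\ref{lem:fs filters}), and $p(\tup x)$ is complete by hypothesis. The key step is then that two complete types over the same set of parameters cannot properly contain one another: if $\phi(\tup x)\in\Av_F(\tup x)\setminus p(\tup x)$, then by completeness of $p$ we would have $\neg\phi(\tup x)\in p(\tup x)\subseteq \Av_F(\tup x)$, forcing $\Av_F(\tup x)$ to contain both $\phi$ and $\neg\phi$, contradicting its consistency. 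Hence $p(\tup x)=\Av_F(\tup x)$, as required.

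I do not expect any significant obstacle: the only non-trivial ingredient is the Kuratowski--Zorn-style extension of a filter to an ultrafilter, which is entirely standard and already implicitly used in Section~\ref{subsec:mt}. The rest is a routine bookkeeping argument exploiting the completeness of $p$.
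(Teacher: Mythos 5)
Your proof is correct and follows essentially the same route as the paper: Lemma~\ref{lem:fs filters} produces a filter $F_0$ with $p\subseteq\Av_{F_0}$, which is then extended to an ultrafilter $F$, and the maximality of the complete type $p$ combined with the consistency of $\Av_F$ forces equality. The only difference is cosmetic: you spell out the ``two complete types cannot properly contain one another'' step via the contradiction with $\phi$ and $\neg\phi$, whereas the paper phrases it more tersely.
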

\begin{proof}
We have already seen that if $F$ is an ultrafilter on $A^{\tup x}$, then $\Av_F(\tup x)$ is a complete type over $\str N$, which is finitely satisfiable in $A$. Conversely, if $p(\tup x)\in \St(\str N)$ is finitely satisfiable in $A$, then by the previous lemma, there is a filter $F_0$ on $A^{\tup x}$ such that $p(\tup x)\subseteq \Av_{F_0}(\tup x)$. Extend $F_0$ to an ultrafilter $F$ on $A^{\tup x}$. Then $p(\tup x)\subseteq \Av_{F_0}(\tup x)\subseteq \Av_{F}(\tup x)$. But since $p(\tup x)$ is a complete type, one cannot add any formulas to it without making it inconsistent. Since $\Av_F(\tup x)$ is consistent, we must have $p(\tup x)= \Av_F(\tup x)$.
\end{proof}

\begin{lemma}\label{lem:extending fs}
Let $\pi(\tup x)$ be a partial type finitely satisfiable in $A$. Then there is a complete type $p(\tup x)\in \St(\str N)$ finitely satisfiable in $A$ which extends $\pi(\tup x)$.
\end{lemma}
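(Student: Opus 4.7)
The plan is to combine the two preceding lemmas together with the ultrafilter extension principle. By Lemma~\ref{lem:fs filters}, the hypothesis that $\pi(\tup x)$ is finitely satisfiable in $A$ yields a filter $F_0$ on $A^{\tup x}$ with $\pi(\tup x)\subseteq \Av_{F_0}(\tup x)$. I would then invoke the standard fact (a consequence of the Kuratowski--Zorn lemma) that every filter on a set extends to an ultrafilter, giving an ultrafilter $F$ on $A^{\tup x}$ with $F_0\subseteq F$. Since $\Av_{(\cdot)}(\tup x)$ is monotone in the filter, we obtain $\pi(\tup x)\subseteq \Av_{F_0}(\tup x)\subseteq \Av_F(\tup x)$.

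By the lemma immediately preceding this one, $\Av_F(\tup x)$ is a complete type in $\St(\str N)$ which is finitely satisfiable in $A$. Setting $p(\tup x):=\Av_F(\tup x)$ thus produces the desired complete extension of $\pi(\tup x)$.

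There is essentially no obstacle: the whole content has been packaged into the two preceding lemmas, and the only non-formal ingredient is the ultrafilter extension principle. An alternative, slightly more self-contained route would be to apply Zorn's lemma directly to the poset of partial types over $\str N$ in the variables $\tup x$ that extend $\pi(\tup x)$ and remain finitely satisfiable in $A$, ordered by inclusion; any chain has an upper bound (its union is still finitely satisfiable in $A$, since finite satisfiability is a property of finite subsets), hence a maximal element $p(\tup x)$ exists, and maximality together with finite satisfiability forces $p$ to be complete (for any formula $\phi(\tup x)$ over $\str N$, at least one of $p\cup\{\phi\}$ or $p\cup\{\neg\phi\}$ is still finitely satisfiable in $A$). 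Either way, the argument is a one-line consequence of what has already been set up.
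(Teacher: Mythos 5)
Your first argument is exactly the paper's proof: take a filter $F_0$ with $\pi\subseteq\Av_{F_0}$ (Lemma~\ref{lem:fs filters}), extend it to an ultrafilter $F$, and set $p=\Av_F$, which is complete and finitely satisfiable in $A$ by the preceding lemma. The alternative Zorn-on-types route you sketch is also sound, but the filter argument is the one the paper uses.
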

\begin{proof}
Let $F$ be a filter on $A^{\tup x}$ such that $\pi(\tup x)\subseteq \Av_F(\tup x)$. Let $F'$ be an ultrafilter extending $F$ and let $p(\tup x) = \Av_{F'}(\tup x)$. Then $p$ is finitely satisfiable in $A$ and extends $\pi$.
\end{proof}

\begin{lemma}\label{lem:fs invariant}
Let $p(\tup x)\in \St(\str N)$ be finitely satisfiable in $A$. Then $p$ is \emph{$A$-invariant}, that is: for any formula $\phi(\tup x;\tup y)$ and tuples $\tup b, \tup b'\in \str N^{\tup y}$, we have:
\[ \tp(\tup b/A)  = \tp(\tup b'/A) \Longrightarrow \phi(\tup x;\tup b)\in p \leftrightarrow \phi(\tup x;\tup b')\in p.\]
\end{lemma}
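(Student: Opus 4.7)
The plan is to prove the lemma by contradiction, exploiting the fact that the roles of the variables $\tup x$ and the parameters $\tup b, \tup b'$ can be swapped when we look at a formula from two different points of view.

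Suppose for contradiction that $\tp(\tup b/A) = \tp(\tup b'/A)$ but, without loss of generality, $\phi(\tup x;\tup b) \in p$ while $\phi(\tup x;\tup b') \notin p$. Since $p$ is a complete type, we then have $\neg\phi(\tup x;\tup b') \in p$. Because $p$ is closed under finite conjunctions (again by completeness), the formula
\[
  \psi(\tup x) := \phi(\tup x;\tup b) \wedge \neg\phi(\tup x;\tup b')
\]
lies in $p$. By finite satisfiability of $p$ in $A$, there exists a tuple $\tup a \in A^{\tup x}$ such that $\str N \models \psi(\tup a)$, i.e.\ $\str N \models \phi(\tup a;\tup b) \wedge \neg\phi(\tup a;\tup b')$.

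Now I would pivot and view $\phi(\tup a;\tup y)$ as a formula with parameters from $A$ in the free variables $\tup y$. Since $\tup a \in A^{\tup x}$ this is a legitimate $A$-formula. The previous line says that $\phi(\tup a;\tup y) \in \tp(\tup b/A)$ but $\phi(\tup a;\tup y) \notin \tp(\tup b'/A)$, contradicting the assumption $\tp(\tup b/A) = \tp(\tup b'/A)$.

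I do not anticipate any obstacle: the argument is a routine two-line compactness-free manipulation that essentially reformulates the definition of finite satisfiability. The only subtlety worth flagging is the symmetric treatment of $\phi(\tup x;\tup y)$, first as a formula in $\tup x$ with parameters $\tup b$ (or $\tup b'$), then as a formula in $\tup y$ with parameters $\tup a$; this symmetry is precisely what transfers information between $p$ and $\tp(\tup b/A)$.
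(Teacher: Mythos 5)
Your proof is correct and takes essentially the same approach as the paper: both use finite satisfiability of $p$ in $A$ to obtain a witness $\tup a\in A^{\tup x}$ of the symmetric difference $\phi(\tup x;\tup b)\triangle\phi(\tup x;\tup b')$ and then pivot to view $\phi(\tup a;\tup y)$ as an $A$-formula in $\tup y$; yours is phrased as a proof by contradiction while the paper's is the direct contrapositive, but the content is identical.
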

\begin{proof}
If $\tp(\tup b/A) = \tp(\tup b'/A)$, then the formula $\phi(\tup x;\tup b)\triangle \phi(\tup x;\tup b')$ has no solution in $A$. Since $p$ is finitely satisfiable in $A$ that formula cannot be in $p$. Hence as $p$ is a complete type, the formula $\phi(\tup x;\tup b)\leftrightarrow \phi(\tup x;\tup b')$ is in $p$ as required.
\end{proof}

% \begin{lemma}\label{lem:number of fs types}
% Let $A \subseteq \str M$ be infinite and let $\phi(\tup x;\tup y)$ a formula, then there are at most $2^{2^{|A|}}$ many complete $\phi$-types over $\str M$ that are finitely satisfiable in $A$.
% \end{lemma}
% \begin{proof}
% Any complete $\phi$-type over $\str M$ that is finitely satisfiable in $A$ is equal to the average $\phi$-type of some ultrafilter on $A^{\tup x}$. There are at most $2^{2^{|A|}}$ many ultrafilters on $A^{\tup x}$. Hence the result follows.
% \end{proof}

\subsection{Indiscernible sequences}

\begin{definition}
Let $\str M$ be a structure and $A\subseteq \str M$. Let $I$ be a linear order. A sequence $(\tup a_i:i\in I)$ of tuples of $\str M$ is \emph{indiscernible over $A$} if for any $n<\omega$ and indices \[i_1 < \cdots < i_n \qquad i'_1 < \cdots < i'_n\] in $I$, we have \[ \tp(a_{i_1},\ldots, a_{i_n}/A) = \tp(a_{i'_1},\ldots,a_{i'_n}/A).\]
\end{definition}

Another way to state this is that the sequence $(\tup a_i:i\in I)$  is {indiscernible over $A$} if for any $n<\omega$, indices \[i_1 < \cdots < i_n \qquad i'_1 < \cdots < i'_n\] in $I$ and formula $\theta(\tup x_1,\ldots,\tup x_n)$ with parameters in $A$, we have  \[ (1) \qquad \str M\models \theta(\tup a_{i_1},\ldots, \tup a_{i_n}) \leftrightarrow \theta(\tup a_{i'_1},\ldots,\tup a_{i'_n}).\]

If $\Delta$ is a set of formulas with parameters in $A$, we will say that the sequence $(\tup a_i:i\in I)$ is \emph{$\Delta$-indiscernible} if $(1)$ holds for each $\theta$ in $\Delta$. If $\Delta$ and $I$ are both finite, then this is expressible by a single first order formula.

\begin{definition}
Two sequences $(\tup a_i:i\in I)$ and $(\tup b_j : j\in J)$ are \emph{mutually indiscernible} over $A$ if $(\tup a_i:i\in I)$ is indiscernible over $A\cup \{\tup b_j :j\in J\}$ and $(\tup b_j:j\in J)$ is indiscernible over $A\cup \{\tup a_i:i\in I\}$.
\end{definition}

An equivalent definition is that the sequences $(\tup a_i:i\in I)$  and $(\tup b_j : j\in J)$ are \emph{mutually indiscernible} over $A$  if for any $n<\omega$, indices \[i_1 < \cdots < i_n \qquad i'_1 < \cdots < i'_n\] and \[ j_1 < \cdots < j_n \qquad  j'_1 < \cdots < j'_n\]  in $I$ and any formula $\theta(\tup x_1,\ldots,\tup x_n;\tup y_1,\ldots,\tup y_n)$ with parameters in $A$, we have  \[ (2) \qquad \str M\models \theta(\tup a_{i_1},\ldots,\tup  a_{i_n};\tup b_{j_1},\ldots,\tup b_{j_n}) \leftrightarrow \theta(\tup a_{i'_1},\ldots,\tup a_{i'_n};\tup b_{j'_1},\ldots,\tup b_{j'_n}).\]

If $\Delta$ is a set of formulas with parameters in $A$, we will say that the sequences $(\tup a_i:i\in I)$ and $(\tup b_j:j<\omega)$ are  \emph{mutually $\Delta$-indiscernible} if $(2)$ holds for each $\theta$ in $\Delta$. If $\Delta$, $I$ and $J$ are finite, then this is again expressible by a single first-order formula.

In the following lemma, we use the notation $\Av_F | C$ to mean the restriction of the type $\Av_F$ to $C$. We also use the notation $\tup a_{<i}$ to mean $\{\tup a_j: j<i\}$.

\begin{lemma}
Let $A\subseteq B \subseteq \str M$. Let $F$ be an ultrafilter on $A^{\tup x}$. Let $I$ be a linear order and let $(\tup a_i:i\in I)$ be a sequence of tuples of $\str M$ such that:
\[\tup a_i \models \Av_F | B\tup a_{<i}.\]
Then the sequence $(\tup a_i:i\in I)$ is indiscernible over $B$.
\end{lemma}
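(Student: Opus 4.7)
The plan is to proceed by induction on $n$, showing that for every $n \ge 1$, every formula $\theta(\tup x_1,\ldots,\tup x_n)$ with parameters in $B$, and any two strictly increasing tuples of indices $i_1<\cdots<i_n$ and $i'_1<\cdots<i'_n$ in $I$,
\[\str M \models \theta(\tup a_{i_1},\ldots,\tup a_{i_n})\;\leftrightarrow\;\theta(\tup a_{i'_1},\ldots,\tup a_{i'_n}).\]
The base case $n=1$ is immediate: by hypothesis every $\tup a_i$ realizes $\Av_F|B$, hence all $\tup a_i$ share the same complete type over $B$.

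For the inductive step, the idea is to peel off the top index. Fix $\theta$ and two increasing tuples of indices as above. Since $\tup a_{i_n} \models \Av_F|B\tup a_{<i_n}$ and the parameters of the formula $\theta(\tup a_{i_1},\ldots,\tup a_{i_{n-1}},\tup x_n)$ lie in $B\cup\tup a_{<i_n}$, we get
\[\str M \models \theta(\tup a_{i_1},\ldots,\tup a_{i_n}) \iff \theta(\tup a_{i_1},\ldots,\tup a_{i_{n-1}},\tup x_n)\in \Av_F,\]
and analogously on the primed side. So it suffices to show that the same formulas of the form $\theta(\tup a_{i_1},\ldots,\tup a_{i_{n-1}},\tup x_n)$ and $\theta(\tup a_{i'_1},\ldots,\tup a_{i'_{n-1}},\tup x_n)$ belong to $\Av_F$. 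This is where $A$-invariance (Lemma~\ref{lem:fs invariant}) enters: $\Av_F$ is finitely satisfiable in $A$, and therefore $A$-invariant, so it is enough to verify that
\[\tp(\tup a_{i_1},\ldots,\tup a_{i_{n-1}}/A) = \tp(\tup a_{i'_1},\ldots,\tup a_{i'_{n-1}}/A).\]
But by the induction hypothesis the sequence is indiscernible over $B$ up to length $n-1$, hence in particular $\tp(\tup a_{i_1},\ldots,\tup a_{i_{n-1}}/B) = \tp(\tup a_{i'_1},\ldots,\tup a_{i'_{n-1}}/B)$; restricting to formulas with parameters in $A\subseteq B$ yields the required equality of types over $A$.

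The main technical point — and the only real obstacle — is that the hypothesis $\tup a_i \models \Av_F|B\tup a_{<i}$ has the ``right'' shape to allow this inductive peeling: the parameters over which $\tup a_{i_n}$ realizes the average type include both $B$ \emph{and} all previously chosen tuples $\tup a_{i_1},\ldots,\tup a_{i_{n-1}}$, which is exactly what makes the equivalence with ``$\theta(\tup a_{i_1},\ldots,\tup a_{i_{n-1}},\tup x_n)\in \Av_F$'' valid. Once this is observed, $A$-invariance of finitely satisfiable types does the rest, and the induction closes without any further compactness or saturation argument.
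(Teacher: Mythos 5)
Your proof is correct and follows essentially the same route as the paper's: induction on $n$, peeling off the last tuple via the Morley-sequence hypothesis ($\tup a_{i_n}\models\Av_F|B\tup a_{<i_n}$), and closing with the $A$-invariance lemma for finitely-satisfiable types. One minor caveat in the phrasing: since $\theta$ may carry parameters from $B\setminus A$, applying Lemma~\ref{lem:fs invariant} after absorbing those parameters into the tuples actually requires the equality of types over $B$, not merely over $A$ — but the induction hypothesis already gives exactly that, so the argument is sound as carried out.
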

\begin{proof}
Write $p=\Av_F$. Note that $p$ is finitely satisfiable in $A$ and a fortiori finitely satisfiable in $B$.

We prove by induction on $n$ that if $n<\omega$ and $i_1 < \cdots < i_n$, $j_1 < \cdots < j_n$ are in $I$, then $\tp(\tup a_{i_1},\ldots, \tup a_{i_n}/A) = \tp(\tup a_{j_1},\ldots,\tup a_{j_n}/A)$. For $n=1$ this follows from the fact that all $\tup a_i$ realize $\Av_F | B$, which is a complete type over $B$. Assume we know it for $n$ and take $i_1 < \cdots < i_n< i_{n+1}$, $j_1 < \cdots < j_n< j_{n+1}$ in $I$. By induction hypothesis, we have \[\tp(\tup a_{i_1},\ldots, \tup a_{i_n}/B) = \tp(\tup a_{j_1},\ldots,\tup a_{j_n}/B).\]
By Lemma \ref{lem:fs invariant}, for any formula $\theta(\tup x;\tup y_1,\ldots, \tup y_n)$ with parameters in $B$, we have:
\[ \theta(\tup x;\tup a_{i_1},\ldots,\tup a_{i_n}) \in p \iff \theta(\tup x;\tup a_{j_1},\ldots,\tup a_{j_n})\in p.\]
Now since $\tup a_{i_{n+1}} \models p| Ba_{i_1}\ldots a_{i_n}$, we have 
\[ \theta(\tup x;\tup a_{i_1},\ldots,\tup a_{i_n}) \in p \iff \str N \models \theta(\tup a_{i_{n+1}};\tup a_{i_1},\ldots,\tup a_{i_n}),\]
and similarly since $\tup a_{j_{n+1}}\models p| Ba_{j_1}\ldots a_{j_n}$, we have
\[ \theta(\tup x;\tup a_{j_1},\ldots,\tup a_{j_n}) \in p \iff \str N \models \theta(\tup a_{j_{n+1}};\tup a_{j_1},\ldots,\tup a_{j_n}).\]
Putting all of this together, we get
\[ \str N \models \theta(\tup a_{i_{n+1}};\tup a_{i_1},\ldots,\tup a_{i_n}) \iff \str N \models \theta(\tup a_{j_{n+1}};\tup a_{j_1},\ldots,\tup a_{j_n}).\]
Since the formula $\theta$ was an arbitrary formula with parameters in $B$, we deduce
\[ \tp(\tup a_{i_1},\ldots,\tup a_{i_{n+1}}/B) = \tp(\tup a_{j_1},\ldots,\tup a_{j_{n+1}}/B)\] as required.
\end{proof}

\begin{definition}
Let the type $p(\tup x)\in \St(\str M)$ be finitely satisfiable in $A\subset \str M$ and let $A\subset B\subset\str M$. A sequence $(\tup a_i :i\in I)$ of tuples in $\str M^{\tup x}$ such that $\tup a_i \models p|B\tup a_{<i}$ is called a \emph{Morley sequence} of $p$ over $B$.
\end{definition}

By the previous lemma, a Morley sequence of $p$ over $B$ is indiscernible over $B$.

\subsection{Building indiscernible sequences}

Indiscernible sequences are easy to find thanks to Ramsey's theorem.

\begin{definition}
Let $(\tup a_i:i<\omega)$ be a sequence of tuples in some structure $\str M$. A family $(\tup b_i:i\in I)$ indexed by a linear order $I$ is \emph{based on $(\tup a_i)_{i<\omega}$} if for any formula $\theta(x_1,\ldots,x_n)\in L$ and $i_1<\ldots <i_n$ in $I$, if $\str M\models \theta(b_{i_1},\ldots,\tup b_{i_n})$ then there are $j_1<\ldots <j_n<\omega$ such that $\str M\models \theta(\tup a_{j_1},\ldots,\tup a_{j_n})$.
\end{definition}

Note that if $(\tup a_i:i<\omega)$ is indiscernible and $(\tup b_i:i\in I)$ is based on it, then it is also indiscernible: indeed for any $i_1<\ldots <i_n$ in $I$ and any $j_1<\ldots <j_n<\omega$, we have \[\tp(\tup b_{i_1},\ldots,\tup b_{i_n}) = \tp(\tup a_{j_1},\ldots,\tup a_{j_n}).\]

\begin{proposition}\label{prop:ramsey}
Let $(\tup a_i:i<\omega)$ be a sequence of tuples in some structure $\str M$ and let $I$ be any linearly ordered set. There is an elementary extension $\str M\prec \str N$ and a sequence $(\tup b_i:i\in I)$ of tuples of $\str N$ that is {based on $(\tup a_i)_{i<\omega}$}.
\end{proposition}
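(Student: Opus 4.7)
The plan is to build the sequence $(\tup b_i)_{i\in I}$ by a standard combination of Ramsey's theorem and the compactness theorem. First I would expand the signature of $\str M$ by adding a fresh constant symbol $c_i$ for every $i\in I$ (so the sequence $(\tup b_i)$ is really a sequence of tuples of constants). Let $T$ be the theory consisting of the elementary diagram of $\str M$ together with, for each formula $\theta(x_1,\ldots,x_n)\in L$ and each $i_1<\cdots<i_n$ in $I$ such that $\str M\not\models\theta(\tup a_{j_1},\ldots,\tup a_{j_n})$ for every $j_1<\cdots<j_n<\omega$, the sentence $\neg\theta(c_{i_1},\ldots,c_{i_n})$. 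Any model of $T$, restricted to the original signature, yields an elementary extension $\str N\succ \str M$, and the interpretations of the constants $c_i$ provide tuples $\tup b_i$. The ``based on'' property is then immediate from the contrapositive of the defining axioms: if $\str N\models\theta(\tup b_{i_1},\ldots,\tup b_{i_n})$ then $\neg\theta(c_{i_1},\ldots,c_{i_n})$ cannot be in $T$, so by definition of $T$ there must exist $j_1<\cdots<j_n<\omega$ with $\str M\models\theta(\tup a_{j_1},\ldots,\tup a_{j_n})$.

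The substantive step is showing that $T$ is finitely satisfiable, and this is where Ramsey's theorem enters. Given a finite fragment $T_0\subset T$, only finitely many formulas $\theta_1,\ldots,\theta_m$ (of arities $n_1,\ldots,n_m$) and finitely many constants $c_{i_1},\ldots,c_{i_k}$ (with $i_1<\cdots<i_k$ in $I$) appear in $T_0$. I would apply the infinite Ramsey theorem iteratively: for each $s\in[m]$, color increasing $n_s$-tuples of $\omega$ by the truth value of $\theta_s$ on the corresponding subsequence of $(\tup a_j)_{j<\omega}$, and pass to an infinite monochromatic subset. After $m$ iterations, I obtain an infinite subset $J\subseteq\omega$ such that for every $s\in[m]$ the truth value of $\theta_s(\tup a_{j_1},\ldots,\tup a_{j_{n_s}})$ does not depend on the choice of $j_1<\cdots<j_{n_s}$ in $J$. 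Pick any $j_1<\cdots<j_k$ in $J$ and interpret $c_{i_l}$ as $\tup a_{j_l}$; I claim that $\str M$ together with these interpretations models $T_0$. The elementary diagram is satisfied by construction. For each axiom $\neg\theta_s(c_{i_{l_1}},\ldots,c_{i_{l_{n_s}}})$ in $T_0$, if on the contrary $\str M\models\theta_s(\tup a_{j_{l_1}},\ldots,\tup a_{j_{l_{n_s}}})$ held, then by homogeneity $\theta_s$ would hold on every increasing $n_s$-tuple from $J\subseteq\omega$, contradicting the fact that this very sentence was placed into $T$ precisely because $\theta_s$ has no witness among increasing subsequences of $(\tup a_j)_{j<\omega}$.

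By compactness, $T$ has a model, which after restricting to the original signature yields the required elementary extension $\str N\succ\str M$ together with the tuples $(\tup b_i)_{i\in I}$; the based-on property was already verified at the outset. I do not anticipate any significant obstacle: the argument is a textbook interplay of Ramsey and compactness, and the only care needed is the bookkeeping around the iterated Ramsey extraction handling all $m$ formulas simultaneously.
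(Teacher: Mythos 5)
Your proof is correct and is precisely the Ramsey-plus-compactness argument that the paper leaves implicit: the paper's proof of \cref{prop:ramsey} is simply ``Follows from Ramsey and compactness,'' and your proposal supplies the standard details (introducing constants, forming the theory axiomatizing the based-on condition, and establishing finite satisfiability by iterated Ramsey extraction).
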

\begin{proof}
Follows from Ramsey and compactness.
\end{proof}

We have analogues for two sequences.

\begin{definition}
Let $(\tup a_i:i<\omega)$ and $(\tup a'_i:i<\omega)$ be two sequences of tuples in $\str M$. Two families $(\tup b_i:i\in I)$, $(\tup b'_j:j\in J)$ indexed by linear orders $I$ and $J$ are \emph{based on $(\tup a_i)_{i<\omega}$ and $(\tup a'_i)_{i<\omega}$} if for any formula $\theta(\tup x_1,\ldots,\tup x_n;\tup y_1,\ldots,\tup y_m)\in L$ and $i_1<\ldots <i_n$ in $I$ and $j_1<\ldots<j_m$ in $J$, if $\str M\models \theta(\tup b_{i_1},\ldots,\tup b_{i_n};\tup b'_{j_1},\ldots,\tup b'_{j_n})$ then there are $k_1<\ldots <k_n<\omega$ and $k'_1<\ldots<k'_m<\omega$ such that $\str M\models \theta(\tup a_{k_1},\ldots,\tup a_{k_n};\tup a_{k'_1},\ldots,\tup a_{k'_m})$.
\end{definition}

Here is a finitary version of Proposition \ref{prop:ramsey} for two sequences.

\begin{lemma}\label{lem:ramsey 2}
Let $\Delta$ be a finite set of formulas and let $m,d<\omega$. Then there is some $m_* <\omega$ such that if $(\tup a_i:i<m_*)$ and $(\tup b_i:i<m_*)$ are two sequences of $d$-tuples of a structure $\str M$, then there are $(\tup a'_i:i<m)$ and $(\tup b'_i:i<m)$ subsequences of $(\tup a_i)_{i<m_*}$ and $(\tup b_i)_{i<m_*}$ respectively such that the sequences $(\tup a'_i:i<m)$ and $(\tup b'_i:i<m)$ are mutually $\Delta$-indiscernible.
\end{lemma}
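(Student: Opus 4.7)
The plan is to prove this via a direct Ramsey-theoretic argument, first reducing to the case of a single formula and then invoking Ramsey's theorem on $(2n)$-subsets of a sufficiently large integer interval.

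\textbf{Reduction to a single formula.} Since mutual $\Delta'$-indiscernibility is manifestly preserved under passage to subsequences for any set of formulas $\Delta'$, I would iterate on the size of $\Delta$: first extract subsequences of length $m_1$ that are mutually $\{\theta_1\}$-indiscernible, then from these extract subsequences of length $m_2$ that are mutually $\{\theta_2\}$-indiscernible (they remain mutually $\{\theta_1\}$-indiscernible), and so on through the $|\Delta|$ formulas. Picking the initial $m_*$ big enough forces the final length to exceed $m$. This reduces the task to proving the lemma for $\Delta=\{\theta(\tup x_1,\ldots,\tup x_n;\tup y_1,\ldots,\tup y_n)\}$, with $n$ and $d$ fixed.

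\textbf{Single-formula case via Ramsey on $2n$-sets.} Given sequences $(\tup a_i)_{i<m_*}$ and $(\tup b_i)_{i<m_*}$ of $d$-tuples, I would color each $(2n)$-subset $K=\{k_1<\cdots<k_{2n}\}$ of $[m_*]$ by its full "split profile": for each $n$-subset $S=\{s_1<\cdots<s_n\}$ of $[2n]$, with complement $\{t_1<\cdots<t_n\}=[2n]\setminus S$, record the truth value of $\theta(\tup a_{k_{s_1}},\ldots,\tup a_{k_{s_n}};\tup b_{k_{t_1}},\ldots,\tup b_{k_{t_n}})$. This uses at most $2^{\binom{2n}{n}}$ colors, a fixed constant depending only on $n$. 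Taking $m_*$ at least the Ramsey number $\mathrm{R}_{2^{\binom{2n}{n}}}^{(2n)}(2m)$, standard Ramsey produces a monochromatic $T\subseteq[m_*]$ with $|T|=2m$.

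\textbf{Extracting disjoint, separated halves.} I would then set $I$ to be the first $m$ elements of $T$ and $J$ the last $m$ elements. For any $n$-subsets $\{i_1<\cdots<i_n\}\subseteq I$ and $\{j_1<\cdots<j_n\}\subseteq J$, the union $K=\{i_1,\ldots,i_n,j_1,\ldots,j_n\}$ is a $(2n)$-subset of $T$ in which $\{i_1,\ldots,i_n\}$ constitutes the first $n$ positions and $\{j_1,\ldots,j_n\}$ the last $n$. Monochromaticity at the distinguished split $S=\{1,\ldots,n\}$ gives that the truth value of $\theta(\tup a_{i_1},\ldots,\tup a_{i_n};\tup b_{j_1},\ldots,\tup b_{j_n})$ is the same for all such choices, which is exactly mutual $\{\theta\}$-indiscernibility of the subsequences indexed by $I$ and $J$.

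\textbf{Main obstacle.} This is a routine Ramsey argument, so there is no deep obstacle; the only delicate point is the setup of the "split profile" coloring and the choice of $I,J$ as the two extreme halves of a monochromatic set --- this is what ensures that \emph{every} pair of $n$-subsets, one from $I$ and one from $J$, arises from the same canonical split of a $2n$-set in $T$, so that a single monochromaticity condition delivers the independence between the two sides required by mutual (as opposed to joint diagonal) indiscernibility.
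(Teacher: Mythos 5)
Your proof is correct. The paper states this lemma without giving a proof, treating it as folklore in the same way it treats Proposition~\ref{prop:ramsey} (``follows from Ramsey and compactness''), so there is no in-paper argument to compare against. Your plan --- reduce to a single formula $\theta(\tup x_1,\dots,\tup x_n;\tup y_1,\dots,\tup y_n)$, color $(2n)$-subsets of $[m_*]$, obtain a monochromatic set $T$ of size $2m$ by hypergraph Ramsey, and draw the two extracted subsequences from the bottom and top halves of $T$ --- is exactly the expected argument, and the device you single out (disjoint, separated halves, so that every pair consisting of an $n$-subset of $I$ and an $n$-subset of $J$ assembles into a $2n$-set whose split between $a$-slots and $b$-slots is always the canonical $\{1,\dots,n\}$) is precisely what makes hypergraph Ramsey deliver \emph{mutual} rather than merely diagonal indiscernibility. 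One small simplification is available and worth noticing: since the final step only appeals to monochromaticity at that one distinguished split, there is no need to record the full ``split profile''. Coloring each $2n$-set $K=\{k_1<\cdots<k_{2n}\}$ by the single bit $\theta(\tup a_{k_1},\dots,\tup a_{k_n};\tup b_{k_{n+1}},\dots,\tup b_{k_{2n}})$ already suffices, with only two colors per formula; and the preliminary iteration over $\Delta$ can likewise be replaced by a single coloring that records the tuple of these bits across all $\theta\in\Delta$, which keeps the number of colors finite and spares you the nested choice of parameters $m_{|\Delta|},\dots,m_0$.
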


\begin{proposition}\label{prop:ramsey 2}
Let $(\tup a_i:i<\omega)$ and $(\tup a'_i:i<\omega)$ be two sequences of tuples in $\str M$ and let $I,J$ be two linearly ordered sets. There is an elementary extension $\str M\prec \str N$ and sequences $(\tup b_i:i\in I)$ and $(\tup b'_j:j\in J)$ of tuples of $\str N$ which are {based on $(\tup a_i)_{i<\omega}$ and $(\tup a'_i)_{i<\omega}$}.
\end{proposition}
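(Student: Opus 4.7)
The plan is to mirror the proof of Proposition~\ref{prop:ramsey}, using a straightforward compactness argument, with Lemma~\ref{lem:ramsey 2} available as a tool to strengthen finite satisfiability if one additionally wanted mutual indiscernibility (which is stronger than ``based on''). Namely, I will describe a theory $T$ whose models correspond to pairs of sequences indexed by $I$ and $J$ with the desired ``based on'' property, then show finite satisfiability using the original sequences themselves.

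More precisely, let $L$ denote the language of $\str M$, and let $d$ be the common length of the tuples $\tup a_i$ and $\tup a'_i$. Extend $L$ by new constants so as to form tuples $\tup c_i$ (of length $d$) for each $i\in I$ and tuples $\tup c'_j$ (of length $d$) for each $j\in J$. Let $T$ be the theory in this extended language consisting of:
\begin{itemize}
\item the elementary diagram of $\str M$ (so that any model of $T$ gives an elementary extension of $\str M$);
\item for every $L$-formula $\theta(\tup x_1,\ldots,\tup x_n;\tup y_1,\ldots,\tup y_m)$, every $i_1<\cdots<i_n$ in $I$ and every $j_1<\cdots<j_m$ in $J$, the sentence $\neg\theta(\tup c_{i_1},\ldots,\tup c_{i_n};\tup c'_{j_1},\ldots,\tup c'_{j_m})$, provided that $\str M\not\models\theta(\tup a_{k_1},\ldots,\tup a_{k_n};\tup a'_{k'_1},\ldots,\tup a'_{k'_m})$ for every choice of indices $k_1<\cdots<k_n<\omega$ and $k'_1<\cdots<k'_m<\omega$.
\end{itemize}
Any model $\str N^+$ of $T$ gives, after taking the $L$-reduct, an elementary extension $\str M\prec\str N$ together with sequences $(\tup b_i)_{i\in I}$ and $(\tup b'_j)_{j\in J}$ obtained by interpreting the new constants. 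The ``based on'' property is immediate from the construction: if $\str N\models\theta(\tup b_{i_1},\ldots,\tup b_{i_n};\tup b'_{j_1},\ldots,\tup b'_{j_m})$, then $\neg\theta$ was not included in $T$, which by definition of $T$ means the desired witnessing tuples in $(\tup a_i)_{i<\omega}$ and $(\tup a'_i)_{i<\omega}$ exist.

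The only nontrivial step is to verify that $T$ is finitely satisfiable, which is where Ramsey could enter (as in the proof of Proposition~\ref{prop:ramsey}). A finite subset $T_0\subseteq T$ mentions at most finitely many constants $\tup c_{i_1},\ldots,\tup c_{i_N}$ (with $i_1<\cdots<i_N$ in $I$) and $\tup c'_{j_1},\ldots,\tup c'_{j_M}$ (with $j_1<\cdots<j_M$ in $J$). A straightforward interpretation already works: set $\tup c_{i_k}:=\tup a_{k-1}$ for $1\le k\le N$ and $\tup c'_{j_\ell}:=\tup a'_{\ell-1}$ for $1\le \ell\le M$. The elementary diagram is satisfied automatically, and each sentence $\neg\theta(\tup c_{i_{t_1}},\ldots,\tup c_{i_{t_n}};\tup c'_{j_{s_1}},\ldots,\tup c'_{j_{s_m}})$ in $T_0$ is satisfied because its presence in $T$ already forces $\str M\not\models\theta(\tup a_{t_1-1},\ldots;\tup a'_{s_1-1},\ldots)$. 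Compactness then yields a model of $T$, completing the proof. If one wanted to upgrade the conclusion to mutually indiscernible sequences, one would instead interpret the constants as the mutually $\Delta$-indiscernible subsequences guaranteed by Lemma~\ref{lem:ramsey 2} (for $\Delta$ a finite set of formulas containing the finitely many $\theta$'s involved in $T_0$); the verification then uses both the indiscernibility and the fact that these are subsequences of the original. In either case, no serious obstacle arises beyond the bookkeeping of matching indices.
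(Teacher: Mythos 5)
Your proof is correct, and it establishes the proposition as literally stated by a genuinely more elementary route than the paper's one-line proof suggests. The paper's proof invokes Lemma~\ref{lem:ramsey 2} (a finitary Ramsey-type result about extracting mutually $\Delta$-indiscernible subsequences) together with compactness. Ramsey is the tool one needs when one wants the new sequences to be mutually indiscernible in addition to being based on the originals, which is the classical and typically useful conclusion; in fact, in the place where this material is actually put to work (the proof of Lemma~\ref{lem:preparation}, Step~1), the theory explicitly includes indiscernibility conditions and appeals directly to Lemma~\ref{lem:ramsey 2}, exactly because "based on" alone would not carry indiscernibility. You correctly observe that for the literal claim in the proposition --- only "based on", with no indiscernibility asserted --- Ramsey is unnecessary: the theory you construct (elementary diagram plus $\neg\theta$ for every $\theta$ never realized along increasing indices from the original sequences) is finitely satisfiable \emph{inside $\str M$ itself}, by assigning the new constants to an initial segment of the original sequences; the "for all choices of indices" clause in your definition of $T$ is exactly what makes the bookkeeping go through. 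A model of $T$ then yields sequences based on the originals, as you verify. You also correctly indicate that upgrading to mutual indiscernibility would require Lemma~\ref{lem:ramsey 2} in the finite satisfiability step. In short: your proof is valid and shows the stated proposition is actually weaker than the argument given in the paper would deliver; the paper's Ramsey route buys mutual indiscernibility as well, which is what the surrounding arguments actually rely on.
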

\begin{proof}
Follows from Lemma \ref{lem:ramsey 2} and compactness.
\end{proof}

\begin{lemma}
    Let $\str N$  be a model and $I=(\tup a_i:i<\omega)$ an indiscernible sequence of tuples of $\str N$. There is an elementary extension $\str N\prec \str N'$, a submodel $\str M\prec \str N'$ and an ultrafilter $F$ on $\str M^{\tup x}$ such that $I$ is a Morley sequence of $\Av_F$ over $\str M$.
\end{lemma}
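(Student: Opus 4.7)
The plan is to build $\str M$ by extending $I$ on the right to a longer indiscernible sequence and taking an appropriate Skolem hull of the added part. I would expand the signature with Skolem functions in the usual way, Skolemize $\str N$, and then, using the indiscernibility of $I$, run a compactness/Ramsey argument in the spirit of Proposition~\ref{prop:ramsey} to obtain an elementary extension $\str N'\succ\str N$ inside which there is an indiscernible sequence $R=(\tup c_\alpha:\alpha<\lambda)$, for $\lambda$ as large as needed, with the property that the concatenation $I\cdot R$ (every $\tup a_i$ preceding every $\tup c_\alpha$) is still indiscernible. Then I would let $\str M$ be the Skolem hull of $R$ in $\str N'$; by Tarski--Vaught this gives $\str M\prec \str N'$ in the reduct to the original signature.

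Next I would verify the two ingredients of the Morley condition for $I$ over $\str M$. Any parameter $\tup m\in\str M$ is Skolem-definable from some finite tuple $\tup c_{\alpha_1},\ldots,\tup c_{\alpha_r}$ in $R$, so every formula with parameters from $\str M\cdot I$ becomes equivalent to one whose explicit parameters are increasing tuples drawn from $I\cdot R$. From this, indiscernibility of $I$ over $\str M$ is immediate, since for all increasing $i_1<\cdots<i_n$ in $\omega$ the tuples $(\tup a_{i_1},\ldots,\tup a_{i_n},\tup c_{\alpha_1},\ldots,\tup c_{\alpha_r})$ share a common type. For finite satisfiability: given a formula $\phi(\tup x;\tup m,\tup a_{j_1},\ldots,\tup a_{j_k})$ with $j_1<\cdots<j_k<i$ satisfied by $\tup a_i$, pick any $\beta<\alpha_1$; the tuples $(\tup a_{j_1},\ldots,\tup a_{j_k},\tup a_i,\tup c_{\alpha_1},\ldots,\tup c_{\alpha_r})$ and $(\tup a_{j_1},\ldots,\tup a_{j_k},\tup c_\beta,\tup c_{\alpha_1},\ldots,\tup c_{\alpha_r})$ are both strictly increasing in $I\cdot R$, hence have the same type; so $\tup c_\beta\in R\subseteq\str M$ witnesses the formula, showing that $\tp(\tup a_i/\str M\tup a_{<i})$ is finitely satisfiable in $\str M$.

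To package this into an ultrafilter, I would define the ``limit type'' $p$ over $\str M\cdot I$ by declaring $\phi\in p$ iff $\phi(\tup a_n;\cdot)$ holds for all sufficiently large $n$. Indiscernibility of $I$ over $\str M$ makes $p$ complete, and the preceding argument makes it finitely satisfiable in $\str M$. Lemma~\ref{lem:extending fs} (applied in a sufficiently saturated elementary extension of $\str N'$, which one may pass to without loss of generality) extends $p$ to a complete type $p^{*}\in\St(\str N')$ that is still finitely satisfiable in $\str M$, and then the ultrafilter version of Lemma~\ref{lem:fs filters} yields an ultrafilter $F$ on $\str M^{\tup x}$ with $\Av_F=p^{*}$. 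By construction, $\tp(\tup a_i/\str M\tup a_{<i})=p|\str M\tup a_{<i}=\Av_F|\str M\tup a_{<i}$ for every $i$, so $I$ is a Morley sequence of $\Av_F$ over $\str M$. The main obstacle, and the reason for extending $I$ on the right rather than on the left, is that any witness in $\str M$ for a formula with parameters in $\str M\tup a_{<i}$ must, via indiscernibility of $I\cdot R$, sit in the ambient sequence order strictly after $\tup a_{<i}$; a left extension could never supply such witnesses.
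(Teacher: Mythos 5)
Your route and the paper's diverge at the choice of $\str M$. The paper extends $I$ on the right by a $\mathbb Z$-indexed indiscernible tail $J$ (still in the original language $L$), takes the ultrafilter $F_0$ on $J$ concentrating at its left end, observes that $I$ is already a Morley sequence of $\Av_{F_0}$ over the mere \emph{set} $J$, and then produces a model $\str M\supseteq J$ by an automorphism trick: build a template Morley sequence $I'$ of $\Av_{F_0}$ over an arbitrary model $\str M_0\supseteq J$, note that $I'$ and $I$ realize the same type over $J$, and conjugate by an automorphism $\sigma$ of a large elementary extension fixing $J$ pointwise and sending $I'$ to $I$, setting $\str M=\sigma(\str M_0)$.

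Your Skolem-hull route has a genuine gap. The crucial step---replacing $\str M$-parameters by Skolem terms over $R$ and then invoking indiscernibility of $I\cdot R$ to match $(\tup a_{\bar j},\tup a_i,\tup c_{\bar\alpha})$ with $(\tup a_{\bar j},\tup c_\beta,\tup c_{\bar\alpha})$---needs $I\cdot R$ to be indiscernible in the \emph{Skolemized} language $L^{\mathrm{sk}}$, since the relevant formula $\phi(\tup x;t(\tup u),\tup v)$ involves Skolem terms. But $I$ is only given $L$-indiscernible, and Skolem functions are arbitrary witness choices that need not respect indiscernibility: already in $(\mathbb Q,<)$, an increasing $<$-indiscernible sequence $(a_i)$ fails to remain indiscernible after adding a Skolem function $f$ for $\exists y(y>x)$ chosen so that $f(a_0)=a_1$ but $f(a_1)\neq a_2$. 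Since any initial segment of an $L^{\mathrm{sk}}$-indiscernible sequence is $L^{\mathrm{sk}}$-indiscernible, you cannot in general extend the given $I$ on the right to an $L^{\mathrm{sk}}$-indiscernible $I\cdot R$; and the standard work-around---pass to an $L^{\mathrm{sk}}$-indiscernible sequence merely \emph{based on} $I$---replaces $I$ by a different sequence, which is not what the lemma asks. The automorphism argument is precisely what the paper uses in place of the Skolem hull to get a model around the tail without paying this cost.

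Two smaller remarks. First, indexing $R$ by an ordinal $\lambda$ leaves no $\beta<\alpha_1$ available when $\alpha_1$ is the least index of $R$ appearing among the Skolem parameters; you need a tail with no least element, such as the $\mathbb Z$-indexed $J$ used in the paper. Second, once the Morley condition over $\str M$ is in hand, your passage from the limit type $p$ to an ultrafilter $F$ with $\Av_F\supseteq p$ via Lemmas~\ref{lem:extending fs} and \ref{lem:fs filters} is fine, and is essentially equivalent to the paper's step of extending $F_0$ to the unique ultrafilter on $\str M$ containing it.
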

\begin{proof}
	In an elementary extension of $\str N$, we can increase the sequence to $I + J$, where $J = (\tup b_i :i \in \mathbb Z)$ so that the sequence $I+J$ is indiscernible. Let $F_0$ be an ultrafilter on $\setof{\tup b_i}{i\in \mathbb Z}$ that contains all subsets of the form $\setof{\tup b_i}{i<n}$ for $n\in \mathbb Z$. It follows from indiscernibility that the sequence $I$ is a Morley sequence of $\Av_{F_0}$ over $\setof{\tup b_i}{i\in \mathbb Z}$. Possibly up to passing to a further elementary extension, we can find an elementary submodel $\str M$ such that $I$ is a Morley sequence of $\Av_{F_0}$ over $\str M$. One can see that by compactness, or alternatively, take $\str M_0$ any model containing $\setof{\tup a'_i}{i\in \mathbb Z}$, let $I' = (\tup a'_i:i<\omega)$ be a Morley sequence of $\Av_{F_0}$ over $M_0$. Now $I$ has the same type as $I'$ over $J$, so passing to an elementary extension, there is an automorphism $\sigma$ fixing $\setof{\tup b_i}{i\in \mathbb Z}$ pointwise and sending $I'$ to $I$. Then take $M = \sigma(M_0)$.
	
	Finally, define $F$ to be the unique ultrafilter on $M$ extending $F_0$ (so a set $A$ is in $F$ if and only if it contains a set in $F_0$). Then $I$ is a Morley sequence of $\Av_F$ over $M$.
\end{proof}

\begin{lemma}\label{lem:limit types}
Let $\str N$ be a structure and let $I=(\tup a_i:i<\omega)$ and $J= (\tup b_j:j<\omega)$ two mutually indiscernible sequences of tuples of $\str N$. There is an elementary extension $\str N\prec \str N'$, a submodel $\str M\prec \str N'$ two ultrafilters $F$ and $F'$ on $\str M^{\tup x}$ such that $I$ is a Morley sequence of $\Av_F$ over $\str M \cup \{\tup a_i:i<\omega\}$ and $J$ is a Morley sequence of $\Av_{F'}$ over  $\str M\cup \{\tup b_j:j<\omega\}$.
\end{lemma}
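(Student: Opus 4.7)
The strategy is to mimic the preceding single-sequence lemma's proof symmetrically on both sequences, maintaining mutual indiscernibility throughout. First, by a standard compactness argument (extending the partial type asserting joint mutual indiscernibility, which is consistent because any finite fragment can be realized using elements of the already mutually indiscernible $I$ and $J$), I can pass to an elementary extension $\str N_1 \succ \str N$ in which $I$ and $J$ are extended to mutually indiscernible sequences $I \frown \hat I$ and $J \frown \hat J$, where $\hat I = (\tup a^*_k)_{k \in \mathbb Z}$ and $\hat J = (\tup b^*_k)_{k \in \mathbb Z}$ sit to the right of $I$ and $J$ respectively.

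Second, I pick ultrafilters $F_0$ on $\hat I$ and $F_0'$ on $\hat J$ each concentrating ``just past'' the original sequence, meaning $F_0$ contains every set of the form $\{\tup a^*_k : k < n\}$ for $n \in \mathbb Z$ and similarly for $F_0'$; such ultrafilters exist because these families have the finite intersection property. Using mutual indiscernibility of $I \frown \hat I$ and $J \frown \hat J$, a direct computation along the lines of the preceding lemma's proof shows that $I$ is a Morley sequence of $\Av_{F_0}$ over $\hat I \cup J \cup \hat J$, and by symmetry $J$ is a Morley sequence of $\Av_{F_0'}$ over $\hat J \cup I \cup \hat I$.

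Third, I transport these statements onto an elementary submodel via the automorphism trick from the preceding lemma's proof. Pick an elementary submodel $\str M_0 \prec \str N_1$ containing $\hat I \cup \hat J$ (using L\"owenheim--Skolem), and extend $F_0, F_0'$ to ultrafilters $F, F'$ on $\str M_0^{\tup x}$ by declaring $A \in F \iff A \cap \hat I \in F_0$, and likewise for $F'$. In a further elementary extension I would construct a fresh pair $(I'', J'')$ over $\str M_0$ with $I''$ a Morley sequence of $\Av_F$ over $\str M_0 \cup J''$ and $J''$ a Morley sequence of $\Av_{F'}$ over $\str M_0 \cup I''$, by interleaving realizations of the two average types step by step; each step is possible because $F, F'$ are finitely satisfiable in $\hat I, \hat J \subseteq \str M_0$. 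Both pairs $(I, J)$ and $(I'', J'')$ realize the same type over $\hat I \cup \hat J$, so in a sufficiently saturated elementary extension $\str N' \succ \str N_1$ there is an automorphism $\sigma$ of $\str N'$ fixing $\hat I \cup \hat J$ pointwise and sending $(I'', J'')$ to $(I, J)$; setting $\str M := \sigma(\str M_0)$ and pushing $F, F'$ forward through $\sigma$ produces the desired elementary submodel and pair of ultrafilters.

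The main obstacle will be the parallel construction of the fresh pair $(I'', J'')$, together with the verification that the joint type of two mutually indiscernible Morley sequences over $\hat I \cup \hat J$ is determined by the two individual average types and the mutual indiscernibility condition. Once this joint-determination fact is in place, the two pairs $(I,J)$ and $(I'',J'')$ realize the same type over $\hat I \cup \hat J$, the transferring automorphism $\sigma$ exists by homogeneity, and the conclusion follows. This parallel handling of two mutually indiscernible sequences, rather than one, is the genuinely new ingredient beyond the preceding single-sequence argument.
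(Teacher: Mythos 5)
Your proposal reproduces the paper's argument step by step: extend $I,J$ by compactness to mutually indiscernible sequences $I + \hat I$ and $J + \hat J$ with $\hat I,\hat J$ of order type $\mathbb Z$; put ultrafilters $F_0,F_0'$ on $\hat I,\hat J$ concentrating on all initial segments; observe (from mutual indiscernibility) that $I$ is a Morley sequence of $\Av_{F_0}$ over $\hat I \cup J \cup \hat J$ and symmetrically for $J$; and finally transport the base from $\hat I\cup\hat J$ to a genuine elementary submodel $\str M$ via the conjugation trick from the single-sequence lemma. The paper's own proof compresses that last step into ``one can then construct the model $M$ as above,'' so you have the intended proof. (You also silently correct the typo in the statement: the base for $I$ should include $J$, not $I$, and vice versa, as your writeup and the application in the proof of the preparation lemma both confirm.)

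The one place you leave a genuine gap is exactly the one you flag: to push $\str M_0$ onto the right submodel by an automorphism, you need $\tp(I,J/\hat I\hat J)=\tp(I'',J''/\hat I\hat J)$. Note that this is not a routine fact: the two Morley-sequence conditions (each over a base containing the \emph{whole} other sequence) do not obviously pin down a single joint type, since ``interleaving realizations in different orders'' is close to asking two invariant types to commute, which fails in general. What rescues it here is that the double Morley-sequence condition over bases containing the entire other sequence forces $I''+\hat I$ and $J''+\hat J$ to be mutually indiscernible with the same formula-by-formula pattern as $I+\hat I$, $J+\hat J$; the type equality then follows by reading off that pattern over the fixed parameters $\hat I\cup\hat J$. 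You should either carry out this verification explicitly, or fall back on the direct-compactness alternative that the paper offers for the single-sequence case — but in either route, the consistency of the pair of Morley-sequence constraints over $\str M_0$ is where the real work lies, so leaving it as an acknowledged ``obstacle'' stops short of a complete proof. A very minor additional point: the ultrafilters should live on $\str M^{\tup y}$ and $\str M^{\tup z}$ respectively, not both on $\str M^{\tup x}$; the lemma statement has the same slip.
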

\begin{proof}
	The proof is very similar to the previous one. First, in an elementary extension, construct sequences $I' = (\tup a'_i:i\in \mathbb Z)$ and $J' = (\tup b'_j :j\in \mathbb Z)$ so that the two sequences $I+I'$ and $J+J'$ are mutually indiscernible. This is possible by compactness. Let $F$ be an ultrafilter on $\setof{\tup a'_i}{i\in \mathbb Z}$ containing all initial segments as in the previous proof and similarly for $F'$ on $\setof{\tup b'_j}{j\in \mathbb Z}$. Then $I$ is a Morley sequence of $\Av_F$ over $\setof{\tup a'_i}{i\in \mathbb Z} \cup \setof{\tup b_j}{j<\omega} \cup \setof{\tup b'_j}{j\in \mathbb Z}$ and $J$ is a Morley sequence of $\Av_{F'}$ over $\setof{\tup a_i}{i<\omega} \cup \setof{\tup a'_i}{i\in \mathbb Z} \cup \setof{\tup b'_j}{j\in \mathbb Z}$. One can then construct the model $M$ as above.
\end{proof}

\section{Proof of Lemma~\ref{lem:preparation}}\label{app:preparation-lemma}
A family $(\phi_i(x))_{i\in I}$ 
of formulas with parameters from $\str N$ is \emph{pairwise inconsistent} if for any distinct $i,j\in I$, the formula $\phi_i(x)\land\phi_j(x)$ has no solution in $\str N$.
For a sequence $\setof{\tup a_i}{i<\omega}$ and for $i\le \omega$ 
by $\tup a_{<i}$ denote the set of elements in all the tuples $\tup a_j$ with $j<i$.

We prove a stronger variant of Lemma~\ref{lem:preparation}.
\begin{lemma}%\label{lem:preparation}
    Suppose $\CC$ is not restrained.
    Then there exist:
    \begin{itemize}\item Formulas  
        $\phi(x,\tup y),\psi(x,\tup z)$, $\theta(\tup u,\tup v)$,
        \item  a structure $\str M$ in the elementary closure of $\CC$, 
        \item an elementary extension $\str N$ of $\str M$,
        \item a sequence $(\tup a_i:i<\omega)$ of tuples in $\str N^{\tup y}$ and a sequence $(\tup b_j:j<\omega)$ of tuples in $\str N^{\tup z}$,
    \end{itemize}
    such that the following properties hold:
    \begin{enumerate}
        \item  the tuples $\tup a_0,\tup a_1,\ldots$ have equal types over $\str M$,
		and the tuples $\tup b_0,\tup b_1,\ldots$ have equal types over $\str M$,
        \item\label{it:many-types} for all $0<i,j<\omega$, 
        the set $\Types[\theta](A/B)$ is infinite, where $A=\phi(\str N,\tup a_i)$ and $B=\psi(\str N,\tup b_i)$,
        \item\label{it:ind-base} $\tup a_i \ind[\str M]\tup a_{<i}\tup b_{<\omega}$ for $i<\omega$,
        \item the formulas $\setof{\phi(x;\tup a_i)}{i<\omega}$ are pairwise inconsistent,
        \item the formulas $\setof{\psi(x;\tup b_j)}{j<\omega}$ are pairwise inconsistent.        
    \end{enumerate}
    \end{lemma}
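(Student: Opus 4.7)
Since $\CC$ is not restrained, unpacking \cref{def:restrained} yields formulas $\phi(x;\tup y),\psi(x;\tup z)$ and a finite set $\Delta(\tup u;\tup v)$ of formulas such that for all $t,k\in\N$ some $\str S\in\CC$ carries a $\phi$-definable disjoint family $\cal R$ and a $\psi$-definable disjoint family $\cal L$, each of size at least $t$, with $|\Types[\Delta](R/L)|\geq k$ for every $R\in\cal R$ and $L\in\cal L$. I would first apply the compactness theorem to the theory of $\CC$ augmented by new constants $(\tup a'_i)_{i<\omega},(\tup b'_j)_{j<\omega}$ together with first-order sentences expressing (a) that the formulas $\phi(x;\tup a'_i)$ are pairwise inconsistent and similarly for $\psi(x;\tup b'_j)$, and (b) for each $k\in\N$ and each pair $(i,j)$, that there exist $k$ tuples in $\phi(\str N;\tup a'_i)^{\tup u}$ with pairwise distinct $\Delta$-types over $\psi(\str N;\tup b'_j)^{\tup v}$. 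Every finite fragment of this extended theory is realized in some structure of $\CC$ by the non-restrained hypothesis, so compactness yields a structure $\str N_0$ in the elementary closure of $\CC$ meeting all these conditions.

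Since $\Delta$ is finite and $|\Types[\Delta]|\leq \prod_{\theta\in\Delta}|\Types[\theta]|$, for each pair $(i,j)$ some $\theta_{i,j}\in\Delta$ witnesses $|\Types[\theta_{i,j}](\phi(\str N_0;\tup a'_i)/\psi(\str N_0;\tup b'_j))|=\infty$. Colouring pairs $(i,j)\in\N^2$ by $\theta_{i,j}$ and invoking bipartite Ramsey (\cref{th: bipRamsey}) on arbitrarily large finite rectangles, followed again by compactness, lets me pass to infinite subsequences on which a single $\theta\in\Delta$ uniformly witnesses infinitude of $\Types[\theta]$, while preserving (a). I then apply \cref{prop:ramsey 2} to obtain, in a further elementary extension, sequences $(\tup a''_i)_{i<\omega},(\tup b''_j)_{j<\omega}$ that are mutually indiscernible and based on the current ones; since (a) and, for each fixed $k$, ``there exist $k$ tuples with pairwise distinguishable $\theta$-types'' are first-order statements that hold of all relevant finite tuples of the previous sequences, their negations cannot hold on any tuples of the new sequences by basing, so (a) and the infinitude of $\Types[\theta]$ persist. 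Finally \cref{lem:limit types} supplies an elementary extension $\str N$, a submodel $\str M\prec\str N$ and ultrafilters $F,F'$ on $\str M^{\tup y},\str M^{\tup z}$ such that $(\tup a''_i)$ is a Morley sequence of $\Av_F$ over $\str M\cup\{\tup b''_j:j<\omega\}$ and symmetrically for $(\tup b''_j)$.

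Setting $\tup a_i:=\tup a''_i$ and $\tup b_j:=\tup b''_j$, property~1 follows from indiscernibility of each sequence over $\str M$; properties~4 and~5 survive basing as explained; property~2 combines basing with the reduction to a single $\theta$; and property~3 is exactly the Morley condition, since $\tup a_i$ realizes $\Av_F | \str M\tup a_{<i}\tup b_{<\omega}$, a type finitely satisfiable in $\str M$, whence $\tup a_i\ind[\str M]\tup a_{<i}\tup b_{<\omega}$. The main obstacle is the coordinated bookkeeping through three nested extensions (compactness, Ramsey-based mutual indiscernibility, Morley/ultrafilter limits) while simultaneously preserving all five conditions; in particular one must carefully justify that ``$|\Types[\theta]|$ infinite'' is expressible as a countable conjunction of first-order sentences in the parameters so as to be transferable both through compactness and through basing, and that the reduction from a finite $\Delta$ to a single $\theta$, which is not automatic from indiscernibility alone, can be interlaced with the Ramsey-type steps without breaking either the disjointness or the many-types condition.
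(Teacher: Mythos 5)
Your proof is correct and uses the same toolkit as the paper's Appendix C argument: compactness to produce the initial pair of sequences with pairwise-inconsistent definable families and many $\Delta$-types; Ramsey-type arguments for mutual indiscernibility; \cref{lem:limit types} to land the sequences on Morley sequences over a submodel $\str M$, from which finite satisfiability in $\str M$ (hence property~3) is immediate; and a pigeonhole over the finite set $\Delta$ to isolate a single $\theta$. What differs is the bookkeeping. The paper bakes $\Gamma$-indiscernibility into the compactness theory $T'$ from the start (its conditions $\bullet_{2,m}$, justified inside each $\str M_{m_*}$ via \cref{lem:ramsey 2}), so mutual indiscernibility and non-restraint emerge from a single compactness step, and $\theta$ is chosen only at the very end, where the choice is essentially free: once the sequences are mutually indiscernible, the sentence ``there exist $k$ tuples in $\phi(\str N;\tup a_i)^{\tup u}$ with pairwise distinct $\theta$-types over $\psi(\str N;\tup b_j)^{\tup v}$'' holds for one index pair iff it holds for all, so any $\theta$ that works for one pair works uniformly. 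You instead run compactness first without indiscernibility, then insert an explicit bipartite-Ramsey-plus-compactness step to fix $\theta$, and only afterwards appeal to \cref{prop:ramsey 2} and preservation-by-basing to upgrade to mutual indiscernibility. Your preservation argument (the negation of a first-order condition holding on the new sequences would by basing transfer to some tuple of the old sequences, contradiction) is correct and does carry properties (a) and the $\theta$-infinitude through. However the intermediate Ramsey step is strictly unnecessary and is the sketchiest part of your argument: since \emph{infinite} bipartite Ramsey fails (e.g.\ colouring $(i,j)$ by $[i<j]$ has no infinite monochromatic rectangle), making that step rigorous requires doing the bipartite Ramsey inside each finite structure of $\CC$ during the compactness argument and then pigeonholing over the $|\Delta|$-many candidate theories $T_\theta$ — which is morally what the paper does anyway, just packaged differently. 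Postponing the choice of $\theta$ until after mutual indiscernibility lets you drop that step entirely and tighten the exposition.
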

    It is clear that each of the properties (1)-(4)
    implies the corresponding property stated in Lemma~\ref{lem:preparation}.
    Properties (5) and (1) together imply that 
    $\psi(\str M,\tup b_j)=\emptyset$ 
    yielding property (5) in Lemma~\ref{lem:preparation}.
    We thus prove the statement above.

    \begin{proof}
Assume $\CC$ is not restrained.
Then there are formulas 
 $\phi(x,\tup y)$ and $\psi(y,\tup z)$ 
 and a finite set of formulas $\Delta(\tup u,\tup v)$ such that for every $n\in\N$ there 
 is a structure $\str S\in\CC$, two disjoint families $\cal R,\cal L$ of size $k$
 with $\Types[\Delta](L/R)\ge k$ for all $L\in\cal L$ and $R\in\cal R$, where $\cal L$ is $\phi$-definable and $\cal R$ is $\psi$-definable.
We proceed in two steps. 

\medskip
\noindent
\underline{Step 1}. 
 There is a model $\str M$ in the elementary closure of $\CC$,
indiscernible sequences 
$(\tup a_i:i<\omega)$ in $\str M^{\tup y}$ and $(\tup b_j:j<\omega)$ in $\str M^{\tup z}$,
formulas $\phi(x,\tup y)$, $\psi(x,\tup z)$ and $\theta(\tup u,\tup v)\in\Delta$ such that:
\begin{itemize}
    \item the families $\setof{\phi(x,\tup a_i)}{i<\omega}$ and $\setof{\psi(x,\tup b_j)}{j<\omega}$ are both pairwise inconsistent
    \item for each $0\le i,j<\omega$ the set 
    $\Types[\Delta](A/B)$ is infinite, where $A=\phi(\str M,\tup a_i)$ and $B=\psi(\str M,\tup b_j)$.    
\end{itemize}
\medskip

 As $\CC$ is not restrained, for every natural number $m$, we can find a structure $\str M_m\in\CC$ sequences $(\tup a^m_i:i<m)$ and $(\tup b^m_j:j<m)$ of tuples of $\str M_m$ such that:
    
    $\bullet$ the two families $\{\phi(\tup x;\tup a^m_i):i<m\}$ and $\{\psi(x;\tup b^m_j):j<m\}$ are pairwise disjoint;
    
    $\bullet$ for every $i,j<m$, the set $\Types[\Delta](A/B)$ has size at least $m$, where $A = \phi(\str M_m;\tup a^m_i)$ and $B = \psi(\str M_m;\tup b^m_j)$.
    
    Add constants to the signature to name two sequences $(\tup a_i:i<\omega)$ and $(\tup b_j:j<\omega)$. Consider the theory $T'$ in the extended language consisting of the following for every $m<\omega$:
    
    $\bullet_0$ all sentences which hold in all structures in $\CC$;
    
    $\bullet_{1,m}$ for every $i<j<m$, the two sets $\phi(\tup x;\tup a_i)$ and $\phi(\tup x;\tup a_j)$ are disjoint and the two sets $\psi(x;\tup b_i)$ and $\psi(x;\tup b_j)$ are disjoint;
    
    $\bullet_{2,m}$ the two sequences $(\tup a_i:i<m)$ and $(\tup b_j:j<m)$ are  indiscernible;
    
    $\bullet_{3,m}$ for every $i,j<m$ the set $\Types[\Delta](A/B)$ has size at least $m$, where $A = \phi(\str M;\tup a^m)$ and $B = \psi(\str M;\tup b^m)$ and $\str M$ is the considered model.
    
    Note that all those conditions are expressible by first order formulas (infinitely many in the case of $\bullet_0$ and $\bullet_{2,m}$).
    
    We claim that $T'$ is consistent. Let $T_0\subseteq T'$ be finite. Then there is $m<\omega$ such that $T_0$ only contains formulas from $T$ along with formulas $\bullet_{1,m'}$, $\bullet_{2,m'}$ and $\bullet_{3,m'}$ for $m'\leq m$. Furthermore, there is a finite set $\Gamma$ of formulas such that the formulas from $\bullet_2$ appearing in $T_0$ say at most that $(\tup a_i:i<m)$ and $(\tup b_j:j<m)$ are  $\Gamma$-indiscernible.
    
    By Lemma \ref{lem:ramsey 2}, for $m_*<\omega$ is large enough, we can find a subsequences $(\tup a'_i:i<m)$ of $(\tup a^{m_*}_i:i<m)$ and a subsequence $(\tup b'_i:i<m)$ of $(\tup b^{m_*}_i:i<m_*)$ that are  $\Gamma$-indiscernible. But then $M_{m_*}$ where we interpret the constants so as to name the two sequences $(\tup a'_i:i<m)$ of $(\tup a^{m_*}_i:i<m)$ is a model of $T_0$. Hence $T_0$ is consistent. As $T_0$ was an arbitrary finite subset of $T'$, we conclude by compactness that $T'$ is consistent.

    Let $\str M$ be a model of $T'$ and set $I =(\tup a_i:i<\omega)$ and $J=(\tup b_j:j<\omega)$ as interpreted in $\str M$. 
This yields the structure $\str M$ as described in Step 1.

% 
    
%     By Lemma \ref{lem:limit types}, there are an elementary extension $N\prec N'$, a submodel $M\prec N'$, two ultrafilters $F$ and $F'$ on $M^{\tup y}$ and $M^{\tup z}$ respectively such that $I$ is a Morley sequence of $\Av_F$ over $M \cup \{\tup a_i:i<\omega\}$ and $J$ is a Morley sequence of $\Av_{F'}$ over  $M\cup \{\tup b_j:j<\omega\}$. 
% \todo{finish}

\medskip\noindent
\underline{Step 2.} 
Apply Lemma~\ref{lem:limit types} to get an elementary extension $\str N$ of $\str M$, an elementary substructure $\str M'$ of $\str N$, such that 
$(\tup b_j:j<\omega)$ 
is a Morley sequence over  $\str M'\tup a_{<\omega}$ and 
$(\tup a_i:j<\omega)$
is a Morley sequence over $\str M'\tup b_{<\omega}$.
 In particular:
\begin{enumerate}
    \item $(\tup a_i:i>\omega)$ and 
     $(\tup b_j:j>\omega)$ are both indiscernible over $\str M'$,
     \item the families $\setof{\phi(x,\tup a_i)}{i<\omega}$ and $\setof{\psi(x,\tup b_j)}{j<\omega}$ are both pairwise inconsistent,
        \item for each $0\le i,j<\omega$ the set 
        $\Types[\Delta](A/B)$ is infinite, where $A=\phi(\str N,\tup a_i)$ and $B=\psi(\str N,\tup b_j)$,          
    \item $\tup a_i\ind[\str M']\tup a_{<i}\tup b_{<\omega}$.
\end{enumerate}
Let $A=\phi(\str N,\tup a_1)$ and $B=\psi(\str N,\tup b_j)$.
Since $\Types[\Delta](A/B)$ is infinite  and $\Delta$ is finite, there is some $\theta(\tup u,\tup v)\in \Delta$ such that 
$\Types[\theta](A/B)$ is infinite.
This finishes the proof of Lemma~\ref{lem:preparation}.
\end{proof}

\end{document}